\documentclass[10pt,reqno,twoside]{amsart}
\usepackage[a4paper,margin=2.5cm]{geometry}

\usepackage{amsmath,amssymb,amsthm,mathtools,mathrsfs}
\usepackage{enumitem}
\usepackage{microtype}
\usepackage[hidelinks]{hyperref}
\usepackage{fancyhdr}
\usepackage{setspace}

\makeatletter

\newcommand{\Rmnum}[1]{\expandafter\@slowromancap\romannumeral #1@}
\makeatother

\newcommand{\Id}{\operatorname{Id}}
\newcommand{\End}{\operatorname{End}}
\newcommand{\Herm}{\operatorname{Herm}}
\newcommand{\tr}{\operatorname{tr}}
\newcommand{\rk}{\operatorname{rk}}
\newcommand{\osc}{\operatorname{osc}}
\newcommand{\Vol}{\operatorname{Vol}}

\newtheorem{theorem}{Theorem}[section]
\newtheorem{proposition}[theorem]{Proposition}
\newtheorem{lemma}[theorem]{Lemma}
\newtheorem{corollary}[theorem]{Corollary}
\theoremstyle{remark}
\newtheorem{remark}[theorem]{Remark}

\numberwithin{equation}{section}
\begin{document}
%\setstretch{1.035}
\pagestyle{fancy}
\fancyhf{}
\fancyhead[LE,RO]{\thepage}
\renewcommand{\headrulewidth}{0pt}
\setlength{\headheight}{14pt}

\title[Higgs--Demailly system]{Higgs--Demailly System and Positivity of Higgs Bundles}
\author{Liangdi Zhang}
\address{Mathematical Science Research Center, Chongqing University of Technology}
\address{Chongqing 400054, China}
\email{ldzhang91@163.com}
\date{}

\begin{abstract}
We prove that the Higgs--Demailly system on a compact Riemann surface
admits, for suitable fixed parameters, a smooth admissible solution at its
terminal parameter if and only if the Higgs bundle is H-ample.  Here H-ampleness is understood in the
modified sense of Biswas--Misra--Ray.  In particular, the argument gives an
analytic proof of the equivalence between this H-ampleness and
Griffiths-positive Hitchin--Simpson curvature.  The proof extends the Demailly--Pingali--Murakami approach
using a priori estimates and Leray--Schauder degree theory.  The missing
scalar lower bound follows from a Higgs-compatible quotient construction:
a blow-up sequence produces a nonzero Higgs quotient of nonpositive
degree, contradicting H-ampleness.
\vspace*{5pt}

\noindent{\it Keywords}: Higgs bundles, Griffiths positivity, H-ampleness,
Hitchin--Simpson curvature, Demailly system.

\noindent{\it 2020 Mathematics Subject Classification}: 53C55
\end{abstract}

\maketitle

%\tableofcontents

%%%%%%%%%%%%%%%%%%%%%%%%%%%%%%%%%%%%%%%%%%%%%%%%%%%%%%%%%%%%%%%%%%%%%%%%%%%%%%
%%%%%%%%%%%%%%%%%%%%%%%%%%%%%%%%%%%%%%%%%%%%%%%%%%%%%%%%%%%%%%%%%%%%%%%%%%%%%%
\section{Introduction}
\label{sec1}

A central theme in complex differential geometry is the relation between
algebro-geometric stability or positivity and canonical Hermitian metrics.
For holomorphic vector bundles, the Narasimhan--Seshadri theorem
\cite{NarasimhanSeshadri1965}, Donaldson's gauge-theoretic work
\cite{Donaldson1983,Donaldson1985}, and the work of Uhlenbeck and Yau
\cite{UhlenbeckYau1986,UhlenbeckYau1989} led to the
Donaldson--Uhlenbeck--Yau correspondence: a holomorphic vector bundle over a
compact K\"ahler manifold is polystable if and only if it admits a
Hermitian--Einstein metric.  Standard differential-geometric accounts
of Hermitian vector bundles and the Kobayashi--Hitchin correspondence include
\cite{Kobayashi1987,LubkeTeleman1995}.
Hitchin \cite{Hitchin1987} established the corresponding picture for Higgs
bundles on compact Riemann surfaces, and Simpson
\cite{Simpson1988,Simpson1990,Simpson1992} developed the higher-dimensional
K\"ahler and nonabelian Hodge theory.
Together with Corlette's theorem \cite{Corlette1988} on harmonic metrics for
reductive flat bundles, these results form the analytic foundation of
nonabelian Hodge theory.  The algebraic and moduli-theoretic development of
Higgs bundles and representations was carried further by Nitsure
\cite{Nitsure1991} and Simpson \cite{Simpson1994I,Simpson1994II}.

A recent development is the prescribed Hermitian--Yang--Mills problem.
Here it is important to distinguish endomorphism-valued curvature from its
covariant Hermitian tensor.  Write $R_h=\nabla_h^2$ for the Chern curvature,
and $R_{D_h}=D_h^2$ for the curvature of the Hitchin--Simpson connection
defined in \eqref{eq:Higgs-connection-full}.  We denote the associated
covariant Hermitian--Yang--Mills tensor by
\[
 \mathscr T_{h,\theta}(v,\bar w)
 :=h\bigl((\sqrt{-1}\Lambda_\omega R_{D_h})v,w\bigr),
 \qquad \mathscr T_h:=\mathscr T_{h,0}.
\]
Wang, Yang, and Yau \cite{WangYangYau2026} proved that, if a holomorphic
vector bundle over a compact K\"ahler manifold admits a metric with
$\mathscr T_{h_0}>0$, then for every positive-definite Hermitian tensor
$P\in\Gamma(M,E^*\otimes\overline E^*)$ there is a unique Hermitian metric
$h$ satisfying $\mathscr T_h=P$.
Fan, Wang, Yang, and Yau \cite{FanWangYangYau2026} proved the Higgs analogue
on compact Hermitian manifolds, with $\mathscr T_h$ replaced by
$\mathscr T_{h,\theta}$.  Cao, Sun, and Zhang
\cite{CaoSunZhang2026} developed a Hermitian--Yang--Mills iteration for stable
bundles, together with a Higgs extension, whereas Xiong, Yang, and Yau
\cite{XiongYangYau2026} obtained an algebro-geometric solvability criterion
for the prescribed Hermitian--Yang--Mills equation via a parabolic flow on
holomorphic vector bundles.  These results show that
curvature prescription problems for ordinary bundles can have nontrivial Higgs
counterparts.

The positivity problem considered here is the Higgs counterpart of Griffiths'
conjecture.  A holomorphic vector bundle $E$ over a projective manifold is
ample when the tautological line bundle
$\mathcal O_{\mathbb P(E^*)}(1)$ is ample \cite{Hartshorne1966}.  Griffiths \cite{Griffiths1969} conjectured that $E$ is ample if and only if it
admits a smooth Hermitian metric with Griffiths-positive Chern curvature.  The
implication from curvature positivity to ampleness is classical, while the
converse remains open in higher dimension.  Classical work surrounding this
problem relates curvature positivity, Nakano positivity, Chern-class
positivity, and algebraic ampleness
\cite{BlochGieseker1971,DemaillyPeternellSchneider1994,DemaillySkoda1980,
FultonLazarsfeld1983,Gieseker1971,Hartshorne1971,Nakano1955}.
Useful general references are
\cite{Demailly2012,GriffithsHarris1978,Kobayashi1987,Lazarsfeld2004}.
A complementary analytic literature develops positivity of direct-image
bundles and singular Hermitian metrics
\cite{Berndtsson2009,MourouganeTakayama2007,MourouganeTakayama2008,
Raufi2015}.
On smooth projective curves, Griffiths' conjecture follows from the work of
Umemura \cite{Umemura1973} and Campana--Flenner \cite{CampanaFlenner1990}.
More recently, Xiong, Yang, and Yau
\cite[Corollary~1.6 and Remark~1.8]{XiongYangYau2026} strengthened the
ordinary-bundle curve case to
a prescribed-curvature statement: for an ample vector bundle over a compact
Riemann surface and any K\"ahler metric $\omega$, every positive-definite
Hermitian tensor $P$ can be realized by a Hermitian metric $h$ satisfying
$\mathscr T_h=P$.  In complex dimension one, this prescribes the positive
covariant curvature form as
$\sqrt{-1}R_h^{\mathrm{cov}}=\omega\otimes P$, where
$R_h^{\mathrm{cov}}(v,\bar w)=h(R_hv,w)$.

Numerical positivity for Higgs bundles has been studied through
semistability, Higgs-nefness, numerical flatness, and related positivity
notions in
\cite{BruzzoGrana2007Metrics,BruzzoGrana2007NF,BruzzoGrana2011,
BruzzoHernandez2006}.  Bruzzo, Capasso, and Gra\~na Otero
\cite{BruzzoCapassoOtero2025} studied a recursive notion of H-ampleness
using universal Higgs quotient bundles.  Biswas, Misra, and Ray
\cite{BiswasMisraRay2026} subsequently observed that this recursive notion
does not in general reduce to the usual ampleness of the underlying vector
bundle when the Higgs field is zero, and introduced a modified notion that
does.  Throughout this paper, H-ampleness means the modified notion of
\cite[Definition~2.5]{BiswasMisraRay2026}.  On a smooth projective curve it
is characterized by positivity of the degree of the bundle and of every
nonzero Higgs quotient bundle
\cite[Theorem~3.8]{BiswasMisraRay2026}; this is the criterion used below.

A closely related development is the theory of twisted
Hermitian--Einstein metrics.  Wang, Yang, and Yau
\cite[Theorem~1.1]{WangYangYauTwisted2026} proved a solvability theorem
governed by the minimal quotient slope.  In
\cite[Remark~1.8]{WangYangYauTwisted2026} they state that their main results
also hold for Higgs bundles, indicating an adaptation of the frames in
\cite{FanWangYangYau2026}, without separately developing the Higgs proof.
Combined with the numerical criterion on curves, this stated extension
implies the existence of a Griffiths-positive Hitchin--Simpson metric on
an H-ample Higgs bundle: the minimal Higgs quotient slope is positive,
and one takes the twisting parameter to be zero.  Our result concerns
the solvability of the particular coupled Higgs--Demailly system, not
only the existence of a positive metric.  The proof below does not use
the Higgs extension stated in that remark; instead it treats this coupled
system directly.

Our analytic approach originates in the nonlinear Hermitian--Yang--Mills
systems introduced by Demailly \cite{Demailly2021}.  These systems are designed
so that a solution at the terminal parameter yields a Griffiths-positive, and
in a suitable version dual-Nakano-positive, metric.  Pingali
\cite{Pingali2021} showed that the cushioned Hermitian--Einstein equation in
Demailly's original formulation is essentially unique on stable bundles,
indicating the need for a modified system.  Pingali \cite{Pingali2023} later
solved the modified system for direct sums of ample line bundles over a compact
Riemann surface and reduced the general case to an a priori estimate by
Leray--Schauder degree theory.  Mandal \cite{Mandal2023} solved related systems
for vortex bundles.  Murakami \cite{Murakami2026} obtained the missing
estimate by combining Pingali's reduction with a quotient form of the
Uhlenbeck--Yau argument, thereby giving an analytic proof of Griffiths'
conjecture on compact Riemann surfaces.

Complex dimension one is essential.  On a Riemann surface the
Hitchin--Simpson curvature is an $\operatorname{End}(E)$-valued $(1,1)$-form
and is therefore determined by its contraction with $\Lambda_\omega$.
Consequently,
\[
  \sqrt{-1}\Lambda_\omega R_{D_h}>0
\]
is equivalent to Griffiths positivity of $R_{D_h}$ with the convention
in \eqref{eq:Griffiths-positive-full}.  Moreover, every
torsion-free coherent sheaf on a smooth curve is locally free, and the
codimension-two singular set in the Uhlenbeck--Yau construction is empty
\cite{UhlenbeckYau1986,UhlenbeckYau1989}.  Thus a weak Higgs projection gives
an actual Higgs subbundle and hence a Higgs quotient bundle, to which the
numerical criterion for H-ampleness applies.  These features do not persist in
the same form in higher dimension.

We extend the Demailly--Pingali--Murakami scheme to prove terminal
solvability of the Higgs--Demailly system under H-ampleness.  A prescribed
covariant-tensor theorem does not directly solve its coupled determinant
and trace-free equations.  In adapting the system argument, the Higgs
commutator changes the metric variation, the maximum-eigenvalue estimate,
and the matrix-power inequalities.  The limiting quotient must be invariant
under the Higgs field, not merely holomorphic.  We obtain this compatibility
using the spectral-cutoff method of Simpson and Jacob
\cite{Simpson1988,Jacob2015}, applied on the quotient side of the blow-up
argument.  These are the points at which the Higgs case differs from the
ordinary-bundle argument.

Let $(M,\omega)$ be a connected compact Riemann surface and let $(E,\theta)$ be
a Higgs bundle of rank $r>0$.  For a Hermitian metric $h$ on $E$, the
Hitchin--Simpson connection is
\begin{equation}\label{eq:Higgs-connection-full}
 D_h=\nabla_h+\theta+\theta_h^{*},
\end{equation}
where $\nabla_h$ is the Chern connection and $\theta_h^{*}$ is the
$h$-adjoint of the Higgs field.  Its curvature is
\[
 R_{D_h}=D_h^2\in\Gamma\bigl(M,\Lambda^2T^*M\otimes\mathrm{End} E\bigr).
\]
We say that $R_{D_h}$ is \emph{Griffiths positive} if
\begin{equation}\label{eq:Griffiths-positive-full}
\bigl\langle R_{D_h}(\xi,\bar\xi)v,v\bigr\rangle_h>0
\end{equation}
for every $x\in M$, every nonzero $\xi\in T_x^{1,0}M$, and every nonzero
$v\in E_x$.  Thus, if $\omega=\sqrt{-1}g_{z\bar z}\,dz\wedge d\bar z$
and $R_{D_h}=A\,dz\wedge d\bar z$, the Hermitian endomorphism used to
test positivity is $A$, while
$\sqrt{-1}\Lambda_\omega R_{D_h}=g_{z\bar z}^{-1}A$.
We denote the trace-free part by
\begin{equation}\label{eq:tracefree-full}
 R^0_{D_h}=R_{D_h}-\frac1r\mathrm{tr}_E(R_{D_h})\otimes\Id_E.
\end{equation}
Similarly, $R_h^0$ denotes the trace-free part of the Chern curvature $R_h$.

Fix a reference metric $h_{\rm ref}$ on $E$ and write
\begin{equation}\label{eq:metric-decomp-intro-full}
 h_t=e^{-f_t}g_th_{\rm ref},
 \qquad \det g_t=1,
\end{equation}
where $f_t\in C^\infty(M,\mathbb R)$ and $g_t$ is a positive
$h_{\rm ref}$-Hermitian endomorphism.  For constants $\alpha,\lambda>0$
and a fixed smooth positive function $a_0$, determined by the initial
solution, we consider the following Higgs extension of Demailly's system:
\begin{equation}\label{eq:HD-intro-full}
 \left\{
 \begin{aligned}
 \det\bigl(\sqrt{-1}\Lambda_\omega R_{D_{h_t}}+(1-t)\alpha\Id_E\bigr)
 &=e^{\lambda f_t}a_0,\\
 \sqrt{-1}\Lambda_\omega R^0_{D_{h_t}}&=-e^{f_t}\log g_t.
 \end{aligned}\right.
\end{equation}
A solution of \eqref{eq:HD-intro-full} is called \emph{admissible} if
\[
 \sqrt{-1}\Lambda_\omega R_{D_{h_t}}
 +(1-t)\alpha\Id_E>0.
\]
At $t=1$, admissibility is exactly the positivity of
$\sqrt{-1}\Lambda_\omega R_{D_{h_t}}$, while the second equation controls the
trace-free metric variable.

The term $[\theta,\theta_h^*]$ is zeroth order, but it cannot be discarded as
a harmless perturbation.  On a curve, the modified H-ampleness criterion is
formulated in terms of degrees of Higgs quotient bundles
\cite[Theorem~3.8]{BiswasMisraRay2026}.  Hence the limiting quotient in the
blow-up argument must be Higgs invariant, not merely holomorphic.  This is
precisely the form of the algebraic input needed below.

The main result is the following.

\begin{theorem}\label{thm:main-intro-full}
Let $(M,\omega)$ be a connected compact Riemann surface and let $(E,\theta)$ be
a Higgs bundle of positive rank.  Fix
$h_{\rm ref}$ and construct $g_0$ as in
Proposition~\ref{prop:g0-detailed}.  Choose
$\alpha\geq\alpha_0$ and $\lambda\geq\lambda_0$ so that
Lemmas~\ref{lem:t0-global-uniqueness} and~\ref{lem:t0-nondegenerate} apply,
and define $a_0$ by \eqref{eq:initial-A-detailed}.  Then the following
statements are equivalent:
\begin{enumerate}[label=(\arabic*)]
 \item the Higgs--Demailly system \eqref{eq:HD-intro-full} has a smooth
 admissible solution at $t=1$;
 \item $(E,\theta)$ admits a Hermitian metric with Griffiths-positive
 Hitchin--Simpson curvature;
 \item $(E,\theta)$ is H-ample in the modified sense of
 \cite[Definition~2.5]{BiswasMisraRay2026}.
\end{enumerate}
\end{theorem}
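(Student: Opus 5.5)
The proof runs around the cycle (1)$\Rightarrow$(2)$\Rightarrow$(3)$\Rightarrow$(1). The last implication carries all the analytic content.

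\emph{(1)$\Rightarrow$(2).} Admissibility at $t=1$ says exactly that $\sqrt{-1}\Lambda_\omega R_{D_{h_1}}>0$ as an $h_1$-Hermitian endomorphism. Write locally $R_{D_{h_1}}=A\,dz\wedge d\bar z$ with $\sqrt{-1}\Lambda_\omega R_{D_{h_1}}=g_{z\bar z}^{-1}A$. In complex dimension one this gives $\langle R_{D_{h_1}}(\xi,\bar\xi)v,v\rangle_{h_1}=|\xi^z|^2\langle Av,v\rangle_{h_1}>0$, which is \eqref{eq:Griffiths-positive-full}.

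\emph{(2)$\Rightarrow$(3).} I will use the numerical criterion \cite[Theorem~3.8]{BiswasMisraRay2026}, so it suffices to show that $\deg E>0$ and $\deg Q>0$ for every nonzero Higgs quotient $(Q,\theta_Q)$. Since $\operatorname{tr}[\theta,\theta^*_h]=0$, we have $\deg E=\frac{\sqrt{-1}}{2\pi}\int_M\operatorname{tr}R_{D_h}>0$.

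For a Higgs quotient, let $S=\ker(E\to Q)$, which is $\theta$-invariant, and identify $Q\cong S^{\perp_h}$ with the induced metric. The Hitchin--Simpson curvature of $Q$ is the compression of $R_{D_h}$ to $S^\perp$ plus a second-fundamental-form term $\beta\wedge\beta^*$. Here $\beta$ combines the holomorphic second fundamental form with the $(1,0)$ component of $\theta^*_h$ mapping $S^\perp\to S$; this component is the adjoint of the $S^\perp\to S$ block of $\theta$, which need not vanish.

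In the sign convention of the quotient, $\sqrt{-1}\Lambda_\omega\operatorname{tr}(\beta\wedge\beta^*)$ restricted to the quotient is $\ge0$: curvature increases in quotients, and the Higgs terms contribute with the same sign. Hence $\deg Q\ge\frac{1}{2\pi}\int_M\operatorname{tr}_Q(\sqrt{-1}\Lambda_\omega R_{D_h})|_{S^\perp}\,\omega>0$. I will verify this sign carefully by a pointwise Gauss–Codazzi computation for $D_h$.

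\emph{(3)$\Rightarrow$(1).} I follow the Pingali–Murakami scheme with Leray–Schauder degree.
\begin{enumerate}[label=(\roman*)]
\item Use Proposition~\ref{prop:g0-detailed} together with Lemmas~\ref{lem:t0-global-uniqueness} and~\ref{lem:t0-nondegenerate}. These give a unique, nondegenerate admissible solution at $t=0$, so the Leray–Schauder degree of the associated map on the space of admissible pairs $(f,g)$ is $\pm1$.
\item The degree stays constant along $t\in[0,1]$ provided admissible solutions satisfy uniform $C^{k,\beta}$ bounds and admissibility is uniformly preserved. An admissible solution at $t=1$ then exists.
\item The estimates are arranged in the following order:
\begin{itemize}
\item An upper bound on $f_t$, by the maximum principle applied to the determinant equation at the maximum of $f_t$.
\item Control of $g_t$ through the trace-free equation, via a maximum-eigenvalue estimate for $\log g_t$. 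This is the step where $[\theta,\theta^*_h]$ enters the metric variation. The commutator terms appear as $|[\theta,\cdot]|^2$-type quantities with a favorable sign, as in Simpson's Donaldson-functional computations.
\item Given $C^0$ bounds on $(f_t,g_t)$, higher-order estimates on a curve follow from the uniform ellipticity supplied by admissibility, which bounds the eigenvalues of $\sqrt{-1}\Lambda_\omega R_{D_h}+(1-t)\alpha$ between two positive constants. This gives a $C^1$/$C^2$ bound and then Schauder estimates.
\end{itemize}
\end{enumerate}

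\emph{The main obstacle} is the missing uniform lower bound on $f_t$, equivalently an upper bound on $\|\log g_t\|$, via blow-up. Suppose $t_j\to t_\infty$ and $\sup|\log g_{t_j}|\to\infty$.
\begin{enumerate}[label=(\alph*)]
\item Normalize $u_j=\log g_{t_j}/\|\log g_{t_j}\|_{L^2}$ and, following Uhlenbeck–Yau as in Murakami's quotient version, extract a weak limit $u_\infty\ne0$.
\item Apply the spectral cutoff of Simpson–Jacob to get weakly holomorphic projections $\pi$. Here we must check that they satisfy $(\Id-\pi)\theta\pi=0$ in addition to $(\Id-\pi)\bar\partial\pi=0$. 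This uses the Higgs term in the $L^2$ inequality $\int|[\theta,u_\infty]|^2$-type control, and it is applied on the quotient side so that the cokernel carries the induced Higgs field.
\item On a curve these projections are smooth subbundles, so they define $\theta$-invariant subbundles and hence Higgs quotients $Q$.
\item The determinant equation, with $e^{\lambda f}$ weighting, together with the inequality inherited in the limit, forces $\deg Q\le0$ for some such nonzero quotient. This contradicts H-ampleness.
\end{enumerate}
I expect the hardest part to be getting the sign and Higgs-invariance right in (b) and (d).
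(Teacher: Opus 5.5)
Your arguments for (1)$\Rightarrow$(2) and (2)$\Rightarrow$(3) are the paper's, and the Leray--Schauder architecture of (3)$\Rightarrow$(1) also matches. One correction in (2)$\Rightarrow$(3): $\theta_h^*$ has type $(0,1)$, so your description of the Higgs piece is garbled. The relevant term is the block $\varphi$ of $\theta$ mapping $\mathcal F^\perp\cong\mathcal Q$ to $\mathcal F$. It enters as $R_{D_{h_{\mathcal Q}}}=qR_{D_h}q+B\wedge B^*-\varphi^*\wedge\varphi$ with $-\sqrt{-1}\Lambda_\omega(\varphi^*\wedge\varphi)\geq0$, so your sign claim is right.

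\textbf{The gap is step (d).} It carries the entire analytic content and is only asserted. Your setup in (a)--(b) is to normalize $v=\log g/\|\log g\|_{L^2}$, apply the Simpson--Jacob cutoff, and pass to the limit in the pairing of the trace-free equation with $\log g$. Let $\gamma_1<\cdots<\gamma_k$ be the limit eigenvalues, $Q_j$ the Higgs quotient carried by the eigenvalues $>\gamma_j$, and $\mu(E)=\deg E/r$. Then the inequality you inherit is
\[
 \int_M\kappa|v_\infty|^2\,\omega
 +2\pi\sum_{j<k}(\gamma_{j+1}-\gamma_j)\bigl(\deg Q_j-\rk Q_j\,\mu(E)\bigr)\leq0,
\]
where $\kappa$ is a weak-$*$ limit of $e^{f}\|\log g\|_{L^2}$. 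This is a weighted slope condition on the whole filtration, not a degree bound on any single quotient. You can add the determinant information: $a_1\to0$ gives $\gamma_k\int_M\kappa\,\omega=2\pi\mu(E)+(1-t_\infty)\alpha\Vol(M)$. Even so, for $t_\infty=1$ the inequality is compatible with $\deg Q_j>0$ for all $j$ as soon as $k\geq3$. For example, with $r=3$ and $\gamma=(-1,0,1)$ it only yields $\deg Q_1+\deg Q_2\leq\mu(E)$. So no contradiction with H-ampleness follows from the inequality your outline produces.

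\textbf{What the paper supplies instead} is a degree bound for one quotient, obtained from the full (not trace-free) curvature paired against a cutoff concentrated on the top eigendirections.
\begin{enumerate}
\item The blow-up is triggered by $f_i\to-\infty$. This is uniform on $M$ by the unconditional estimates $\osc_Mf+|\Delta_\omega^{\mathbb C}f|+e^f|\log g|+\|\sqrt{-1}\Lambda_\omega R_{D_h}\|_{C^0}\leq C$, which you do not derive. Your ``equivalently $\sup|\log g|\to\infty$'' is unproved, and if $\log g$ stays bounded your normalization gives nothing. The paper's normalization covers that case through $\pi=0$, $\mathcal Q=E$.
\item One normalizes multiplicatively, $H_i=e^{-m_i}g_i$ with $m_i=\sup_M\ell_{\max}$. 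Then $q=\mathbf 1_{(0,\infty)}(H_\infty)$ keeps exactly the directions whose eigenvalues of $\log g_i$ stay within bounded distance of $m_i$.
\item The Higgs Uhlenbeck--Yau power inequality plus lower semicontinuity give $2\pi\deg\mathcal Q\leq\liminf_j\liminf_i\int_M\operatorname{tr}\bigl(\sqrt{-1}\Lambda_\omega R_{D_{g_ih_{\rm ref}}}H_i^{\sigma_j}\bigr)\,\omega$.
\item $\det A_i=e^{\lambda f_i}a_0\to0$ forces the smallest eigenvalue $a_{1,i}\to0$. On the surviving directions, $a_{a,i}-a_{1,i}=e^{f_i}(\ell_{1,i}-\ell_{a,i})\to0$, so the corresponding eigenvalues of $\sqrt{-1}\Lambda_\omega R_{D_{h_i}}$ tend to $-(1-t_\infty)\alpha\leq0$.
\item The leftover $\Delta_\omega^{\mathbb C}f_i$ term integrates to zero against $\operatorname{tr}q=\rk\mathcal Q$.
\end{enumerate}
These steps yield $\deg\mathcal Q\leq0$, which your outline does not reach.
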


\begin{remark}\label{rem:theta-zero-intro}
When $\theta=0$, the modified H-ampleness used here coincides with the
usual ampleness of the underlying holomorphic vector bundle
\cite[Remark~3]{BiswasMisraRay2026}, and the Hitchin--Simpson curvature
reduces to the Chern curvature.  In this case the theorem recovers the
ordinary vector-bundle problem: Pingali \cite{Pingali2023} reduced the
Demailly-system approach to a uniform lower bound, and Murakami
\cite{Murakami2026} established that bound by a quotient form of the
Uhlenbeck--Yau construction, obtaining an analytic proof of Griffiths'
conjecture on compact Riemann surfaces.  The curve case was already known by
the earlier work of Umemura \cite{Umemura1973} and Campana--Flenner
\cite{CampanaFlenner1990}.  Thus the new analytic issues in the
present paper occur when the Higgs field is nontrivial.
\end{remark}

For a nonzero Higgs field, the weak limiting projection must satisfy both
\[
 (\Id_E-\pi)\bar\partial_E\pi=0
 \qquad\text{and}\qquad
 (\Id_E-\pi)\theta\pi=0.
\]
The second identity is indispensable, since otherwise the limiting quotient
need not be a Higgs quotient and H-ampleness gives no contradiction.  There
are two further Higgs-specific points.  The variation of the metric adjoint
$\theta_h^*$ produces a double commutator whose logarithmic pairing gives the
nonnegative energy term $|[\theta_h^*,K]|^2$; this is used in the uniqueness
and nondegeneracy arguments at $t=0$.  In the blow-up analysis, the matrix
power inequality acquires an additional Higgs term, and the quotient degree
formula contains the nonnegative off-diagonal Higgs energy.  These sign
properties are what allow the ordinary Uhlenbeck--Yau mechanism to survive in
the Higgs setting.

Our proof separates the scalar and trace-free metric variables.  The initial
metric is obtained from a strictly perturbed Higgs Hermitian--Einstein
equation, and sufficiently large $\alpha$ and $\lambda$ give uniqueness and
nondegeneracy at $t=0$.  Along the continuity path we establish
\[
 e^f|\log g|+|\Delta_\omega^{\mathbb C} f|+\operatorname{osc}_M f\le C.
\]
A uniform lower bound for $f$ then gives two-sided metric bounds and higher
regularity.  If this lower bound fails, a maximum-eigenvalue normalization of
$g$, followed by a quotient-side Simpson--Uhlenbeck--Yau construction,
produces a nonzero Higgs quotient of nonpositive degree, contradicting
H-ampleness.  The limiting subsheaf may be zero, in which case the quotient is
$E$ itself; no additional determinant normalization of the blow-up sequence is needed.

The paper is organized as follows.  Section~\ref{sec2} records the curvature
identities.  Section~\ref{sec3} constructs and analyzes the initial solution.
Section~\ref{sec4} proves the a priori estimates.  Section~\ref{sec5} carries
out the Leray--Schauder reduction.  Section~\ref{sec6} establishes the missing
lower bound by the Higgs quotient construction, and Section~\ref{sec7}
completes the proof of Theorem~\ref{thm:main-intro-full}.  The appendices
contain the local matrix calculations and compactness arguments used in the
proof.

\section{Preliminaries}
\label{sec2}

Let $(E,h,\theta)$ be a Hermitian Higgs bundle over a complex manifold. Thus
$E$ is a holomorphic vector bundle, $h$ is a Hermitian metric, and
\[
 \theta\in\Omega^{1,0}(M,\mathrm{End} E)
\]
satisfies
\begin{equation}\label{eq:Higgs-condition-full}
 \bar\partial_E\theta=0,
 \qquad \theta\wedge\theta=0.
\end{equation}
The Chern connection is
\(
 \nabla_h=\partial^h+\bar\partial_E
\), and the Hitchin--Simpson connection is
\begin{equation}\label{eq:HS-connection-full}
 D_h=\partial^h+\bar\partial_E+\theta+\theta_h^{*}.
\end{equation}
In local holomorphic coordinates $z$ and a local holomorphic frame
$\{e_\alpha\}$, write
\begin{equation}\label{eq:theta-local-detailed}
 \theta=\Theta\,dz,
 \qquad
 \Theta=(\theta^\beta_{\ \alpha}),
 \qquad
 \theta_h^*=h^{-1}\Theta^{\dagger}h\,d\bar z.
\end{equation}
Here $\Theta^{\dagger}$ denotes the conjugate transpose in the chosen frame.
The $(1,1)$-part of the Higgs commutator is
\begin{equation}\label{eq:higgs-comm-local-detailed}
 [\theta,\theta_h^*]
 =\bigl(\Theta h^{-1}\Theta^{\dagger}h
        -h^{-1}\Theta^{\dagger}h\Theta\bigr)
   dz\wedge d\bar z,
\end{equation}
and therefore
\begin{equation}\label{eq:trace-comm-zero-detailed}
 \mathrm{tr}_E[\theta,\theta_h^*]=0.
\end{equation}

Since every Hermitian metric on a compact
Riemann surface is K\"ahler, no torsion term occurs in the integrations by
parts below.  We use the convention
\[
 \Delta^\mathbb{C}_\omega f=\sqrt{-1}\Lambda_\omega\partial\bar\partial f,
\]
so that $\Delta^\mathbb{C}_\omega f\leq0$ at a maximum point of $f$.

These formulas will be used repeatedly when the endomorphism relating two
metrics is diagonalized at a point.

For later use, we recall the sets defined in \cite{FanWangYangYau2026,WangYangYau2026}. Let
\[
 \mathrm{Herm}(E,h)=\{S\in\Gamma(M,\mathrm{End} E):S^{*_{h}}=S\}
\]
denote the space of $h$-Hermitian tensors on $E$, let
\[
 \mathrm{Herm}_0(E,h)=\{S\in\mathrm{Herm}(E,h):\mathrm{tr}_E S=0\}
\]
be its trace-free part, and let
\[
 \mathrm{Herm}^+(E,h)=\{S\in\mathrm{Herm}(E,h):S>0\}
\]
be the space of smooth positive $h$-Hermitian endomorphisms of $E$.  Each
$S\in\mathrm{Herm}^+(E,h)$ determines the Hermitian metric $Sh$ by
$(Sh)(v,w)=h(Sv,w)$.

\begin{lemma}\label{lem:change-metric-full}
Let $h_1$ and $h_2$ be Hermitian metrics on $E$.  Suppose that
$h_2=Kh_1$ for some $K\in\mathrm{Herm}^+(E,h_1)$.  Then
\begin{equation}\label{eq:Chern-change-full}
 \partial^{h_2}-\partial^{h_1}=K^{-1}\partial^{h_1}K
\end{equation}
and
\begin{equation}\label{eq:Higgs-adjoint-change-full}
 \theta_{h_2}^{*}=K^{-1}\theta_{h_1}^{*}K.
\end{equation}
\end{lemma}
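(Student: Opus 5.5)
Both identities come from the defining properties of the Chern connection and the $h$-adjoint. I will use the convention $h_2(v,w)=h_1(Kv,w)$, with $K$ being $h_1$-Hermitian and positive.

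For \eqref{eq:Chern-change-full}, the Chern connection $\nabla_{h_2}$ is the unique $h_2$-compatible connection with $(0,1)$-part $\bar\partial_E$. So it suffices to show that
\[
 \nabla':=\partial^{h_1}+K^{-1}\partial^{h_1}K+\bar\partial_E
\]
is $h_2$-compatible, where $\partial^{h_1}K$ is the induced $(1,0)$-covariant derivative of $K$ as an endomorphism. For local sections $v,w$, compatibility of $\nabla_{h_1}$ with $h_1$ gives
\[
 \partial\bigl(h_1(Kv,w)\bigr)=h_1(\partial^{h_1}(Kv),w)+h_1(Kv,\bar\partial_E w).
\]
Next expand $\partial^{h_1}(Kv)=(\partial^{h_1}K)v+K\partial^{h_1}v$, and note that $(\partial^{h_1}K)v+K\partial^{h_1}v=K\bigl(\partial^{h_1}v+K^{-1}(\partial^{h_1}K)v\bigr)$. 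Substituting,
\[
 \partial\bigl(h_2(v,w)\bigr)=h_2\bigl((\partial^{h_1}+K^{-1}\partial^{h_1}K)v,w\bigr)+h_2(v,\bar\partial_E w).
\]
The $\bar\partial$-component of $d\,h_2(v,w)$ is handled either by conjugate symmetry or by the same computation with roles swapped. Uniqueness of the Chern connection then gives the identity.

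A local frame check is a useful alternative. In a holomorphic frame, $\partial^{h}=\partial+h^{-1}\partial h$ in the matrix convention, with $h_2=h_1K$ as matrices. Then
\[
 h_2^{-1}\partial h_2=K^{-1}h_1^{-1}(\partial h_1)K+K^{-1}\partial K,
\]
and this equals $h_1^{-1}\partial h_1+K^{-1}\bigl(\partial K+[h_1^{-1}\partial h_1,K]\bigr)$. The bracket term $\partial K+[h_1^{-1}\partial h_1,K]$ is exactly $\partial^{h_1}K$ for an endomorphism. I would present the invariant argument and mention the frame computation as a check against the matrix convention of \eqref{eq:theta-local-detailed}.

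For \eqref{eq:Higgs-adjoint-change-full}, I would verify the defining property of the adjoint directly:
\[
 h_2(\theta v,w)=h_1(K\theta v,w)=h_1\bigl(v,\theta^*_{h_1}Kw\bigr)=h_1\bigl(Kv,K^{-1}\theta^*_{h_1}Kw\bigr)=h_2\bigl(v,K^{-1}\theta^*_{h_1}Kw\bigr).
\]
The step $h_1(v,u)=h_1(Kv,K^{-1}u)$ uses that $K$ is $h_1$-Hermitian. The form parts are conjugated in the standard way, so the $dz$ part of $\theta$ becomes $d\bar z$. Hence $\theta^*_{h_2}=K^{-1}\theta^*_{h_1}K$.

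In frames this is simply $h_2^{-1}\Theta^\dagger h_2=K^{-1}h_1^{-1}\Theta^\dagger h_1K$, which agrees with \eqref{eq:theta-local-detailed}. The only real obstacle is bookkeeping of conventions: the order of $K$ versus $h$ in matrix form, and the placement of the conjugation. I would fix $h_2=h_1K$ as matrices, consistent with $(Kh)(v,w)=h(Kv,w)$, and check both formulas against that convention.
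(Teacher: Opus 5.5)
Your proof is correct and follows the paper's argument essentially step for step. You expand $\partial\bigl(h_1(Kv,w)\bigr)$ using $h_1$-compatibility, factor out $K$ to identify $\partial^{h_1}+K^{-1}\partial^{h_1}K$ as the $(1,0)$-part of the $h_2$-Chern connection, and verify the adjoint identity through the same chain $h_1(K\theta v,w)=h_1(v,\theta^*_{h_1}Kw)=h_2(v,K^{-1}\theta^*_{h_1}Kw)$. Your appeal to uniqueness of the Chern connection only rephrases the paper's ``comparison'' step, and the frame check with $H_2=H_1K$ is a consistent sanity check of the matrix conventions.
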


\begin{proof}
For smooth sections $w,v$ of $E$, metric compatibility gives
\[
 \partial h_2(w,v)=h_2(\partial^{h_2}w,v)+h_2(w,\bar\partial_E v).
\]
On the other hand,
\begin{align*}
 \partial h_2(w,v)
 &=\partial h_1(Kw,v)\\
 &=h_1((\partial^{h_1}K)w,v)+h_1(K\partial^{h_1}w,v)
   +h_1(Kw,\bar\partial_E v)\\
 &=h_2(K^{-1}(\partial^{h_1}K)w,v)+h_2(\partial^{h_1}w,v)
   +h_2(w,\bar\partial_E v).
\end{align*}
Comparison proves \eqref{eq:Chern-change-full}.  Moreover,
\begin{align*}
 h_2(\theta w,v)
 &=h_1(K\theta w,v)=h_1(w,\theta_{h_1}^{*}Kv)\\
 &=h_1(Kw,K^{-1}\theta_{h_1}^{*}Kv)
 =h_2(w,K^{-1}\theta_{h_1}^{*}Kv),
\end{align*}
which proves \eqref{eq:Higgs-adjoint-change-full}.
\end{proof}

\begin{lemma}\label{lem:HS-curvature-full}
On a Riemann surface the Hitchin--Simpson curvature is of type $(1,1)$ and is
\begin{equation}\label{eq:HS-curvature-full}
 R_{D_h}
 = R_h
  +[\theta,\theta_h^{*}].
\end{equation}
In particular,
\(
 \mathrm{tr}_E[\theta,\theta_h^{*}]=0
\), so
\(
 \mathrm{tr}_E R_{D_h}=\mathrm{tr}_E R_h
\).
\end{lemma}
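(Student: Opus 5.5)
I will expand $D_h^2$ directly, using the decomposition \eqref{eq:HS-connection-full} and the bidegrees of the pieces. Write $D_h=\nabla_h+\Phi$ with $\Phi=\theta+\theta_h^{*}$, an $\End E$-valued $1$-form. For any connection and any such $\Phi$,
\[
 (\nabla_h+\Phi)^2=\nabla_h^2+\nabla_h\Phi+\Phi\wedge\Phi ,
\]
where $\nabla_h\Phi$ is the covariant exterior derivative on $\End E$-valued forms. So $R_{D_h}=R_h+\nabla_h\theta+\nabla_h\theta_h^{*}+(\theta+\theta_h^{*})\wedge(\theta+\theta_h^{*})$. I then sort these terms by bidegree.

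On a Riemann surface every $(2,0)$- and $(0,2)$-form vanishes. Hence $\partial^h\theta$, $\theta\wedge\theta$, $\bar\partial_E\theta_h^{*}$ and $\theta_h^{*}\wedge\theta_h^{*}$ are all zero for dimensional reasons; alternatively, \eqref{eq:Higgs-condition-full} gives $\theta\wedge\theta=0$ in any dimension. The remaining mixed terms are $\bar\partial_E\theta$, $\partial^h\theta_h^{*}$ and $\theta\wedge\theta_h^{*}+\theta_h^{*}\wedge\theta$. The first vanishes by \eqref{eq:Higgs-condition-full}.

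For the second, I plan to show $\partial^h\theta_h^{*}=(\bar\partial_E\theta)^{*_h}$ up to the sign conventions for adjoints of forms. Since $\nabla_h$ is $h$-unitary, the $(1,0)$-part of the induced connection on $\End E$ is the $h$-adjoint of its $(0,1)$-part. Concretely, in the local frame of \eqref{eq:theta-local-detailed}, the expression $\partial^h(h^{-1}\Theta^\dagger h)$ is $h^{-1}\partial(\Theta^\dagger)h$ together with connection-matrix terms. Because $\Theta$ is holomorphic, this equals $h^{-1}(\bar\partial\Theta)^\dagger h=0$. This step, checking that the adjoint of a holomorphic section of $\Omega^{1,0}(\End E)$ is $\partial^h$-closed, is the only one requiring care. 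I would carry it out in a local frame where $h$ is the identity at a point with $dh=0$ there, so that the connection terms drop out.

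Finally, the graded commutator of $\End E$-valued $1$-forms is $[\theta,\theta_h^{*}]=\theta\wedge\theta_h^{*}+\theta_h^{*}\wedge\theta$. This gives \eqref{eq:HS-curvature-full}, and all terms are of type $(1,1)$. In coordinates this is \eqref{eq:higgs-comm-local-detailed}, and taking the trace gives $\tr(\Theta A)-\tr(A\Theta)=0$ with $A=h^{-1}\Theta^\dagger h$. This is \eqref{eq:trace-comm-zero-detailed}, and so $\tr_E R_{D_h}=\tr_E R_h$.
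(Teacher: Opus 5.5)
Your proposal is correct and is essentially the paper's argument. Like the paper, you expand $D_h^2$, remove $\bar\partial_E\theta$ and $\theta\wedge\theta$ by the Higgs condition, remove $\partial^h\theta_h^{*}$ by metric compatibility, discard the $(2,0)$ and $(0,2)$ terms on a curve, and use that the trace of a commutator vanishes; the only difference is that you check $\partial^h\theta_h^{*}=0$ explicitly in a normal holomorphic frame, where the paper simply asserts $\partial^h\theta_h^{*}=(\bar\partial_E\theta)^{*}_{h}=0$.
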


\begin{proof}
Expanding \eqref{eq:HS-connection-full} gives
\[
 \begin{aligned}
 R_{D_h}={}&R_h+[\theta,\theta_h^{*}]
 +\partial^h\theta+\bar\partial_E\theta
 +\partial^h\theta_h^{*}+\bar\partial_E\theta_h^{*}\\
 &+\theta\wedge\theta+\theta_h^{*}\wedge\theta_h^{*}.
 \end{aligned}
\]
The Higgs condition gives $\bar\partial_E\theta=0$ and
$\theta\wedge\theta=0$, and metric compatibility gives
$\partial^h\theta_h^{*}=(\bar\partial_E\theta)^{*}_{h}=0$.
The remaining terms other than $R_h+[\theta,\theta_h^{*}]$ have type
$(2,0)$ or $(0,2)$.  They vanish on a complex curve (and, in any dimension,
do not contribute after contraction with $\sqrt{-1}\Lambda_\omega$),
which proves \eqref{eq:HS-curvature-full}.  The trace assertion follows from the trace of a commutator being zero.
\end{proof}

On the curve $M$ we use the degree convention
\begin{equation}\label{eq:degree-convention}
\deg E:=\frac1{2\pi}\int_M
\operatorname{tr}\bigl(
\sqrt{-1}\Lambda_\omega R_h
\bigr)\,\omega.
\end{equation}
It is independent of the Hermitian metric $h$.  Every compact Riemann
surface is projective.  By GAGA, the holomorphic vector bundles and
coherent sheaves used below may be viewed algebraically, so the numerical
criterion for H-ampleness on a smooth projective curve applies to $M$.

\subsection{Higgs quotients and H-ampleness on a curve}

We briefly recall the algebro-geometric positivity criterion used in the
blow-up argument. A coherent subsheaf $\mathcal F\subset E$ is called a
Higgs subsheaf if
\[
  \theta(\mathcal F)\subset \mathcal F\otimes K_M
\]
as coherent subsheaves of $E\otimes K_M$. Equivalently, the quotient
sheaf $\mathcal Q=E/\mathcal F$ carries the induced Higgs sheaf
structure. The subsheaf $\mathcal F$ is saturated if and only if
$\mathcal Q$ is torsion-free. Since $M$ is a nonsingular complex curve,
every torsion-free coherent sheaf on $M$ is locally free. Consequently,
the quotient by a saturated coherent Higgs subsheaf is a Higgs quotient
bundle.

The earlier recursive notion of H-ampleness in
\cite{BruzzoCapassoOtero2025} was shown by Biswas, Misra, and Ray
\cite{BiswasMisraRay2026} not to reduce, in general, to ordinary ampleness
when the Higgs field is zero.  We therefore use their modified definition
\cite[Definition~2.5]{BiswasMisraRay2026}: in addition to H-nefness, one
requires the determinant of $E$ and the tautological determinant line bundles
on all Higgs Grassmann schemes to be ample.  This modified notion agrees with
ordinary ampleness when $\theta=0$ \cite[Remark~3]{BiswasMisraRay2026}.
For curves, the only form of the definition needed in the proof is the
following numerical criterion.

\begin{theorem}[{\cite[Theorem~3.8]{BiswasMisraRay2026}}]
\label{thm:Hample-criterion-detailed}
Let $(E,\theta)$ be a Higgs bundle over a compact Riemann surface $M$.
Then $(E,\theta)$ is H-ample if and only if
\[
\deg E>0
\]
and
\[
\deg \mathcal Q>0
\]
for every nonzero Higgs quotient bundle $(\mathcal Q,\theta_{\mathcal Q})$ of $(E,\theta)$.
\end{theorem}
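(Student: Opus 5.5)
This statement is quoted from \cite[Theorem~3.8]{BiswasMisraRay2026} and is used here as a black box. If I were to reprove it, I would start from \cite[Definition~2.5]{BiswasMisraRay2026}. That definition asks for three things: $(E,\theta)$ is H-nef, $\det E$ is ample, and for every $s$ the tautological determinant $\det\mathcal Q_s$ of the universal quotient on the Higgs Grassmann scheme $\mathfrak{Gr}_s(E,\theta)$ is ample. The scheme $\mathfrak{Gr}_s(E,\theta)$ is a closed subscheme of the ordinary Grassmann bundle $\mathrm{Gr}_s(E)\to M$, so it is projective over $M$. On a curve a line bundle is ample exactly when its degree is positive. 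This immediately turns ampleness of $\det E$ into $\deg E>0$.

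\emph{Forward direction.} A rank-$s$ Higgs quotient bundle $E\to\mathcal Q$ is the same thing as a section $\sigma\colon M\to\mathfrak{Gr}_s(E,\theta)$. Under this correspondence $\sigma^*\det\mathcal Q_s=\det\mathcal Q$. Pulling back an ample line bundle along a finite map gives an ample line bundle, so $\deg\mathcal Q>0$.

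\emph{Converse, ordering of steps.} Assume $\deg E>0$ and $\deg\mathcal Q>0$ for every nonzero Higgs quotient bundle. I would proceed as follows.

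\begin{enumerate}
\item \emph{Minimal slope.} Using the Higgs Harder--Narasimhan filtration, the last graded piece is a Higgs quotient realizing the minimal Higgs quotient slope $\mu_{\min}(E,\theta)$. The hypothesis then gives $\mu_{\min}(E,\theta)>0$.
\item \emph{Behaviour under finite covers.} Let $f\colon C\to M$ be a finite morphism of degree $d$ from a smooth curve. After passing to a Galois closure, uniqueness of the Higgs HN filtration forces the filtration of $(f^*E,f^*\theta)$ to be Galois-invariant, hence descended from the filtration of $(E,\theta)$. It follows that $\mu_{\min}(f^*E,f^*\theta)=d\,\mu_{\min}(E,\theta)>0$.
\item \emph{Degree on horizontal curves.} Let $C'\subset\mathfrak{Gr}_s(E,\theta)$ be an irreducible curve that dominates $M$, with normalization $C$. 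It corresponds to a rank-$s$ Higgs quotient $f^*E\to Q$ on $C$, and $\det\mathcal Q_s\cdot C'=\deg Q\ge s\,d\,\mu_{\min}(E,\theta)>0$. The same argument, with $\ge0$ in place of $>0$, gives H-nefness.
\item \emph{Degree on vertical curves.} A curve contained in a fibre lies in an ordinary Grassmannian. There $\det\mathcal Q_s$ restricts to the Plücker polarization, so its degree is again positive.
\end{enumerate}

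\emph{Main obstacle.} Positivity of $\det\mathcal Q_s$ on every curve does not by itself give ampleness; Nakai--Moishezon would also require control on higher-dimensional subvarieties. I would use the Seshadri criterion: find $\varepsilon>0$ with $\det\mathcal Q_s\cdot C\ge\varepsilon\,\mathrm{mult}_x C$ for every curve $C$ and point $x$. To get the uniform bound, I would write $\det\mathcal Q_s=\mathcal O_{\mathrm{Pl}}(1)$ restricted to $\mathfrak{Gr}_s(E,\theta)$. The first attempt would be to twist by the pullback of a small positive-degree line bundle $L$ on $M$, replacing $E$ by $E\otimes L^{-1/rk}$ formally. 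Here $L$ is chosen with degree less than the slope gap $\mu_{\min}(E,\theta)$, which keeps H-nefness of the twisted bundle. Ampleness would then follow from nef plus relatively ample plus a pullback of an ample line bundle from $M$. I expect making this twisting argument precise, for instance via $\mathbb Q$-twisted Higgs bundles on a cover, to be the delicate step.
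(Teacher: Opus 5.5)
The paper gives no proof of this statement. It imports it from \cite[Theorem~3.8]{BiswasMisraRay2026} and uses it only as a black box, exactly as your first sentence says. As a reproof, your forward direction and Steps 1--4 of the converse are sound, but the converse is not completed. The passage from positivity of $\det\mathcal Q_s$ on every curve of $X:=\mathfrak{Gr}_s(E,\theta)$ to ampleness is left as an expected ``delicate step''. That step cannot be skipped, because strict positivity on curves does not imply ampleness: Mumford's ruled surface over a curve, built from a stable rank-two bundle of degree zero, carries a line bundle that is positive on every curve but not ample. So, as written, the implication from positive quotient degrees to H-ampleness is unproved.

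The gap closes with what you already have. You need neither Seshadri constants nor $\mathbb Q$-twisted Higgs bundles, because the bound in Step 3 is linear in the covering degree. Let $\rho\colon X\to M$ be the projection, put $\mu:=\mu_{\min}(E,\theta)>0$, and consider the $\mathbb Q$-line bundle $N:=\det\mathcal Q_s-s\mu\,\rho^*[\mathrm{pt}]$.

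For a horizontal curve $C'$ with $d=\rho^*[\mathrm{pt}]\cdot C'$, Step 3 gives $N\cdot C'=\deg Q-s\,d\,\mu\geq0$. For a vertical curve, Step 4 gives $N\cdot C'=\det\mathcal Q_s\cdot C'>0$. Hence $N$ is nef. It is also $\rho$-ample, because it restricts to the Pl\"ucker bundle on every fibre. Consequently $N+m\,\rho^*[\mathrm{pt}]$ is ample for all sufficiently large $m$; take also $m\geq s\mu$. Then
\[
 \det\mathcal Q_s=\frac{s\mu}{m}\bigl(N+m\,\rho^*[\mathrm{pt}]\bigr)+\Bigl(1-\frac{s\mu}{m}\Bigr)N
\]
is ample plus nef, hence ample. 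Possible non-reducedness of $X$ is harmless, since ampleness and nefness can be tested on $X_{\mathrm{red}}$.

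Two smaller points also need to be made explicit.

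First, H-nefness is a recursive condition on the universal Higgs quotients themselves, not only on their determinants. So ``the same argument with $\geq0$'' requires an induction on rank. The pullback of $\mathcal Q_s$ to a curve $h\colon C\to X$ is a Higgs quotient of $(\rho h)^*E$, so its Higgs quotients are Higgs quotients of $(\rho h)^*E$. On vertical curves $(\rho h)^*E$ is trivial, so its quotients have nonnegative degree.

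Second, the pulled-back Higgs field in Steps 2--3 is $f^*K_M$-valued rather than $K_C$-valued. This is harmless, because Harder--Narasimhan theory and the Galois-descent argument work verbatim for Higgs fields twisted by an arbitrary line bundle.
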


\begin{remark}
\label{rem:coherent-quotients-detailed}
Theorem~\ref{thm:Hample-criterion-detailed} is stated for Higgs
quotient bundles, whereas the Uhlenbeck--Yau regularity theorem is
formulated in terms of coherent subsheaves. This causes no difficulty
on a Riemann surface. Indeed, a weak Higgs projection determines a
saturated coherent Higgs subsheaf $\mathcal F\subset E$ which is a holomorphic
subbundle away from an analytic subset of complex codimension at least
two. Since $\dim_{\mathbb C}M=1$, this analytic subset is empty.
Consequently, $\mathcal F$ is a holomorphic Higgs subbundle on all of
$M$, and $\mathcal Q=E/\mathcal F$ is an actual Higgs quotient bundle.
Thus Theorem~\ref{thm:Hample-criterion-detailed} applies directly to
the quotient produced by the analytic construction.
\end{remark}

\subsection{The logarithmic pairing identity}

We record a pointwise identity that will be used both in the construction
of the initial metric and in the uniform estimates.  Let \(h_{\rm ref}\)
be a fixed Hermitian metric on \(E\), let
\[
 g=e^u\in\Herm^+(E,h_{\rm ref}),
 \qquad
 u=u^{*_{h_{\rm ref}}},
\]
and set
\[
 B:=g^{-1}\partial^{h_{\rm ref}}g.
\]
All pairings below are the real pointwise pairings induced by
\(h_{\rm ref}\) and \(\omega\).

\begin{lemma}[Logarithmic pairing identity]
\label{lem:logarithmic-pairing}
Define
\begin{align}
 \mathscr Q_\nabla(u)
 &:=
 \operatorname{Re}
 \left\langle
 B,\partial^{h_{\rm ref}}u
 \right\rangle_{h_{\rm ref},\omega},
 \label{eq:Qnabla-definition}\\
 \mathscr Q_\theta(u)
 &:=
 \left\langle
 \sqrt{-1}\Lambda_\omega
 [\theta,g^{-1}\theta_{h_{\rm ref}}^*g
          -\theta_{h_{\rm ref}}^*],
 u
 \right\rangle_{h_{\rm ref}}.
 \label{eq:Qtheta-definition}
\end{align}
Then
\begin{align}
&\left\langle
 \sqrt{-1}\Lambda_\omega
 \left(
   \bar\partial B
   +[\theta,g^{-1}\theta_{h_{\rm ref}}^*g
                 -\theta_{h_{\rm ref}}^*]
 \right),u
\right\rangle_{h_{\rm ref}}
\notag\\
&\qquad
=
-\frac12\Delta_\omega^{\mathbb C}|u|_{h_{\rm ref}}^2
+\mathscr Q_\nabla(u)+\mathscr Q_\theta(u).
\label{eq:logarithmic-pairing}
\end{align}
Moreover,
\[
 \mathscr Q_\nabla(u)\geq0,
 \qquad
 \mathscr Q_\theta(u)\geq0.
\]
If \(\|u\|_{C^0}\leq L\), then there exists a constant \(c_L>0\),
depending only on \(L\), such that
\begin{equation}\label{eq:Qnabla-coercive}
 \mathscr Q_\nabla(u)
 \geq
 c_L|\partial^{h_{\rm ref}}u|_{h_{\rm ref},\omega}^2.
\end{equation}
\end{lemma}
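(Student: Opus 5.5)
The plan is to prove the identity \eqref{eq:logarithmic-pairing} pointwise, working at a fixed point in an $h_{\rm ref}$-unitary frame that diagonalizes $u$. Write $u=\operatorname{diag}(\lambda_1,\dots,\lambda_r)$ and $g=\operatorname{diag}(e^{\lambda_i})$ at that point.

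\emph{The connection part.} Since $\sqrt{-1}\Lambda_\omega\bar\partial$ and $\sqrt{-1}\Lambda_\omega\partial\bar\partial$ interact with the $h_{\rm ref}$ pairing through the Kähler identities, I would start from
\[
\bar\partial\langle B,u\rangle=\langle\bar\partial B,u\rangle-\langle B,\partial^{h_{\rm ref}}u\rangle
\]
with the appropriate conjugations. Taking $\sqrt{-1}\Lambda_\omega$ and real parts gives $\langle\sqrt{-1}\Lambda_\omega\bar\partial B,u\rangle$ as a divergence term plus $\mathscr Q_\nabla(u)$.

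The divergence term is $\operatorname{Re}\sqrt{-1}\Lambda_\omega\bar\partial\operatorname{tr}(Bu)$. Because $\operatorname{tr}(g^{-1}\partial g\,u)=\operatorname{tr}(\partial u\,u)=\tfrac12\partial|u|^2$ (trace cyclicity, using $g=e^u$ and that $u$ commutes with $g$), this term equals $-\tfrac12\Delta^{\mathbb C}_\omega|u|^2$ once the sign convention $\Delta^{\mathbb C}_\omega=\sqrt{-1}\Lambda_\omega\partial\bar\partial$ is tracked. I will check this sign carefully, since it determines the $-\tfrac12$.

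\emph{Positivity and coercivity of $\mathscr Q_\nabla$.} In the diagonal frame, write $\partial^{h_{\rm ref}}u=(u_{ij})$. Expanding the derivative of the exponential gives the standard formula
\[
(g^{-1}\partial g)_{ij}=\frac{e^{\lambda_j-\lambda_i}-1}{\lambda_j-\lambda_i}\,u_{ij}
\]
(with the value $1$ when $\lambda_i=\lambda_j$). Hence
\[
\mathscr Q_\nabla(u)=\sum_{i,j}\psi(\lambda_j-\lambda_i)\,|u_{ij}|^2,\qquad \psi(x)=\frac{e^{x}-1}{x}>0,
\]
up to the symmetric normalization of the pairing. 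This is nonnegative, and when $|\lambda|\le L$ we have $\psi\ge\frac{1-e^{-2L}}{2L}=:c_L$, which gives \eqref{eq:Qnabla-coercive}. The main obstacle is justifying the divided-difference formula for $d(e^u)$ when $u$ is only pointwise diagonalized. I would handle it by the integral representation
\[
g^{-1}\partial g=\int_0^1 e^{-su}(\partial u)e^{su}\,ds,
\]
which is valid without any diagonalization of $\partial u$ and, evaluated in the eigenbasis, yields $\psi$.

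\emph{The Higgs part.} With $\theta=\Theta\,dz$ and $\omega=\sqrt{-1}\,dz\wedge d\bar z$ locally (up to a positive factor), the quantity $\mathscr Q_\theta(u)$ is a positive multiple of
\[
\operatorname{tr}\bigl([\Theta,g^{-1}\Theta^\dagger g-\Theta^\dagger]\,u\bigr).
\]
Using $\operatorname{tr}([\Theta,X]u)=\operatorname{tr}(X[u,\Theta])$ in the diagonal frame, and writing $\Theta=(\theta_{ij})$, one has $[u,\Theta]_{ij}=(\lambda_i-\lambda_j)\theta_{ij}$. The $(i,j)$ entry of $g^{-1}\Theta^\dagger g-\Theta^\dagger$ is $(e^{\lambda_j-\lambda_i}-1)\bar\theta_{ji}$. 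Summing, $\mathscr Q_\theta(u)$ becomes a positive multiple of
\[
\sum_{i,j}(e^{\lambda_i-\lambda_j}-1)(\lambda_i-\lambda_j)\,|\theta_{ij}|^2\ \ge 0,
\]
possibly with the roles of $i$ and $j$ swapped depending on the convention. Every term is nonnegative since $(e^x-1)x\ge0$. The only care needed here is the sign produced by $\sqrt{-1}\Lambda_\omega(dz\wedge d\bar z)$, and I would fix it by comparing with the $\theta=0$ Kähler-identity computation in the connection part.
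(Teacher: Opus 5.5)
Your proposal is correct and follows essentially the same route as the paper: the Duhamel formula $B=\int_0^1 e^{-su}(\partial^{h_{\rm ref}}u)e^{su}\,ds$ gives $\operatorname{tr}(Bu)=\tfrac12\partial|u|^2$, the Leibniz rule then produces $-\tfrac12\Delta^{\mathbb C}_\omega|u|^2+\mathscr Q_\nabla(u)$, and divided differences in a unitary frame diagonalizing $u$ give $\mathscr Q_\nabla\geq0$ and $\mathscr Q_\theta\geq0$. The Higgs part of the identity itself holds by the definition of $\mathscr Q_\theta$, and your explicit constant $c_L=(1-e^{-2L})/(2L)$ is exactly the paper's $\min_{|s|\le 2L}\chi(s)$, because $\chi$ is increasing.
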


\begin{proof}
The differential formula for the exponential gives
\begin{equation}\label{eq:B-integral-exponential}
\begin{aligned}
 B
 &=
 e^{-u}\partial^{h_{\rm ref}}e^u\\
 &=
 \int_0^1
 e^{-su}(\partial^{h_{\rm ref}}u)e^{su}\,ds.
\end{aligned}
\end{equation}
Since \(u\) commutes with \(e^{su}\), cyclicity of the trace yields
\begin{align}
 \operatorname{tr}_E(Bu)
 &=
 \int_0^1
 \operatorname{tr}_E
 \left(
 e^{-su}(\partial^{h_{\rm ref}}u)e^{su}u
 \right)\,ds
 \notag\\
 &=
 \operatorname{tr}_E
 \left(
 (\partial^{h_{\rm ref}}u)u
 \right)
 =
 \frac12\partial\operatorname{tr}_E(u^2).
 \label{eq:Bu-trace}
\end{align}
Since \(u\) is \(h_{\rm ref}\)-Hermitian,
\[
 |u|_{h_{\rm ref}}^2=\operatorname{tr}_E(u^2).
\]
Using \eqref{eq:Bu-trace}, the Leibniz rule, and the convention for
\(\Delta_\omega^{\mathbb C}\), we obtain
\begin{align}
 \left\langle
 \sqrt{-1}\Lambda_\omega\bar\partial B,u
 \right\rangle_{h_{\rm ref}}
 &=
 -\frac12\Delta_\omega^{\mathbb C}|u|_{h_{\rm ref}}^2
 +
 \operatorname{Re}
 \left\langle
 B,\partial^{h_{\rm ref}}u
 \right\rangle_{h_{\rm ref},\omega}
 \notag\\
 &=
 -\frac12\Delta_\omega^{\mathbb C}|u|_{h_{\rm ref}}^2
 +\mathscr Q_\nabla(u).
 \label{eq:differential-log-pairing}
\end{align}
Adding the Higgs commutator pairing
\eqref{eq:Qtheta-definition} proves
\eqref{eq:logarithmic-pairing}.

We next verify the positivity of the two quadratic terms.  Fix a point
\(x\in M\).  Choose a local holomorphic coordinate which is unitary for
\(\omega\) at \(x\), and an \(h_{\rm ref}\)-unitary frame which is normal
at \(x\) and diagonalizes \(u(x)\).  Write
\[
 u(x)=\operatorname{diag}(\ell_1,\ldots,\ell_r),
 \qquad
 \theta(x)=\Theta\,dz.
\]
Define
\[
 \chi(s):=
 \begin{cases}
 \dfrac{e^s-1}{s},&s\neq0,\\[5pt]
 1,&s=0.
 \end{cases}
\]
The divided-difference formula for the exponential gives
\begin{equation}\label{eq2.17}
 B_{ab}
 =
 \chi(\ell_b-\ell_a)
 (\partial^{h_{\rm ref}}u)_{ab}
\end{equation}
at \(x\).  Consequently,
\begin{equation}\label{eq:Qnabla-exact}
 \mathscr Q_\nabla(u)
 =
 \sum_{a,b}
 \chi(\ell_b-\ell_a)
 \left|(\partial^{h_{\rm ref}}u)_{ab}\right|^2
 \geq0.
\end{equation}
If \(\|u\|_{C^0}\leq L\), then
\[
 |\ell_b-\ell_a|\leq2L.
\]
Since \(\chi\) is continuous and strictly positive on \(\mathbb R\),
\[
 c_L:=
 \min_{|s|\leq2L}\chi(s)>0,
\]
and \eqref{eq:Qnabla-coercive} follows.

For the Higgs term, one has
\[
 \left(
 g^{-1}\theta_{h_{\rm ref}}^*g
 -\theta_{h_{\rm ref}}^*
 \right)_{ba}
 =
 \bigl(e^{\ell_a-\ell_b}-1\bigr)
 \overline{\Theta_{ab}}\,d\bar z.
\]
A direct computation therefore gives
\begin{equation}\label{eq:Qtheta-exact}
 \mathscr Q_\theta(u)
 =
 \sum_{a,b}
 (\ell_a-\ell_b)
 \bigl(e^{\ell_a-\ell_b}-1\bigr)
 |\Theta_{ab}|^2.
\end{equation}
Since
\[
 s(e^s-1)\geq0
 \qquad
 \text{for every }s\in\mathbb R,
\]
we conclude that
\[
 \mathscr Q_\theta(u)\geq0.
\]
\end{proof}

\begin{corollary}[Weighted logarithmic estimate]
\label{cor:weighted-logarithmic-estimate}
Let \(f\in C^\infty(M,\mathbb R)\), let
\(g=e^u\in\Herm^+(E,h_{\rm ref})\), and suppose that
\begin{equation}\label{eq:abstract-tracefree-equation}
 \sqrt{-1}\Lambda_\omega
 \left(
  \bar\partial(g^{-1}\partial^{h_{\rm ref}}g)
  +[\theta,g^{-1}\theta_{h_{\rm ref}}^*g
             -\theta_{h_{\rm ref}}^*]
 \right)
 +e^f u+\Psi=0.
\end{equation}
Then
\begin{equation}\label{eq:logarithmic-identity-general}
 e^f|u|_{h_{\rm ref}}^2
 -\frac12\Delta_\omega^{\mathbb C}|u|_{h_{\rm ref}}^2
 +\mathscr Q_\nabla(u)+\mathscr Q_\theta(u)
 =
 -\langle\Psi,u\rangle_{h_{\rm ref}}.
\end{equation}
In particular,
\begin{equation}\label{eq:logarithmic-inequality-general}
 e^f|u|_{h_{\rm ref}}^2
 -\frac12\Delta_\omega^{\mathbb C}|u|_{h_{\rm ref}}^2
 \leq
 |\Psi|_{h_{\rm ref}}|u|_{h_{\rm ref}},
\end{equation}
and
\begin{equation}\label{eq:weighted-log-bound-general}
 e^{\min_M f}\|u\|_{C^0}
 \leq
 \|\Psi\|_{C^0}.
\end{equation}
\end{corollary}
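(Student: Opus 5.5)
The plan is to pair equation \eqref{eq:abstract-tracefree-equation} with $u$ using the pointwise inner product induced by $h_{\rm ref}$, and then apply Lemma~\ref{lem:logarithmic-pairing}. Pairing with $u$ gives
\[
 \left\langle
 \sqrt{-1}\Lambda_\omega
 \left(
  \bar\partial B
  +[\theta,g^{-1}\theta_{h_{\rm ref}}^*g-\theta_{h_{\rm ref}}^*]
 \right),u
 \right\rangle_{h_{\rm ref}}
 +e^f|u|_{h_{\rm ref}}^2
 +\langle\Psi,u\rangle_{h_{\rm ref}}=0 .
\]
Here $\langle u,u\rangle_{h_{\rm ref}}=\operatorname{tr}_E(u^2)=|u|^2_{h_{\rm ref}}$ because $u$ is $h_{\rm ref}$-Hermitian. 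Substituting \eqref{eq:logarithmic-pairing} for the first term yields \eqref{eq:logarithmic-identity-general} directly.

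For \eqref{eq:logarithmic-inequality-general}, I will drop the terms $\mathscr Q_\nabla(u)\ge0$ and $\mathscr Q_\theta(u)\ge0$, using the positivity statements in Lemma~\ref{lem:logarithmic-pairing}. I will then bound $-\langle\Psi,u\rangle_{h_{\rm ref}}$ by $|\Psi|\,|u|$ via Cauchy--Schwarz. This presumes $\Psi$ is $h_{\rm ref}$-Hermitian, or at least that the pairing is the real part of the trace pairing. Either way Cauchy--Schwarz still applies.

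For \eqref{eq:weighted-log-bound-general}, I will use the maximum principle. Take a point $x_0\in M$ where $|u|^2_{h_{\rm ref}}$ attains its maximum. With the stated sign convention, $\Delta^{\mathbb C}_\omega|u|^2\le0$ there, so the inequality gives
\[
 e^{f(x_0)}|u|^2(x_0)\le|\Psi|(x_0)\,|u|(x_0).
\]
If $|u|(x_0)=0$, then $u\equiv0$ and the bound is trivial. Otherwise, divide by $|u|(x_0)$ and use $e^{\min f}\le e^{f(x_0)}$. This gives
\[
 e^{\min_M f}\max_M|u|\le\|\Psi\|_{C^0},
\]
where $\|u\|_{C^0}$ means the sup of the pointwise $h_{\rm ref}$-norm.

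The only delicate point is sign bookkeeping. I need the convention for $\Delta^{\mathbb C}_\omega$ (nonpositive at a maximum) to be consistent with the minus sign in \eqref{eq:logarithmic-pairing}. I also need the real part to be taken in the pairings, so that every term is real.
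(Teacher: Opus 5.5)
Your proposal is correct and follows the paper's proof essentially verbatim: you pair the equation with $u$, invoke Lemma~\ref{lem:logarithmic-pairing}, discard the nonnegative terms $\mathscr Q_\nabla(u)$ and $\mathscr Q_\theta(u)$ using Cauchy--Schwarz, and apply the maximum principle at a maximum point of $|u|_{h_{\rm ref}}^2$. Your explicit treatment of the case $|u|(x_0)=0$ before dividing is a small detail that the paper leaves implicit.
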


\begin{proof}
Taking the real trace pairing of
\eqref{eq:abstract-tracefree-equation} with \(u\) and applying
Lemma~\ref{lem:logarithmic-pairing} gives
\eqref{eq:logarithmic-identity-general}.  Since
\(\mathscr Q_\nabla(u)\) and \(\mathscr Q_\theta(u)\) are nonnegative,
\eqref{eq:logarithmic-inequality-general} follows.

Let \(x_0\) be a maximum point of
\(|u|_{h_{\rm ref}}^2\).  Then
\[
 \Delta_\omega^{\mathbb C}|u|_{h_{\rm ref}}^2(x_0)\leq0,
\]
and hence
\[
 e^{f(x_0)}\|u\|_{C^0}^2
 \leq
 \|\Psi\|_{C^0}\|u\|_{C^0}.
\]
Since \(f(x_0)\geq\min_M f\), this proves
\eqref{eq:weighted-log-bound-general}.
\end{proof}

\section{The Higgs--Demailly system near the initial parameter}
\label{sec3}
Put
\begin{equation}\label{eq:beta-detailed}
 \beta:=\sqrt{-1}\Lambda_\omega\tr_E R_{D_{h_{\rm ref}}}
       =\sqrt{-1}\Lambda_\omega\tr_E R_{h_{\rm ref}}.
\end{equation}
The following formula is the basic algebraic reduction of the system.

\begin{lemma}\label{lem:system-rewrite-detailed}
Let $h_t=e^{-f_t}g_t h_{\rm ref}$, where $g_t>0$ is
$h_{\rm ref}$-Hermitian and $\det g_t=1$.  Then
\begin{equation}\label{eq:HS-mean-rewrite}
 \sqrt{-1}\Lambda_\omega R_{D_{h_t}}
 =\left(\frac{\beta}{r}+\Delta_\omega^{\mathbb C} f_t\right)\Id_E
  +\sqrt{-1}\Lambda_\omega R^0_{D_{g_t h_{\rm ref}}}.
\end{equation}
Consequently, the Higgs--Demailly system is equivalent to
\begin{equation}\label{eq:HD-detailed}
 \left\{
 \begin{aligned}
  \det A_t&=e^{\lambda f_t}a_0,\\
  \sqrt{-1}\Lambda_\omega\left(
    R^0_{h_{\rm ref}}+\bar\partial(g_t^{-1}\partial^{h_{\rm ref}}g_t)
    +[\theta,g_t^{-1}\theta_{h_{\rm ref}}^*g_t]
  \right)&=-e^{f_t}\log g_t,
 \end{aligned}
 \right.
\end{equation}
where
\begin{equation}\label{eq:A-detailed}
 A_t:=\left(\frac{\beta}{r}+\Delta_\omega^{\mathbb C} f_t+(1-t)\alpha\right)\Id_E
      -e^{f_t}\log g_t.
\end{equation}
Under this reformulation, admissibility is equivalent to $A_t>0$.
\end{lemma}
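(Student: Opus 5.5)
We compute the Hitchin--Simpson curvature of $h_t=e^{-f_t}g_th_{\rm ref}$ directly from Lemma~\ref{lem:change-metric-full}, taking $K=e^{-f_t}g_t$ relative to $h_1=h_{\rm ref}$.

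For the Chern part, \eqref{eq:Chern-change-full} gives
\[
\partial^{h_t}=\partial^{h_{\rm ref}}+K^{-1}\partial^{h_{\rm ref}}K=\partial^{h_{\rm ref}}-\partial f_t\,\Id_E+g_t^{-1}\partial^{h_{\rm ref}}g_t .
\]
Since $R_h=\bar\partial_E\partial^h+\partial^h\bar\partial_E$ on a curve, we obtain
\[
R_{h_t}=R_{h_{\rm ref}}+\bar\partial\partial f_t\,\Id_E+\bar\partial(g_t^{-1}\partial^{h_{\rm ref}}g_t).
\]
Contracting with $\sqrt{-1}\Lambda_\omega$ and using the convention $\Delta^{\mathbb C}_\omega f=\sqrt{-1}\Lambda_\omega\partial\bar\partial f$ turns the scalar term into $+\Delta^{\mathbb C}_\omega f_t\,\Id_E$, because $\bar\partial\partial=-\partial\bar\partial$. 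This sign is the one point that deserves care.

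For the Higgs part, \eqref{eq:Higgs-adjoint-change-full} gives $\theta^*_{h_t}=K^{-1}\theta^*_{h_{\rm ref}}K=g_t^{-1}\theta^*_{h_{\rm ref}}g_t$, since the scalar factor $e^{-f_t}$ cancels. Lemma~\ref{lem:HS-curvature-full} then yields
\[
R_{D_{h_t}}=R_{h_{\rm ref}}+\bar\partial\partial f_t\,\Id_E+\bar\partial(g_t^{-1}\partial^{h_{\rm ref}}g_t)+[\theta,g_t^{-1}\theta^*_{h_{\rm ref}}g_t].
\]
Applying the same computation with $f=0$ shows that $R_{D_{g_th_{\rm ref}}}$ is this expression without the $f_t$ term.

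Next we take traces. Since $\det g_t=1$, we have $\tr_E(g_t^{-1}\partial g_t)=\partial\log\det g_t=0$, so the $\bar\partial(g_t^{-1}\partial^{h_{\rm ref}}g_t)$ term is trace-free. The commutator term is trace-free by \eqref{eq:trace-comm-zero-detailed}. Hence
\[
\sqrt{-1}\Lambda_\omega\tr_E R_{D_{h_t}}=\beta+r\Delta^{\mathbb C}_\omega f_t .
\]
The trace-free part of $\sqrt{-1}\Lambda_\omega R_{D_{h_t}}$ is unaffected by the scalar $f_t$ term. It therefore equals
\[
\sqrt{-1}\Lambda_\omega R^0_{D_{g_th_{\rm ref}}}=\sqrt{-1}\Lambda_\omega\Bigl(R^0_{h_{\rm ref}}+\bar\partial(g_t^{-1}\partial^{h_{\rm ref}}g_t)+[\theta,g_t^{-1}\theta^*_{h_{\rm ref}}g_t]\Bigr).
\]
Combining the trace and trace-free parts proves \eqref{eq:HS-mean-rewrite}.

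For the reformulation, the second equation of \eqref{eq:HD-intro-full} is exactly the second line of \eqref{eq:HD-detailed}, because $R^0_{D_{h_t}}$ and $R^0_{D_{g_th_{\rm ref}}}$ have the same $\Lambda$-contraction. Substituting this equation into \eqref{eq:HS-mean-rewrite} gives
\[
\sqrt{-1}\Lambda_\omega R_{D_{h_t}}+(1-t)\alpha\Id_E=A_t .
\]
Consequently the determinant equation and the admissibility condition become $\det A_t=e^{\lambda f_t}a_0$ and $A_t>0$. Conversely, if \eqref{eq:HD-detailed} holds, the same substitution recovers \eqref{eq:HD-intro-full}.

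The main obstacle is purely bookkeeping of signs and conventions. There are two places where this matters: the relation $\bar\partial\partial f=-\partial\bar\partial f$ against the chosen definition of $\Delta^{\mathbb C}_\omega$, and the ordering in $R_h$ written as $\bar\partial(g^{-1}\partial g)$.
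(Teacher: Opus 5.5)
Your argument follows the paper's proof essentially step for step. You apply Lemma~\ref{lem:change-metric-full} to $K=e^{-f_t}g_t$ for both $\partial^{h_t}$ and $\theta^*_{h_t}$, use $\det g_t=1$ and the tracelessness of the commutator to isolate the trace part, and then substitute the trace-free equation to produce $A_t$. The trace-free identification with $R^0_{h_{\rm ref}}$, the reformulation, and the equivalence of admissibility with $A_t>0$ are all correct.

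There is, however, a sign slip exactly at the point you flagged. Since $\partial^{h_t}$ contains $-\partial f_t\,\Id_E$, applying $\bar\partial$ to it gives $-\bar\partial\partial f_t\,\Id_E=+\partial\bar\partial f_t\,\Id_E$. This is the term in the paper's \eqref{eq3.5}.

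You wrote $+\bar\partial\partial f_t\,\Id_E$, both in $R_{h_t}$ and in your formula for $R_{D_{h_t}}$. With that sign, contracting with $\sqrt{-1}\Lambda_\omega$ gives $-\Delta^{\mathbb C}_\omega f_t\,\Id_E$. So your justification ``because $\bar\partial\partial=-\partial\bar\partial$'' actually contradicts the conclusion you draw from it.

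Your final formula comes out right only because the two slips cancel. Replace the scalar term by $+\partial\bar\partial f_t\,\Id_E$ and the rest of the argument goes through unchanged.
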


\begin{proof}
By Lemma~\ref{lem:change-metric-full},
\[
 \partial^{h_t}=\partial^{h_{\rm ref}}+g_t^{-1}\partial^{h_{\rm ref}}g_t
                 -\partial f_t\,\Id_E,
 \qquad
 \theta_{h_t}^*=g_t^{-1}\theta_{h_{\rm ref}}^*g_t.
\]
Therefore
\begin{eqnarray}\label{eq3.5}
 R_{h_t}&=&R_{h_{\rm ref}}+\bar\partial_{\End E}(g_t^{-1}\partial^{h_{\rm ref}}g_t
                 -\partial f_t\,\Id_E)\notag\\
 &=&R_{h_{\rm ref}}+\bar\partial(g_t^{-1}\partial^{h_{\rm ref}}g_t)
          +\partial\bar\partial f_t\otimes\Id_E.
\end{eqnarray}
Since $\det g_t=1$,
\[
 \tr_E(g_t^{-1}\partial^{h_{\rm ref}}g_t)
 =\partial\log\det g_t=0.
\]
The trace of a commutator vanishes, hence
\begin{equation}\label{eq3.6}
 \sqrt{-1}\Lambda_\omega\tr_E R_{D_{h_t}}
 =\beta+r\Delta_\omega^{\mathbb C} f_t.
\end{equation}
\eqref{eq:HS-mean-rewrite} follows.

 Using \eqref{eq:HS-mean-rewrite} and the second equation in \eqref{eq:HD-intro-full}, we obtain the first equation in \eqref{eq:HD-detailed}.

By \eqref{eq:trace-comm-zero-detailed}, \eqref{eq:HS-curvature-full} and \eqref{eq3.5}, the trace-free part is
\begin{eqnarray}\label{eq3.7}
 \sqrt{-1}\Lambda_\omega R^0_{D_{h_t}}&=&\sqrt{-1}\Lambda_\omega(R_{h_t}
  +[\theta,\theta_{h_t}^{*}])-\frac1r\sqrt{-1}\Lambda_\omega\mathrm{tr}_E(R_{h_t}
  +[\theta,\theta_{h_t}^{*}])\otimes\Id_E\notag\\
 &=&\sqrt{-1}\Lambda_\omega\left(R^0_{h_{\rm ref}}+\bar\partial(g_t^{-1}\partial^{h_{\rm ref}}g_t)+[\theta,g_t^{-1}\theta_{h_{\rm ref}}^*g_t]\right).
\end{eqnarray}
Substitution of \eqref{eq3.7} in the second equation of \eqref{eq:HD-intro-full} yields
the second equation in \eqref{eq:HD-detailed}.
\end{proof}

\subsection{The strictly perturbed trace-free equation}
\label{subsec:strictly-perturbed}

Set
\begin{equation}\label{eq:Psi-detailed}
 \Psi:=\sqrt{-1}\Lambda_\omega
 \bigl(R^0_{h_{\rm ref}}+[\theta,\theta_{h_{\rm ref}}^*]\bigr)
 \in\Gamma(M,\Herm_0(E,h_{\rm ref})).
\end{equation}
We construct the initial metric by a two-stage covariant continuity
argument.  The first stage removes the background forcing while a large
zeroth-order coefficient supplies uniform coercivity.  The second stage
reduces that coefficient to one.

For a positive $h_{\rm ref}$-Hermitian endomorphism $g$, put
$h=gh_{\rm ref}$ and define the covariant transport of $\Psi$ by
\begin{equation}\label{eq:covariant-Psi-detailed}
 \mathscr S_g\Psi:=g^{-1/2}\Psi g^{1/2}.
\end{equation}
The endomorphism $\mathscr S_g\Psi$ is trace-free and
$h$-Hermitian. Indeed, by cyclicity of the trace,
\[
 \tr_E(\mathscr S_g\Psi)
 =\tr_E(g^{-1/2}\Psi g^{1/2})
 =\tr_E\Psi=0.
\]
Since
$h=gh_{\rm ref}$, and $g$, $\Psi\in\Herm(E,h_{\rm ref})$, we have
\[
 (\mathscr S_g\Psi)^{*}_{h}
 =g^{-1}
   (\mathscr S_g\Psi)^{*}_{h_{\rm ref}}g
 =g^{-1}(g^{1/2}\Psi g^{-1/2})g
 =\mathscr S_g\Psi.
\]
Thus $\mathscr S_g\Psi\in\Herm_0(E,h)$.

Moreover, since $u=\log g$ is a function of $g$, it commutes with
$g^{1/2}$ and $g^{-1/2}$. Hence, by cyclicity of the trace,
\begin{equation}\label{eq:transport-pairing-detailed}
\tr_E\bigl((\mathscr S_g\Psi)u\bigr)=\tr_E\bigl(g^{-1/2}\Psi g^{1/2}u\bigr)=\tr_E\bigl(\Psi g^{1/2}u g^{-1/2}\bigr)=\tr_E(\Psi u).
\end{equation}
Thus the transport $\mathscr S_g$ places the fixed endomorphism $\Psi$
in the moving space $\Herm_0(E,h)$ without changing its trace pairing
with $u=\log g$. Consequently, the usual logarithmic $C^0$ estimate is
preserved.

Fix \(\tau_0\in[0,1]\) and let \(g\) be a
solution of
\begin{equation}\label{eq3.10}
\sqrt{-1}\Lambda_\omega R^0_{D_{g h_{\rm ref}}}
+c_*\log g
=(1-\tau_0)\mathscr S_g(\Psi),
\end{equation}
where $c_*$ is a fixed prescribed positive constant.

Put
\(h=gh_{\rm ref}\).  Every determinant-one Hermitian metric sufficiently
close to \(h\) can be written uniquely as
\[
h_K=e^Kh,
\qquad
K\in C^{k+2,\gamma}(\Herm_0(E,h)).
\]
Set \(g_K=ge^K\) so that \(h_K=g_Kh_{\rm ref}\).  For \(\tau\) near \(\tau_0\), define the full equation operator by
\begin{equation}\label{eq3.11}
\mathcal F(\tau,K)=\sqrt{-1}\Lambda_\omega R^0_{D_{h_K}}
+c_*\log g_K-(1-\tau)\mathscr S_{g_K}\Psi.
\end{equation}
Each term is trace-free and \(h_K\)-Hermitian, and hence
\[
\mathcal F(\tau,K)
\in
C^{k,\gamma}\bigl(\Herm_0(E,h_K)\bigr).
\]
Moreover, \eqref{eq3.10} gives
\begin{equation}\label{eq3.12}
\mathcal F(\tau_0,0)=0.
\end{equation}

To regard the equation as a map between fixed Banach spaces, set
\[
\widetilde{\mathcal F}(\tau,K)
:=
e^{K/2}\mathcal F(\tau,K)e^{-K/2}.
\]
Since conjugation by \(e^{K/2}\) identifies \(h_K\)-Hermitian
endomorphisms with \(h\)-Hermitian endomorphisms, we obtain a smooth map
\[
\widetilde{\mathcal F}:
[0,1]\times
C^{k+2,\gamma}\bigl(\Herm_0(E,h)\bigr)
\longrightarrow
C^{k,\gamma}\bigl(\Herm_0(E,h)\bigr).
\]
Its zeros are precisely the nearby solutions of the original equation.

Let
\(
L\in C^{k+2,\gamma}\bigl(\Herm_0(E,h)\bigr).
\)
Differentiating with respect to \(K\) at \((\tau_0,0)\) and using \eqref{eq3.12}, we obtain
\begin{eqnarray*}
D_K\widetilde{\mathcal F}(\tau_0,0)[L]&=&\frac12 L\,\mathcal F(\tau_0,0)
+D_K\mathcal F(\tau_0,0)[L]-\frac12\mathcal F(\tau_0,0)L\notag\\
&=&D_K\mathcal F(\tau_0,0)[L].
\end{eqnarray*}
Thus no additional term is introduced into the linearized equation by
the above identification of the moving Hermitian spaces.

We shall also use the Fr\'echet derivative of the matrix logarithm.  If
$G\in\Herm^+(E,h_{\rm ref})$ and $A\in\End(E)$, then
\begin{equation}\label{eq:Dlog-detailed}
 \mathcal D(\log)_G(A)
 =
 \int_0^\infty
 (G+\rho\Id_E)^{-1}
 A
 (G+\rho\Id_E)^{-1}\,d\rho .
\end{equation}
In particular, for a variation $G_s$ with $G_0=G$ and
$\dot G_0=B$, one has
\[
 \left.\frac{d}{ds}\right|_{s=0}\log G_s
 =
 \mathcal D(\log)_G(B).
\]

We now carry out the first stage of the construction, in which the
background forcing term is removed while a sufficiently large zeroth-order
coefficient provides uniform coercivity.

\begin{lemma}[Covariant forcing removal]
\label{lem:g0-forcing-removal}
There exists a constant $c_*>1$, depending only on the fixed background
data, such that, for every $\tau\in[0,1]$, the equation
\begin{equation}\label{eq:g0-path-detailed}
 \sqrt{-1}\Lambda_\omega R^0_{D_{g_\tau h_{\rm ref}}}
 +c_*\log g_\tau
 =(1-\tau)\mathscr S_{g_\tau}\Psi,
 \qquad \det g_\tau=1,
\end{equation}
admits a unique smooth positive $h_{\rm ref}$-Hermitian solution $g_\tau$.
 In particular, at $\tau=1$ there  exists a unique smooth positive
determinant-one endomorphism $g_*$ satisfying
\begin{equation}\label{eq:gstar-detailed}
 \sqrt{-1}\Lambda_\omega R^0_{D_{g_*h_{\rm ref}}}
 +c_*\log g_*=0.
\end{equation}
\end{lemma}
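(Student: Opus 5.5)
We use the method of continuity in $\tau\in[0,1]$, starting at $\tau=0$. There the equation reads
\[
\sqrt{-1}\Lambda_\omega R^0_{D_{gh_{\rm ref}}}+c_*\log g=\mathscr S_g\Psi ,
\]
and $g_0=\Id_E$ solves it. Indeed, $\sqrt{-1}\Lambda_\omega R^0_{D_{h_{\rm ref}}}=\Psi$ by \eqref{eq:Psi-detailed} and Lemma~\ref{lem:HS-curvature-full}, and $\mathscr S_{\Id}\Psi=\Psi$. Let $I\subset[0,1]$ be the set of $\tau$ for which a smooth determinant-one solution exists. We show $I$ is open and closed, and then prove uniqueness separately. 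Throughout, write $g=e^u$ with $\operatorname{tr}u=0$. By \eqref{eq3.7} the equation takes the form \eqref{eq:abstract-tracefree-equation} with $e^f$ replaced by $c_*$, and the forcing is
\[
\Psi_\tau:=\Psi-(1-\tau)\mathscr S_g\Psi .
\]

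\textbf{Step 1: $C^0$ bound.} We pair with $u$. Using \eqref{eq:transport-pairing-detailed}, $\langle\mathscr S_g\Psi,u\rangle=\langle\Psi,u\rangle$, so \eqref{eq:logarithmic-identity-general} becomes
\[
c_*|u|^2-\tfrac12\Delta^{\mathbb C}_\omega|u|^2+\mathscr Q_\nabla+\mathscr Q_\theta=-\tau\langle\Psi,u\rangle .
\]
At a maximum point of $|u|^2$ this gives $\|u\|_{C^0}\le \|\Psi\|_{C^0}/c_*$, uniformly in $\tau$. This is the point of the covariant transport: the $g$-dependence of the forcing term drops out of the pairing.

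\textbf{Step 2: higher estimates and closedness.} Once $|u|\le L$, the metrics $gh_{\rm ref}$ are uniformly equivalent. We then need a $C^1$ bound on $g$. The standard route is a blow-up argument: if $\sup|\partial g|\to\infty$, rescale around the maximum point to obtain a nonconstant bounded solution of the Hermitian--Yang--Mills-type equation on $\mathbb C$ with $|u|\le L$. On $\mathbb C$ the Higgs and forcing terms scale away. Since $\dim_{\mathbb C}=1$, an alternative is a Donaldson/Simpson-type estimate $\Delta\operatorname{tr}g\ge -C$ plus $L^p$ theory. Concretely, the equation is $\bar\partial(g^{-1}\partial g)=$ (bounded terms), so the coercivity \eqref{eq:Qnabla-coercive} integrated over $M$ gives $\|\partial u\|_{L^2}\le C$. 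We then bootstrap by elliptic regularity, treating $\bar\partial(g^{-1}\partial g)$ as an elliptic operator with $L^\infty$ right-hand side and using that $g$ is bounded. This yields $W^{2,p}$, hence $C^{1,\gamma}$, and then $C^{k,\gamma}$ bounds via Schauder estimates. Arzelà--Ascoli then gives closedness.

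\textbf{Step 3: openness.} We apply the implicit function theorem to $\widetilde{\mathcal F}$. The linearization $D_K\mathcal F(\tau_0,0)[L]$ has three parts:
\begin{itemize}
\item a Laplacian-type part $\sqrt{-1}\Lambda_\omega\bar\partial\partial^{h}L$ from the curvature;
\item the Higgs part $\sqrt{-1}\Lambda_\omega[\theta,[\theta^*_h,L]]$;
\item the term $c_*\,\mathcal D(\log)_g(gL)$ together with the derivative of $-(1-\tau_0)\mathscr S_{g_K}\Psi$.
\end{itemize}
Pairing with $L$ in $L^2(h)$, the first two give $\|\partial^hL\|^2+\|[\theta^*_h,L]\|^2\ge0$. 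The logarithm term, via \eqref{eq:Dlog-detailed}, is bounded below by $c_*\,c(L_0)\|L\|^2$, where $c(L_0)$ depends only on the $C^0$ bound from Step 1. The derivative of $\mathscr S_{g_K}\Psi$ is bounded by $C\|\Psi\|_{C^0}\|L\|$, again with $C$ depending only on that $C^0$ bound.

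This is where $c_*$ is chosen: take $c_*>1$ so large that $c_*c(L_0)$ dominates $C\|\Psi\|_{C^0}$. Since $L_0\le\|\Psi\|/c_*$ decreases as $c_*$ grows, this choice is consistent. The linear operator is then injective, and since it is self-adjoint Fredholm of index zero, it is invertible.

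\textbf{Step 4: uniqueness.} Given two solutions $g_1,g_2$, write $g_2=g_1e^{K}$ and run the pairing argument for the difference. The same coercivity from $c_*$ absorbs the transport-term difference. Alternatively, Steps 1 and 3 already show the solution set is a smooth curve starting at the unique solution at $\tau=0$, with no bifurcation since the linearization is everywhere invertible. Uniqueness at $\tau=0$ is itself the pairing argument.

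\textbf{Main obstacle.} I expect Step 3/4 to be the hardest: controlling the nonlinearity of $\mathscr S_g\Psi$ in $g$ by the convexity of the logarithm, uniformly along the path. This is exactly why $c_*$ must be taken large depending on the background data.
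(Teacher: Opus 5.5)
Your plan matches the paper's proof in all essentials:
\begin{itemize}
\item the base point $g=\Id_E$ at $\tau=0$;
\item the bound $\|\log g\|_{C^0}\le\|\Psi\|_{C^0}/c_*$ from the transport identity \eqref{eq:transport-pairing-detailed};
\item closedness by bootstrapping $B=g^{-1}\partial^{h_{\rm ref}}g$ from the integrated $L^2$ bound on $\partial^{h_{\rm ref}}u$;
\item openness by choosing $c_*$ so that the logarithmic coercivity on the resulting cone dominates the transport derivative $\mathcal Z_g$ (your remark that $L_0$ shrinks as $c_*$ grows is exactly how the paper makes this choice consistent);
\item uniqueness by pairing along $g_s=g_1e^{sK}$.
\end{itemize}
Two points need repair.

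\textbf{The interpolating path must be placed in the cone.} In the uniqueness pairing, the intermediate $g_s$ are not solutions. So Step~1 does not place them in the cone on which your constants $c(L_0)$ and $C$ were computed, and you must check this separately. The paper observes that $g_s=g_1^{1/2}(g_1^{-1/2}g_2g_1^{-1/2})^sg_1^{1/2}$ is the weighted geometric mean of $g_1$ and $g_2$. Its joint monotonicity keeps it in $e^{-L_0}\Id_E\le g_s\le e^{L_0}\Id_E$. Even the crude estimate gives $e^{-3L_0}\Id_E\le g_s\le e^{3L_0}\Id_E$, which is why the paper fixes its constants on the larger cone $\mathcal K_1$.

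\textbf{The linearization is not self-adjoint.} The curvature, Higgs and logarithmic parts are symmetric for the real $L^2(h)$ pairing. The transport term $(1-\tau)\mathcal Z_g$ is not symmetric in general; a rank-two example with $g$ having distinct eigenvalues and $\Psi$ off-diagonal already breaks the symmetry. Index zero still holds, because the operator differs from a self-adjoint Laplace-type operator only by zeroth-order terms that can be homotoped away. That is the paper's argument, and you should use it instead of self-adjointness.

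\textbf{Your alternative uniqueness route is incomplete as stated.} Invertibility of the linearization at every solution, together with compactness of the full solution set, makes the projection of that set onto $[0,1]$ a finite covering. So the number of solutions is independent of $\tau$. You must still count the solutions at one parameter, and at $\tau=0$ that count requires the full transport-term pairing again. Anchor the count at $\tau=1$ instead. There the transport term is absent, and uniqueness follows from the pointwise positivity in Lemma~\ref{lem:matrix-monotonicity-app} with no cone restriction on the path. That version would be a genuinely cleaner route, bypassing the geometric-mean step entirely.
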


\begin{proof}
At $\tau=0$, $g=\Id_E$ is a solution because
$\sqrt{-1}\Lambda_\omega R^0_{D_{h_{\rm ref}}}=\Psi=\mathscr S_{\Id_E}\Psi$.
We prove openness and closedness.

We first establish a uniform estimate and choose \(c_*\). Let $g$ be a solution of \eqref{eq:g0-path-detailed}, and put $h=gh_{\rm ref}$ and $u=\log g$, and set
$B=g^{-1}\partial^{h_{\rm ref}}g$.  By \eqref{eq3.7},
\eqref{eq:g0-path-detailed} can be written as
\begin{align}\label{eq:g0-path-reference-detailed}
 &\sqrt{-1}\Lambda_\omega\left(
  \bar\partial B
  +[\theta,g^{-1}\theta_{h_{\rm ref}}^*g
             -\theta_{h_{\rm ref}}^*]
 \right)+c_*u\\
 &\qquad=(1-\tau)\mathscr S_g\Psi-\Psi.\notag
\end{align}
Taking the real trace pairing of
\eqref{eq:g0-path-reference-detailed} with \(u=\log g\) and applying
Lemma~\ref{lem:logarithmic-pairing}, we obtain
\begin{equation}\label{eq:g0-UY-detailed}
 c_*|u|_{h_{\rm ref}}^2
 -\frac12\Delta_\omega^{\mathbb C}|u|_{h_{\rm ref}}^2
 +\mathscr Q_\nabla(u)+\mathscr Q_\theta(u)
 =
 -\tau\langle\Psi,u\rangle_{h_{\rm ref}}.
\end{equation}
Here we used
\eqref{eq:transport-pairing-detailed} to compute
\[
 \langle\mathscr S_g\Psi,u\rangle_{h_{\rm ref}}=\mathrm{Re}\tr_E\bigl((\mathscr S_g\Psi)u\bigr)=\mathrm{Re}\tr_E(\Psi u)=\langle\Psi,u\rangle_{h_{\rm ref}}.
\]

Let \(x_0\in M\) be a maximum point of
\(|u|_{h_{\rm ref}}^2\).  Since
\[
 \Delta_\omega^{\mathbb C}|u|_{h_{\rm ref}}^2(x_0)\leq0,
\qquad
 \mathscr Q_\nabla(u)\geq0,
 \qquad
 \mathscr Q_\theta(u)\geq0,
\]
equation \eqref{eq:g0-UY-detailed} gives
\[
 c_*|u(x_0)|_{h_{\rm ref}}^2
 \leq
 \tau|\Psi(x_0)|_{h_{\rm ref}}|u(x_0)|_{h_{\rm ref}}
 \leq
 \|\Psi\|_{C^0}|u(x_0)|_{h_{\rm ref}}.
\]
Consequently,
\begin{equation}\label{eq:g0-C0-detailed}
 \|u\|_{C^0}
 \leq
 \frac{\|\Psi\|_{C^0}}{c_*}.
\end{equation}

Choose initially $c_*\geq2\|\Psi\|_{C^0}+1$.  Then every solution  satisfies
\begin{equation}\label{eq:g0-fixed-cone-detailed}
 e^{-1/2}\Id_E\leq g\leq e^{1/2}\Id_E.
\end{equation}
For the uniqueness argument below, we shall also use the larger fixed
cone
\[
\mathcal K_1
:=
\left\{
G\in\Herm^+(E,h_{\rm ref}):
e^{-3/2}\Id_E\leq G\leq e^{3/2}\Id_E
\right\}.
\]
Lemma~\ref{lem:compact-cone-matrix-estimates}, applied to
\(\mathcal K_1\),  gives constants
\(d_0>0\) and \(C_\Psi>0\), depending only on the fixed background data,
such that
\begin{align}
 \operatorname{Re}
 \left\langle
  \mathcal D(\log)_G(GK),K
 \right\rangle_{Gh_{\rm ref}}
 &\geq d_0|K|_{Gh_{\rm ref}}^2,
 \label{eq:Dlog-lower-g0-detailed}\\
 \left|
  \operatorname{Re}
  \langle\mathcal Z_G(K),K\rangle_{Gh_{\rm ref}}
 \right|
 &\leq C_\Psi|K|_{Gh_{\rm ref}}^2.
 \label{eq:Z-bound-detailed}
\end{align}
for every \(G\in\mathcal K_1\) and every
\(Gh_{\rm ref}\)-Hermitian endomorphism \(K\), where
\[
\mathcal Z_G(K)
:=
\left.\frac{d}{ds}\right|_{s=0}
\mathscr S_{Ge^{sK}}(\Psi).
\]
Increase \(c_*\) once more, if necessary, so that
\begin{equation}\label{eq:cstar-choice}
c_*d_0>C_\Psi+1.
\end{equation}
Increasing \(c_*\) only strengthens
\eqref{eq:g0-C0-detailed}, and hence preserves
\eqref{eq:g0-fixed-cone-detailed}.

Define the solvability set
\[
\mathcal T
:=
\left\{
\tau\in[0,1]:
\eqref{eq:g0-path-detailed}
\text{ admits a smooth positive determinant-one solution}
\right\}.
\]
We have already shown that \(0\in\mathcal T\).

\medskip
\noindent\textit{Claim 1 (Openness).}
The set \(\mathcal T\) is open in \([0,1]\).

\smallskip
Let \(\tau_0\in\mathcal T\), let \(g\) be a solution at \(\tau_0\), and
put \(h=gh_{\rm ref}\).  For
\[
K\in C^{k+2,\gamma}\bigl(\Herm_0(E,h)\bigr),
\]
consider
\[
 g_s=ge^{sK},
 \qquad
h_s=e^{sK}h=g_sh_{\rm ref}.
\]
Then
\[
 \dot g_0=gK,
\]
and the standard metric variation formulas give
\[
 \left.\frac{d}{ds}\right|_{s=0}\partial^{h_s}
 =\partial^hK,
 \qquad
 \left.\frac{d}{ds}\right|_{s=0}\theta_{h_s}^*
 =[\theta_h^*,K],
\]
and
\[
 \left.\frac{d}{ds}\right|_{s=0}\log g_s
 =
 \mathcal D(\log)_g(gK).
\]
Consequently, the linearized operator is
\begin{align}\label{eq:g0-linearized-detailed}
 L_{\tau,g}K
 :=&
\left.\frac{d}{ds}\right|_{s=0}
\left[
\sqrt{-1}\Lambda_\omega R^0_{D_{g_sh_{\rm ref}}}
+c_*\log g_s
-(1-\tau)\mathscr S_{g_s}\Psi
\right]\notag\\
 ={}&
 \sqrt{-1}\Lambda_\omega
 \left(
  \bar\partial\partial^hK
  +[\theta,[\theta_h^*,K]]
 \right)
 +c_*\mathcal D(\log)_g(gK)-(1-\tau)\mathcal Z_g(K).
\end{align}
Explicitly,
\[
\begin{aligned}
 \mathcal Z_g(K)
 ={}&
 \mathcal D(G\mapsto G^{-1/2})_g[gK]\,
 \Psi g^{1/2}\\
 &+
 g^{-1/2}\Psi\,
 \mathcal D(G\mapsto G^{1/2})_g[gK].
\end{aligned}
\]

At a solution, the direct variation
\eqref{eq:g0-linearized-detailed} agrees with the derivative of the
fixed-space map \(\widetilde{\mathcal F}\), because the derivatives of
the conjugating factors are multiplied by the value of the equation
operator, which vanishes.

Since \(g\) lies in \eqref{eq:g0-fixed-cone-detailed},  integration by parts in
\eqref{eq:g0-linearized-detailed}, together with
\eqref{eq:Dlog-lower-g0-detailed} and
\eqref{eq:Z-bound-detailed}, yields
\begin{align}\label{eq:g0-coercive-detailed}
 \operatorname{Re}\int_M
 \langle L_{\tau,g}K,K\rangle_h\,\omega
 \geq
 \int_M
 \left(
  |\partial^hK|_h^2
  +|[\theta_h^*,K]|_h^2
  +|K|_h^2
 \right)\omega.
\end{align}
Thus \(L_{\tau,g}\) is injective.

As an operator
\[
 L_{\tau,g}:
 C^{k+2,\gamma}\bigl(\Herm_0(E,h)\bigr)
 \longrightarrow
 C^{k,\gamma}\bigl(\Herm_0(E,h)\bigr),
\]
it is a second-order elliptic Fredholm operator.  Its principal symbol is the scalar Laplace symbol on
\(\Herm_0(E,h)\).   By deforming
only its lower-order terms, it is therefore homotopic through elliptic operators
to a self-adjoint Laplace-type operator on the same bundle and hence has
Fredholm index zero.  Therefore
\(
\operatorname{ind}L_{\tau_0,g}=0.
\)
Injectivity consequently implies surjectivity, so
\(L_{\tau_0,g}\) is an isomorphism.

Finally,
\[
 D_K\widetilde{\mathcal F}(\tau_0,0)
 =
 L_{\tau_0,g}.
\]
The implicit function theorem on the fixed H\"older spaces produces a unique
nearby solution for every \(\tau\) sufficiently close to \(\tau_0\).
Therefore the solvability set is open.

\medskip
\noindent\textit{Claim 2 (Closedness).}
The set \(\mathcal T\) is closed in \([0,1]\).

\smallskip
Let \(\tau_j\in\mathcal T\), \(\tau_j\to\tau_\infty\), and let \(g_j\)
solve \eqref{eq:g0-path-detailed} at \(\tau=\tau_j\).  Put
\(u_j=\log g_j\) and \(B_j=g_j^{-1}\partial^{h_{\rm ref}}g_j\).
The estimate \eqref{eq:g0-C0-detailed} gives a fixed positive cone
\[
 e^{-L}\Id_E\leq g_j\leq e^L\Id_E,
 \qquad
 L=\frac{\|\Psi\|_{C^0}}{c_*}.
\]
Integrating \eqref{eq:g0-UY-detailed} and using
Lemma~\ref{lem:logarithmic-pairing} yields
\[
 \|\partial^{h_{\rm ref}}u_j\|_{L^2}
 +\|B_j\|_{L^2}\leq C.
\]
Moreover, \eqref{eq:g0-path-reference-detailed} gives
\[
 \sqrt{-1}\Lambda_\omega\bar\partial B_j
 =
 (1-\tau_j)\mathscr S_{g_j}(\Psi)-\Psi-c_*u_j
 -
 \sqrt{-1}\Lambda_\omega
 [\theta,g_j^{-1}\theta_{h_{\rm ref}}^*g_j-\theta_{h_{\rm ref}}^*].
\]
The right-hand side is uniformly bounded in \(L^\infty\).  Since on a
curve
\[
 \bar\partial:\Omega^{1,0}(\End E)\longrightarrow\Omega^{1,1}(\End E)
\]
is elliptic, the standard \(W^{1,p}\)--Morrey bootstrap used in
Lemma~\ref{lem:strictly-perturbed-compactness} first gives a uniform
\(C^{0,\gamma}\)-bound for \(g_j\).  The displayed equation then has a
uniformly \(C^{0,\gamma}\)-bounded right-hand side, and Schauder estimates,
together with
\(
 \partial^{h_{\rm ref}}g_j=g_jB_j,
\)
give uniform \(C^{2,\gamma}\)-bounds.  Repeating the argument gives bounds of
all orders.  Thus, after passing to a subsequence,
\[
 g_j\longrightarrow g_\infty\qquad\text{in }C^\infty.
\]
The fixed cone and \(\det g_j=1\) imply \(g_\infty>0\) and
\(\det g_\infty=1\), and passage to the limit in
\eqref{eq:g0-path-detailed} shows that \(g_\infty\) solves the equation at
\(\tau_\infty\).  Hence \(\tau_\infty\in\mathcal T\), proving closedness.

Since \(\mathcal T\) is nonempty, open, and closed, \(\mathcal T=[0,1]\).

\medskip
\noindent\textit{Claim 3 (Uniqueness).}
For each fixed \(\tau\in[0,1]\), the solution of
\eqref{eq:g0-path-detailed} is unique.

\smallskip
For uniqueness, let \(g_1\) and \(g_2\) be two solutions at the same
parameter \(\tau\).  Put \(h_i=g_i h_{\rm ref}\), let
\[
 H:=g_1^{-1}g_2,
 \qquad
 K:=\log H,
\]
and consider the positive-cone geodesic
\[
 g_s=g_1e^{sK},
 \qquad
 h_s=g_sh_{\rm ref},
 \qquad 0\leq s\leq1.
\]
Then \(K\) is \(h_s\)-Hermitian and trace-free for every \(s\).  Since
both endpoints lie in the fixed cone
\eqref{eq:g0-fixed-cone-detailed}, monotonicity of the weighted geometric
mean shows that the whole path lies in the same cone.

Subtracting the endpoint equations, pairing with \(K\), and integrating
the variation along \(g_s\), we obtain
\begin{align*}
0={}&
\int_0^1\!\!\int_M
\left(
 |\partial^{h_s}K|_{h_s}^2
 +|[\theta_{h_s}^*,K]|_{h_s}^2
\right)\omega\,ds\\
&+
c_*\int_0^1\!\!\int_M
\operatorname{Re}
\left\langle
 \mathcal D(\log)_{g_s}(g_sK),K
\right\rangle_{h_s}
\omega\,ds\\
&-
(1-\tau)\int_0^1\!\!\int_M
\operatorname{Re}
\langle\mathcal Z_{g_s}(K),K\rangle_{h_s}
\,\omega\,ds.
\end{align*}
By \eqref{eq:Dlog-lower-g0-detailed},
\eqref{eq:Z-bound-detailed}, and the choice
\(c_*d_0>C_\Psi+1\), it follows that
\[
0\geq
\int_0^1\!\!\int_M
\left(
 |\partial^{h_s}K|_{h_s}^2
 +|[\theta_{h_s}^*,K]|_{h_s}^2
 +|K|_{h_s}^2
\right)\omega\,ds.
\]
Hence \(K=0\), and therefore \(g_1=g_2\). This proves uniqueness.

Taking \(\tau=1\), we obtain the unique endomorphism \(g_*\)
satisfying \eqref{eq:gstar-detailed}.
\end{proof}

\begin{proposition}[Construction of the initial trace-free metric]
\label{prop:g0-detailed}
There exists a unique smooth positive \(h_{\rm ref}\)-Hermitian
endomorphism \(g_0\) satisfying
\[
 \det g_0=1
\]
and
\begin{equation}\label{eq:g0-detailed}
 \sqrt{-1}\Lambda_\omega
 R^0_{D_{g_0h_{\rm ref}}}
 +\log g_0=0.
\end{equation}
Equivalently,
\begin{equation}\label{eq:g0-expanded-detailed}
 \sqrt{-1}\Lambda_\omega
 \left(
  R^0_{h_{\rm ref}}
  +\bar\partial(g_0^{-1}\partial^{h_{\rm ref}}g_0)
  +[\theta,g_0^{-1}\theta_{h_{\rm ref}}^*g_0]
 \right)
 =
 -\log g_0.
\end{equation}
\end{proposition}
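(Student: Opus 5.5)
The plan is the second stage of the two-stage construction announced before Lemma~\ref{lem:g0-forcing-removal}. There the background forcing was removed at a large coefficient $c_*$. Here I keep the forcing at zero and lower the coefficient from $c_*$ to $1$. For $\sigma\in[1,c_*]$ consider
\[
 \sqrt{-1}\Lambda_\omega R^0_{D_{g_\sigma h_{\rm ref}}}+\sigma\log g_\sigma=0,
 \qquad \det g_\sigma=1.
\]
At $\sigma=c_*$ this is solved uniquely by $g_*$ from \eqref{eq:gstar-detailed}. I will show that the set $\mathcal T'\subset[1,c_*]$ of parameters admitting a smooth determinant-one solution is nonempty, open and closed. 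The equivalence of \eqref{eq:g0-detailed} with \eqref{eq:g0-expanded-detailed} is just \eqref{eq3.7}, so only existence and uniqueness at $\sigma=1$ need proof.

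\emph{A priori bound.} I write the equation in the form \eqref{eq:abstract-tracefree-equation}, with $e^f$ replaced by the constant $\sigma$ and the inhomogeneous term equal to $\Psi$ from \eqref{eq:Psi-detailed}. Corollary~\ref{cor:weighted-logarithmic-estimate} then gives
\[
 \|\log g_\sigma\|_{C^0}\le\sigma^{-1}\|\Psi\|_{C^0}\le\|\Psi\|_{C^0},
\]
uniformly in $\sigma\in[1,c_*]$. So every solution lies in the fixed cone $e^{-L}\Id_E\le g\le e^{L}\Id_E$ with $L=\|\Psi\|_{C^0}$.

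\emph{Closedness.} Integrating \eqref{eq:logarithmic-identity-general} gives $L^2$ bounds on $\partial^{h_{\rm ref}}u$, using the coercivity \eqref{eq:Qnabla-coercive} on this cone. The $W^{1,p}$--Morrey--Schauder bootstrap then runs exactly as in Claim~2 of Lemma~\ref{lem:g0-forcing-removal} and yields $C^\infty$ compactness.

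\emph{Openness.} The linearization at a solution $g$, along $g_s=ge^{sK}$ with $K$ trace-free and $h$-Hermitian, is
\[
 \sqrt{-1}\Lambda_\omega\bigl(\bar\partial\partial^hK+[\theta,[\theta_h^*,K]]\bigr)+\sigma\,\mathcal D(\log)_g(gK).
\]
It contains no $\mathcal Z$-term, because the forcing is now absent. Integrating by parts against $K$ gives
\[
 \int_M\Bigl(|\partial^hK|^2+|[\theta_h^*,K]|^2+\sigma\operatorname{Re}\langle\mathcal D(\log)_g(gK),K\rangle_h\Bigr)\omega.
\]
To handle the last term I would first try to verify directly that $\operatorname{Re}\langle\mathcal D(\log)_g(gK),K\rangle_h>0$ for $K\ne0$, for every $g>0$ and not merely on a compact cone. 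In an eigenbasis of $g$ with eigenvalues $\mu_a$, the $(a,b)$ entry of $\mathcal D(\log)_g(gK)$ is the divided difference of $\log$ at $(\mu_a,\mu_b)$ times $\mu_bK_{ab}$, which is positive up to the conjugation relating $h$- and $h_{\rm ref}$-pairings. Failing that, I would invoke Lemma~\ref{lem:compact-cone-matrix-estimates} on the cone $e^{-L-1}\le G\le e^{L+1}$, which gives $d_0>0$. Either way the linearization is injective. It is Fredholm of index zero by the homotopy argument of Claim~1, and the implicit function theorem applied to the conjugated map $\widetilde{\mathcal F}$ gives openness. Hence $\mathcal T'=[1,c_*]$.

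\emph{Uniqueness at $\sigma=1$.} This is the main obstacle. In Claim~3 the large $c_*$ was needed only to absorb the $\mathcal Z$-term, and that term is now gone. So I take two solutions $g_1,g_2$ and the geodesic $g_s=g_1e^{sK}$ with $K=\log(g_1^{-1}g_2)$, which stays in the fixed cone by monotonicity of the geometric mean. Integrating the difference of the two equations paired with $K$ along this path, the same computation gives
\[
 0=\int_0^1\!\!\int_M\Bigl(|\partial^{h_s}K|^2+|[\theta^*_{h_s},K]|^2+\operatorname{Re}\langle\mathcal D(\log)_{g_s}(g_sK),K\rangle_{h_s}\Bigr)\omega\,ds.
\]
By the positivity of the logarithmic-derivative pairing with constant $d_0$ on the cone, this forces $K=0$. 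The delicate point to check is that $K$ is $h_s$-Hermitian and that the $\log$-pairing identity holds along the path exactly as in Claim~3. Smoothness of $g_0$ follows from the bootstrap.
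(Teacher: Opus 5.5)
Your proposal is correct and follows the paper's proof essentially step for step. The paper also runs a continuity path that lowers the zeroth-order coefficient from $c_*$ to $1$ with no forcing term, parametrized as $c_\sigma=(1-\sigma)c_*+\sigma$ for $\sigma\in[0,1]$ rather than by the coefficient itself. It uses the same ingredients you do: the $C^0$ bound $\|\log g\|_{C^0}\le\|\Psi\|_{C^0}$ from the logarithmic identity, closedness via Lemma~\ref{lem:strictly-perturbed-compactness}, openness from the $\mathcal Z$-free coercive linearization with index zero, and uniqueness via the positive-cone geodesic $g_s=g_1e^{sK}$.
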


\begin{proof}
Let \(g_*\) be the solution obtained in
Lemma~\ref{lem:g0-forcing-removal}, so that
\[
 \sqrt{-1}\Lambda_\omega R^0_{D_{g_*h_{\rm ref}}}
 +c_*\log g_*=0.
\]
For \(0\leq\sigma\leq1\), set
\[
 c_\sigma:=(1-\sigma)c_*+\sigma
\]
and consider
\begin{equation}\label{eq:g0-coefficient-path-detailed}
 \sqrt{-1}\Lambda_\omega R^0_{D_{g_\sigma h_{\rm ref}}}
 +c_\sigma\log g_\sigma=0,
 \qquad
 \det g_\sigma=1.
\end{equation}
The equation is solvable at \(\sigma=0\), with \(g_\sigma=g_*\).

At a solution, the linearization in a trace-free
\(h_\sigma=g_\sigma h_{\rm ref}\)-Hermitian direction \(K\) is
\begin{equation}\label{eq:g0-coefficient-linearized-detailed}
 \sqrt{-1}\Lambda_\omega
 \left(
  \bar\partial\partial^{h_\sigma}K
  +[\theta,[\theta_{h_\sigma}^*,K]]
 \right)
 +c_\sigma\mathcal D(\log)_{g_\sigma}(g_\sigma K).
\end{equation}
Pairing with \(K\) and integrating gives
\begin{eqnarray*}
&&\operatorname{Re}\int_M
 \left\langle
  L_{\sigma,g_\sigma}K,K
 \right\rangle_{h_\sigma}\omega\\
&=&
 \int_M
 \left(
  |\partial^{h_\sigma}K|_{h_\sigma}^2
  +|[\theta_{h_\sigma}^*,K]|_{h_\sigma}^2
 \right)\omega+
 c_\sigma\int_M
 \operatorname{Re}
 \left\langle
  \mathcal D(\log)_{g_\sigma}(g_\sigma K),K
 \right\rangle_{h_\sigma}\omega.
\end{eqnarray*}

Since \(c_\sigma\geq1\), the last term is strictly positive unless
\(K=0\).  Thus the linearization is injective.  It is elliptic of
Fredholm index zero and hence is an isomorphism.  The solution set is
therefore open.

Writing \(u_\sigma=\log g_\sigma\), equation
\eqref{eq:g0-coefficient-path-detailed}, together with
Lemma~\ref{lem:logarithmic-pairing}, gives
\begin{equation}\label{eq:g0-coefficient-C0-detailed}
 c_\sigma|u_\sigma|_{h_{\rm ref}}^2
 -\frac12\Delta_\omega^{\mathbb C}
  |u_\sigma|_{h_{\rm ref}}^2
 +\mathscr Q_\nabla(u_\sigma)
 +\mathscr Q_\theta(u_\sigma)
 =
 -\langle\Psi,u_\sigma\rangle_{h_{\rm ref}}.
\end{equation}
Since \(c_\sigma\geq1\), the maximum principle yields
\[
 \|u_\sigma\|_{C^0}
 \leq
 \|\Psi\|_{C^0}.
\]
The compactness result
Lemma~\ref{lem:strictly-perturbed-compactness}, applied with
\(
 c_j=c_{\sigma_j}
\)
and
\(
 F_j=-\Psi,
\)
then gives uniform estimates of all orders.  Hence the solution set is
closed.  It follows that \eqref{eq:g0-coefficient-path-detailed} is
solvable for every \(\sigma\in[0,1]\).

At \(\sigma=1\), one has \(c_\sigma=1\); denote the resulting solution
by \(g_0\).  It satisfies \eqref{eq:g0-detailed}.

Finally, if \(g_1\) and \(g_2\) are two solutions of
\eqref{eq:g0-detailed}, set
\[
 H:=g_1^{-1}g_2,\qquad K:=\log H,
\]
and join them by the positive-cone geodesic
\(g_s=g_1e^{sK}\).  Pairing the difference of the equations with \(K\)
and integrating along the path gives
\[
0=
\int_0^1\!\!\int_M
\left(
 |\partial^{h_s}K|_{h_s}^2
 +|[\theta_{h_s}^*,K]|_{h_s}^2
\right)\omega\,ds+
\int_0^1\!\!\int_M
\operatorname{Re}
\left\langle
 \mathcal D(\log)_{g_s}(g_sK),K
\right\rangle_{h_s}
\omega\,ds.
\]
Every term is nonnegative, and the logarithmic term is strictly positive
unless \(K=0\).  Hence \(K=0\) and \(g_1=g_2\).
\end{proof}
Set
\[
 h_0:=g_0h_{\rm ref}.
\]
Choose $\alpha>0$ sufficiently large so that
\begin{equation}\label{eq:initial-A-detailed}
 A_0:=\left(\frac{\beta}{r}+\alpha\right)\Id_E-\log g_0>0,
 \qquad a_0:=\det A_0.
\end{equation}
Then $(f,g)=(0,g_0)$ is an admissible solution at $t=0$.

\subsection{Uniform control, uniqueness, and nondegeneracy at the initial parameter}
Unless otherwise stated, all constants in this subsection depend only
on the fixed background data and are independent of
\(\alpha\), \(\lambda\), and the particular admissible solution.

\begin{lemma}[Uniform control at the initial parameter]
\label{lem:t0-asymptotics}
Fix $\lambda_*>0$.  Uniformly for all $\lambda\geq\lambda_*$ and all
admissible solutions $(f,g)$ at $t=0$, with $u=\log g$, one has
\begin{equation}\label{eq:t0-asymptotic-detailed}
 \lambda\|f\|_{C^0}=O(\alpha^{-1}),
 \qquad \|u\|_{C^0}\leq C,
 \qquad
 \frac{\|\Delta_\omega^{\mathbb C} f\|_{C^0}}{\alpha}=o(1)
 \quad (\alpha\to\infty).
\end{equation}
More precisely,
\[
 \sup_{\lambda\geq\lambda_*}
 \sup_{(f,g)\in\mathcal S_{\alpha,\lambda}}
 \frac{\|\Delta_\omega^{\mathbb C}f\|_{C^0}}{\alpha}
 \longrightarrow0
 \qquad\text{as }\alpha\to\infty,
\]
where \(\mathcal S_{\alpha,\lambda}\) denotes the set of admissible
solutions at \(t=0\).
\end{lemma}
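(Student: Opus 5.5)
The plan is to work directly with the reformulated system \eqref{eq:HD-detailed} at $t=0$. Writing $c:=\beta/r+\alpha$ (a function, but $c=\alpha+O(1)$ uniformly), we have
\[
A=(c+\Delta_\omega^{\mathbb C}f)\Id_E-e^{f}u,\qquad \det A=e^{\lambda f}a_0,\qquad a_0=\det\bigl(c\Id_E-u_0\bigr),
\]
where $u_0=\log g_0$ is the fixed, trace-free endomorphism from Proposition~\ref{prop:g0-detailed}. The trace-free equation has the form \eqref{eq:abstract-tracefree-equation} with forcing $\Psi$ from \eqref{eq:Psi-detailed}. The order of the argument is: first a two-sided bound on $f$ via the maximum principle at its extremal points, then the bound on $u$, and finally the bound on $\Delta_\omega^{\mathbb C}f$ via a scalar monotonicity argument.

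\emph{Step 1: expansion of $a_0$.} Since $\tr u_0=0$ and $\|u_0\|_{C^0}\le C$, the product $\prod_i(1-\mu_i/c)$ over the eigenvalues $\mu_i$ of $u_0$ equals $1+O(\alpha^{-2})$. Hence
\[
\log a_0=r\log c+O(\alpha^{-2}).
\]

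\emph{Step 2: upper bound on $f$.} At a maximum point $x$ of $f$ we have $\Delta_\omega^{\mathbb C}f\le0$. Therefore $0<A\le c\Id_E-e^{f}u$, so the right-hand matrix is positive definite. Since it has trace $rc$, the AM--GM inequality gives $\det A\le c^r$. Combined with Step~1, this yields $\lambda\max f\le O(\alpha^{-2})$.

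\emph{Step 3: lower bound on $f$.} At a minimum point $x$ of $f$ we have $\Delta_\omega^{\mathbb C}f\ge0$, so $A\ge c\Id_E-e^{f(x)}u(x)$. This is the place where one might fear circularity, since a bound on $u$ seems to need a lower bound on $f$. It is avoided by evaluating \eqref{eq:weighted-log-bound-general} of Corollary~\ref{cor:weighted-logarithmic-estimate} exactly at the minimum point. That gives
\[
e^{f(x)}|u(x)|\le e^{\min f}\|u\|_{C^0}\le\|\Psi\|_{C^0},
\]
independently of $\lambda$ and $\alpha$. For $\alpha$ large, $c\Id_E-e^{f(x)}u(x)$ is then positive and trace $rc$ with a bounded trace-free part. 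Hence $\det A\ge c^r(1-C\alpha^{-2})$, and Step~1 gives $\lambda\min f\ge -O(\alpha^{-2})$. Thus
\[
\lambda\|f\|_{C^0}=O(\alpha^{-2})\subset O(\alpha^{-1}),
\]
uniformly in $\lambda\ge\lambda_*$.

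\emph{Step 4: bound on $u$.} From Step~3, $\|f\|_{C^0}\le C/(\lambda_*\alpha^2)\le C$. Corollary~\ref{cor:weighted-logarithmic-estimate} then gives $\|u\|_{C^0}\le e^{C}\|\Psi\|_{C^0}$. In particular $e^{f}|u|\le C_1$ everywhere, with $C_1$ independent of $\alpha$, $\lambda$ and the solution.

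\emph{Step 5: bound on $\Delta_\omega^{\mathbb C}f$.} At any point set $s:=(c+\Delta_\omega^{\mathbb C}f)/c$ and $\varepsilon_i:=e^{f}\mu_i(u)/c$, where $\mu_i(u)$ are the eigenvalues of $u$; by Step~4, $|\varepsilon_i|\le C_1/\alpha$. Admissibility says $s>\max_i\varepsilon_i$. The determinant equation becomes
\[
\prod_i(s-\varepsilon_i)=e^{\lambda f}a_0/c^r=1+O(\alpha^{-1}).
\]
The function $s\mapsto\prod_i(s-\varepsilon_i)$ is increasing on $(\max_i\varepsilon_i,\infty)$. Moreover it is $\ge(1+\delta-C_1/\alpha)^r$ for $s\ge1+\delta$ and $\le(1-\delta+C_1/\alpha)^r$ for $s\le1-\delta$. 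Comparing with $1+O(\alpha^{-1})$ forces $|s-1|\le C/\alpha$. Hence $|\Delta_\omega^{\mathbb C}f|\le C$ uniformly, and so $\|\Delta_\omega^{\mathbb C}f\|_{C^0}/\alpha\le C/\alpha\to0$, uniformly over $\lambda\ge\lambda_*$ and over $\mathcal S_{\alpha,\lambda}$.

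I expect the only delicate point to be the uniformity in Step~3, which rests on evaluating the weighted estimate at the minimum of $f$. Everything else is elementary matrix inequalities: AM--GM for the upper bound, the second-order expansion for trace-free perturbations of $c\Id_E$ for the lower bound, and the monotonicity of the scalar product in Step~5.
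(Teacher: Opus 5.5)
Your proposal is correct. Steps 2--4 follow the paper's own proof:
\begin{itemize}
\item AM--GM at a maximum point of $f$;
\item the weighted estimate \eqref{eq:weighted-log-bound-general} evaluated exactly at a minimum point of $f$, which is also how the paper avoids the apparent circularity;
\item Corollary~\ref{cor:weighted-logarithmic-estimate} to bound $u$.
\end{itemize}
Your use of $\tr u=\tr u_0=0$ to expand both determinants to second order only sharpens the paper's $O(\alpha^{-1})$ to $O(\alpha^{-2})$.

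The genuine difference is Step~5. The paper obtains the Laplacian bound by contradiction. It normalizes by $\alpha_j$ and observes that the factors $1+v_j-q_{a,j}$ are positive, have product tending to $1$, and have pairwise differences tending to $0$. It then invokes Lemma~\ref{lem:asymptotic-product-app}. This yields only the qualitative statement $\|\Delta_\omega^{\mathbb C}f\|_{C^0}=o(\alpha)$.

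You instead use monotonicity of $s\mapsto\prod_i(s-\varepsilon_i)$ on the admissible range $s>\max_i\varepsilon_i$. This is a quantitative version of the same pinching, and it uses the same first-order input, namely that the right-hand side is $1+O(\alpha^{-1})$. It gives $|s-1|\le C/\alpha$, hence the uniform bound $\|\Delta_\omega^{\mathbb C}f\|_{C^0}\le C$, with no compactness step. That is strictly stronger than what the lemma asserts. It would also make the two-sided comparison $c\alpha\,\Id_E\le A_s\le C\alpha\,\Id_E$ in Lemma~\ref{lem:t0-global-uniqueness} immediate, without an unspecified increase of the threshold $\alpha_0$.
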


\begin{proof}
At a minimum point of $f$, admissibility and
\eqref{eq:weighted-log-bound-general} imply that every eigenvalue of $A$ is at least
$\alpha-C$.  Since
\[
 a_0=\det\left(\left(\frac\beta r+\alpha\right)\Id_E-\log g_0\right)
 =\alpha^r(1+O(\alpha^{-1}))
\]
uniformly on $M$, the determinant equation yields
\[
 e^{\lambda\min f}\geq1-C\alpha^{-1}.
\]
At a maximum point of $f$, the arithmetic--geometric mean inequality gives
\begin{equation}\label{eq:max-f-alpha-detailed}
 e^{\lambda\max f}\leq
 \frac{(\alpha+C)^r}{a_0}=1+C\alpha^{-1}.
\end{equation}
For large $\alpha$, taking logarithms proves
$\lambda\|f\|_{C^0}\leq C\alpha^{-1}$.
\eqref{eq:weighted-log-bound-general} then gives the uniform bound for $u$.

For the Laplacian estimate, argue by contradiction.  Let
$\alpha_j\to\infty$ and suppose that there are admissible solutions
$(f_j,u_j)$ for which
$\|\Delta_\omega^{\mathbb C} f_j\|_{C^0}/\alpha_j$ does not tend to zero.  Write
\[
 v_j=\frac{\beta/r+\Delta_\omega^{\mathbb C} f_j}{\alpha_j},
 \qquad
 q_{a,j}=\frac{e^{f_j}\ell_{a,j}}{\alpha_j}.
\]
Then $q_{a,j}\to0$ uniformly.  Dividing the determinant equation by
$\alpha_j^r$ gives
\begin{equation}\label{eq:normalized-t0-det-detailed}
 \prod_{a=1}^r(1+v_j-q_{a,j})
 =e^{\lambda f_j}\frac{a_{0,j}}{\alpha_j^r}
 \longrightarrow1
\end{equation}
uniformly.  The factors are positive, and their pairwise differences tend
to zero uniformly.  Lemma~\ref{lem:asymptotic-product-app} therefore gives
$1+v_j-q_{a,j}\to1$ uniformly for every $a$.  Hence $v_j\to0$, a
contradiction.
\end{proof}

\begin{lemma}[Global uniqueness at $t=0$]
\label{lem:t0-global-uniqueness}
There are $\alpha_0,\lambda_0>0$ such that, whenever
$\alpha\geq\alpha_0$ and $\lambda\geq\lambda_0$, the only admissible
solution at $t=0$ is $(0,g_0)$.
\end{lemma}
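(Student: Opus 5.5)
Let $(f,g)$ be an admissible solution at $t=0$ and put $u=\log g$. I will compare it with $(0,g_0)$ and show that $(f,u)=(0,u_0)$ once $\alpha$ and $\lambda$ are large. Lemma~\ref{lem:t0-asymptotics} supplies the a priori bounds
\[
\lambda\|f\|_{C^0}\le C\alpha^{-1},\qquad \|u\|_{C^0}\le C,\qquad \|\Delta^{\mathbb C}_\omega f\|_{C^0}=o(\alpha).
\]
These bounds are uniform in $\lambda\ge\lambda_*$.

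\emph{Step 1: $g$ is close to $g_0$.} Write the trace-free equation at $f$ as
\[
\sqrt{-1}\Lambda_\omega R^0_{D_{gh_{\rm ref}}}+\log g=(1-e^{f})\log g .
\]
The right-hand side has $C^0$ norm $O(\|f\|_{C^0})=O((\lambda\alpha)^{-1})$. Now join $g_0$ and $g$ by the positive-cone geodesic $g_s=g_0e^{sK}$ with $K=\log(g_0^{-1}g)$, exactly as in the uniqueness part of Proposition~\ref{prop:g0-detailed}. Subtract the two equations, pair with $K$, and integrate in $s$. The left-hand side is bounded below by
\[
\int_0^1\!\!\int_M\Bigl(|\partial^{h_s}K|^2+|[\theta^*_{h_s},K]|^2+d_0|K|^2\Bigr)\omega\,ds .
\]
Here $d_0>0$ comes from Lemma~\ref{lem:compact-cone-matrix-estimates}, applied to a fixed cone containing both $g_0$ and $g$; such a cone exists by the uniform $C^0$ bound on $u$. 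The pairing with the right-hand side is at most $C\|f\|_{C^0}\|K\|_{L^1}$. This gives $\|K\|_{W^{1,2}}\le C\|f\|_{C^0}$.

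I then upgrade this to $C^{2,\gamma}$. Elliptic estimates for the difference equation, linearized in the way used for Lemma~\ref{lem:strictly-perturbed-compactness}, should give
\[
\|u-u_0\|_{C^{2,\gamma}}\le C\|f\|_{C^0}.
\]

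\emph{Step 2: the scalar equation forces $f=0$.} Write $\mu:=\beta/r+\alpha$, so that $A_0=\mu\Id_E-u_0$ and $A=(\mu+\Delta^{\mathbb C}_\omega f)\Id_E-e^{f}u$. Take logarithms of the determinant equation:
\[
\lambda f=\log\det A-\log\det A_0=\tr\Bigl[\bigl(\Delta^{\mathbb C}_\omega f\,\Id_E-(e^f u-u_0)\bigr)\,\widetilde A^{-1}\Bigr],
\]
where $\widetilde A^{-1}$ is an averaged inverse along the segment from $A_0$ to $A$. Both endpoints have eigenvalues $\alpha(1+o(1))$, by admissibility and the last statement of Lemma~\ref{lem:t0-asymptotics}. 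Hence $\widetilde A^{-1}=\alpha^{-1}(1+o(1))$, and the equation takes the form
\[
\lambda\alpha f=r\Delta^{\mathbb C}_\omega f\,(1+o(1))-\tr(u-u_0)(1+o(1))+O(\|f\|_{C^0})+o(1)\,|\Delta^{\mathbb C}_\omega f| .
\]
Now evaluate this at a maximum point and at a minimum point of $f$. At these points the sign of $\Delta^{\mathbb C}_\omega f$ helps, or else $\Delta^{\mathbb C}_\omega f$ is controlled by an $o(1)$ factor. The term $\tr(u-u_0)$ is $O(\|f\|_{C^0})$ by Step~1, and in fact it vanishes, since $\det g=\det g_0=1$ makes both $u$ and $u_0$ trace-free. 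The result is
\[
\lambda\alpha\|f\|_{C^0}\le C\|f\|_{C^0}\bigl(1+o(1)\bigr)+o(1)\,\alpha\|f\|_{C^0}
\]
in its worst form. Choosing $\alpha\ge\alpha_0$ and $\lambda\ge\lambda_0$ with $\lambda_0\alpha_0>2C$ then forces $f\equiv0$. With $f=0$, the trace-free equation reduces to \eqref{eq:g0-detailed}, so $g=g_0$ by the uniqueness in Proposition~\ref{prop:g0-detailed}.

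\emph{Main obstacle.} The delicate point is the term $\Delta^{\mathbb C}_\omega f$ inside the averaged inverse. The cross terms that couple $\Delta^{\mathbb C}_\omega f$ with $u-u_0$ are only $o(\alpha)$ times small quantities, and there is no a priori $C^2$ bound on $f$ that is linear in $\|f\|_{C^0}$. I would try first to rewrite the equation as a linear elliptic equation for $f$,
\[
r\,\tr(\widetilde A^{-1})^{-1}\cdot\tfrac1r\tr(\widetilde A^{-1})\,\Delta^{\mathbb C}_\omega f-\lambda f=(\text{terms bounded by }C\alpha^{-1}\|f\|_{C^0}).
\]
The maximum principle applied to this equation gives $\lambda\|f\|_{C^0}\le C\alpha^{-1}\|f\|_{C^0}$, and hence $f=0$ once $\lambda\alpha>C$. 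This avoids needing any explicit control of $\Delta^{\mathbb C}_\omega f$ itself.
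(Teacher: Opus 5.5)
Your Step~2, in the form you give in your last paragraph, is sound. Both $A_0$ and $A$ are $\geq c\alpha\Id_E$, so $\lambda f=\tr(\widetilde A^{-1})\Delta_\omega^{\mathbb C}f-\tr\bigl[(e^fu-u_0)\widetilde A^{-1}\bigr]$ with $\tr(\widetilde A^{-1})>0$ and $|\widetilde A^{-1}|\le C\alpha^{-1}$. The maximum principle then gives $\lambda\alpha\|f\|_{C^0}\le C\|f\|_{C^0}$, \emph{provided} $|u-u_0|\le C\|f\|_{C^0}$ pointwise with $C$ independent of $\alpha,\lambda$. Two side remarks: trace-freeness of $u-u_0$ does not remove this term, because $\alpha\widetilde A^{-1}$ is a matrix and not the scalar $1+o(1)$; and the ``worst form'' inequality with the extra $o(1)\alpha\|f\|_{C^0}$ is not derived and should be dropped.

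So everything rests on the upgrade in Step~1, which you only assert, and you assert it in a form that is not available. A bound $\|u-u_0\|_{C^{2,\gamma}}\le C\|f\|_{C^0}$ would need Schauder control of $\|(1-e^f)u\|_{C^{0,\gamma}}$, hence of the H\"older seminorm of $f$. At $t=0$ you only know $\|\Delta_\omega^{\mathbb C}f\|_{C^0}=o(\alpha)$, and the scalar equation in fact makes $\Delta_\omega^{\mathbb C}f$ comparable to $\lambda\alpha f$.

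The $C^0$ bound you actually need can be supplied as follows.
\begin{enumerate}
\item The part of the proof of Lemma~\ref{lem:strictly-perturbed-compactness} leading to the $C^{1,\eta}$ bound uses only $L^\infty$ bounds on $c=e^f$ and $F=-\Psi$. Hence $K=\log(g_0^{-1}g)$ is bounded in $C^{1,\eta}$ uniformly in $\alpha,\lambda$.
\item Write $e^{-K}\partial^{h_0}e^K=\mathcal A_K(\partial^{h_0}K)$ with $\mathcal A_K(X)=\int_0^1e^{-sK}Xe^{sK}\,ds$. The difference of the two trace-free equations becomes the linear system $\sqrt{-1}\Lambda_\omega\bar\partial\bigl(\mathcal A_K\partial^{h_0}K\bigr)+\mathcal P_K(K)=(1-e^f)u$. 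Its leading coefficient is uniformly H\"older and uniformly positive, and its zeroth-order part $\mathcal P_K$ (the Higgs and logarithm differences) is uniformly bounded.
\item $W^{1,p}$ estimates started from your $\|K\|_{W^{1,2}}\le C\|f\|_{C^0}$, followed by Morrey's inequality, give $\|K\|_{C^0}\le C\|f\|_{C^0}$.
\end{enumerate}
With this inserted, your argument closes.

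The paper's proof never needs pointwise control of $K$. It uses geodesic monotonicity only to get $\|K\|_{L^2(h_0)}\le C\|f\|_{L^2}$. It then differentiates the scalar equation along $(sf,g_0e^{sK})$, integrates in $s$, multiplies by $-F_1^{-1}f$ with $F_1=\int_0^1\tr(A_s^{-1})\,ds$, and integrates over $M$.
\begin{itemize}
\item The weight turns the Laplacian term exactly into $\|\partial f\|_{L^2}^2$.
\item The $\lambda$-term is $\geq c\lambda\alpha\|f\|_{L^2}^2$.
\item Since $\|F_1^{-1}A_s^{-1}\|\le C$, the cross terms are bounded by $C\|f\|_{L^2}(\|K\|_{L^2}+\|f\|_{L^2})$.
\end{itemize}
This yields $\|\partial f\|_{L^2}^2+(c\lambda\alpha-C)\|f\|_{L^2}^2\le0$. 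The paper thus stays entirely at the $L^2$ level, and its weighted energy identity is what replaces your extra regularity step. Your maximum-principle route is more direct on the scalar side, but it pays for this with a uniform $C^0$ estimate for the nonlinear difference system.
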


\begin{proof}
Let \((f,g)\) be an admissible solution at \(t=0\).  We compare it
with the solution \((0,g_0)\), and let \(H\) be the positive
\(h_0\)-Hermitian endomorphism determined
by
\[
 gh_{\rm ref}(\,\cdot\,,\,\cdot\,)
 =
 h_0(H\,\cdot\,,\,\cdot\,).
\]
Thus \(g=g_0H\).  Set
\begin{equation}\label{eq:metric-geodesic-t0-verified}
 K:=\log H,
 \qquad
 \widehat h_s:=e^{sK}h_0=g_sh_{\rm ref},
 \qquad
 g_s:=g_0e^{sK},
 \qquad
 f_s:=sf,
 \quad 0\leq s\leq1,
\end{equation}
and let
\[
 \widetilde h_s:=e^{-f_s}\widehat h_s
\]
be the corresponding full interpolating metric.  Scalar conformal changes do
not alter the trace-free Hitchin--Simpson curvature, so the trace-free part of
the argument may be computed using $\widehat h_s$, whereas the scalar equation
is evaluated on $\widetilde h_s$.
Since \(\det g=\det g_0=1\), we have
\[
 \tr_E K=\log\det H=0.
\]
Moreover,
\[
 \dot g_s=g_sK.
\]
Writing
\[
 u_s:=\log g_s,
 \qquad
 u_0=\log g_0,
 \qquad
 u_1=u=\log g,
\]
we have
\begin{equation}\label{eq:udot-geodesic-t0-verified}
 \dot u_s
 =
 \mathcal D(\log)_{g_s}(g_sK).
\end{equation}

By Lemma~\ref{lem:t0-asymptotics}, \(g\) and \(g^{-1}\) are uniformly
bounded, while \(g_0\) is fixed.  Hence \(H\), \(H^{-1}\), and
\(K=\log H\) are uniformly bounded.  It follows that the whole family
\(\{g_s:0\leq s\leq1\}\) lies in a fixed compact subset of the positive
endomorphisms, independently of the particular admissible solution.

For \(0\leq s\leq1\), define
\[
 A_s
 :=
 \left(
  \frac{\beta}{r}
  +s\Delta_\omega^{\mathbb C}f
  +\alpha
 \right)\Id_E
 -e^{sf}u_s.
\]
Lemma~\ref{lem:t0-asymptotics} gives
\[
 \|f\|_{C^0}=o(1),
 \qquad
 \frac{\|\Delta_\omega^{\mathbb C}f\|_{C^0}}{\alpha}=o(1),
 \qquad
 \sup_{0\leq s\leq1}\|u_s\|_{C^0}\leq C.
\]
Consequently, after increasing \(\alpha_0\), there exist constants
\(c,C>0\), independent of \(\alpha,\lambda\), and of the solution, such
that
\begin{equation}\label{eq:As-comparable-geodesic-verified}
 c\alpha\Id_E
 \leq A_s
 \leq C\alpha\Id_E
 \qquad
 \text{for every }s\in[0,1].
\end{equation}
The matrices $A_s$ are the matrices entering the scalar residual
$\mathcal T_1(f_s,g_s)$ along this interpolation.  The intermediate pairs
$(f_s,g_s)$ are not asserted to solve the trace-free equation.  What is needed
below is precisely the uniform positivity in
\eqref{eq:As-comparable-geodesic-verified}; it guarantees that
$\log\det A_s$ and its variation are well defined for every
$s\in[0,1]$.

We first compare the trace-free equations.  Put
\[
 \mathcal R_s^0
 :=
 \sqrt{-1}\Lambda_\omega R^0_{D_{\widehat h_s}}.
\]
Along the path \(\widehat h_s=e^{sK}h_0\), the standard metric variation
formulas give
\[
 \frac{d}{ds}\mathcal R_s^0
 =
 \sqrt{-1}\Lambda_\omega
 \left(
  \bar\partial\partial^{\widehat h_s}K
  +[\theta,[\theta_{\widehat h_s}^*,K]]
 \right).
\]
Pairing this identity with \(K\), integrating by parts over \(M\), and
then integrating in \(s\), we obtain
\begin{equation}\label{eq:HS-geodesic-monotonicity-verified}
 \operatorname{Re}\int_M
 \tr_E\bigl((\mathcal R_1^0-\mathcal R_0^0)K\bigr)\,\omega
 =
 \int_0^1\!\!\int_M
 \left(
  |\partial^{\widehat h_s}K|_{\widehat h_s}^2
  +|[\theta_{\widehat h_s}^*,K]|_{\widehat h_s}^2
 \right)\omega\,ds
 \geq0.
\end{equation}
At the two endpoints, the trace-free equations give
\(
 \mathcal R_1^0=-e^f u,
\)
\(
 \mathcal R_0^0=-u_0.
\)
Therefore
\begin{equation}\label{eq:log-endpoint-sign-verified}
 \operatorname{Re}\int_M
 \tr_E\bigl((e^f u-u_0)K\bigr)\,\omega
 \leq0.
\end{equation}

By \eqref{eq:udot-geodesic-t0-verified},
\[
 u-u_0
 =
 \int_0^1
 \mathcal D(\log)_{g_s}(g_sK)\,ds.
\]
Since \(g_s\) remains in a fixed compact positive cone,
Lemma~\ref{lem:compact-cone-matrix-estimates} and uniform equivalence
of the metrics \(\widehat h_s\) and \(h_0\) give
\begin{align}\label{eq:log-geodesic-coercivity-verified}
 &\operatorname{Re}\int_M
 \tr_E\bigl((u-u_0)K\bigr)\,\omega
 \notag\\
 &\qquad=
 \int_0^1\!\!\int_M
 \operatorname{Re}
 \left\langle
  \mathcal D(\log)_{g_s}(g_sK),K
 \right\rangle_{\widehat h_s}
 \omega\,ds
 \geq
 c\|K\|_{L^2(h_0)}^2.
\end{align}
Using
\(
 e^f u-u_0=(u-u_0)+(e^f-1)u
\)
in \eqref{eq:log-endpoint-sign-verified}, we obtain
\[
 c\|K\|_{L^2(h_0)}^2
 \leq
 \left|
  \int_M
  (e^f-1)\operatorname{Re}\tr_E(uK)\,\omega
 \right|.
\]
Since \(\|u\|_{C^0}\leq C\) and
\[
 |e^f-1|\leq C|f|
\]
for the uniformly small function \(f\), the Cauchy--Schwarz inequality
gives
\[
 c\|K\|_{L^2(h_0)}^2
 \leq
 C\|f\|_{L^2}\|K\|_{L^2(h_0)}.
\]
Hence
\begin{equation}\label{eq:K-controlled-by-f-verified}
 \|K\|_{L^2(h_0)}
 \leq
 C\|f\|_{L^2}.
\end{equation}

We now use the scalar equation.  Define
\[
 \mathcal T_1(f,g)
 :=
 \log\det\left[
  \left(
   \frac{\beta}{r}
   +\Delta_\omega^{\mathbb C}f
   +\alpha
  \right)\Id_E
  -e^f\log g
 \right]
 -\lambda f-\log a_0.
\]
Both endpoints satisfy the scalar equation, so
\[
 \mathcal T_1(f_1,g_1)
 -
 \mathcal T_1(f_0,g_0)
 =
 0.
\]
Differentiating along \((f_s,g_s)\) gives
\begin{align}\label{eq:T1-geodesic-derivative-verified}
 \frac{d}{ds}\mathcal T_1(f_s,g_s)
 ={}&
 \tr_E(A_s^{-1})\Delta_\omega^{\mathbb C}f
 -\tr_E(A_s^{-1}e^{sf}\dot u_s)
 \notag\\
 &-
 f\,\tr_E(A_s^{-1}e^{sf}u_s)
 -\lambda f.
\end{align}
Set
\[
 F_1
 :=
 \int_0^1\tr_E(A_s^{-1})\,ds.
\]
The two-sided bound \eqref{eq:As-comparable-geodesic-verified} implies
that there exist constants \(c,C>0\) such that
\begin{equation}\label{eq:F1-two-sided-verified}
 c\alpha^{-1}
 \leq
 F_1
 \leq
 C\alpha^{-1},
 \qquad
 c\alpha
 \leq
 F_1^{-1}
 \leq
 C\alpha.
\end{equation}

Integrating \eqref{eq:T1-geodesic-derivative-verified} in \(s\),
multiplying the resulting identity by \(-F_1^{-1}f\), and integrating
over \(M\), we obtain
\begin{align}\label{eq:T1-geodesic-paired-verified}
 0={}&
 \int_M|\partial f|^2\,\omega
 +\lambda\int_MF_1^{-1}f^2\,\omega
 \notag\\
 &+
 \int_MF_1^{-1}f
 \int_0^1
 \tr_E(A_s^{-1}e^{sf}\dot u_s)\,ds\,\omega
 +
 \int_MF_1^{-1}f^2
 \int_0^1
 \tr_E(A_s^{-1}e^{sf}u_s)\,ds\,\omega.
\end{align}
Here the first term follows from
\[
 -F_1^{-1}f\cdot F_1\Delta_\omega^{\mathbb C}f
 =
 -f\Delta_\omega^{\mathbb C}f
\]
and integration by parts.

Since \(g_s\) remains in a fixed compact positive cone,
\eqref{eq:udot-geodesic-t0-verified} implies
\[
 |\dot u_s|\leq C|K|.
\]
Moreover, by
\eqref{eq:As-comparable-geodesic-verified} and
\eqref{eq:F1-two-sided-verified},
\[
 \|F_1^{-1}A_s^{-1}\|\leq C.
\]
Together with the uniform bounds for \(f\) and \(u_s\), this shows that
the last two terms in
\eqref{eq:T1-geodesic-paired-verified} are bounded in absolute value by
\[
 C\|f\|_{L^2}\|K\|_{L^2(h_0)}
 +
 C\|f\|_{L^2}^2.
\]
Using \(F_1^{-1}\geq c\alpha\), we therefore obtain
\[
 \|\partial f\|_{L^2}^2
 +c\lambda\alpha\|f\|_{L^2}^2
 \leq
 C\|f\|_{L^2}\|K\|_{L^2(h_0)}
 +C\|f\|_{L^2}^2.
\]
By \eqref{eq:K-controlled-by-f-verified},
\[
 \|\partial f\|_{L^2}^2
 +\bigl(c\lambda\alpha-C\bigr)\|f\|_{L^2}^2
 \leq0.
\]
After choosing \(\alpha_0\) and \(\lambda_0\) so that
\(c\lambda\alpha>C\), we conclude that \(f=0\).
Equation \eqref{eq:K-controlled-by-f-verified} then gives \(K=0\), and
hence \(g=g_0\).
\end{proof}

\begin{lemma}[Nondegeneracy of the initial solution]
\label{lem:t0-nondegenerate}
After increasing the thresholds $\alpha_0$ and $\lambda_0$ in
Lemma~\ref{lem:t0-global-uniqueness} if necessary, for every
$\alpha\geq\alpha_0$ and $\lambda\geq\lambda_0$, the linearization
\[
L:
C^{2,\gamma}(M,\mathbb R)
\oplus
C^{2,\gamma}\bigl(\Herm_0(E,h_0)\bigr)
\longrightarrow
C^{0,\gamma}(M,\mathbb R)
\oplus
C^{0,\gamma}\bigl(\Herm_0(E,h_0)\bigr)
\]
of the
Higgs--Demailly system at $(0,g_0,0)$ is an isomorphism.
\end{lemma}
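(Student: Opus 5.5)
We will compute the linearization explicitly at the trivial solution $(f,g)=(0,g_0)$, $t=0$, and then argue that it is Fredholm of index zero and injective. Write variations as $(\varphi,K)$ with $\varphi\in C^{2,\gamma}(M,\mathbb R)$ and $K\in C^{2,\gamma}(\Herm_0(E,h_0))$, using $g_s=g_0e^{sK}$ and $f_s=s\varphi$ as in the proof of Lemma~\ref{lem:t0-global-uniqueness}. At $f=0$ one has $A_0=(\beta/r+\alpha)\Id_E-u_0$, and $\dot u=\mathcal D(\log)_{g_0}(g_0K)$. The linearized scalar equation (in the logarithmic form $\mathcal T_1$) then reads
\[
 \tr_E(A_0^{-1})\Delta_\omega^{\mathbb C}\varphi
 -\tr_E\bigl(A_0^{-1}\mathcal D(\log)_{g_0}(g_0K)\bigr)
 -\varphi\,\tr_E(A_0^{-1}u_0)-\lambda\varphi .
\]
The trace-free part reads
\[
 \sqrt{-1}\Lambda_\omega\bigl(\bar\partial\partial^{h_0}K+[\theta,[\theta_{h_0}^*,K]]\bigr)
 +\mathcal D(\log)_{g_0}(g_0K)+\varphi\,u_0 .
\]
Here the term $\varphi u_0$ comes from differentiating $e^{f}\log g$ in $f$. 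This operator is a diagonal elliptic system with scalar Laplace principal symbol in each block. Deforming the lower-order coupling terms to zero gives a homotopy to a self-adjoint Laplace-type operator, exactly as in Lemma~\ref{lem:g0-forcing-removal}. Hence $L$ is Fredholm of index zero between the stated H\"older spaces, and it suffices to prove injectivity.

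For injectivity, suppose $L(\varphi,K)=0$. First pair the trace-free equation with $K$ and integrate. The differential part contributes $\int(|\partial^{h_0}K|^2+|[\theta_{h_0}^*,K]|^2)\ge0$. Lemma~\ref{lem:compact-cone-matrix-estimates} on the fixed cone containing $g_0$ gives $\operatorname{Re}\langle\mathcal D(\log)_{g_0}(g_0K),K\rangle\ge d_0|K|^2$. Since $u_0$ is fixed, we obtain
\[
 d_0\|K\|_{L^2}^2\le C\|\varphi\|_{L^2}\|K\|_{L^2},
 \qquad\text{so}\qquad
 \|K\|_{L^2}\le C\|\varphi\|_{L^2},
\]
with $C$ independent of $\alpha,\lambda$. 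This is the infinitesimal version of \eqref{eq:K-controlled-by-f-verified}.

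Next multiply the scalar equation by $-\tr_E(A_0^{-1})^{-1}\varphi$ and integrate. Here $\tr_E(A_0^{-1})$ is a fixed smooth function comparable to $r/\alpha$ for $\alpha$ large, by \eqref{eq:initial-A-detailed}. The Laplacian term would integrate by parts to $\|\partial\varphi\|^2$ only if the weight in front of $\Delta\varphi$ were constant, and it is not. To avoid a gradient-of-weight term, I would instead divide the scalar equation by $\tr_E(A_0^{-1})$ before pairing with $-\varphi$. This leaves the term $-\varphi\Delta\varphi$, which integrates to $\|\partial\varphi\|^2$, plus $\lambda F^{-1}\varphi^2$ with $F^{-1}\ge c\alpha$. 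The remaining terms satisfy $|F^{-1}A_0^{-1}|\le C$, so they are bounded by $C\|\varphi\|_{L^2}\|K\|_{L^2}+C\|\varphi\|_{L^2}^2$. Combining with the bound on $K$ gives
\[
 \|\partial\varphi\|^2+(c\lambda\alpha-C)\|\varphi\|^2\le0 .
\]
After enlarging $\alpha_0,\lambda_0$ this forces $\varphi=0$, and then $K=0$.

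The main obstacle is making the constants uniform. Specifically, the bound $|F^{-1}A_0^{-1}|\le C$ and the coercivity constant $d_0$ must not depend on $\alpha,\lambda$. This holds because $g_0$ is fixed and independent of both parameters, while $A_0=\alpha(\Id_E+O(\alpha^{-1}))$. A secondary point is that the linearization computed by direct variation along $g_0e^{sK}$ must agree with the derivative of the fixed-space map. As in Claim~1, the conjugation terms are multiplied by the value of the equation operator, which vanishes at a solution, so the two agree. Injectivity together with index zero then gives that $L$ is an isomorphism.
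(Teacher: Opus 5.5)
Your proposal is correct and follows the paper's proof essentially step for step. It uses the same linearization $L_1,L_2$ and pairs $L_2$ with $K$ to get $\|K\|_{L^2}\le C\|\varphi\|_{L^2}$, then pairs $L_1$ with $-w_0\varphi$, $w_0=(\tr_E A_0^{-1})^{-1}$, to get $\|\partial\varphi\|^2+(c\lambda\alpha-C)\|\varphi\|^2\le0$, and obtains index zero by homotoping the lower-order terms. Your worry about a gradient-of-weight term is unfounded. Multiplying by $-w_0\varphi$ already cancels the coefficient $\tr_E(A_0^{-1})$ of $\Delta_\omega^{\mathbb C}\varphi$, so it is the same operation as your ``divide first'' alternative, which is exactly the paper's step.
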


\begin{proof}
Let $u_0=\log g_0$ and
$A_0=(\beta/r+\alpha)\Id_E-u_0$.  Vary
$f_s=s\varphi$ and set
\[
 g_s=g_0e^{sK},
 \qquad
 \widehat h_s=g_sh_{\rm ref},
 \qquad
 h_s=e^{-f_s}\widehat h_s,
\]
where $K\in C^{2,\gamma}(\Herm_0(E,g_0h_{\rm ref}))$.
Here $\widehat h_s$ is the determinant-one metric variable, while
$h_s$ is the corresponding full metric in
\eqref{eq:metric-decomp-intro-full}.  Scalar conformal changes do not
alter the trace-free Hitchin--Simpson curvature, so the second component
is linearized through $\widehat h_s$.  The two components of the
linearization are
\begin{align}
 L_1(\varphi,K)={}&\tr(A_0^{-1})\Delta_\omega^{\mathbb C}\varphi
 -\tr\bigl(A_0^{-1}\mathcal D(\log)_{g_0}(g_0K)\bigr)\notag\\
 &-\varphi\tr(A_0^{-1}u_0)-\lambda\varphi,
 \label{eq:linear-first-t0-detailed}\\
 L_2(\varphi,K)={}&\sqrt{-1}\Lambda_\omega\left(
 \bar\partial\partial^{h_0}K+[\theta,[\theta_{h_0}^*,K]]\right)\notag\\
 &+\mathcal D(\log)_{g_0}(g_0K)+\varphi u_0.
 \label{eq:linear-second-t0-detailed}
\end{align}

Suppose $L_1=L_2=0$.  Set
$w_0=(\tr A_0^{-1})^{-1}$.  Since
$A_0=\alpha\Id_E+O(1)$, one has
\[
 c\alpha\leq w_0\leq C\alpha,\qquad w_0A_0^{-1}=O(1).
\]
Multiplying \eqref{eq:linear-first-t0-detailed} by $-w_0\varphi$ and integrating gives
the exact identity
\begin{eqnarray}\label{eq:linear-scalar-identity}
 0&=&\|\partial\varphi\|_{L^2}^2
 +\lambda\int_Mw_0\varphi^2\,\omega\notag\\
 &&+\int_Mw_0\varphi\,
 \tr\bigl(A_0^{-1}\mathcal D(\log)_{g_0}(g_0K)\bigr)\,\omega
 +\int_Mw_0\varphi^2\tr(A_0^{-1}u_0)\,\omega.
\end{eqnarray}
The bounds for $w_0A_0^{-1}$ and the fixed tensors $u_0,g_0$ imply
\begin{equation}\label{eq:linear-scalar-energy}
 \|\partial\varphi\|_{L^2}^2
 +\lambda\int_Mw_0\varphi^2\,\omega
 \leq C\|\varphi\|_{L^2}^2
      +C\|\varphi\|_{L^2}\|K\|_{L^2}.
\end{equation}
Pairing \eqref{eq:linear-second-t0-detailed} with $K$ gives the exact
identity
\begin{align}\label{eq:linear-matrix-identity}
 0={}&\|\partial^{h_0}K\|_{L^2}^2
 +\|[\theta_{h_0}^*,K]\|_{L^2}^2\\
 &+\int_M\left\langle
   \mathcal D(\log)_{g_0}(g_0K),K\right\rangle_{h_0}\omega
 +\int_M\varphi\langle u_0,K\rangle_{h_0}\omega.\notag
\end{align}
The logarithmic term is bounded below by $c\|K\|_{L^2}^2$; hence
\begin{equation}\label{eq:linear-matrix-energy}
 \|\partial^{h_0}K\|_{L^2}^2
 +\|[\theta_{h_0}^*,K]\|_{L^2}^2+c\|K\|_{L^2}^2
 \leq C\|\varphi\|_{L^2}\|K\|_{L^2}.
\end{equation}
Combining \eqref{eq:linear-scalar-energy} and
\eqref{eq:linear-matrix-energy}, using $w_0\geq c\alpha$, and applying
Young's inequality gives
\begin{align}\label{eq:linear-coercive-t0-detailed}
 0\geq{}&c\|\partial\varphi\|_{L^2}^2
 +c(\lambda\alpha-C)\|\varphi\|_{L^2}^2
 +\|\partial^{h_0}K\|_{L^2}^2\\
 &+\|[\theta_{h_0}^*,K]\|_{L^2}^2
 +c\|K\|_{L^2}^2.\notag
\end{align}
Thus $\varphi=K=0$.  The principal symbol is the direct sum of the
Laplace symbols on the scalar and trace-free Hermitian components.  Keeping
this principal symbol fixed and homotoping the lower-order terms to zero joins
$L$ through elliptic Fredholm operators to the corresponding diagonal
Laplace-type operator, whose Fredholm index is zero.  Hence $L$ also has index
zero.  Since its kernel is trivial, its cokernel is trivial as well, and $L$
is an isomorphism.
\end{proof}

\section{Uniform a priori estimates}
\label{sec4}

We now derive estimates for an arbitrary smooth admissible solution
$(f,g)$ at any $t\in[0,1]$.  All constants in this section are independent of
$t$ and of the particular solution.  We put
\[
 u:=\log g,
 \qquad
 \ell_1\geq\ell_2\geq\cdots\geq\ell_r
\]
for the eigenvalues of $u$, and write $\ell_{\max}=\ell_1$.

Rewriting the trace-free equation relative to \(h_{\rm ref}\), we have
\begin{equation}\label{eq:tracefree-reference}
 \sqrt{-1}\Lambda_\omega
 \left(
  \bar\partial(g^{-1}\partial^{h_{\rm ref}}g)
  +[\theta,g^{-1}\theta_{h_{\rm ref}}^*g
             -\theta_{h_{\rm ref}}^*]
 \right)
 +e^f u+\Psi=0.
\end{equation}
Taking the real trace pairing with \(u\) and applying
Lemma~\ref{lem:logarithmic-pairing}, we obtain
\begin{equation}\label{eq:logarithmic-identity-detailed}
 e^f|u|_{h_{\rm ref}}^2
 -\frac12\Delta_\omega^{\mathbb C}|u|_{h_{\rm ref}}^2
 +\mathscr Q_\nabla(u)+\mathscr Q_\theta(u)
 =
 -\langle\Psi,u\rangle_{h_{\rm ref}}.
\end{equation}
Since the two quadratic terms are nonnegative,
\begin{equation}\label{eq:logarithmic-inequality-detailed}
 e^f|u|_{h_{\rm ref}}^2
 -\frac12\Delta_\omega^{\mathbb C}|u|_{h_{\rm ref}}^2
 \leq
 |\Psi|_{h_{\rm ref}}|u|_{h_{\rm ref}}.
\end{equation}
Consequently, if \(f\geq-C_0\), then the maximum principle gives
\[
 \|u\|_{C^0}
 \leq
 e^{C_0}\|\Psi\|_{C^0}.
\]

The estimates in this section follow the ordinary-bundle strategy of
Pingali \cite{Pingali2023} and Murakami \cite{Murakami2026}; our main task is to verify that the additional Higgs
commutator has the required sign in the logarithmic and largest-eigenvalue
estimates.

\subsection{The largest eigenvalue and the Higgs sign}

The eigenvalues need not be smooth where their multiplicities change.  Let
$W\subset M$ be the open dense set on which all multiplicities are locally
constant.  On $W$ choose a local $h_{\rm ref}$-unitary frame
$\{s_a\}_{a=1}^r$ diagonalizing $u$ and write
\[
 g=\sum_{a=1}^r e^{\ell_a}s_a\otimes s_a^*.
\]
At a point $p\notin W$ we use the following regularization, keeping
track of the fact that the perturbed endomorphism does not solve the
Higgs--Demailly system.  Choose a smooth local $h_{\rm ref}$-unitary frame
$\{\widetilde s_a\}$ which diagonalizes $u(p)$.  After reordering the frame
inside each eigenspace of $u(p)$, choose a smooth Hermitian endomorphism
\begin{align*}
 P_\varepsilon&=\sum_a b_a\widetilde s_a\otimes\widetilde s_a^*,
 &P_\varepsilon&\leq0,\\
 b_1(p)&=0>b_2(p)>\cdots>b_r(p),
 &\|P_\varepsilon\|_{C^2}&\leq\varepsilon.
\end{align*}
Set
\[
 \widehat u_\varepsilon=u+P_\varepsilon,
 \qquad \widehat g_\varepsilon=e^{\widehat u_\varepsilon}.
\]
For a sufficiently small coordinate ball, the largest eigenvalue
$\widehat\ell_{1,\varepsilon}$ is smooth and simple, satisfies
$\widehat\ell_{1,\varepsilon}\leq\ell_{\max}$ on the ball, and equals
$\ell_{\max}$ at $p$.  Since the matrix maps
\[
 U\longmapsto e^U,
 \qquad
 G\longmapsto G^{-1},
\]
are smooth on the relevant compact subsets, there exists a constant
\(C\), depending on the fixed local solution but independent of
\(\varepsilon\), such that
\begin{equation}\label{eq:eigen-regularization-C2}
 \|\widehat g_\varepsilon-g\|_{C^2}
 +\|\widehat g_\varepsilon^{-1}-g^{-1}\|_{C^2}
 \leq C\varepsilon.
\end{equation}
The eigenvalue identity may be applied to
$\widehat\ell_{1,\varepsilon}$, whereas the Higgs--Demailly equation \eqref{eq:HD-detailed}
will always be applied to the original endomorphism \(g\); compare the proof of \cite[Lemma~2.3]{Murakami2026}.

Let
\[
 \partial^{h_{\rm ref}}s_a=(A^{1,0})^b_a s_b,
 \qquad
 \bar\partial s_a=(A^{0,1})^b_a s_b,
\]
and define
\[
 C_{ab}:=-\sqrt{-1}\Lambda_\omega
 (A^{1,0})^b_a\wedge(A^{0,1})^a_b\geq0.
\]

The Chern part of the largest-eigenvalue computation is the ordinary-bundle
identity of \cite[Lemma~2.3]{Murakami2026}.  We record it in our notation.
\begin{lemma}\label{lem:eigen-laplacian-detailed}
On $W$,
\begin{align}\label{eq:eigen-laplacian-detailed}
 \Delta_\omega^{\mathbb C}\ell_a={}&-
 \left(\sqrt{-1}\Lambda_\omega
 \bar\partial(g^{-1}\partial^{h_{\rm ref}}g)\right)^a_a\\
 &+\sum_b(e^{\ell_a-\ell_b}-1)C_{ab}
 -\sum_b(e^{\ell_b-\ell_a}-1)C_{ba}.\notag
\end{align}
\end{lemma}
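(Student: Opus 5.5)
We compute directly in the diagonalizing frame on \(W\).  Since the multiplicities are locally constant there, the eigenvalues \(\ell_a\) and the \(h_{\rm ref}\)-unitary frame \(\{s_a\}\) can be chosen smooth, and
\[
 \ell_a=\langle u s_a,s_a\rangle_{h_{\rm ref}} .
\]
The plan is to compute \(g^{-1}\partial^{h_{\rm ref}}g\) in this frame, apply \(\bar\partial\), and read off the \((a,a)\) entry.  The first-order terms coming from moving the frame will then be identified with the connection-form quantities \(C_{ab}\).

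\emph{Step 1.}  Apply \(\partial^{h_{\rm ref}}\) to \(g=\sum_b e^{\ell_b}s_b\otimes s_b^*\).  Unitarity of the frame gives \(\partial(s_b^*)=-\sum_c\overline{(A^{0,1})^b_c}\,s_c^*\); equivalently, the connection matrix is skew-Hermitian, \((A^{1,0})^b_a=-\overline{(A^{0,1})^a_b}\).  This yields
\[
 (g^{-1}\partial^{h_{\rm ref}}g)^a_a=\partial\ell_a,
 \qquad
 (g^{-1}\partial^{h_{\rm ref}}g)^b_a=(e^{\ell_a-\ell_b}-1)(A^{1,0})^b_a\quad(b\neq a),
\]
where the off-diagonal formula records the contribution \(e^{\ell_a}\partial s_a\) minus the term \(e^{\ell_b}\) coming from \(\partial s_b^*\), each multiplied by \(g^{-1}\).

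\emph{Step 2.}  Apply \(\bar\partial\) to the endomorphism-valued \((1,0)\)-form \(\Phi=g^{-1}\partial^{h_{\rm ref}}g\).  In the frame this reads
\[
 (\bar\partial\Phi)^a_a=\bar\partial(\Phi^a_a)+\sum_b\bigl((A^{0,1})^a_b\wedge\Phi^b_a-\Phi^a_b\wedge(A^{0,1})^b_a\bigr),
\]
with the signs fixed by the convention \(\bar\partial s_a=(A^{0,1})^b_as_b\).  We substitute Step 1 into this expression.

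The diagonal part gives \(\bar\partial\partial\ell_a\), and \(\sqrt{-1}\Lambda_\omega\bar\partial\partial\ell_a=-\Delta^{\mathbb C}_\omega\ell_a\) under the convention \(\Delta^{\mathbb C}_\omega=\sqrt{-1}\Lambda_\omega\partial\bar\partial\).  The off-diagonal products give the following terms after contraction with \(\sqrt{-1}\Lambda_\omega\):
\[
 (e^{\ell_a-\ell_b}-1)\,\sqrt{-1}\Lambda_\omega\bigl((A^{0,1})^a_b\wedge(A^{1,0})^b_a\bigr)
 =(e^{\ell_a-\ell_b}-1)\,C_{ab},
\]
using anticommutation of 1-forms, and
\[
 -(e^{\ell_b-\ell_a}-1)\,\sqrt{-1}\Lambda_\omega\bigl((A^{1,0})^a_b\wedge(A^{0,1})^b_a\bigr)
 =-(e^{\ell_b-\ell_a}-1)\,C_{ba},
\]
where the signs have to be checked against the definition of \(C_{ba}\).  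The \(b=a\) terms in both sums vanish because of the factor \(e^0-1\).

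\emph{Step 3.}  Solve for \(\Delta^{\mathbb C}_\omega\ell_a\), which gives \eqref{eq:eigen-laplacian-detailed}.  The nonnegativity \(C_{ab}\geq0\) follows from the skew-Hermitian relation: \(C_{ab}\) is a positive multiple of \(|(A^{1,0})^b_a|^2\).

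\emph{Main obstacle.}  The main difficulty is the sign bookkeeping.  Three conventions interact: the \(\bar\partial\) action on \(\End E\)-valued forms in a non-holomorphic frame, the ordering of the wedge factors in \(C_{ab}\), and the sign convention for \(\Delta^{\mathbb C}_\omega\).  As an independent check, we will compare the result with \cite[Lemma~2.3]{Murakami2026} in the case \(r=2\).  In particular, the coefficient of \(C_{ab}\) must vanish when \(\ell_a=\ell_b\), consistent with constant multiplicity on \(W\).
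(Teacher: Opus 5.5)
Your route is the right one: differentiate $g=\sum_b e^{\ell_b}s_b\otimes s_b^*$ in a smooth $h_{\rm ref}$-unitary eigenframe on $W$ and read off the $(a,a)$ entry. This is the computation behind \cite[Lemma~2.3]{Murakami2026}, which the paper simply cites. Your Step~1 output $(g^{-1}\partial^{h_{\rm ref}}g)^a_a=\partial\ell_a$, $(g^{-1}\partial^{h_{\rm ref}}g)^b_a=(e^{\ell_a-\ell_b}-1)(A^{1,0})^b_a$ is correct, and so is your final identity. However, sign bookkeeping is the entire content of this lemma, and your written derivation reaches the identity only through compensating errors.

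(i) For the unitary dual frame, $\partial^{h_{\rm ref}}s_b^*=-\sum_c(A^{1,0})^b_c\,s_c^*=\sum_c\overline{(A^{0,1})^c_b}\,s_c^*$. Your formula $-\sum_c\overline{(A^{0,1})^b_c}\,s_c^*$ equals $\sum_c(A^{1,0})^c_b\,s_c^*$, has the indices transposed, and would not yield your Step~1 entries.

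(ii) On an $\End E$-valued $1$-form, the frame entries of $\bar\partial\Phi$ are those of $\bar\partial(\Phi^b_a)+A^{0,1}\wedge\Phi+\Phi\wedge A^{0,1}$, since this is the graded commutator of two $1$-forms. Hence
\[
 (\bar\partial\Phi)^a_a=\bar\partial(\Phi^a_a)+\sum_b\bigl((A^{0,1})^a_b\wedge\Phi^b_a+\Phi^a_b\wedge(A^{0,1})^b_a\bigr),
\]
with a plus sign, not a minus, on the second product.

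(iii) By definition $C_{ba}=-\sqrt{-1}\Lambda_\omega\bigl((A^{1,0})^a_b\wedge(A^{0,1})^b_a\bigr)$. So your second displayed identity is false: its left-hand side equals $+(e^{\ell_b-\ell_a}-1)C_{ba}$. The errors in (ii) and (iii) cancel each other.

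(iv) The $b=a$ terms do not vanish ``because of the factor $e^0-1$''. The diagonal entry is $\Phi^a_a=\partial\ell_a$, which carries no such factor. These terms vanish because $(A^{0,1})^a_a\wedge\partial\ell_a+\partial\ell_a\wedge(A^{0,1})^a_a=0$ by anticommutativity of $1$-forms, and this uses the correct plus sign from (ii). With your minus sign they would leave $2(A^{0,1})^a_a\wedge\partial\ell_a$.

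With (i)--(iv) corrected, one obtains
\[
 \bigl(\sqrt{-1}\Lambda_\omega\bar\partial\Phi\bigr)^a_a
 =-\Delta_\omega^{\mathbb C}\ell_a+\sum_b(e^{\ell_a-\ell_b}-1)C_{ab}-\sum_b(e^{\ell_b-\ell_a}-1)C_{ba},
\]
which rearranges to \eqref{eq:eigen-laplacian-detailed}. A check that pins the sign more reliably than comparing with the case $r=2$ is to sum over $a$. Using $\tr\Phi=\partial\log\det g$, the first term on the right of \eqref{eq:eigen-laplacian-detailed} reproduces $\sum_a\Delta_\omega^{\mathbb C}\ell_a$. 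Therefore the two $C$-sums must cancel after relabelling $a\leftrightarrow b$, which forces the minus sign in front of the $C_{ba}$ sum.
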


\begin{proof}
This is exactly \cite[Lemma~2.3]{Murakami2026}, after expressing the
background Chern connection relative to \(h_{\rm ref}\).  No Higgs term
enters this identity; the new contribution is isolated in the next lemma.
\end{proof}

The Higgs term improves the estimate for the largest eigenvalue.

\begin{lemma}\label{lem:Higgs-largest-sign-detailed}
At a point where $u$ is diagonal, write $\theta=\Theta\,dz$.  Then
\begin{eqnarray}\label{eq:Higgs-largest-sign-detailed}
 &&\left(\sqrt{-1}\Lambda_\omega
 [\theta,g^{-1}\theta_{h_{\rm ref}}^*g-\theta_{h_{\rm ref}}^*]
 \right)^1_1\notag\\
 &=&c_\omega\sum_b\left[
  (e^{\ell_1-\ell_b}-1)|\Theta_{1b}|^2
 -(e^{\ell_b-\ell_1}-1)|\Theta_{b1}|^2\right]\geq0.
\end{eqnarray}
\end{lemma}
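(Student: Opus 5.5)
The plan is a direct pointwise computation in a frame that diagonalizes $u$, followed by a sign check that uses the maximality of $\ell_1$.

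Fix the point $x$, a local coordinate $z$ with $\omega=\sqrt{-1}g_{z\bar z}\,dz\wedge d\bar z$, and an $h_{\rm ref}$-unitary frame in which $u(x)=\operatorname{diag}(\ell_1,\ldots,\ell_r)$ and $\theta(x)=\Theta\,dz$. In this frame $\theta_{h_{\rm ref}}^*=\Theta^\dagger d\bar z$. Conjugating by $g=\operatorname{diag}(e^{\ell_a})$, exactly as in the proof of Lemma~\ref{lem:logarithmic-pairing}, gives
\[
 \Phi:=g^{-1}\theta_{h_{\rm ref}}^*g-\theta_{h_{\rm ref}}^*,
 \qquad
 \Phi_{ba}=(e^{\ell_a-\ell_b}-1)\overline{\Theta_{ab}}\,d\bar z .
\]

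Next expand the graded commutator $[\theta,\Phi]=\theta\wedge\Phi+\Phi\wedge\theta$ and read off its $(1,1)$ diagonal entry. Write $\phi$ for the matrix coefficient of $d\bar z$ in $\Phi$, so $\phi_{ba}=(e^{\ell_a-\ell_b}-1)\overline{\Theta_{ab}}$. Then
\[
 [\theta,\Phi]=\bigl(\Theta\phi-\phi\Theta\bigr)\,dz\wedge d\bar z .
\]
The two diagonal contributions are
\[
 (\Theta\phi)_{11}=\sum_b\Theta_{1b}\phi_{b1}
 =\sum_b(e^{\ell_1-\ell_b}-1)|\Theta_{1b}|^2,
\]
\[
 (\phi\Theta)_{11}=\sum_b\phi_{1b}\Theta_{b1}
 =\sum_b(e^{\ell_b-\ell_1}-1)|\Theta_{b1}|^2 .
\]
Since $\sqrt{-1}\Lambda_\omega(dz\wedge d\bar z)$ is a positive multiple of $g_{z\bar z}^{-1}$, call it $c_\omega>0$, and this gives the displayed equality in \eqref{eq:Higgs-largest-sign-detailed}.

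The sign is immediate. Because $\ell_1=\ell_{\max}$, every exponent $\ell_1-\ell_b$ is $\geq0$, so each coefficient $e^{\ell_1-\ell_b}-1$ is $\geq0$. Likewise $\ell_b-\ell_1\leq0$, so each $e^{\ell_b-\ell_1}-1$ is $\leq0$, and the subtracted sum is nonpositive. The whole expression is therefore $\geq0$.

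The only point needing care is the bookkeeping, namely the index placement, the complex conjugation in $\Phi$, and the positivity of the normalization constant $c_\omega$. I would check all three against the already verified formula \eqref{eq:Qtheta-exact}. Summing $\ell_a$ times the diagonal entries of $\Theta\phi-\phi\Theta$ should reproduce exactly the $(\ell_a-\ell_b)(e^{\ell_a-\ell_b}-1)|\Theta_{ab}|^2$ structure there, up to the factor $c_\omega$. That cross-check fixes both the index convention and the sign of $c_\omega$.
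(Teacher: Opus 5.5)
Your proposal is correct and is essentially the paper's proof. In an $h_{\rm ref}$-unitary frame diagonalizing $u$, you compute the entries of $g^{-1}\theta_{h_{\rm ref}}^*g-\theta_{h_{\rm ref}}^*$, expand the graded commutator as $(\Theta\phi-\phi\Theta)\,dz\wedge d\bar z$ to read off the $(1,1)$ entry with $c_\omega=\sqrt{-1}\Lambda_\omega(dz\wedge d\bar z)=g_{z\bar z}^{-1}>0$, and get the sign from $\ell_1\geq\ell_b$.
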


\begin{proof}
Set
\[
 S:=g^{-1}\theta_{h_{\rm ref}}^*g-\theta_{h_{\rm ref}}^*.
\]
In the \(h_{\rm ref}\)-unitary frame diagonalizing \(u\), one has
\[
 S_{ab}
 =
 \bigl(e^{\ell_b-\ell_a}-1\bigr)
 \overline{\Theta_{ba}}\,d\bar z.
\]
Hence
\[
 \sqrt{-1}\Lambda_\omega[\theta,S]^1_1
 =
 c_\omega\sum_b
 \left[
  \bigl(e^{\ell_1-\ell_b}-1\bigr)|\Theta_{1b}|^2
  -
  \bigl(e^{\ell_b-\ell_1}-1\bigr)|\Theta_{b1}|^2
 \right],
\]
where
\(
c_\omega=\sqrt{-1}\Lambda_\omega(dz\wedge d\bar z)>0.
\)
Since \(\ell_1\geq\ell_b\), both terms in each summand are
nonnegative.  This proves the assertion.
\end{proof}

\begin{lemma}\label{lem:lmax-subharmonic-detailed}
There is a constant $C$ such that $\Delta_\omega^{\mathbb C}\ell_{\max}\geq-C$ in
the viscosity sense.  Consequently,
\begin{equation}\label{eq:lmax-osc-average-detailed}
 \frac1{\Vol(M)}\int_M(\sup_M\ell_{\max}-\ell_{\max})\,\omega\leq C.
\end{equation}
\end{lemma}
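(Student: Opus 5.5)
We will combine the eigenvalue identity of Lemma~\ref{lem:eigen-laplacian-detailed} with the trace-free equation \eqref{eq:tracefree-reference}, use Lemma~\ref{lem:Higgs-largest-sign-detailed} for the sign of the Higgs term, and then apply a Green's function argument to pass from the differential inequality to the integral bound \eqref{eq:lmax-osc-average-detailed}.

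\emph{Step 1: the estimate on $W$.} At a point of $W$ where $\ell_1$ is simple and locally equals $\ell_{\max}$, we take $a=1$ in \eqref{eq:eigen-laplacian-detailed}. Since $\ell_1\ge\ell_b$ and $C_{ab}\ge0$, both sums satisfy
\[
(e^{\ell_1-\ell_b}-1)C_{1b}-(e^{\ell_b-\ell_1}-1)C_{b1}\ge0 .
\]
By \eqref{eq:tracefree-reference},
\[
-\bigl(\sqrt{-1}\Lambda_\omega\bar\partial(g^{-1}\partial^{h_{\rm ref}}g)\bigr)^1_1
=\bigl(\sqrt{-1}\Lambda_\omega[\theta,g^{-1}\theta^*_{h_{\rm ref}}g-\theta^*_{h_{\rm ref}}]\bigr)^1_1+e^f\ell_1+\Psi^1_1 .
\]
The commutator term is nonnegative by Lemma~\ref{lem:Higgs-largest-sign-detailed}, and $|\Psi^1_1|\le\|\Psi\|_{C^0}$. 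Hence
\[
\Delta^{\mathbb C}_\omega\ell_1\ge e^f\ell_1-C .
\]
This bound is not yet uniform, because $e^f\ell_1$ may be negative. However, $\operatorname{tr}u=0$ forces $\ell_1\ge0$, so $e^f\ell_1\ge0$ and we get $\Delta^{\mathbb C}_\omega\ell_{\max}\ge-C$ with $C=\|\Psi\|_{C^0}$.

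\emph{Step 2: points outside $W$ (the main obstacle).} Let $\phi$ be a smooth function touching $\ell_{\max}$ from above at $p$. Then $\phi$ also touches $\widehat\ell_{1,\varepsilon}$ from above at $p$. We repeat the Step~1 computation for $\widehat g_\varepsilon$, which does not satisfy the equation. We write the equation for $g$ and estimate the error
\[
\sqrt{-1}\Lambda_\omega\bar\partial(\widehat g_\varepsilon^{-1}\partial\widehat g_\varepsilon)-\sqrt{-1}\Lambda_\omega\bar\partial(g^{-1}\partial g),
\]
together with the corresponding Higgs error, by $C\varepsilon$ using \eqref{eq:eigen-regularization-C2}. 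The Higgs sign lemma and the $C_{ab}$ signs hold for $\widehat u_\varepsilon$ because $\widehat\ell_{1,\varepsilon}$ is the simple top eigenvalue. The zeroth-order terms differ by $O(\varepsilon)$, and $\widehat\ell_{1,\varepsilon}(p)=\ell_{\max}(p)\ge0$. This gives
\[
\Delta^{\mathbb C}_\omega\phi(p)\ge\Delta^{\mathbb C}_\omega\widehat\ell_{1,\varepsilon}(p)\ge-C-C'\varepsilon .
\]
Letting $\varepsilon\to0$ yields the viscosity inequality. The delicate point is to make sure the local constant $C'$ does not spoil the limit; it only needs to be independent of $\varepsilon$, which \eqref{eq:eigen-regularization-C2} guarantees.

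\emph{Step 3: the integral bound.} A continuous viscosity subsolution of $\Delta^{\mathbb C}_\omega v\ge-C$ on a compact surface is a distributional subsolution. Set $v=\ell_{\max}$, and let $G\ge0$ be the Green's function of $\Delta^{\mathbb C}_\omega$ normalized so that $\int_M G(x,\cdot)\,\omega$ is a fixed constant. Evaluating the Green representation at a maximum point $x$ gives
\[
\sup v-\frac{1}{\Vol(M)}\int_M v\,\omega=-\int_M G(x,y)\,\Delta^{\mathbb C}_\omega v(y)\,\omega(y)\le C\sup_x\int_M G(x,\cdot)\,\omega .
\]
This is exactly \eqref{eq:lmax-osc-average-detailed}. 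Alternatively, one may mollify $v$ or compare with the solution of $\Delta^{\mathbb C}_\omega w=C-\overline{C}$.
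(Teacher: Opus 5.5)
Your proposal is correct and follows the paper's proof essentially step for step: the $a=1$ eigenvalue identity on $W$ combined with the trace-free equation, the Higgs sign lemma and $\ell_{\max}\ge0$; then the regularization $\widehat\ell_{1,\varepsilon}$ with an $\varepsilon$-independent $C^2$ error at the remaining points; and finally the passage from viscosity to distributional subsolution followed by the Green-function estimate. The only cosmetic difference is that you apply the Higgs sign lemma to $\widehat u_\varepsilon$ after absorbing the zeroth-order Higgs difference into the $O(\varepsilon)$ error, whereas the paper applies it to $g$ itself in the regularized frame at $p$, whose first vector lies in the $\ell_{\max}(p)$-eigenspace of $u(p)$; both are valid.
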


\begin{proof}
On \(W\), apply Lemma~\ref{lem:eigen-laplacian-detailed} with \(a=1\).  Since
\(\ell_1\geq\ell_b\) and \(C_{ab}\geq0\), both sums in the
eigenvalue formula are nonnegative.  Hence
\[
 \Delta_\omega^{\mathbb C}\ell_1
 \geq
 -\left(
  \sqrt{-1}\Lambda_\omega
  \bar\partial(g^{-1}\partial^{h_{\rm ref}}g)
 \right)^1_1.
\]
The trace-free equation gives
\[
 -\sqrt{-1}\Lambda_\omega
 \bar\partial(g^{-1}\partial^{h_{\rm ref}}g)
 =
 \sqrt{-1}\Lambda_\omega
 [\theta,g^{-1}\theta_{h_{\rm ref}}^*g-\theta_{h_{\rm ref}}^*]
 +e^f u+\Psi.
\]
By Lemma~\ref{lem:Higgs-largest-sign-detailed}, the \((1,1)\)-component of the Higgs term is
nonnegative.  Since \(\operatorname{tr}_E u=0\), one has
\(\ell_1\geq0\), and therefore
\[
 \Delta_\omega^{\mathbb C}\ell_1
 \geq e^f\ell_1+\Psi^1_1
 \geq-C
 \qquad\text{on }W.
\]

We now consider a point \(p\in M\setminus W\).  Let
\(\varphi\in C^2\) touch \(\ell_{\max}\) from above at \(p\), and let
\(\widehat u_\varepsilon\), \(\widehat g_\varepsilon\), and
\(\widehat\ell_{1,\varepsilon}\) be the regularization introduced
above.  Since
\[
 \widehat\ell_{1,\varepsilon}\leq\ell_{\max}\leq\varphi,
 \qquad
 \widehat\ell_{1,\varepsilon}(p)
 =
 \ell_{\max}(p)
 =
 \varphi(p),
\]
the function
\(\varphi-\widehat\ell_{1,\varepsilon}\) has a local minimum at \(p\).
Thus
\[
 \Delta_\omega^{\mathbb C}\varphi(p)
 \geq
 \Delta_\omega^{\mathbb C}
 \widehat\ell_{1,\varepsilon}(p).
\]
Applying Lemma~\ref{lem:eigen-laplacian-detailed} to \(\widehat g_\varepsilon\), and again using the
nonnegativity of both sums in the largest-eigenvalue direction, gives
\[
 \Delta_\omega^{\mathbb C}\varphi(p)
 \geq
 -\left(
  \sqrt{-1}\Lambda_\omega
  \bar\partial(
   \widehat g_\varepsilon^{-1}
   \partial^{h_{\rm ref}}\widehat g_\varepsilon
  )
 \right)^1_1(p).
\]
The \(C^2\)-estimate for
\(\widehat g_\varepsilon-g\) and
\(\widehat g_\varepsilon^{-1}-g^{-1}\) implies
\[
 \Delta_\omega^{\mathbb C}\varphi(p)
 \geq
 -\left(
  \sqrt{-1}\Lambda_\omega
  \bar\partial(g^{-1}\partial^{h_{\rm ref}}g)
 \right)^1_1(p)
 -C\varepsilon.
\]
At \(p\), the first vector of the regularized eigenframe belongs to
the \(\ell_{\max}(p)\)-eigenspace of \(u(p)\).  Applying the original
trace-free equation to \(g\) and Lemma~\ref{lem:Higgs-largest-sign-detailed} in this direction, we obtain
\[
 \Delta_\omega^{\mathbb C}\varphi(p)
 \geq
 e^{f(p)}\ell_{\max}(p)-C-C\varepsilon
 \geq-C-C\varepsilon.
\]
Letting \(\varepsilon\downarrow0\) proves
\[
 \Delta_\omega^{\mathbb C}\ell_{\max}\geq-C
\]
in the viscosity sense on \(M\).

Since \(\ell_{\max}\) is continuous, this viscosity inequality is
equivalent to the corresponding distributional inequality for the
linear operator \(\Delta_\omega^{\mathbb C}\).  The standard
Green-function estimate then yields \eqref{eq:lmax-osc-average-detailed}.
\end{proof}

\subsection{Curvature bounds independent of a lower bound for \texorpdfstring{$f$}{f}}

\begin{lemma}[Scalar one-sided bounds]
\label{lem:scalar-one-sided}
Every smooth admissible solution satisfies
\begin{equation}\label{eq:scalar-one-sided}
 \Delta_\omega^{\mathbb C} f\geq-C,
 \qquad f\leq C.
\end{equation}
\end{lemma}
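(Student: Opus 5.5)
The plan is to read both bounds off admissibility $A>0$, evaluated along the eigendirection of $u$ with the \emph{largest} eigenvalue. The lower bound on $\Delta f$ comes pointwise from the matrix inequality, and the upper bound on $f$ then comes from the determinant equation at a maximum point of $f$.

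\textbf{Step 1: the Laplacian bound.} By Lemma~\ref{lem:system-rewrite-detailed}, admissibility says
\[
A=\Bigl(\frac{\beta}{r}+\Delta_\omega^{\mathbb C} f+(1-t)\alpha\Bigr)\Id_E-e^{f}u>0 .
\]
Testing this on an eigenvector of $u$ for $\ell_{\max}$ gives
\[
\frac{\beta}{r}+\Delta_\omega^{\mathbb C} f+(1-t)\alpha> e^f\ell_{\max}\ge 0 .
\]
The last inequality holds because $\tr_E u=0$ forces $\ell_{\max}\ge0$. Since $\beta$ is a fixed smooth function and $(1-t)\alpha\le\alpha$ with $\alpha$ fixed, this yields $\Delta_\omega^{\mathbb C} f\ge -\|\beta\|_{C^0}/r-\alpha=:-C$. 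All constants depend only on the fixed background data and on $\alpha$, which is fixed throughout.

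\textbf{Step 2: the upper bound on $f$.} Let $x_0$ be a maximum point of $f$. There $\Delta_\omega^{\mathbb C} f(x_0)\le0$. Every eigenvalue of $A$ is $\frac{\beta}{r}+\Delta f+(1-t)\alpha-e^f\ell_a$. Since $\sum_a\ell_a=0$, the trace of $A$ is
\[
\tr A=r\Bigl(\frac{\beta}{r}+\Delta f+(1-t)\alpha\Bigr)\le \|\beta\|_{C^0}+r\alpha
\]
at $x_0$. All eigenvalues are positive, so the arithmetic--geometric mean inequality gives
\[
e^{\lambda f(x_0)}a_0(x_0)=\det A\le\Bigl(\frac{\tr A}{r}\Bigr)^r\le C .
\]
The function $a_0=\det A_0$ is smooth and positive, so $\min_M a_0>0$, and taking logarithms gives $\lambda\max_M f\le C$. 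With $\lambda\ge\lambda_0>0$ this is the required $f\le C$.

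The main point to verify is that the negative part of $e^f u$ does not spoil Step 1. Choosing the $\ell_{\max}$ direction makes $-e^f\ell_{\max}\le0$, so the bound survives regardless of how negative $f$ is. In Step 2 the trace-free property removes $u$ entirely. Neither step uses a lower bound on $f$, which is exactly what this lemma is meant to provide.
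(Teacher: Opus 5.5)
Your proof is correct and follows the paper's argument; Step 2 (trace bound at a maximum point of $f$, then AM--GM and $\min_M a_0>0$) is identical to the paper's. The only difference is cosmetic: in Step 1 you test $A_t>0$ on a top eigenvector of $u$, whereas the paper takes the trace of $A_t$ and uses $\tr u=0$. Both give $\Delta_\omega^{\mathbb C} f>-\beta/r-(1-t)\alpha$, and your version also yields the sharper inequality $e^f\ell_{\max}<\beta/r+\Delta_\omega^{\mathbb C} f+(1-t)\alpha$, which the paper uses in the next lemma.
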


\begin{proof}
Taking the trace of $A_t>0$ and using $\tr u=0$ gives
\[
 \beta+r\Delta_\omega^{\mathbb C} f+r(1-t)\alpha>0,
\]
which proves the lower bound for the Laplacian.  At a maximum point $x_0$
of $f$, one has $\Delta_\omega^{\mathbb C} f(x_0)\leq0$ and
\[
 \tr A_t(x_0)\leq\|\beta\|_{C^0}+r\alpha.
\]
If $a_1,\ldots,a_r$ are the positive eigenvalues of $A_t(x_0)$, then
\[
 e^{\lambda f(x_0)}a_0(x_0)=\prod_a a_a
 \leq\left(\frac1r\sum_a a_a\right)^r\leq C.
\]
The fixed positive lower bound for $a_0$ yields $f\leq C$.
\end{proof}

\begin{lemma}[Weighted maximum-eigenvalue estimate]
\label{lem:raw-max-eigenvalue}
Every smooth admissible solution satisfies
\begin{equation}\label{eq:raw-max-eigenvalue}
 e^f\ell_{\max}\leq C,
 \qquad e^f|u|\leq C.
\end{equation}
\end{lemma}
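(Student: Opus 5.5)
We prove the first inequality in \eqref{eq:raw-max-eigenvalue} by a maximum-principle argument applied to $e^f\ell_{\max}$, using admissibility and the scalar bounds of Lemma~\ref{lem:scalar-one-sided}. The second inequality then follows algebraically. Since $\tr_E u=0$, one has $\ell_r\geq-(r-1)\ell_{\max}$, and hence $|u|\leq C\ell_{\max}$.

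The key observation is that admissibility already controls $e^f\ell_{\max}$ from above. The smallest eigenvalue of $A_t$ is
\[
 \frac{\beta}{r}+\Delta_\omega^{\mathbb C}f+(1-t)\alpha-e^f\ell_{\max}>0,
\]
so pointwise
\[
 e^f\ell_{\max}<\frac{\beta}{r}+\alpha+\Delta_\omega^{\mathbb C}f.
\]
It therefore suffices to bound $\Delta_\omega^{\mathbb C}f$ from above at a suitable point, namely a maximum point of $e^f\ell_{\max}$.

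To arrange this, consider $\Phi:=f+\log\ell_{\max}$ on the set where $\ell_{\max}>0$. If $\ell_{\max}\equiv0$ there is nothing to prove. Let $x_0$ be a maximum point of $\Phi$, working in the viscosity sense with the regularization $\widehat\ell_{1,\varepsilon}$ introduced before Lemma~\ref{lem:eigen-laplacian-detailed} if $x_0\notin W$. At $x_0$ we have $\Delta_\omega^{\mathbb C}f\leq-\Delta_\omega^{\mathbb C}\log\ell_{\max}$. From the proof of Lemma~\ref{lem:lmax-subharmonic-detailed}, including the Higgs sign of Lemma~\ref{lem:Higgs-largest-sign-detailed},
\[
 \Delta_\omega^{\mathbb C}\ell_{\max}\geq e^f\ell_{\max}-C.
\]
Moreover, $\Delta^{\mathbb C}_\omega\log\ell=\Delta^{\mathbb C}_\omega\ell/\ell-|\partial\ell|^2/\ell^2$, and at the maximum $\partial\log\ell_{\max}=-\partial f$. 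Combining these gives
\[
 \Delta_\omega^{\mathbb C}f\leq -e^f+\frac{C}{\ell_{\max}}+|\partial f|^2 .
\]

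The gradient term is the obstacle. I would avoid it by using a different test quantity, namely a maximum point of $e^f\ell_{\max}$ itself, treated as a product. Write $Y=e^f\ell_{\max}$. At its maximum,
\[
 0\geq\Delta^{\mathbb C}_\omega Y=e^f\bigl(\Delta^{\mathbb C}_\omega\ell_{\max}+\ell_{\max}\Delta^{\mathbb C}_\omega f+\ell_{\max}|\partial f|^2+2\operatorname{Re}\langle\partial f,\partial\ell_{\max}\rangle\bigr).
\]
The condition $\partial Y=0$ gives $\partial\ell_{\max}=-\ell_{\max}\partial f$, so the last two terms combine to $-\ell_{\max}|\partial f|^2\leq 0$. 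This sign is again the wrong one. The obstacle persists in this form as well, so instead I would use the cruder route: simply evaluate at a maximum point $x_1$ of $f$, where $\Delta^{\mathbb C}_\omega f\leq0$.

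At $x_1$, admissibility gives $e^{f(x_1)}\ell_{\max}(x_1)\leq C$. To propagate this bound to all of $M$, I would use the distributional inequality $\Delta^{\mathbb C}_\omega\ell_{\max}\geq e^f\ell_{\max}-C$ together with Green's representation and \eqref{eq:lmax-osc-average-detailed}. These give
\[
 \sup\ell_{\max}\leq \frac{1}{\Vol(M)}\int\ell_{\max}\,\omega+C.
\]
Integrating the same inequality gives $\int e^f\ell_{\max}\,\omega\leq C$. Then $e^{f}\ell_{\max}\leq e^{\max f}\bigl(\operatorname{avg}\ell_{\max}+C\bigr)$.

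The remaining step, which I expect to be the main difficulty, is to control $\operatorname{avg}\ell_{\max}$ using the weight $e^f$ when $f$ may be very negative somewhere. For this I would invoke the osc bound on $f$ that comes from $\Delta^{\mathbb C}_\omega f\geq-C$ via Green's function: $\max f-\operatorname{avg}f\leq C$. Together with Jensen's inequality this lets one compare $e^{\max f}$ with the integral weight, giving
\[
 e^{\max f}\int\ell_{\max}\,\omega\lesssim\int e^f\ell_{\max}\,\omega+C\leq C.
\]
If Jensen's inequality points the wrong way here, I would fall back on Murakami's argument from \cite{Murakami2026} for this step.
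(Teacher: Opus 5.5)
Your final route is essentially the paper's proof, and the two maximum-principle attempts before it are dead ends that can simply be dropped. The ingredients are the same: $\int_M e^f\ell_{\max}\,\omega\le C$ (most simply obtained by integrating your pointwise admissibility bound $e^f\ell_{\max}<\beta/r+\alpha+\Delta_\omega^{\mathbb C}f$), together with \eqref{eq:lmax-osc-average-detailed}, the Green bound $\sup_M f-\frac{1}{\Vol(M)}\int_M f\,\omega\le C$, Jensen, and $f\le C$.

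The step you hedge on does go the right way, but you need to decouple the weight explicitly by writing $\ell_{\max}=\sup_M\ell_{\max}-(\sup_M\ell_{\max}-\ell_{\max})$. This gives
\[
 C\ge\int_M e^f\ell_{\max}\,\omega\ge\sup_M\ell_{\max}\int_M e^f\,\omega-e^{\sup_M f}\int_M\bigl(\sup_M\ell_{\max}-\ell_{\max}\bigr)\,\omega\ge c\,e^{\sup_M f}\sup_M\ell_{\max}-C,
\]
where Jensen bounds the first term and $f\le C$ with \eqref{eq:lmax-osc-average-detailed} bounds the second. Hence $e^f\ell_{\max}\le e^{\sup_M f}\sup_M\ell_{\max}\le C$.
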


\begin{proof}
Since $\tr u=0$, $\ell_{\max}\geq0$.  Positivity of $A_t$ in a largest
eigenvalue direction gives
\[
 e^f\ell_{\max}<\frac\beta r+\Delta_\omega^{\mathbb C} f+(1-t)\alpha.
\]
After integration, the Laplacian term vanishes, so
\begin{equation}\label{eq:int-eflmax-detailed}
 \int_M e^f\ell_{\max}\,\omega\leq C.
\end{equation}
Lemma~\ref{lem:lmax-subharmonic-detailed} gives
\begin{equation}\label{eq:lmax-L1-osc-detailed}
 \int_M(\sup_M\ell_{\max}-\ell_{\max})\,\omega\leq C.
\end{equation}
The Green representation formula and
Lemma~\ref{lem:scalar-one-sided} give
\begin{equation}\label{eq:f-average-max-detailed}
 \frac1{\Vol(M)}\int_M(f-\sup_Mf)\,\omega\geq-C.
\end{equation}
Now
\begin{align}
 \int_M e^f\ell_{\max}\,\omega
 ={}&\int_M e^f(\ell_{\max}-\sup\ell_{\max})\,\omega\notag\\
 &+e^{\sup f}\sup\ell_{\max}
   \int_M e^{f-\sup f}\,\omega.
 \label{eq:split-eflmax-detailed}
\end{align}

Since \(f\leq C\), Lemma~\ref{lem:lmax-subharmonic-detailed} gives
\[
 \int_M e^f(\ell_{\max}-\sup_M\ell_{\max})\,\omega
 \geq
 -e^{\sup_Mf}
 \int_M(\sup_M\ell_{\max}-\ell_{\max})\,\omega
 \geq-C.
\]
On the other hand, Jensen's inequality yields
\[
 \int_Me^{f-\sup_Mf}\,\omega\geq c>0.
\]  Combining this with
\eqref{eq:int-eflmax-detailed} proves
$e^{\sup f}\sup\ell_{\max}\leq C$, hence
$e^f\ell_{\max}\leq C$.

Finally, if $\ell_1\geq\cdots\geq\ell_r$ and $\sum_a\ell_a=0$, then
$-\ell_r\leq(r-1)\ell_1$ and $|u|\leq C_r\ell_{\max}$.  This proves the
second estimate.
\end{proof}

\begin{lemma}[Two-sided Laplacian and curvature bounds]
\label{lem:laplacian-curvature-bounds}
Every smooth admissible solution satisfies
\begin{equation}\label{eq:laplacian-curvature-bounds}
 |\Delta_\omega^{\mathbb C} f|+\osc_M f\leq C,
 \qquad
 \left\|\sqrt{-1}\Lambda_\omega R_{D_h}\right\|_{C^0}\leq C.
\end{equation}
\end{lemma}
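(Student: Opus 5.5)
The plan is to obtain the upper bound on $\Delta_\omega^{\mathbb C} f$ from the determinant equation, using the lower bounds already available. Lemma~\ref{lem:scalar-one-sided} gives $\Delta_\omega^{\mathbb C} f\geq -C$ and $f\leq C$. Lemma~\ref{lem:raw-max-eigenvalue} gives $e^f|u|\leq C$. Write $A_t=\mu\Id_E-e^fu$ with
\[
 \mu:=\frac{\beta}{r}+\Delta_\omega^{\mathbb C} f+(1-t)\alpha .
\]
By Lemma~\ref{lem:raw-max-eigenvalue}, every eigenvalue of $A_t$ lies in $[\mu-C,\mu+C]$, and all of them are positive by admissibility. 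The determinant equation then gives
\[
 (\mu-C)_+^{\,r}\leq \det A_t=e^{\lambda f}a_0\leq C,
\]
since $f\leq C$ and $a_0$ is a fixed smooth function. Hence $\mu\leq C$, and so $\Delta_\omega^{\mathbb C} f\leq C-\beta/r\leq C$. Together with the lower bound, this gives $|\Delta_\omega^{\mathbb C} f|\leq C$, with constants independent of $t\in[0,1]$ because $0\leq(1-t)\alpha\leq\alpha$ and $\alpha$ is fixed.

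The oscillation bound follows from the Green representation formula on $(M,\omega)$:
\[
 f(x)-\frac{1}{\Vol(M)}\int_M f\,\omega=-\int_M G(x,y)\,\Delta_\omega^{\mathbb C} f(y)\,\omega(y),
\]
with the Green function $G$ normalized to be bounded below and integrable. Since $\Delta_\omega^{\mathbb C} f$ is bounded in $C^0$, this yields $\|f-\bar f\|_{C^0}\leq C$. Hence $\osc_M f\leq C$.

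For the curvature bound, use \eqref{eq:HS-mean-rewrite} together with the trace-free equation $\sqrt{-1}\Lambda_\omega R^0_{D_{h_t}}=-e^f\log g$. This gives
\[
 \sqrt{-1}\Lambda_\omega R_{D_h}=\Bigl(\frac{\beta}{r}+\Delta_\omega^{\mathbb C} f\Bigr)\Id_E-e^fu .
\]
Both terms are bounded: the first by the step above and the second by Lemma~\ref{lem:raw-max-eigenvalue}.

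The norm in which the curvature is measured deserves a brief check. Because $u$ and $\Id_E$ commute, the expression above is diagonal in an $h_{\rm ref}$-unitary eigenframe of $u$, and its eigenvalues are bounded. Since it is $h$-Hermitian, its operator norm with respect to $h$ equals its spectral radius, so the $C^0$ bound holds regardless of the missing two-sided bound on $g$. I expect no real obstacle here. The only point needing care is that the upper bound on $\mu$ uses the upper bound on $f$ and not a lower bound, which is precisely why these estimates hold without the lower bound on $f$ established later.
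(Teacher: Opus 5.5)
Your proposal is correct and follows the paper's proof essentially verbatim: the paper also bounds $s=\beta/r+\Delta_\omega^{\mathbb C} f+(1-t)\alpha$ from above via $(s-C_0)^r\leq\det A_t=e^{\lambda f}a_0\leq C$, splitting into the cases $s\leq 2C_0+1$ and $s>2C_0+1$ instead of using $(\cdot)_+$. It then uses the Green representation for $\osc_M f$ and the identity $\sqrt{-1}\Lambda_\omega R_{D_h}=(\beta/r+\Delta_\omega^{\mathbb C} f)\Id_E-e^fu$ for the curvature bound, and your extra check (that $u$ commutes with $g$ and is Hermitian for both $h_{\rm ref}$ and $h$, so the choice of norm is immaterial) is a correct detail the paper leaves implicit.
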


\begin{proof}
Set
\[
 s:=
 \frac{\beta}{r}
 +\Delta_\omega^{\mathbb C}f
 +(1-t)\alpha.
\]
The eigenvalues of \(A_t\) are
\(
 s-e^f\ell_a>0,
\)
and
\(
1\leq a\leq r.
\)
By Lemma~\ref{lem:raw-max-eigenvalue}, there exists \(C_0>0\) such that
\(
 |e^f\ell_a|\leq C_0
\)
for every \(a\).

If \(s\leq2C_0+1\), then \(s\) is already uniformly bounded above.
If \(s>2C_0+1\), then
\[
 s-e^f\ell_a\geq s-C_0
\]
for every \(a\), and hence
\[
 (s-C_0)^r
 \leq
 \det A_t
 =
 e^{\lambda f}a_0
 \leq C,
\]
where the last inequality follows from the upper bound for \(f\) and
the fact that \(a_0\) is fixed.  Thus \(s\leq C\) in both cases.
Consequently,
\(
 \Delta_\omega^{\mathbb C}f\leq C.
\)
Together with Lemma~\ref{lem:scalar-one-sided}, this gives
\(
 |\Delta_\omega^{\mathbb C}f|\leq C.
\)

The Green representation formula gives $\osc_M f\leq C$.  Finally,
\begin{equation}\label{eq:K-bounded-detailed}
 \sqrt{-1}\Lambda_\omega R_{D_h}
 =\left(\frac\beta r+\Delta_\omega^{\mathbb C} f\right)\Id_E-e^f u,
\end{equation}
which is uniformly bounded by the preceding estimates.
\end{proof}

\begin{proposition}\label{prop:unconditional-estimates-detailed}
There is a constant $C$, independent of $t$ and of the solution, such that
\begin{equation}\label{eq:unconditional-list-detailed}
 \quad f\leq C,
 \quad e^f\ell_{\max}+e^f|u|
 +|\Delta_\omega^{\mathbb C} f|+\osc_M f\leq C.
\end{equation}
The contracted Hitchin--Simpson curvature is uniformly bounded in $C^0$.
\end{proposition}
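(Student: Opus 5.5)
This proposition simply collects the three preceding lemmas, so the plan is to assemble them in order and check that the constants they produce are uniform.

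\emph{Step 1: scalar bounds.} Lemma~\ref{lem:scalar-one-sided} gives $f\leq C$ and $\Delta_\omega^{\mathbb C} f\geq -C$. The constant depends only on $\|\beta\|_{C^0}$, $r$, $\alpha$, $\lambda$ and $\min_M a_0>0$. These are all fixed background data, and in particular independent of $t\in[0,1]$, since $(1-t)\alpha\leq\alpha$.

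\emph{Step 2: weighted eigenvalue bounds.} Lemma~\ref{lem:raw-max-eigenvalue} then supplies $e^f\ell_{\max}\leq C$ and $e^f|u|\leq C$. It uses three inputs:
\begin{itemize}
\item the viscosity subharmonicity $\Delta_\omega^{\mathbb C}\ell_{\max}\geq -C$ from Lemma~\ref{lem:lmax-subharmonic-detailed};
\item the Green-function control of $\sup_M f-f$ in $L^1$ from Step~1;
\item the integrated admissibility inequality.
\end{itemize}
The only point to recheck is the constant in Lemma~\ref{lem:lmax-subharmonic-detailed}. It comes from $\|\Psi\|_{C^0}$, from the bound $e^f\ell_1\geq 0$, and from the sign of the Higgs commutator in Lemma~\ref{lem:Higgs-largest-sign-detailed}. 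None of these depends on the solution or on $t$.

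\emph{Step 3: two-sided Laplacian, oscillation and curvature.} Lemma~\ref{lem:laplacian-curvature-bounds} gives $|\Delta_\omega^{\mathbb C} f|\leq C$. The argument is a dichotomy on $s=\beta/r+\Delta_\omega^{\mathbb C} f+(1-t)\alpha$, using the determinant equation together with $f\leq C$. Green's representation formula then gives $\osc_M f\leq C$. For the contracted curvature, identity \eqref{eq:K-bounded-detailed} expresses $\sqrt{-1}\Lambda_\omega R_{D_h}$ through $\Delta_\omega^{\mathbb C} f$ and $e^f u$, both already bounded. This yields the $C^0$ bound.

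\emph{Main point of care.} The only real issue is uniformity. Every constant must be traced back to $(M,\omega,E,\theta,h_{\rm ref},\alpha,\lambda,a_0)$ and must not depend on $t$ or on a lower bound for $f$. No step above uses a lower bound for $f$. The Higgs term enters only through its favourable sign, so nothing beyond the three lemmas is needed.
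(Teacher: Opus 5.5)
Your proposal is correct and matches the paper's proof, which combines Lemmas~\ref{lem:scalar-one-sided}, \ref{lem:raw-max-eigenvalue}, and~\ref{lem:laplacian-curvature-bounds} in that order. Your check that no constant depends on $t$ or on a lower bound for $f$ is the right thing to verify, and it agrees with how those lemmas are proved.
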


\begin{proof}
Combine Lemmas~\ref{lem:scalar-one-sided},
~\ref{lem:raw-max-eigenvalue}, and
~\ref{lem:laplacian-curvature-bounds}.
\end{proof}

\subsection{Estimates assuming the missing lower bound}

Once the missing lower bound for \(f\) is available, the remaining estimates
are standard elliptic consequences of the equations.  We keep the argument
short and refer to Appendix~B for the bundle-valued compactness estimate.

\begin{lemma}[Zeroth-order bounds and strict admissibility]
\label{lem:conditional-C0-strict}
Assume $f\geq-C_0$.  Then
\begin{equation}\label{eq:g-C0-two-sided-detailed}
 C^{-1}\Id_E\leq g\leq C\Id_E,
 \qquad c\Id_E\leq A_t\leq C\Id_E.
\end{equation}
\end{lemma}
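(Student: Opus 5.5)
The bound on $g$ comes from the logarithmic $C^0$ estimate, and the bounds on $A_t$ come from its explicit form together with the determinant equation.

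\emph{Bound on $g$.} Under the assumption $f\geq-C_0$, the maximum principle applied to \eqref{eq:logarithmic-inequality-detailed} gives
\[
\|u\|_{C^0}\leq e^{C_0}\|\Psi\|_{C^0},
\]
as already noted after that inequality; equivalently, this is \eqref{eq:weighted-log-bound-general} with $\min_M f\geq-C_0$. Since $g=e^u$ with $u$ Hermitian, its eigenvalues lie in $[e^{-\|u\|_{C^0}},e^{\|u\|_{C^0}}]$. This gives $C^{-1}\Id_E\leq g\leq C\Id_E$.

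\emph{Upper bound on $A_t$.} Recall
\[
A_t=s\,\Id_E-e^fu,\qquad s=\frac{\beta}{r}+\Delta_\omega^{\mathbb C}f+(1-t)\alpha.
\]
Proposition~\ref{prop:unconditional-estimates-detailed} gives $|\Delta_\omega^{\mathbb C}f|\leq C$ and $e^f|u|\leq C$. Since $\beta$ is fixed and $(1-t)\alpha\leq\alpha$, both $|s|$ and the eigenvalues of $A_t$ are bounded above by a uniform constant. Note that $\alpha$ is a fixed parameter, so dependence on it is allowed.

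\emph{Lower bound on $A_t$.} Admissibility only gives $A_t>0$, so the lower bound must come from the determinant equation. First, $f\geq-C_0$ and $f\leq C$ give $|f|\leq C$. Since $a_0$ is a fixed smooth positive function, $\det A_t=e^{\lambda f}a_0\geq c>0$. Now let $a_1\leq\cdots\leq a_r$ be the positive eigenvalues of $A_t$, each bounded above by $C$. Then
\[
a_1\geq\frac{\det A_t}{C^{\,r-1}}\geq c',
\]
which gives $A_t\geq c\,\Id_E$.

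All constants depend only on $C_0$ and the fixed data (including $\alpha$ and $\lambda$). The argument requires nothing beyond the estimates already established. The only point needing care is the lower bound for $A_t$, which requires combining the lower bound for $\det A_t$ with the upper bound on the eigenvalues.
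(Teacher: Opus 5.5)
Your proposal is correct and follows the paper's proof essentially step for step: the weighted logarithmic estimate with $\min_M f\geq -C_0$ gives the two-sided bound on $g$. For $A_t$, the paper bounds the eigenvalues above via $\tr A_t\leq C$ and positivity (you bound $|s|$ and $e^f|u|$ directly), and combining this with $\det A_t=e^{\lambda f}a_0\geq c_0$ yields $a_1\geq c_0/C^{r-1}$. The determinant lower bound needs only $f\geq-C_0$, not the upper bound on $f$.
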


\begin{proof}
By Corollary~\ref{cor:weighted-logarithmic-estimate},
\(
 \|u\|_{C^0}\leq e^{C_0}\|\Psi\|_{C^0},
\)
which gives the two-sided metric bound.  Proposition
~\ref{prop:unconditional-estimates-detailed} gives \(\tr A_t\leq C\),
while
\(
 \det A_t=e^{\lambda f}a_0\geq c_0>0.
\)
If \(0<a_1\leq\cdots\leq a_r\) are the eigenvalues of \(A_t\), then
\(a_r\leq C\) and \(a_1\geq c_0/C^{r-1}\).
\end{proof}

\begin{proposition}\label{prop:conditional-estimates-detailed}
Assume $f\geq-C_0$.  Then, for every integer $k\geq0$ and
$0<\gamma<1$, there is a constant $C_{k,\gamma}$, independent of $t$, such
that
\begin{equation}\label{eq:all-estimates-detailed}
 \|f\|_{C^{k+2,\gamma}}+\|g\|_{C^{k+2,\gamma}}
 +\|g^{-1}\|_{C^{k+2,\gamma}}\leq C_{k,\gamma}.
\end{equation}
Moreover, $A_t\geq c\Id_E$.
\end{proposition}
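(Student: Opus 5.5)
Under the assumption \(f\geq-C_0\), I would first record the zeroth-order information already available. Lemma~\ref{lem:conditional-C0-strict} gives \(C^{-1}\Id_E\leq g\leq C\Id_E\) and \(c\Id_E\leq A_t\leq C\Id_E\), so \(A_t\geq c\Id_E\) is immediate. Proposition~\ref{prop:unconditional-estimates-detailed} gives \(|\Delta_\omega^{\mathbb C}f|+\osc_Mf\leq C\), and together with \(f\leq C\) and \(f\geq-C_0\) this yields \(\|f\|_{C^0}\leq C\). Since \(\Delta_\omega^{\mathbb C}f\) is bounded in \(L^\infty\), \(L^p\) theory on the compact surface gives \(\|f\|_{W^{2,p}}\leq C_p\) for every \(p<\infty\), hence \(\|f\|_{C^{1,\gamma}}\leq C\).

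Next I would turn to the trace-free equation \eqref{eq:tracefree-reference}. Write it as
\[
\sqrt{-1}\Lambda_\omega\bar\partial B=-e^fu-\Psi-\sqrt{-1}\Lambda_\omega[\theta,g^{-1}\theta^*_{h_{\rm ref}}g-\theta^*_{h_{\rm ref}}],
\qquad B=g^{-1}\partial^{h_{\rm ref}}g.
\]
The right-hand side is bounded in \(L^\infty\) by the two-sided bound on \(g\). Integrating the logarithmic identity \eqref{eq:logarithmic-identity-detailed} and using the coercivity \eqref{eq:Qnabla-coercive} (valid since \(\|u\|_{C^0}\) is bounded) gives \(\|\partial^{h_{\rm ref}}u\|_{L^2}\leq C\), hence \(\|B\|_{L^2}\leq C\). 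This is exactly the setting of the bundle-valued compactness Lemma~\ref{lem:strictly-perturbed-compactness} of Appendix~B, as already used in the closedness step of Lemma~\ref{lem:g0-forcing-removal}. The elliptic \(\bar\partial\)-equation for \(B\) with \(L^\infty\) data gives \(W^{1,p}\) bounds on \(B\), so \(g\in W^{2,p}\subset C^{1,\gamma}\). Then \(g^{-1}=\) a smooth function of \(g\) on a compact cone is also controlled.

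Finally I would bootstrap jointly. Once \(g,f\in C^{1,\gamma}\), the right-hand side of the \(B\)-equation is in \(C^{0,\gamma}\), and Schauder estimates give \(g\in C^{2,\gamma}\). For the scalar equation, write
\[
\Delta_\omega^{\mathbb C}f=s-\tfrac{\beta}{r}-(1-t)\alpha,
\]
where \(s\) is determined implicitly by \(\det(s\Id_E-e^fu)=e^{\lambda f}a_0\). Because \(A_t\geq c\Id_E\), every root is a simple positive one lying in a compact range, and \(\partial_s\det=\det A_t\tr A_t^{-1}\geq c>0\). The implicit function theorem therefore makes \(s\) a smooth function of \((f,u)\) on a compact set. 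Hence \(\Delta_\omega^{\mathbb C}f\in C^{k,\gamma}\) whenever \(f,u\in C^{k,\gamma}\), which gives \(f\in C^{k+2,\gamma}\). Alternating the two equations raises regularity by one step each time, and all constants stay independent of \(t\). This uses that \(t\) enters only through the bounded constant \((1-t)\alpha\).

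The main obstacle is the first step for \(g\): getting from \(L^\infty\) plus \(L^2\)-gradient control to a Hölder modulus, since the equation for \(g\) is quasilinear in \(g\). I would rely on Appendix~B's \(W^{1,p}\)-Morrey argument exactly as in Lemma~\ref{lem:g0-forcing-removal}, with \(F=-e^fu-\Psi\) now an \(L^\infty\)-bounded forcing.
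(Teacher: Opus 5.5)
Your proposal is correct and follows essentially the same route as the paper. Both arguments take the cone bounds and $A_t\geq c\Id_E$ from Lemma~\ref{lem:conditional-C0-strict}, use scalar $W^{2,p}$ estimates to get $f\in C^{1,\gamma}$, run the Appendix~B $W^{1,p}$--Morrey--Schauder argument for $g$, and resolve the determinant equation by the implicit function theorem (with $\partial_s\log\det=\tr A_t^{-1}$ bounded below) before alternating the bootstraps.

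The only cosmetic difference is in how the trace-free equation is fed into Appendix~B. The paper applies Lemma~\ref{lem:strictly-perturbed-compactness} verbatim with $c=e^f$, which is admissible since $f\geq-C_0$ and $f\in C^{1,\gamma}$, and with $F=-\Psi$. You instead absorb $e^fu$ into the forcing; this does not literally meet that lemma's hypotheses, but it is harmless because you rerun its proof and supply the $L^2$ bound on $B$ yourself.
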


\begin{proof}
The strict lower bound for \(A_t\) is Lemma
~\ref{lem:conditional-C0-strict}.  Proposition
~\ref{prop:unconditional-estimates-detailed} gives uniform \(C^0\)-bounds
for \(f\) and \(\Delta_\omega^{\mathbb C}f\).  Hence, for \(p>2\),
standard scalar elliptic estimates and Morrey embedding give
\[
 \|f\|_{W^{2,p}}+\|f\|_{C^{1,\gamma}}\leq C_p
 \qquad
 \left(0<\gamma<1-\frac2p\right).
\]
The trace-free equation can be written as
\[
 \sqrt{-1}\Lambda_\omega
 \left(
  \bar\partial(g^{-1}\partial^{h_{\rm ref}}g)
  +[\theta,g^{-1}\theta_{h_{\rm ref}}^*g-\theta_{h_{\rm ref}}^*]
 \right)
 +e^f\log g=-\Psi.
\]
Thus Lemma~\ref{lem:strictly-perturbed-compactness}, with
\(c=e^f\) and \(F=-\Psi\), applies on the fixed positive cone supplied by
Lemma~\ref{lem:conditional-C0-strict}; it yields
\[
 \|g\|_{C^{2,\gamma}}+\|g^{-1}\|_{C^{2,\gamma}}\leq C.
\]

It remains only to recover the same regularity for \(f\) from the determinant
equation.  Put
\[
 s=\frac{\beta}{r}+\Delta_\omega^{\mathbb C}f+(1-t)\alpha,
 \qquad
 A_t=s\Id_E-e^fu,
 \qquad u=\log g,
\]
and define
\[
 \mathcal F(x,s,y,U)
 =
 \log\det(s\Id_E-e^yU)-\lambda y-\log a_0(x).
\]
Along a solution \(\mathcal F(x,s,f,u)=0\), and
\[
 \partial_s\mathcal F=\tr_E(A_t^{-1}).
\]
The two-sided bound in Lemma~\ref{lem:conditional-C0-strict} gives
\(0<c\leq\partial_s\mathcal F\leq C\).  The finite-dimensional implicit
function theorem therefore resolves the admissible root smoothly as
\(s=\mathcal S(x,f,u)\) on a fixed neighborhood of the range of the
solutions.  Hence
\[
 \Delta_\omega^{\mathbb C}f
 =
 \mathcal S(x,f,u)-\frac{\beta}{r}-(1-t)\alpha,
\]
and the scalar Schauder estimate yields \(\|f\|_{C^{2,\gamma}}\leq C\).

The higher-order estimates follow by alternating these two standard
bootstraps: the trace-free equation together with
Lemma~\ref{lem:strictly-perturbed-compactness} improves \(g\) once the
coefficient \(e^f\) is controlled, while the resolved scalar equation improves
\(f\) once \(u=\log g\) is controlled.  Iteration gives
\eqref{eq:all-estimates-detailed} for every \(k\geq0\).
\end{proof}

\section{The Leray--Schauder reduction}
\label{sec5}

In this section we prove that a uniform lower bound for the scalar
variable implies solvability at \(t=1\). More precisely, we assume
that every smooth admissible solution along the parameter interval
satisfies
\[
 f\geq-C_0,
\]
where \(C_0\) is independent of \(t\) and of the solution.  By
Proposition~\ref{prop:conditional-estimates-detailed}, all such solutions then satisfy uniform estimates of
every order and remain a fixed positive distance from the boundary of
the admissible cone. The fixed-point construction follows the Leray--Schauder reduction in
\cite[Section~4]{Pingali2023}; the second auxiliary equation is replaced
by its Higgs analogue.

Fix \(0<\gamma<1\) and set
\[
\mathbb X:=
 C^{2,\gamma}(M,\mathbb R)
 \times
 C^{2,\gamma}\bigl(\Herm_0(E,h_{\rm ref})\bigr).
\]

For \(R>0\) and \(\delta>0\), let
\[
 \mathcal B:=
 \left\{
 (f,u,t)\in \mathbb X\times[0,1]:
 \|(f,u)\|_{\mathbb X}<R,\
 A(f,u,t)>\delta\Id_E
 \right\},
\]
where
\[
 A(f,u,t):=
 \left(
  \frac{\beta}{r}
  +\Delta_\omega^{\mathbb C}f
  +(1-t)\alpha
 \right)\Id_E-e^f u.
\]
The constants \(R\) and \(\delta\) will be chosen so that all
admissible solutions satisfying the assumed lower bound lie in
\(\mathcal B\).

We construct the fixed-point map by solving separately a scalar
equation and a trace-free Higgs metric equation.

\begin{lemma}[The scalar solution operator]
\label{lem:scalar-solver-detailed}
Fix a bounded subset of triples $(f,u,t)$ such that $u$ is trace-free
Hermitian and, for some $\delta>0$ independent of the triple,
\begin{equation}\label{eq:scalar-margin}
 \left(\frac\beta r+\Delta_\omega^{\mathbb C} f+(1-t)\alpha\right)\Id_E
 -e^f u\geq\delta\Id_E.
\end{equation}
Then the equation
\begin{equation}\label{eq:auxiliary}
 \det\left[
  \left(\frac\beta r+\Delta_\omega^{\mathbb C} U+(1-t)\alpha\right)\Id_E
  -e^f u
 \right]=e^{\lambda U}a_0
\end{equation}
has a unique admissible solution $U\in C^{2,\gamma}(M,\mathbb R)$.  On
bounded subsets satisfying \eqref{eq:scalar-margin},
$U$ is bounded in $C^{4,\gamma}$ and depends smoothly on $(f,u,t)$
in the corresponding Banach spaces.
\end{lemma}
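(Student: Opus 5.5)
Write $b:=\beta/r+(1-t)\alpha$ and $s:=b+\Delta_\omega^{\mathbb C}U$. The plan is to turn \eqref{eq:auxiliary} into a \emph{semilinear} scalar equation by inverting the determinant in the variable $s$ pointwise, and then to apply the maximum principle, Schauder theory and the implicit function theorem. Fix $x\in M$ and let $\mu(x):=e^{f(x)}\ell_{\max}(x)$ be the largest eigenvalue of $e^fu$. On $(\mu(x),\infty)$ the function
\[
 G(x,s):=\log\det\bigl(s\Id_E-e^{f}u\bigr)=\sum_a\log\bigl(s-e^{f}\ell_a\bigr)
\]
is smooth and strictly increasing, with $\partial_sG=\tr_E\bigl((s\Id_E-e^fu)^{-1}\bigr)>0$. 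It tends to $-\infty$ as $s\downarrow\mu(x)$ and to $+\infty$ as $s\to\infty$. Hence for each real $y$ there is a unique admissible root $s=\mathcal S(x,y)>\mu(x)$ of $G(x,s)=y$. Since $G$ is a symmetric function of the eigenvalues, it is a smooth function of the matrix entries of $e^fu$, so $\mathcal S$ is as regular in $x$ as $f$ and $u$ are (namely $C^{2,\gamma}$) and smooth in $y$, with $\partial_y\mathcal S=1/\partial_sG>0$. Admissible solutions of \eqref{eq:auxiliary} are then exactly the solutions of
\[
 \Delta_\omega^{\mathbb C}U=\Phi(x,U):=\mathcal S\bigl(x,\lambda U+\log a_0(x)\bigr)-b .
\]
Admissibility $s>\mu$ is automatic from the choice of root.

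\emph{A priori bounds and uniqueness.} Since $\lambda>0$, $\Phi$ is strictly increasing in $U$. With the convention that $\Delta_\omega^{\mathbb C}U\le0$ at a maximum, at a maximum point of $U$ we get $\Phi(x,U)\le0$. By the AM--GM bound as in Lemma~\ref{lem:scalar-one-sided}, this gives $\lambda U+\log a_0\le r\log(C+|b|)$, so $U\le C$. At a minimum point, $\Phi(x,U)\ge0$ means $s\ge b$, and the margin \eqref{eq:scalar-margin} gives $s-e^f\ell_a\ge b-e^f\ell_a$. This yields $e^{\lambda U}a_0\ge\det(b\Id_E-e^fu)$, which is bounded below on the bounded family: I expect \eqref{eq:scalar-margin}, evaluated at a point where $\Delta_\omega^{\mathbb C}f\le0$ and combined with the uniform $C^2$ bound on $f$, to give $b-\mu\ge c>0$ after perhaps shrinking constants. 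So $U\ge-C$. The same comparison applied to two solutions $U_1,U_2$ (at a maximum of $U_1-U_2$, monotonicity of $\Phi$ forces $U_1\le U_2$) gives uniqueness. With $U$ bounded in $C^0$, $y=\lambda U+\log a_0$ ranges in a compact interval. There $\mathcal S-\mu$ is bounded below, and $\mathcal S$, $\partial_y\mathcal S$ and their $x$-derivatives are bounded in terms of the bounded set of $(f,u,t)$. Then $\Delta U\in L^\infty$ gives a $C^{1,\gamma}$ bound, Schauder estimates give a $C^{2,\gamma}$ bound, and one more Schauder step, using that $\Phi$ is $C^{2,\gamma}$ in $x$ and smooth in $U$, gives $C^{4,\gamma}$.

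\emph{Existence and smooth dependence.} I will use the continuity method in a parameter $\tau$, replacing $\log a_0$ by $(1-\tau)\,G(x,b')+\tau\log a_0$ with $b'$ a large constant. At $\tau=0$ the constant $U\equiv0$ with $s=b'$ is a solution once $b'$ is chosen so that $b'-b$ equals the constant value of $\Delta U$; more simply, one starts from any equation solvable by a constant. The linearization is $\Delta_\omega^{\mathbb C}-\partial_U\Phi$ with $\partial_U\Phi=\lambda\,\partial_y\mathcal S>0$. This operator $C^{2,\gamma}\to C^{0,\gamma}$ is injective by the maximum principle and has index zero, so it is an isomorphism, which gives openness. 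Closedness follows from the uniform bounds above. Smooth dependence on $(f,u,t)$ follows from the implicit function theorem applied to $(U;f,u,t)\mapsto\Delta_\omega^{\mathbb C}U-\Phi_{f,u,t}(x,U)$, since the same linearization is invertible; smoothness of the map in $(f,u)$ uses that $\mathcal S$ depends smoothly on the matrix $e^fu$. The only delicate step is the uniform lower bound for $U$, where the margin $\delta$ must be converted into a uniform gap $b-\mu\ge c$ so that $\det(b\Id_E-e^fu)$ stays bounded away from zero. If that pointwise conversion fails, I would instead use a barrier: compare $U$ with the constant $U_-$ defined by $\lambda U_-=\min_M\bigl(\log\det(\,(b+\Delta_\omega^{\mathbb C}f)\Id_E-e^fu\,)-\log a_0\bigr)-C$. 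The term $\Delta_\omega^{\mathbb C}f$ is controlled in $C^0$ on the bounded set, so this modified quantity is well defined by \eqref{eq:scalar-margin}.
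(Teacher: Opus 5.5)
Your pointwise inversion to $\Delta_\omega^{\mathbb C}U=\Phi(x,U)$ with $\Phi$ increasing in $U$ is sound and matches the paper. So are the upper bound, the uniqueness argument, the Schauder bootstrap and the implicit-function step.

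The gap is the lower $C^0$ bound for $U$, the step you yourself flagged, and neither of your two routes closes it. The margin \eqref{eq:scalar-margin} says $b+\Delta_\omega^{\mathbb C}f-\mu\ge\delta$. It says nothing about $b-\mu$ at the minimum point of $U$, where you only know $b+\Delta_\omega^{\mathbb C}U\ge b$. A $C^2$ bound on $f$ cannot transfer information from a point where $\Delta_\omega^{\mathbb C}f\le0$ to that point.

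In fact $b>\mu$ can fail. At $t=1$, take $u=0$ and a reference metric with $\beta<0$ somewhere but $\int_M\beta\,\omega>0$. Let $f$ solve $\Delta_\omega^{\mathbb C}f=\delta-\beta/r$ with $\delta\Vol(M)=\int_M(\beta/r)\,\omega$. Then \eqref{eq:scalar-margin} holds, yet $b-\mu=\beta/r<0$ somewhere. There $b\Id_E-e^fu$ is not positive definite, so $s\ge b$ adds nothing to $s>\mu$ and gives no lower bound on $\det(s\Id_E-e^fu)$.

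Your fallback fails for the same reason. A constant $w$ can serve as a lower barrier only if $\Phi(x,w)\le0$ everywhere, i.e.\ $\mathcal S(x,\lambda w+\log a_0)\le b$. This is impossible wherever $b\le\mu$, since $\mathcal S>\mu$. The quantity you wrote down for $U_-$ is the right one, but it belongs to a barrier of the form $f+c$, not to a constant.

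The missing idea is to compare $U$ with $f$, which is admissible precisely because of the margin. This is what the paper does with the barriers $f-C_-$ and $f+C_+$. At a minimum point of $U-f$ one has $\Delta_\omega^{\mathbb C}U\ge\Delta_\omega^{\mathbb C}f$. Monotonicity of the determinant in $s$ and \eqref{eq:scalar-margin} then give
\[
 e^{\lambda U}a_0=\det\bigl((b+\Delta_\omega^{\mathbb C}U)\Id_E-e^fu\bigr)\ge\det\bigl((b+\Delta_\omega^{\mathbb C}f)\Id_E-e^fu\bigr)\ge\delta^r,
\]
so $U-f\ge-C_-$ there, and therefore everywhere.

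The same issue breaks the start of your continuity path. A constant is an admissible solution of an equation of your form only if $b>\mu$ everywhere, so ``start from an equation solvable by a constant'' is unavailable in general. Your $\tau=0$ equation with $G(x,b')$, $b'$ constant, is also not solved by $U\equiv0$, since $b$ is not constant.

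Start instead from $U=f$. One option is the paper's path $\Delta_\omega^{\mathbb C}U=(1-\tau)(U-f+\Delta_\omega^{\mathbb C}f)+\tau\Phi(x,U)$. Another is to replace $\log a_0$ by $(1-\tau)\bigl(\log\det((b+\Delta_\omega^{\mathbb C}f)\Id_E-e^fu)-\lambda f\bigr)+\tau\log a_0$. Along either path the $f$-barriers give uniform $C^0$ bounds, and with them the rest of your argument (the bounds on $\mathcal S-\mu$, the Schauder estimates, closedness) goes through.
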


\begin{proof}
This is the scalar part of the Leray--Schauder construction used in
\cite[Section~4]{Pingali2023}; we record the points needed for uniformity in
our notation.  For each \(x\in M\), let
\[
 s_*(x)=e^{f(x)}\ell_{\max}(u(x))
 -\frac{\beta(x)}r-(1-t)\alpha
\]
and, for \(s>s_*(x)\), set
\[
 L_x(s)=
 \det\left[
 \left(\frac{\beta(x)}r+s+(1-t)\alpha\right)\Id_E-e^{f(x)}u(x)
 \right].
\]
Then \(L_x:(s_*(x),\infty)\to(0,\infty)\) is a smooth increasing
diffeomorphism, because
\[
 \frac{d}{ds}\log L_x(s)
 =
 \tr_E\left[
 \left(
 \left(\frac{\beta(x)}r+s+(1-t)\alpha\right)\Id_E-e^{f(x)}u(x)
 \right)^{-1}
 \right]>0.
\]
Denote its inverse by \(R_x\).  Equation \eqref{eq:auxiliary} is equivalent to
\begin{equation}\label{eq:scalar-inverse}
 \Delta_\omega^{\mathbb C}U
 =
 R_x(e^{\lambda U}a_0(x)).
\end{equation}

Connect this equation to \(U=f\) by
\begin{equation}\label{eq:scalar-path}
 \Delta_\omega^{\mathbb C}U
 =
 (1-\tau)(U-f+\Delta_\omega^{\mathbb C}f)
 +\tau R_x(e^{\lambda U}a_0(x)),
 \qquad 0\leq\tau\leq1.
\end{equation}
The linearization is \(\Delta_\omega^{\mathbb C}-c_\tau(x)\) with
\(c_\tau>0\), so the maximum principle and index zero give openness.

The margin assumption \eqref{eq:scalar-margin} implies
\(s_*\leq\Delta_\omega^{\mathbb C}f-\delta\).  On the prescribed bounded
family, the inverse functions \(R_x\) approach \(s_*\) uniformly as their
argument tends to \(0\), and tend uniformly to \(+\infty\) as their argument
does.  Indeed, after diagonalizing \(u\) and writing
\(\ell_1=\ell_{\max}(u)\), there is a uniform constant \(C_B>0\) such that,
for \(0<\varepsilon\leq1\),
\[
 \varepsilon^r
 \leq L_x(s_*(x)+\varepsilon)
 \leq \varepsilon(\varepsilon+C_B)^{r-1}.
\]
The left inequality gives the uniform convergence
\(R_x(\eta)\downarrow s_*(x)\) as \(\eta\downarrow0\), while uniform
boundedness of the coefficients of \(L_x\) gives
\(R_x(\eta)\to+\infty\) uniformly as \(\eta\to+\infty\).  Hence there
are uniform constants \(C_\pm>0\) such that \(f-C_-\) and \(f+C_+\) are
respectively lower and upper barriers for \eqref{eq:scalar-path}.  Thus
\[
 f-C_-\leq U\leq f+C_+.
\]
In particular, \(e^{\lambda U}a_0\) is bounded above and below by positive
constants.  The inverse roots \(\rho(x):=R_x(e^{\lambda U}a_0(x))\) are
uniformly bounded, and the right inequality above gives
\(\rho(x)-s_*(x)\geq\delta_1>0\).  Thus the matrices
\[
 B_\rho:=\left(\frac\beta r+\rho+(1-t)\alpha\right)\Id_E-e^fu
\]
are uniformly positive.  This statement concerns the inverse roots; the
determinant matrix formed with \(\Delta_\omega^{\mathbb C}U\) need not be
positive at intermediate values of \(\tau\).
The implicit function theorem applied to \(L_x(\rho)=e^{\lambda U}a_0\)
shows that the inverse root is a smooth function of the matrix coefficients
and the positive determinant argument, with uniformly bounded derivatives
on this set.  No differentiability of \(s_*\) is needed.
The right-hand side of \eqref{eq:scalar-path} is uniformly bounded in
\(L^\infty\).  Scalar \(W^{2,p}\) estimates and Sobolev embedding give a
uniform \(C^{1,\eta}\)-bound for some \(\eta>\gamma\).  The right-hand
side is then bounded in \(C^{0,\gamma}\), and Schauder estimates give a
uniform \(C^{2,\gamma}\)-bound.  For each fixed triple \((f,u,t)\), take
a subsequence converging in \(C^{2,\gamma'}\), where
\(0<\gamma'<\gamma\), as \(\tau\) tends to a limiting value.  The equation
gives \(C^{2,\gamma}\) regularity of the limit, proving closedness.
At \(\tau=1\),
\eqref{eq:scalar-inverse} gives \(\Delta_\omega^{\mathbb C}U=\rho\),
so the solution is admissible.  Since the coefficients are bounded in
\(C^{2,\gamma}\), the same equation gives the asserted
\(C^{4,\gamma}\)-bound.

Uniqueness follows directly from the maximum principle and the strict
monotonicity of \(L_x\).  Finally, the linearization of
\eqref{eq:scalar-inverse} in \(U\) is again
\(\Delta_\omega^{\mathbb C}-c(x)\) with \(c>0\); the implicit function
theorem therefore gives smooth dependence on \((f,u,t)\).
\end{proof}

\begin{lemma}[The Higgs metric solution operator]
\label{lem:metric-solver-detailed}
Let $f$ range in a bounded subset of $C^{2,\gamma}(M)$.  There is a unique
positive determinant-one endomorphism $V$ satisfying
\begin{equation}\label{eq:aux-V-detailed}
 \sqrt{-1}\Lambda_\omega\left(
 R^0_{h_{\rm ref}}+\bar\partial(V^{-1}\partial^{h_{\rm ref}}V)
 +[\theta,V^{-1}\theta_{h_{\rm ref}}^*V]
 \right)=-e^f\log V.
\end{equation}
Moreover, $V$ is bounded in $C^{4,\gamma}$ on bounded sets of $f$ and
depends smoothly on $f$.
\end{lemma}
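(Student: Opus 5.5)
Since $f$ is fixed, \eqref{eq:aux-V-detailed} is the trace-free equation of Proposition~\ref{prop:g0-detailed} with the constant coefficient $1$ replaced by the smooth positive function $c=e^f$. I will therefore repeat the two-stage continuity scheme of Lemma~\ref{lem:g0-forcing-removal} and Proposition~\ref{prop:g0-detailed}, with this variable coefficient. On a bounded set of $f$ in $C^{2,\gamma}$ we have $e^{-C}\le e^f\le e^C$ uniformly, so every constant below depends only on this bound.

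\textbf{Existence.} I connect $\sqrt{-1}\Lambda_\omega R^0_{D_{Vh_{\rm ref}}}+c_\sigma\log V=0$ with $c_\sigma=(1-\sigma)c_*+\sigma e^f$, starting from the solution $g_*$ of \eqref{eq:gstar-detailed} at $\sigma=0$.
\begin{itemize}
\item \emph{A priori bound.} Corollary~\ref{cor:weighted-logarithmic-estimate}, applied with $e^{f}$ replaced by $c_\sigma\ge\min(c_*,e^{\min f})>0$ and with $\Psi$ the fixed background term, gives $\|\log V\|_{C^0}\le C$.
\item \emph{Closedness.} Lemma~\ref{lem:strictly-perturbed-compactness}, with $c_j=c_{\sigma_j}$ and $F_j=-\Psi$, gives uniform $C^{2,\gamma}$ bounds, hence closedness.
\item \emph{Openness.} The linearization is \eqref{eq:g0-coefficient-linearized-detailed} with $c_\sigma$ now a positive function. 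Pair it with $K$ and integrate. This gives $\|\partial^hK\|^2+\|[\theta_h^*,K]\|^2$ plus $\int c_\sigma\operatorname{Re}\langle\mathcal D(\log)_V(VK),K\rangle$. The last term is at least $c\|K\|_{L^2}^2$ by Lemma~\ref{lem:compact-cone-matrix-estimates} on the fixed cone. So the linearization is injective, has index zero, and is therefore an isomorphism.
\end{itemize}

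\textbf{Uniqueness.} Join two solutions by the geodesic $g_s=V_1e^{sK}$, exactly as at the end of the proof of Proposition~\ref{prop:g0-detailed}. The coefficient $e^f$ is the same at both endpoints and does not depend on $s$. The pairing therefore gives $0$ equal to a sum of nonnegative terms, one of which controls $\int e^f|K|^2$. Hence $K=0$.

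\textbf{Regularity and smooth dependence.}
\begin{itemize}
\item \emph{$C^{4,\gamma}$ bound.} The uniform $C^0$ bound on $\log V$ and Lemma~\ref{lem:strictly-perturbed-compactness} give $\|V\|_{C^{2,\gamma}}\le C$. In the equation $\sqrt{-1}\Lambda_\omega\bar\partial(V^{-1}\partial V)=\cdots$ the right-hand side is then bounded in $C^{2,\gamma}$, because $f\in C^{2,\gamma}$. Schauder estimates for this Laplace-type system then give the $C^{4,\gamma}$ bound, and the same holds for $V^{-1}$.
\item \emph{Smooth dependence.} Write the equation as a smooth map $(f,K)\mapsto\widetilde{\mathcal F}$ between fixed H\"older spaces, using the conjugation trick of Section~\ref{sec3}. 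Its $K$-derivative is the isomorphism just described. The implicit function theorem then gives smooth local dependence on $f$. Global uniqueness lets these local solution maps agree on overlaps, so they patch into a single smooth map.
\end{itemize}

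\textbf{Main obstacle.} I expect the most care to be needed at two points. The first is the uniform coercivity of the $\mathcal D(\log)$ term when the coefficient is a variable function rather than a constant. This is handled by the compact-cone estimate together with $e^f\ge e^{-C}$. The second is checking that Lemma~\ref{lem:strictly-perturbed-compactness} allows a variable coefficient $c$ that is only $C^{2,\gamma}$. Its use in Proposition~\ref{prop:conditional-estimates-detailed} with $c=e^f$ suggests that it does.
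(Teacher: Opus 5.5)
Your proposal is correct and follows essentially the same route as the paper: a continuity path in the zeroth-order coefficient, the logarithmic maximum-principle bound combined with Lemma~\ref{lem:strictly-perturbed-compactness} for closedness and the $C^{4,\gamma}$ bound, coercivity of the linearization for openness, the geodesic pairing for uniqueness, and the implicit function theorem plus uniqueness for smooth dependence. The only cosmetic difference is that you start the path at $g_*$ with coefficient $c_*$, whereas the paper starts at $g_0$ with coefficient $1$ using $c_s=(1-s)+se^f$; this changes nothing in the argument.
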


\begin{proof}
For \(0\leq s\leq1\), set
\[
 c_s:=(1-s)+se^f
\]
and consider
\begin{equation}\label{eq:metric-path}
 \sqrt{-1}\Lambda_\omega R^0_{D_{V_sh_{\rm ref}}}
 =-c_s\log V_s,
 \qquad
 \det V_s=1.
\end{equation}
At \(s=0\), Proposition~\ref{prop:g0-detailed} gives the solution \(V_0=g_0\).

Using the fixed-space identification from Subsection~\ref{subsec:strictly-perturbed}, the
linearization at a solution in a trace-free
\(h_s=V_sh_{\rm ref}\)-Hermitian direction \(K\) is
\[
 \mathscr L_sK
 =
 \sqrt{-1}\Lambda_\omega
 \bigl(
  \bar\partial\partial^{h_s}K
  +[\theta,[\theta_{h_s}^*,K]]
 \bigr)
 +c_s\mathcal D(\log)_{V_s}(V_sK).
\]
Pairing with \(K\) and integrating gives
\[
\begin{aligned}
 \operatorname{Re}\int_M
 \langle\mathscr L_sK,K\rangle_{h_s}\,\omega
 &=
 \int_M
 \left(
  |\partial^{h_s}K|_{h_s}^2
  +|[\theta_{h_s}^*,K]|_{h_s}^2
 \right)\omega\\
 &\quad+
 \int_M c_s\operatorname{Re}
 \left\langle
  \mathcal D(\log)_{V_s}(V_sK),K
 \right\rangle_{h_s}\omega.
\end{aligned}
\]
Since \(c_s>0\), Lemma~\ref{lem:matrix-monotonicity-app} implies that the kernel is trivial.
The operator is elliptic of Fredholm index zero, hence is an
isomorphism.  The solution set is therefore open.

Since \(f\) ranges in a bounded subset of \(C^{2,\gamma}\), the
functions \(c_s\) are uniformly bounded above and below by positive
constants and are uniformly bounded in \(C^{2,\gamma}\).  Writing
\(u_s=\log V_s\), Lemma~\ref{lem:logarithmic-pairing} gives
\[
 c_s|u_s|^2
 -\frac12\Delta_\omega^{\mathbb C}|u_s|^2
 +\mathscr Q_\nabla(u_s)+\mathscr Q_\theta(u_s)
 =
 -\langle\Psi,u_s\rangle.
\]
The maximum principle yields a uniform \(C^0\)-bound for \(u_s\).
Lemma~\ref{lem:strictly-perturbed-compactness}, applied with coefficient \(c_s\) and forcing term
\(-\Psi\), then gives a uniform \(C^{2,\gamma}\)-bound for \(V_s\).
Since \(c_s\) is uniformly bounded in \(C^{2,\gamma}\), Schauder
bootstrapping in \eqref{eq:metric-path} gives
\[
 \|V_s\|_{C^{4,\gamma}}\leq C.
\]
These estimates prove closedness and hence existence at \(s=1\).

For uniqueness, let \(V_1,V_2\) be two solutions of the terminal equation
\eqref{eq:aux-V-detailed}.  Put \(h_i=V_i h_{\rm ref}\), let
\(H=V_1^{-1}V_2\), and set \(K=\log H\).  Since
\(\det V_1=\det V_2=1\), one has \(\tr_E K=0\).  Define
\[
 V_\tau=V_1e^{\tau K},
 \qquad
 h_\tau=V_\tau h_{\rm ref}.
\]
Pairing the difference of the two equations with \(K\) and integrating
along this path gives
\[
\begin{aligned}
 0
 &=
 \int_0^1\int_M
 \left(
  |\partial^{h_\tau}K|^2
  +|[\theta_{h_\tau}^*,K]|^2
 \right)\omega\,d\tau \\
 &\quad+
 \int_0^1\int_M
 e^f\operatorname{Re}
 \left\langle
  \mathcal D(\log)_{V_\tau}(V_\tau K),K
 \right\rangle_{h_\tau}
 \omega\,d\tau.
\end{aligned}
\]
Every term is nonnegative, and the logarithmic term is strictly
positive unless \(K=0\).  Thus \(V_1=V_2\).

The same linearized coercivity shows that the derivative with respect
to \(V\) is an isomorphism at every solution.  The Banach-space
implicit function theorem, together with uniqueness, therefore gives
smooth dependence of \(V\) on \(f\).
\end{proof}

For \((f,u,t)\in\mathcal B\), let
\(\mathcal U_t(f,u)\) be the solution given by
Lemma~\ref{lem:scalar-solver-detailed}, and let
\(\mathcal V(f)\) be the solution given by
Lemma~\ref{lem:metric-solver-detailed}.  Define
\begin{equation}\label{eq:Phi-detailed}
\Phi(f,u,t)
:=
\Phi_t(f,u)
:=
\bigl(\mathcal U_t(f,u),\log\mathcal V(f)\bigr).
\end{equation}
Lemmas~\ref{lem:scalar-solver-detailed} and
~\ref{lem:metric-solver-detailed} show that
\(
\Phi:\mathcal B\longrightarrow\mathbb X
\)
is \(C^1\) and maps bounded closed subsets of \(\mathcal B\) into
bounded subsets of \(C^{4,\gamma}\).  Since the embedding
\(
C^{4,\gamma}\hookrightarrow C^{2,\gamma}
\)
is compact, \(\Phi\) maps bounded closed subsets of \(\mathcal B\)
into relatively compact subsets of \(\mathbb X\).

\begin{lemma}[Continuation on a parameterized open set]
\label{lem:parameterized-continuation}
Let \(X\) be a Banach space and let
\(\mathcal D\subset X\times[0,1]\) be relatively open.  Suppose that
\[
 \Phi:\mathcal D\longrightarrow X
\]
is continuous and maps bounded closed subsets of \(\mathcal D\) into
relatively compact subsets of \(X\).  Let
\(\mathcal O\subset X\times[0,1]\) be bounded and relatively open, with
\[
 \overline{\mathcal O}\subset\mathcal D,
\]
and assume that
\begin{equation}\label{eq:no-boundary-fixed-point}
 x\neq\Phi(x,t)
 \qquad
 \text{for every }(x,t)\in\partial\mathcal O,
\end{equation}
where the boundary is taken relative to \(X\times[0,1]\).  If
\[
 \mathcal O_t:=\{x\in X:(x,t)\in\mathcal O\},
 \qquad
 \Phi_t(x):=\Phi(x,t),
\]
then
\[
 \deg(\Id-\Phi_t,\mathcal O_t,0)
\]
is well defined and independent of \(t\).  The degree of an empty
slice is understood to be zero.
\end{lemma}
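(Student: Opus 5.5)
This is the standard homotopy invariance of the Leray--Schauder degree, in its parameterized form with moving slices. The plan has three parts: show each slice degree is defined, show it is locally constant in \(t\), and conclude by connectedness of \([0,1]\).

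\emph{Well-definedness.} Fix \(t\) with \(\mathcal O_t\) nonempty. Then \(\mathcal O_t\) is a bounded open subset of \(X\). Its closure satisfies \(\overline{\mathcal O_t}\times\{t\}\subset\overline{\mathcal O}\subset\mathcal D\), and its boundary satisfies \(\partial\mathcal O_t\times\{t\}\subset\partial\mathcal O\). The second inclusion holds because a point of \(\partial\mathcal O_t\) is a limit of points of \(\mathcal O\) in the slice but does not lie in \(\mathcal O\). Hence \(\Phi_t\) is continuous on \(\overline{\mathcal O_t}\). Since \(\overline{\mathcal O_t}\times\{t\}\) is bounded and closed in \(\mathcal D\), the map \(\Phi_t\) is compact there. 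By \eqref{eq:no-boundary-fixed-point}, \(\Id-\Phi_t\) has no zeros on \(\partial\mathcal O_t\), so the Leray--Schauder degree is defined.

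\emph{Local constancy.} Set \(F=\{(x,t)\in\overline{\mathcal O}:x=\Phi(x,t)\}\). We first show \(F\) is compact. Take a sequence \((x_j,t_j)\in F\). By compactness of \(\Phi\) on the bounded closed set \(\overline{\mathcal O}\), we may pass to a subsequence with \(x_j=\Phi(x_j,t_j)\) convergent and \(t_j\to t\). The limit lies in \(\overline{\mathcal O}\) and is a fixed point by continuity. By \eqref{eq:no-boundary-fixed-point} it lies in \(\mathcal O\), so \(F\subset\mathcal O\) is compact.

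Now fix \(t_0\). By compactness of \(F\) and relative openness of \(\mathcal O\), there is \(\varepsilon>0\) and an open set \(G\subset X\) with the following properties:
\[
\overline G\times[t_0-\varepsilon,t_0+\varepsilon]\subset\mathcal O,
\qquad
F\cap\bigl(X\times[t_0-\varepsilon,t_0+\varepsilon]\bigr)\subset G\times[t_0-\varepsilon,t_0+\varepsilon].
\]
To construct \(G\), cover the compact set \(F_{t_0}\) by finitely many balls whose doubled closures times a \(t\)-interval lie in \(\mathcal O\). Then shrink \(\varepsilon\) by contradiction: if no such \(\varepsilon\) existed, there would be fixed points \((x_j,t_j)\) with \(t_j\to t_0\) and \(x_j\notin G\). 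Their limit would be a fixed point at \(t_0\) lying outside \(G\), which is impossible since \(F_{t_0}\subset G\).

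With \(G\) in hand, apply excision at each \(t\in[t_0-\varepsilon,t_0+\varepsilon]\):
\[
\deg(\Id-\Phi_t,\mathcal O_t,0)=\deg(\Id-\Phi_t,G,0).
\]
Excision is valid because all zeros of \(\Id-\Phi_t\) in \(\overline{\mathcal O_t}\) lie in \(G\). On the fixed set \(G\), the family \(\Phi_t\) is a compact homotopy without zeros on \(\partial G\) for \(t\) in the interval. The ordinary homotopy invariance of the degree therefore makes the right-hand side constant on \([t_0-\varepsilon,t_0+\varepsilon]\).

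\emph{Empty slices and the conclusion.} The same excision argument handles empty slices. If \(F_{t_0}=\emptyset\), we may take \(G=\emptyset\), so the degree is zero nearby. This agrees with the convention that an empty slice has degree zero, and the argument also covers slices that are nonempty but contain no fixed points.

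A locally constant integer-valued function on the connected interval \([0,1]\) is constant, which proves the lemma.

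The main obstacle is the uniform choice of \(G\) and \(\varepsilon\): the slices \(\mathcal O_t\) move with \(t\), and the relative-openness and compactness must be combined carefully at the endpoints \(t_0\in\{0,1\}\), where the interval is taken one-sided.
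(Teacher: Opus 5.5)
Your proof is correct and follows the paper's argument: the fixed-point set in \(\overline{\mathcal O}\) is compact and lies in \(\mathcal O\), excision reduces each nearby slice to a fixed open set times a parameter interval, ordinary homotopy invariance applies there, and connectedness of \([0,1]\) finishes. The only difference is that you spell out the construction of \(G\) and \(\varepsilon\) (finite cover plus a contradiction argument), which the paper asserts directly from compactness and relative openness.
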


\begin{proof}
Let
\[
 Z=\{(x,t)\in\overline{\mathcal O}:x=\Phi(x,t)\}.
\]
Compactness of \(\Phi\) makes \(Z\) compact, while
\eqref{eq:no-boundary-fixed-point} gives \(Z\subset\mathcal O\).
Thus each slice has no boundary fixed point and its degree is defined.
For a fixed \(t_0\), compactness of \(Z\) and openness of \(\mathcal O\)
provide a bounded open set \(U\subset X\) and an interval \(I\ni t_0\)
such that all fixed points for \(t\in I\) lie in \(U\) and
\(\overline U\times I\subset\mathcal O\).  Excision and the usual
fixed-domain homotopy invariance then give
\[
 \deg(\Id-\Phi_t,\mathcal O_t,0)
 =
 \deg(\Id-\Phi_t,U,0)
 =
 \deg(\Id-\Phi_{t_0},U,0),
 \qquad t\in I.
\]
(The same conclusion is immediate if the fixed-point slice at \(t_0\) is
empty.)  Hence the degree is locally constant, and therefore constant on
\([0,1]\).
\end{proof}

\begin{theorem}\label{thm:degree-reduction-detailed}
Assume that every smooth admissible solution of the Higgs--Demailly system
satisfies a uniform lower bound $f_t\geq-C$.  Then the system has a smooth
admissible solution at $t=1$.
\end{theorem}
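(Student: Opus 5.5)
We run the Leray--Schauder degree argument on the map $\Phi$ from \eqref{eq:Phi-detailed}. First, fixed points of $\Phi_t$ are exactly the admissible solutions at parameter $t$. If $(f,u)=\Phi_t(f,u)$, then $u=\log\mathcal V(f)$, so $g=e^u$ solves the trace-free equation of \eqref{eq:HD-detailed}. Also $f=\mathcal U_t(f,u)$ solves \eqref{eq:auxiliary} with $U=f$ and is admissible, so the pair solves the first equation with $A_t>0$. Conversely, every admissible solution is such a fixed point, by the uniqueness parts of Lemmas~\ref{lem:scalar-solver-detailed} and~\ref{lem:metric-solver-detailed}.

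Next we choose the constants. By hypothesis and Proposition~\ref{prop:conditional-estimates-detailed}, every admissible solution satisfies $\|(f,\log g)\|_{\mathbb X}\leq C_1$ and $A_t\geq c\Id_E$, uniformly in $t$. Take $R=C_1+1$ and $\delta=c/2$ in the definition of $\mathcal B$, and let $\mathcal O$ be the same set with $R$ replaced by $C_1+1/2$ and $\delta$ replaced by $3c/4$. Then $\mathcal O$ is bounded and relatively open, and $\overline{\mathcal O}\subset\mathcal B$. Since $A$ depends continuously on $(f,u,t)$ in $C^{0}$, the closure keeps $\|\cdot\|\leq C_1+1/2$ and $A\geq 3c/4$. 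Every fixed point lies in the strict interior, with norm $\leq C_1$ and $A\geq c$, so there are no fixed points on $\partial\mathcal O$. The remaining hypotheses of Lemma~\ref{lem:parameterized-continuation} hold: $\Phi$ is continuous and compact on bounded closed subsets because of the $C^{4,\gamma}$ bounds and the compact embedding into $C^{2,\gamma}$. Here one must check that Lemma~\ref{lem:scalar-solver-detailed} applies uniformly on $\overline{\mathcal O}$, which it does since the margin $\delta$ is fixed. Lemma~\ref{lem:parameterized-continuation} then shows that $\deg(\Id-\Phi_t,\mathcal O_t,0)$ is independent of $t$.

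The main step is computing the degree at $t=0$. By Lemma~\ref{lem:t0-global-uniqueness}, the only admissible solution at $t=0$ is $(0,\log g_0)$, so this is the unique fixed point in $\mathcal O_0$. The degree then equals the Leray--Schauder index $\pm1$, provided $\Id-D\Phi_0$ is invertible at this point. To see invertibility, suppose $(\varphi,\kappa)$ lies in its kernel. Then $\kappa=D\log\mathcal V(0)[\varphi]$ and $\varphi=D\mathcal U_0[\varphi,\kappa]$. Differentiating the defining equations of $\mathcal U_0$ and $\mathcal V$ at the fixed point shows that $(\varphi,K)$, with $K$ the corresponding $h_0$-Hermitian direction ($\kappa=\mathcal D(\log)_{g_0}(g_0K)$), lies in the kernel of the full linearization $L$ of Lemma~\ref{lem:t0-nondegenerate}. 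Lemma~\ref{lem:t0-nondegenerate} says $L$ is an isomorphism, so $\varphi=K=0$. Since $D\Phi_0$ is compact, $\Id-D\Phi_0$ is Fredholm of index zero, and injectivity gives invertibility. Hence the index is $\pm1$, and the degree at $t=0$ is nonzero.

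We expect this identification of $\ker(\Id-D\Phi_0)$ with $\ker L$ to be the main obstacle. It has to be done carefully: the second component of $\Phi$ sees $f$ only through the coefficient $e^f$, and the variable changes between $u$ and $K$ must be tracked. Once it is settled, the degree at $t=1$ equals the nonzero degree at $t=0$, so $\mathcal O_1$ contains a fixed point of $\Phi_1$. This fixed point is an admissible solution at $t=1$, and it is smooth by Proposition~\ref{prop:conditional-estimates-detailed}.
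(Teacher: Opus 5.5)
Your proposal is correct and follows the paper's own proof. It uses the same domain $\mathcal B$ with a bounded neighborhood $\mathcal O$ of the fixed-point set, Lemma~\ref{lem:parameterized-continuation} for constancy of the degree, and the $t=0$ index from Lemmas~\ref{lem:t0-global-uniqueness} and~\ref{lem:t0-nondegenerate}; your kernel argument is just the paper's factorization \eqref{eq:linearization-factorization} used in the direction $\ker(\Id-D\Phi_0)\subset\ker DN_0$, combined with the Riesz--Schauder alternative.

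One small ordering point: a fixed point is a priori only $C^{2,\gamma}$, so before invoking the hypothesis $f\geq-C$ (stated for smooth solutions) to keep fixed points off $\partial\mathcal O$, you should note that fixed points are smooth by bootstrapping, since $\Phi_t$ takes values in $C^{4,\gamma}$. Citing Proposition~\ref{prop:conditional-estimates-detailed} for smoothness, as you do at the end, is circular because that proposition already presupposes the lower bound.
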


\begin{proof}
By Proposition~\ref{prop:conditional-estimates-detailed}, choose \(R\)
strictly larger than the uniform \(\mathbb X\)-norm bound and \(\delta\)
less than half the uniform lower bound for the smallest eigenvalue of
\(A(f,u,t)\).  Thus every admissible solution satisfying the assumed
lower bound lies in \(\mathcal B\) with a uniform margin.  Recall that
\[
 \Phi:\mathcal B\longrightarrow\mathbb X
\]
is the compact \(C^1\) map defined in \eqref{eq:Phi-detailed}.  A fixed point of
\(\Phi_t\) is precisely an admissible \(C^{2,\gamma}\) solution of
\eqref{eq:HD-detailed}.  Since \(\Phi_t\) takes values in
\(C^{4,\gamma}\), every fixed point is \(C^{4,\gamma}\), and standard
elliptic bootstrapping makes it smooth.  The assumed lower bound
therefore applies to every fixed point.

Set
\[
 \mathcal S:=
 \{(x,t)\in\mathcal B:x=\Phi_t(x)\}.
\]
By Proposition~\ref{prop:conditional-estimates-detailed}, the fixed-point set \(\mathcal S\) is compact in
\(\mathbb X\times[0,1]\) and remains a uniform positive distance from
the boundary of \(\mathcal B\).  Hence there exists a bounded
relatively open neighborhood \(\mathcal O\) of \(\mathcal S\) such
that
\(
 \overline{\mathcal O}\subset\mathcal B.
\)

Since \(\mathcal S\) is the full fixed-point set of \(\Phi\) in
\(\mathcal B\),
\[
 x\neq\Phi_t(x)
 \qquad
 \text{for every }(x,t)\in\partial\mathcal O.
\]
Lemma~\ref{lem:parameterized-continuation}, applied with \(\mathcal D=\mathcal B\), therefore gives
\begin{equation}\label{eq:degree-continuation}
 \deg(\Id-\Phi_t,\mathcal O_t,0)
 =
 \deg(\Id-\Phi_0,\mathcal O_0,0).
\end{equation}

It remains to evaluate the degree at \(t=0\).  Write \(x=(f,u)\) and
define the scalar auxiliary residual by
\[
 F_1(U;x,t):=
 \log\det\left[
 \left(
  \frac{\beta}{r}
  +\Delta_\omega^{\mathbb C}U
  +(1-t)\alpha
 \right)\Id_E-e^f u
 \right]
 -\lambda U-\log a_0.
\]
Let \(F_2(w;f)\) denote the fixed-space formulation, obtained by the
conjugation used in Subsection~\ref{subsec:strictly-perturbed}, of
\[
 \sqrt{-1}\Lambda_\omega
 R^0_{D_{e^wh_{\rm ref}}}
 +e^f w=0.
\]
Thus, if
\[
 N_t(x):=
 \bigl(F_1(f;x,t),F_2(u;f)\bigr),
\]
then \(N_t(x)=0\) is the Higgs--Demailly system.

Write \(W(f):=\log V(f)\).  The auxiliary solution operators satisfy
\[
 F_1(U_t(x);x,t)=0,
 \qquad
 F_2(W(f);f)=0.
\]
Differentiating these identities at a fixed point gives
\[
 D_UF_1\,DU_t+D_xF_1=0,
 \qquad
 D_wF_2\,DW+D_fF_2=0.
\]
Consequently,
\begin{equation}\label{eq:linearization-factorization}
 DN_t
 =
 \begin{pmatrix}
  D_UF_1&0\\
  0&D_wF_2
 \end{pmatrix}
 (\Id-D\Phi_t).
\end{equation}
The block-diagonal factor is an isomorphism by Lemmas~\ref{lem:scalar-solver-detailed} and~\ref{lem:metric-solver-detailed}.

By Lemma~\ref{lem:t0-global-uniqueness}, the only fixed point at \(t=0\) is
\[
 x_0=(0,\log g_0).
\]
The metric variation used in Lemma~\ref{lem:t0-nondegenerate} is
\[
 h_s=e^{sK}h_0,
 \qquad
 g_s=g_0e^{sK},
\]
whereas the second coordinate in \(\mathbb X\) is
\(u=\log g\).  These variations are related by the linear map
\[
 \mathcal J_{g_0}(K)
 :=
 \mathcal D(\log)_{g_0}(g_0K).
\]
The map
\[
 \mathcal J_{g_0}:
 C^{2,\gamma}\bigl(\Herm_0(E,h_0)\bigr)
 \longrightarrow
 C^{2,\gamma}\bigl(\Herm_0(E,h_{\rm ref})\bigr)
\]
is an isomorphism.  Under this change of variables and the fixed-space
identification of the target introduced in Subsection~\ref{subsec:strictly-perturbed}, the
linearization \(DN_0(x_0)\) corresponds to the PDE linearization in
Lemma~\ref{lem:t0-nondegenerate}.  Therefore \(DN_0(x_0)\) is an isomorphism.

Evaluating \eqref{eq:linearization-factorization} at \(x_0\), and using
the invertibility of its block-diagonal factor, we conclude that
\(
 \Id-D\Phi_0(x_0)
\)
is an isomorphism.

Since \(x_0\) is the unique fixed point at
\(t=0\), its local Leray--Schauder degree is \(\pm1\), and therefore
\[
 \deg(\Id-\Phi_0,\mathcal O_0,0)=\pm1.
\]
By \eqref{eq:degree-continuation}, the degree at \(t=1\) is nonzero.
Therefore \(\Phi_1\) has a fixed point, which is a smooth admissible
solution of the Higgs--Demailly system.
\end{proof}

\section{The Higgs--Uhlenbeck--Yau quotient construction}
\label{sec6}

It remains to prove the lower bound required in
Theorem~\ref{thm:degree-reduction-detailed}.  We use a quotient-side
normalization, following Murakami's ordinary-bundle argument and Simpson's
Higgs adaptation of the Uhlenbeck--Yau method.

\subsection{Weak Higgs projections and coherent quotients}

We recall the regularity statement that converts the analytic limit into an
algebro-geometric object.  An endomorphism
$\pi\in W^{1,2}(M,\End E)$ is a \emph{weakly holomorphic projection} if
\begin{equation}\label{eq:weak-holomorphic-definition-detailed}
 \pi^{*_{h_{\rm ref}}}=\pi=\pi^2,
 \qquad
 (\Id_E-\pi)\bar\partial_E\pi=0
\end{equation}
in the distributional sense, where the adjoint is taken with respect to $h_{\rm ref}$.  It is a \emph{weak Higgs projection} if, in
addition,
\begin{equation}\label{eq:weak-Higgs-definition-detailed}
 (\Id_E-\pi)\theta\pi=0.
\end{equation}

\begin{theorem}[Uhlenbeck--Yau regularity
\cite{LubkeTeleman1995,UhlenbeckYau1986}]
\label{thm:UY-regularity-detailed}
Let $\pi$ satisfy \eqref{eq:weak-holomorphic-definition-detailed}.  Then
there are a saturated coherent subsheaf $\mathcal F\subset E$ and an analytic subset
$S\subset M$ with $\operatorname{codim}_{\mathbb C}S\geq2$ such that, on
$M\setminus S$, $\pi$ is smooth and is the orthogonal projection onto the
holomorphic subbundle $\mathcal F|_{M\setminus S}$.
\end{theorem}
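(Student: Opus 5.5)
The theorem is quoted from \cite{UhlenbeckYau1986,LubkeTeleman1995}, and I would follow the classical argument, specialized where convenient to $\dim_{\mathbb C}M=1$. First I would fix the rank. Since $\pi=\pi^2=\pi^{*_{h_{\rm ref}}}$, the function $\operatorname{tr}_E\pi$ lies in $W^{1,2}$ and takes integer values almost everywhere. A $W^{1,2}$ function with values in $\mathbb Z$ is constant on the connected manifold $M$, so $\operatorname{rk}\pi=k$ almost everywhere. If $k=0$ or $k=r$, then $\pi=0$ or $\pi=\Id_E$, and we take $\mathcal F=0$ or $\mathcal F=E$. Assume $0<k<r$ from now on.

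Next I would produce holomorphic data from $\pi$. Let $s$ be a local holomorphic section of $E$ on a coordinate disc. The identity $(\Id_E-\pi)\bar\partial_E\pi=0$ gives $\bar\partial_E(\pi s)=(\bar\partial_E\pi)s=\pi(\bar\partial_E\pi)s$. Hence $\bar\partial_E$ preserves the image of $\pi$ in the weak sense. Choose holomorphic $s_1,\dots,s_r$ spanning $E$ and, for each $k$-subset $I$, set $w_I:=\pi s_{i_1}\wedge\cdots\wedge\pi s_{i_k}$. This is a $W^{1,2}\cap L^\infty$ section of $\Lambda^kE$, and it satisfies $\bar\partial w_I=a\,w_I$ with $a:=\operatorname{tr}_E(\pi\bar\partial_E\pi)\in L^2$. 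The $1$-form $a$ does not depend on $I$. I would solve $\bar\partial v=a$ on the disc with $v\in W^{1,2}$. Then $e^{-v}w_I$ is weakly holomorphic, hence holomorphic. Because the map $\pi\mapsto\Lambda^k\pi$ kills nothing on the image, at least one $w_I$ is not identically zero.

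The hard step is the exponential. In real dimension two, $v\in W^{1,2}$ is not bounded, so $e^{-v}w_I$ is not obviously in $L^1_{\rm loc}$. I would first use the Trudinger--Moser inequality, which gives $e^{\pm v}\in L^p_{\rm loc}$ for every $p<\infty$; that is enough for $e^{-v}w_I\in W^{1,1}_{\rm loc}$ and for Weyl's lemma to apply. If this fails, I would instead follow Uhlenbeck--Yau's route through the ratios $w_I/w_J$: they satisfy $\bar\partial(w_I/w_J)=0$ where $w_J\neq0$, and one bounds them by using $|w_I|\le C$.

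With this in hand, the holomorphic sections $e^{-v}w_I$ are decomposable almost everywhere, since there they equal a nonzero multiple of the wedge of a basis of $\operatorname{im}\pi$. Their common zeros form a discrete set, so they define a holomorphic map from the punctured disc to the Grassmannian $G(k,r)$ via the Pl\"ucker embedding. On a curve this map extends holomorphically across the punctures, because a meromorphic map from a smooth curve to a projective variety is holomorphic. Equivalently, we may divide out the common vanishing order of the Pl\"ucker coordinates. We then define $\mathcal F$ as the sheaf of holomorphic sections of $E$ that lie pointwise in the extended subbundle. It is saturated by construction, and $\pi$ agrees almost everywhere with the smooth orthogonal projection onto $\mathcal F$. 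In higher dimension the same construction is carried out off an analytic set of codimension at least two, which yields the stated $S$; on $M$ we have $S=\emptyset$, as Remark~\ref{rem:coherent-quotients-detailed} requires.
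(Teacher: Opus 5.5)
\textbf{There is a genuine gap: the key identity is false.} Your argument rests on $\bar\partial w_I=a\,w_I$ with an $I$-independent $a=\operatorname{tr}_E(\pi\bar\partial_E\pi)$. This identity fails, and so does the fallback through the ratios $w_I/w_J$.

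First, $a\equiv0$. The hypothesis gives $\pi\bar\partial_E\pi=\bar\partial_E\pi$, so $a=\bar\partial\operatorname{tr}_E\pi=\bar\partial k=0$. Your claim would therefore make every $w_I$ holomorphic, and a smooth example refutes this. Take a disc, $E$ trivial of rank two with the flat metric, and $\mathcal F$ spanned by $\sigma=(1,z)$. Then $\pi=\sigma\sigma^\dagger/(1+|z|^2)$, with $w_{\{1\}}=\sigma/(1+|z|^2)$ and $w_{\{2\}}=\bar z\,\sigma/(1+|z|^2)$. Neither is holomorphic, and $w_{\{2\}}/w_{\{1\}}=\bar z$ is antiholomorphic.

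Structurally, let $p:=\Lambda^k\pi$ be the rank-one projection onto $\Lambda^k(\operatorname{im}\pi)$.
\begin{itemize}
\item Differentiating $p^2=p$ gives $(\bar\partial p)p=0$, so $\bar\partial w_I=(\bar\partial p)(\Id-p)s_I$ depends on the component of $s_I$ orthogonal to that line.
\item Writing $\bar\partial w_I=b_Iw_I$, the factor $b_I$ genuinely depends on $I$. A common $b$ would give $\bar\partial p=b\,p$, hence $0=(\bar\partial p)p=b\,p$. So $b=0$ and $\bar\partial p=0$, which for Hermitian $p$ means $p$ is parallel.
\item Worse, $b_I$ blows up on the zero set of $w_I$. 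In the example, $b_{\{2\}}=d\bar z/(\bar z(1+|z|^2))$ is not in $L^2$ near $0$.
\item The zero sets of the $w_I$ are only measurable, so you cannot choose an $I$ with $b_I\in L^2$ on an open set. The Trudinger--Moser step, which would be fine for an $L^2$ coefficient, therefore has nothing to act on.
\end{itemize}
Note also that all $w_I$ lie on the same line $\Lambda^k(\operatorname{im}\pi)$, so they are not Pl\"ucker coordinates. Those are the components of a single $w_I$ in the frame $\{s_J\}$, and it is their ratios, not $w_I/w_J$, that one expects to be meromorphic.

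\textbf{What is missing is the analytic core of the Uhlenbeck--Yau theorem.} The paper does not reprove this; it quotes it from \cite{UhlenbeckYau1986,LubkeTeleman1995}. The task is to produce holomorphic sections of $E$ lying a.e.\ in $\operatorname{im}\pi$. Equivalently, $(\Id_E-\pi)\bar\partial_E\pi=0$ says the $W^{1,2}$ map $z\mapsto\operatorname{im}\pi(z)$ into the Grassmannian has complex-linear differential, and one must show such a map is holomorphic. This is hard because $W^{1,2}$ does not embed in $C^0$ in real dimension two, so you cannot restrict to an open set where a Pl\"ucker coordinate is nonzero.

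On a curve there are two ways to close the gap.
\begin{itemize}
\item Replace $h_{\rm ref}$ locally by a flat metric in a holomorphic trivialization; this does not change $\operatorname{im}\pi$ or the condition. Then check that the new projection satisfies $\partial_z[\pi,\partial_{\bar z}\pi]+\partial_{\bar z}[\pi,\partial_z\pi]=0$, so it is a weakly harmonic map into $G(k,r)$, and apply H\'elein's regularity theorem.
\item Alternatively, your exponential trick works once you have a $W^{1,2}$ unit generator $\xi$ of $\Lambda^k(\operatorname{im}\pi)$. Then $\bar\partial\xi=\langle\bar\partial\xi,\xi\rangle\xi$ has an $L^2$ coefficient. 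However, constructing such a lift is itself a borderline-exponent problem requiring a Coulomb-gauge or compensated-compactness argument.
\end{itemize}
Your closing remark about higher dimensions is also unsupported: $W^{1,2}$ has no exponential integrability in real dimension $\geq4$, and $\bar\partial v=a$ requires $\bar\partial a=0$. This is irrelevant on $M$.
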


In the present curve setting, the analytic subset \(S\) is empty.
Hence \(\pi\) is smooth on \(M\) and its image is a holomorphic
subbundle \(\mathcal F\subset E\).  If \eqref{eq:weak-Higgs-definition-detailed}
holds, then for every local section \(v\) of \(\mathcal F\),
\[
 (\Id_E-\pi)\theta v
 =
 (\Id_E-\pi)\theta\pi v
 =0.
\]
Thus
\[
 \theta(\mathcal F)\subset\mathcal F\otimes K_M,
\]
so \(\mathcal F\) is a holomorphic Higgs subbundle and
\(
 \mathcal Q:=E/\mathcal F
\)
is a Higgs quotient bundle.

The orientation of the projection is important.  We shall construct
a nonzero projection \(q\) and set
\[
 \pi:=\Id_E-q.
\]
Then \(\pi\) projects onto \(\mathcal F\), while
\(q\) projects onto \(\mathcal F^{\perp_{h_{\rm ref}}}\), which is
naturally identified with the quotient bundle \(\mathcal Q\).
Consequently, \(q\neq0\) implies \(\mathcal Q\neq0\).  The case
\(\pi=0\) is allowed and corresponds to \(\mathcal Q=E\).

\subsection{Power inequalities and weak Higgs projections}

Let $H$ be a positive $h_{\rm ref}$-Hermitian endomorphism with
$0<H\leq\Id_E$.  For $0<\sigma\leq1$, define $H^\sigma$ by functional
calculus.

\begin{lemma}[Higgs power inequality]\label{lem:Higgs-power-detailed}
For every $0<\sigma\leq1$ one has
\begin{eqnarray}\label{eq:Higgs-power-detailed}
 &&\int_M\tr\left(
 \sqrt{-1}\Lambda_\omega\bar\partial(H^{-1}\partial^{h_{\rm ref}}H)
 H^\sigma\right)\omega\notag\\
 &&+\int_M\tr\left(
 \sqrt{-1}\Lambda_\omega
 [\theta,H^{-1}\theta_{h_{\rm ref}}^*H-\theta_{h_{\rm ref}}^*]
 H^\sigma\right)\omega\notag\\
 &\geq&
 \|H^{-\sigma/2}\partial^{h_{\rm ref}}H^\sigma\|_{L^2}^2
 +\|H^{-\sigma/2}[\theta_{h_{\rm ref}}^*,H^\sigma]\|_{L^2}^2.
\end{eqnarray}
\end{lemma}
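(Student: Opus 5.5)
The plan is to follow the Uhlenbeck--Yau/Simpson power-inequality argument. I would first integrate by parts in the Chern term. Then I would reduce both terms to pointwise sums over an eigenframe of $H$, with coefficients given by scalar divided differences. Finally I would compare these coefficients term by term using one elementary scalar inequality.

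\emph{Chern term.} Since $M$ is a closed K\"ahler curve, Stokes' theorem moves $\bar\partial$ onto $H^\sigma$. Using $\bar\partial H^\sigma=(\partial^{h_{\rm ref}}H^\sigma)^{*_{h_{\rm ref}}}$, this gives
\[
\int_M\tr\bigl(\sqrt{-1}\Lambda_\omega\bar\partial(H^{-1}\partial^{h_{\rm ref}}H)H^\sigma\bigr)\omega
=\operatorname{Re}\int_M\bigl\langle H^{-1}\partial^{h_{\rm ref}}H,\ \partial^{h_{\rm ref}}H^\sigma\bigr\rangle_{h_{\rm ref},\omega}\,\omega .
\]
I would check the sign against the derivation of \eqref{eq:differential-log-pairing} and, if $H$ is not smooth, regularize as there. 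Next, fix $x$ and take an $h_{\rm ref}$-unitary frame normal at $x$ in which $H(x)=\operatorname{diag}(\lambda_1,\dots,\lambda_r)$ with $0<\lambda_a\le1$. Writing $d_{ab}:=(\partial^{h_{\rm ref}}H)_{ab}$, the divided-difference formula (as in \eqref{eq2.17}) gives
\[
(\partial^{h_{\rm ref}}H^\sigma)_{ab}=\frac{\lambda_a^\sigma-\lambda_b^\sigma}{\lambda_a-\lambda_b}\,d_{ab},
\qquad (H^{-1}\partial^{h_{\rm ref}}H)_{ab}=\lambda_a^{-1}d_{ab},
\]
with the usual convention $\sigma\lambda^{\sigma-1}$ on the diagonal. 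The integrand is then $\sum_{a,b}\lambda_a^{-1}\frac{\lambda_a^\sigma-\lambda_b^\sigma}{\lambda_a-\lambda_b}|d_{ab}|^2$. The target integrand is $\sum_{a,b}\lambda_a^{-\sigma}\bigl(\frac{\lambda_a^\sigma-\lambda_b^\sigma}{\lambda_a-\lambda_b}\bigr)^2|d_{ab}|^2$. So it suffices to prove, for every pair $a,b$,
\[
\lambda_a^{-1}\ \ge\ \lambda_a^{-\sigma}\,\frac{\lambda_a^\sigma-\lambda_b^\sigma}{\lambda_a-\lambda_b}.
\]
Putting $x=\lambda_b/\lambda_a$, this reads $x-1\ge x^\sigma-1$ when $x>1$ and $x-1\le x^\sigma-1$ when $x<1$. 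Both hold because $0<\sigma\le1$. If the index placement turns out to be the transpose of what I wrote, I would symmetrize in $(a,b)$ before comparing.

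\emph{Higgs term.} With $S=H^{-1}\theta^*_{h_{\rm ref}}H-\theta^*_{h_{\rm ref}}$, the computation in the proof of Lemma~\ref{lem:Higgs-largest-sign-detailed} gives
\[
S_{ab}=\Bigl(\frac{\lambda_b}{\lambda_a}-1\Bigr)\overline{\Theta_{ba}}\,d\bar z,
\qquad
[\theta^*_{h_{\rm ref}},H^\sigma]_{ab}=(\lambda_b^\sigma-\lambda_a^\sigma)\overline{\Theta_{ba}}\,d\bar z .
\]
Using cyclicity of the trace, the pointwise Higgs integrand becomes $c_\omega\sum_{a,b}\bigl(\frac{\lambda_b}{\lambda_a}-1\bigr)(\lambda_b^\sigma-\lambda_a^\sigma)|\Theta_{ba}|^2$, up to the same relabelling. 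The corresponding target term is $c_\omega\sum_{a,b}\lambda_a^{-\sigma}(\lambda_b^\sigma-\lambda_a^\sigma)^2|\Theta_{ba}|^2$. With $x=\lambda_b/\lambda_a$, the needed inequality is
\[
(x-1)(x^\sigma-1)\ \ge\ (x^\sigma-1)^2,\quad\text{i.e.}\quad (x^\sigma-1)(x-x^\sigma)\ge0 .
\]
This holds because $x^\sigma-1$ and $x-x^\sigma$ have the same sign for $0<\sigma\le1$. Integrating over $M$ then yields \eqref{eq:Higgs-power-detailed}. The hypothesis $H\le\Id_E$ is not used pointwise; it only ensures the expressions are finite.

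\emph{Main obstacle.} I expect the hard part to be bookkeeping rather than analysis. One must fix the index and weight conventions ($H^{-\sigma/2}$ on the left, the order of $a,b$ in the divided differences, and the sign of the integration by parts) so that each summand matches the scalar inequality in the correct orientation. I would first pin these down in the rank-two case with $\theta$ strictly off-diagonal, and then transcribe the result to general rank.
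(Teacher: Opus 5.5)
Your proposal is correct and follows the paper's route: you diagonalize $H$ pointwise and reduce the Higgs term, with $x=\lambda_b/\lambda_a$, to $(x-1)(x^\sigma-1)\ge(x^\sigma-1)^2$, which is exactly \eqref{eq:scalar-power2-app}. The only difference is that the paper cites L\"ubke--Teleman for the Chern term, while you rederive it by integration by parts and the same divided-difference reduction to $(x^\sigma-1)/(x-1)\le 1$; this holds for every ordered pair by concavity of $x\mapsto x^\sigma$, and your signs and index placements check out, so no symmetrization is needed.
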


\begin{proof}
The first term is the standard Uhlenbeck--Yau matrix power inequality;
see, for example, \cite[Lemma~3.4.4]{LubkeTeleman1995}.  The Higgs
extension appears in \cite[(5.19)--(5.21)]{Jacob2015}; we record the
pointwise computation in our conventions.  Choose a coordinate with
$g_{z\bar z}=1$ at the point,
diagonalize $H$ with eigenvalues $0<\rho_a\leq1$, and write
$\theta=\Theta\,dz$.  Both sides split into sums over ordered pairs \((a,b)\).  In an
\(h_{\rm ref}\)-unitary frame diagonalizing \(H\), the Higgs term on
the left-hand side equals
\[
 \sum_{a,b}
 \left(\frac{\rho_a}{\rho_b}-1\right)
 \left(\rho_a^\sigma-\rho_b^\sigma\right)
 |\Theta_{ab}|^2,
\]
whereas
\[
 \bigl|H^{-\sigma/2}
 [\theta_{h_{\rm ref}}^*,H^\sigma]\bigr|^2
 =
 \sum_{a,b}
 \rho_b^{-\sigma}
 |\rho_a^\sigma-\rho_b^\sigma|^2
 |\Theta_{ab}|^2.
\]
Thus it suffices to prove, for \(x,y>0\),
\[
 \left(\frac{y}{x}-1\right)(y^\sigma-x^\sigma)
 \geq
 x^{-\sigma}|y^\sigma-x^\sigma|^2.
\]
This is precisely the scalar inequality in Lemma~\ref{lem:scalar-powers-app}.  Hence the
Higgs term satisfies the required estimate.  Combining it with the
standard Uhlenbeck--Yau inequality for the Chern term proves
\eqref{eq:Higgs-power-detailed}.
\end{proof}

Let $(f_i,g_i,t_i)$ be a sequence of admissible solutions, put
$h_i:=e^{-f_i}g_i h_{\rm ref}$, and set
\begin{equation}\label{eq:mH-detailed}
 m_i:=\sup_M\ell_{\max}(\log g_i),
 \qquad H_i:=e^{-m_i}g_i.
\end{equation}
Then $0<H_i\leq\Id_E$.

\begin{lemma}[Uniform weighted energy]
\label{lem:weighted-energy-detailed}
For every $0<\sigma\leq1$, the normalized endomorphisms $H_i$ satisfy
\begin{align}\label{eq:weighted-energy-detailed}
 &\|H_i^{-\sigma/2}\partial^{h_{\rm ref}}H_i^\sigma\|_{L^2}^2
 +\|H_i^{-\sigma/2}[\theta_{h_{\rm ref}}^*,H_i^\sigma]\|_{L^2}^2
 \leq C,
\end{align}
where $C$ is independent of both $i$ and $\sigma$.  Moreover, the constant below can be chosen independent of $\sigma$:
\begin{equation}\label{eq:Hsigma-nonzero-detailed}
 \int_M\tr(H_i^\sigma)\,\omega\geq c_0>0.
\end{equation}
\end{lemma}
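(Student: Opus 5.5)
The plan is to combine Lemma~\ref{lem:Higgs-power-detailed}, applied to $H=H_i$, with the trace-free equation \eqref{eq:tracefree-reference}. Since $H_i=e^{-m_i}g_i$ differs from $g_i$ by a constant scalar, we have
\[
H_i^{-1}\partial^{h_{\rm ref}}H_i=g_i^{-1}\partial^{h_{\rm ref}}g_i,
\qquad
H_i^{-1}\theta_{h_{\rm ref}}^*H_i=g_i^{-1}\theta_{h_{\rm ref}}^*g_i .
\]
Hence the sum of the two operators on the left of \eqref{eq:Higgs-power-detailed} equals $-e^{f_i}u_i-\Psi$ by \eqref{eq:tracefree-reference}, where $u_i=\log g_i$. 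The left-hand side of \eqref{eq:Higgs-power-detailed} is therefore
\[
-\int_M\tr\bigl((e^{f_i}u_i+\Psi)H_i^\sigma\bigr)\,\omega .
\]
The goal is to bound this from above uniformly in $i$ and $\sigma$.

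For the upper bound, note that $0<H_i^\sigma\le\Id_E$, so $|H_i^\sigma|\le\sqrt r$. The $\Psi$-term is then bounded by $\sqrt r\,\|\Psi\|_{C^0}\Vol(M)$. For the other term, diagonalize simultaneously: $u_i$ and $H_i^\sigma$ commute, with eigenvalues $\ell_a$ and $e^{\sigma(\ell_a-m_i)}\in(0,1]$. Thus
\[
-\tr(e^{f_i}u_iH_i^\sigma)=-e^{f_i}\sum_a\ell_a e^{\sigma(\ell_a-m_i)}\le e^{f_i}\sum_a|\ell_a|\le C_r\,e^{f_i}|u_i| .
\]
This is uniformly bounded by Lemma~\ref{lem:raw-max-eigenvalue}, i.e. Proposition~\ref{prop:unconditional-estimates-detailed}. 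Integrating gives \eqref{eq:weighted-energy-detailed} with a constant independent of $i$ and $\sigma$. One can also simply drop the negative eigenvalue contributions, but the crude absolute bound suffices.

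For \eqref{eq:Hsigma-nonzero-detailed}, use $\tr(H_i^\sigma)\ge e^{\sigma(\ell_{\max}(u_i)-m_i)}\ge e^{-(m_i-\ell_{\max})}$, which holds because $\sigma\le1$ and $m_i-\ell_{\max}\ge0$. Jensen's inequality then gives
\[
\frac1{\Vol(M)}\int_M\tr(H_i^\sigma)\,\omega\ge\exp\Bigl(-\frac1{\Vol(M)}\int_M(m_i-\ell_{\max})\,\omega\Bigr)\ge e^{-C},
\]
by \eqref{eq:lmax-osc-average-detailed} of Lemma~\ref{lem:lmax-subharmonic-detailed}. This lower bound is independent of $\sigma$ and $i$.

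The main obstacle is making sure the power inequality really applies with the Higgs commutator $[\theta,H^{-1}\theta^*_{h_{\rm ref}}H-\theta^*_{h_{\rm ref}}]$ matching the one in \eqref{eq:tracefree-reference}. This is exactly the identity above, because the constant normalization $e^{-m_i}$ cancels in the conjugations. A secondary technical point is that the eigenvalue bound for the $e^{f_i}u_i$ term must be uniform even though $f_i$ may tend to $-\infty$. This is handled by using the weighted bound $e^{f}|u|\le C$ rather than any bound on $u$ alone.
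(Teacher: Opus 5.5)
Your proof is correct and follows the paper's argument: you apply Lemma~\ref{lem:Higgs-power-detailed} to $H_i$, use $0<H_i^\sigma\le\Id_E$ for a $\sigma$-independent upper bound, and combine Jensen's inequality with Lemma~\ref{lem:lmax-subharmonic-detailed} for the lower bound. The only difference is cosmetic. You identify the paired term directly as $-e^{f_i}u_i-\Psi$ via \eqref{eq:tracefree-reference}, so you need only $e^{f}|u|\le C$, whereas the paper bounds $\sqrt{-1}\Lambda_\omega R_{D_{h_i}}$ and $\Delta_\omega^{\mathbb C}f_i$ separately; the $\Delta_\omega^{\mathbb C}f_i$ contributions cancel, so the two quantities coincide.
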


\begin{proof}
Multiplication of a Hermitian metric by a positive constant changes neither
its Chern connection nor the adjoint of the Higgs field.  Therefore
\[
 R_{D_{H_i h_{\rm ref}}}=R_{D_{g_i h_{\rm ref}}}.
\]
The metric-change identity gives
\begin{align}
 &\sqrt{-1}\Lambda_\omega R_{D_{g_i h_{\rm ref}}}
 -\sqrt{-1}\Lambda_\omega R_{D_{h_{\rm ref}}}\notag\\
 &\qquad=\sqrt{-1}\Lambda_\omega\left(
 \bar\partial(H_i^{-1}\partial^{h_{\rm ref}}H_i)
 +[\theta,H_i^{-1}\theta_{h_{\rm ref}}^*H_i-
                 \theta_{h_{\rm ref}}^*]\right).
 \label{eq:H-metric-change-detailed}
\end{align}
Pair \eqref{eq:H-metric-change-detailed} with $H_i^\sigma$ and integrate.
Lemma~\ref{lem:Higgs-power-detailed} gives the signed inequality
\begin{align*}
 &\|H_i^{-\sigma/2}\partial^{h_{\rm ref}}H_i^\sigma\|_{L^2}^2
 +\|H_i^{-\sigma/2}[\theta_{h_{\rm ref}}^*,H_i^\sigma]\|_{L^2}^2\\
 &\quad\leq\int_M\tr\left(
 (\sqrt{-1}\Lambda_\omega R_{D_{g_i h_{\rm ref}}}
 -\sqrt{-1}\Lambda_\omega R_{D_{h_{\rm ref}}})H_i^\sigma
 \right)\omega.
\end{align*}
The right-hand side is nonnegative by the inequality and is bounded above
by the absolute values of the two curvature integrals.  Proposition~\ref{prop:unconditional-estimates-detailed} gives a uniform $C^0$ bound for
$\sqrt{-1}\Lambda_\omega R_{D_{h_i}}$ and for $\Delta_\omega^{\mathbb C} f_i$.
Since
\[
 \sqrt{-1}\Lambda_\omega R_{D_{g_i h_{\rm ref}}}
 =\sqrt{-1}\Lambda_\omega R_{D_{h_i}}
  -\Delta_\omega^{\mathbb C} f_i\Id_E,
\]
the curvature on the left of \eqref{eq:H-metric-change-detailed} is
uniformly bounded.  Since \(0<H_i^\sigma\leq\Id_E\), these curvature integrals are bounded
uniformly in both \(i\) and \(0<\sigma\leq1\).  This proves
\eqref{eq:weighted-energy-detailed}.

Let $\ell_{\max,i}$ be the largest eigenvalue of $\log g_i$.  Since
$m_i=\sup_M\ell_{\max,i}$,
\[
 \tr(H_i^\sigma)\geq
 e^{-\sigma(m_i-\ell_{\max,i})}.
\]
Lemma~\ref{lem:lmax-subharmonic-detailed} gives
\[
 \frac1{\Vol(M)}\int_M(m_i-\ell_{\max,i})\omega\leq C.
\]
Jensen's inequality therefore yields
\begin{align*}
 \int_M\tr(H_i^\sigma)\,\omega
 &\geq\Vol(M)\exp\left(
 -\frac\sigma{\Vol(M)}
 \int_M(m_i-\ell_{\max,i})\,\omega\right)\\
 &\geq\Vol(M)e^{-C}=:c_0>0.
\end{align*}
\end{proof}

\begin{lemma}[Strong coefficients and weak derivatives]
\label{lem:strong-weak-products}
Let $A_i,A\in L^\infty(M,\End E)$ satisfy
$\sup_i\|A_i\|_{L^\infty}<\infty$ and $A_i\to A$ almost everywhere.
If $X_i\rightharpoonup X$ weakly in $L^2$, then
$A_i X_i\rightharpoonup AX$ weakly in $L^2$.
\end{lemma}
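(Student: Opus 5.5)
The plan is to test against an arbitrary $\Phi\in L^2(M,\End E)$ and to split the error into two pieces,
\[
 A_iX_i-AX=(A_i-A)X_i+A(X_i-X).
\]
The second piece is straightforward.  Since $A\in L^\infty$, the map $Y\mapsto AY$ is bounded on $L^2$, so its adjoint $\Phi\mapsto A^{*}\Phi$ is also bounded on $L^2$.  Therefore
\[
 \int_M\langle A(X_i-X),\Phi\rangle\,\omega=\int_M\langle X_i-X,A^{*}\Phi\rangle\,\omega\to0
\]
by the weak convergence $X_i\rightharpoonup X$.

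For the first piece, I would use that weakly convergent sequences are bounded, so $\sup_i\|X_i\|_{L^2}\leq C$.  Cauchy--Schwarz then gives
\[
 \Bigl|\int_M\langle (A_i-A)X_i,\Phi\rangle\,\omega\Bigr|
 \leq C\,\|(A_i-A)^{*}\Phi\|_{L^2}.
\]
Pointwise, $(A_i-A)^{*}\Phi\to0$ almost everywhere.  It is also dominated by $(\sup_i\|A_i\|_{L^\infty}+\|A\|_{L^\infty})\,|\Phi|$, which lies in $L^2$.  Dominated convergence therefore shows $\|(A_i-A)^{*}\Phi\|_{L^2}\to0$.  Note that $\|A\|_{L^\infty}$ is controlled by the uniform bound on the $A_i$, since $A$ is their almost-everywhere limit.

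There is no real obstacle here.  The one point to get right is to move the strongly convergent factor $A_i-A$ onto the fixed test function $\Phi$, where dominated convergence applies, rather than to try to pass to the limit in the product directly.  Because $\Phi$ is arbitrary, the two pieces together give $A_iX_i\rightharpoonup AX$ weakly in $L^2$.
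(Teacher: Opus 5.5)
Your proof is correct and follows essentially the same route as the paper: the paper also transfers the coefficients to the test function, $\int_M\langle A_iX_i,Y\rangle=\int_M\langle X_i,A_i^*Y\rangle$, and uses dominated convergence to get $A_i^*Y\to A^*Y$ strongly in $L^2$. Your splitting into $(A_i-A)X_i+A(X_i-X)$, together with the uniform bound $\sup_i\|X_i\|_{L^2}<\infty$, simply makes explicit the weak--strong pairing step that the paper's one-line argument leaves implicit.
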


\begin{proof}
For every $Y\in L^2$, dominated convergence gives
$A_i^*Y\to A^*Y$ strongly in $L^2$.  Therefore
\[
 \int_M\langle A_i X_i,Y\rangle
 =\int_M\langle X_i,A_i^*Y\rangle
 \longrightarrow\int_M\langle X,A^*Y\rangle.
\]
\end{proof}

\begin{proposition}[Simpson--Uhlenbeck--Yau cutoff]
\label{prop:Higgs-cutoff-detailed}
After passing to one subsequence, there are a nonzero orthogonal projection
$q\in W^{1,2}(M,\End E)$ and $\pi=\Id_E-q$ such that
\begin{equation}\label{eq:weak-Higgs-projection-detailed}
 \pi^*=\pi=\pi^2,
 \qquad q\bar\partial_E\pi=0,
 \qquad q\theta\pi=0
\end{equation}
in the distributional sense.  The projection $\pi$ defines a saturated
coherent Higgs subsheaf $\mathcal F\subset E$, and $q$ represents the nonzero
Higgs quotient $\mathcal Q=E/\mathcal F$.  On a Riemann surface this quotient
is locally free.
\end{proposition}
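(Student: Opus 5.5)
The plan is to run the Simpson--Uhlenbeck--Yau limiting argument on the normalized endomorphisms $H_i=e^{-m_i}g_i$ of \eqref{eq:mH-detailed}, using the uniform weighted energy of Lemma~\ref{lem:weighted-energy-detailed}.

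\textbf{Step 1: bounded sequences and weak limits.} Since $0<H_i\leq\Id_E$, we have $0<H_i^\sigma\leq\Id_E$. Moreover $|\partial^{h_{\rm ref}}H_i^\sigma|\leq|H_i^{-\sigma/2}\partial^{h_{\rm ref}}H_i^\sigma|$ pointwise, and similarly for the Higgs commutator. Hence \eqref{eq:weighted-energy-detailed} shows that $\{H_i^\sigma\}_i$ is bounded in $W^{1,2}$, uniformly in $\sigma\in(0,1]$.

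Fix a sequence $\sigma_k\downarrow0$. By a diagonal argument we pass to one subsequence in $i$ such that, for every $k$:
\begin{itemize}
\item $H_i^{\sigma_k}\rightharpoonup H_\infty^{(k)}$ weakly in $W^{1,2}$;
\item $H_i^{\sigma_k}\to H_\infty^{(k)}$ strongly in $L^2$ and almost everywhere, by Rellich.
\end{itemize}
The limits satisfy $0\leq H_\infty^{(k)}\leq\Id_E$. By \eqref{eq:Hsigma-nonzero-detailed} they also satisfy $\int_M\tr H_\infty^{(k)}\,\omega\geq c_0$, and they inherit the energy bound by lower semicontinuity.

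Next let $k\to\infty$ and extract a further weak $W^{1,2}$ limit, strong in $L^2$, $H_\infty^{(k)}\to q$. The key point, as in Simpson and in \cite[\S5]{Jacob2015}, is that $q$ is a projection. Since $\sigma_k\to0$, the functional-calculus functions $x\mapsto x^{\sigma_k}$ converge to the indicator of $\{x>0\}$. To turn this into an identity for the limit, I would use the estimate
\[
\|(H_\infty^{(k)})^2-H_\infty^{(k)}\|_{L^1}\to0 .
\]
I would try to get it from the relation $H_i^{2\sigma}$ versus $H_i^{\sigma}$, using that $(H_i^{\sigma})^2=H_i^{2\sigma}$ and that both sequences are bounded in $W^{1,2}$ uniformly in $\sigma$ with a common limit behaviour as $\sigma\to0$. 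Concretely, I would compare the limits at $\sigma_k$ and $2\sigma_k$ through the monotone structure in $\sigma$: for $0<H\le\Id$, $H^\sigma$ is monotone in $\sigma$. Then $q^*=q=q^2$, and $\int\tr q\geq c_0$ gives $q\neq0$.

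\textbf{Step 2: the projection identities.} I would derive $q\bar\partial_E\pi=0$ and $q\theta\pi=0$ from the weighted energy. The idea is to write
\[
(\Id_E-H_i^\sigma)\,\partial^{h_{\rm ref}}H_i^\sigma
\]
and bound it in $L^2$ by a constant multiple of $\sigma^{1/2}$ times the energy, or alternatively to use the $H^{-\sigma/2}$ weight with $\sigma$ replaced by $\sigma/2$ as in Uhlenbeck--Yau. Passing to the limit with Lemma~\ref{lem:strong-weak-products} (strong a.e. coefficient, weak derivative) gives
\[
(\Id_E-q)\,\partial^{h_{\rm ref}}q=0 .
\]
Taking adjoints, this becomes $q\,\bar\partial_E(\Id_E-q)=0$, i.e.\ $q\bar\partial_E\pi=0$.

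The same argument applied to the Higgs energy $\|H^{-\sigma/2}[\theta^*_{h_{\rm ref}},H^\sigma]\|$ gives $(\Id_E-q)[\theta^*_{h_{\rm ref}},q]=0$. Unpacking the adjoint, this is $q\theta(\Id_E-q)=0$, i.e.\ $q\theta\pi=0$.

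\textbf{Step 3: conclusion.} Theorem~\ref{thm:UY-regularity-detailed} together with Remark~\ref{rem:coherent-quotients-detailed} then yields the saturated Higgs subbundle $\mathcal F$ and the locally free Higgs quotient $\mathcal Q=E/\mathcal F$. Since $q\neq0$, this quotient is nonzero.

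\textbf{Main obstacle.} I expect Step 1's projection property (idempotence of the limit as $\sigma\to0$) to be the main difficulty. It needs the uniform-in-$\sigma$ bounds already provided, plus careful handling of the two successive limits in $i$ and in $\sigma$ simultaneously. The Higgs identity in Step 2 is the new feature, but it should follow verbatim from the commutator energy.
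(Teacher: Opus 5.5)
Your Step 2 has a genuine gap. The estimate meant to make the defect vanish cannot hold when the cutoff uses the same exponent as the derivative, or any fixed multiple of it. In an $h_{\rm ref}$-unitary eigenframe of $H_i$ with eigenvalues $x\in(0,1]$, your only input is the weighted energy \eqref{eq:weighted-energy-detailed}. So you must compare $(1-x^\sigma)^2$ with $x^{-\sigma}$. But $\sup_{0<x\leq1}(1-x^\sigma)^2x^\sigma=\max_{0\leq y\leq1}(1-y)^2y=4/27$ for every $\sigma$. This gives a $\sigma$-independent bound on $\|(\Id_E-H_i^\sigma)\partial^{h_{\rm ref}}H_i^\sigma\|_{L^2}^2$, never $O(\sigma)$. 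The problem is not just the pointwise comparison. For the scalar model $x=e^{-1/|z|}$ on a disc, $\int x^{-\sigma}|\partial x^\sigma|^2$ stays bounded as $\sigma\to0$, while $\int(1-x^\sigma)^2|\partial x^\sigma|^2$ tends to a positive constant. Cutting off at $\sigma/2$ does not help: $\sup_{0<y\leq1}(1-y)^2y^2=1/16$ is again independent of $\sigma$. Without norm smallness, Lemma~\ref{lem:strong-weak-products} only says the weak limit of the defect is $\pi\partial^{h_{\rm ref}}q$, which is exactly what you need to show vanishes. The same objection applies to your Higgs identity: with equal exponents the commutator energy yields only $\|\pi[\theta^*_{h_{\rm ref}},q]\|_{L^2}^2\leq\frac4{27}C$, not zero.

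The missing idea, used in the paper, is to decouple the cutoff exponent from the differentiated one. For $0<s\leq\sigma/2$, inequality \eqref{eq:scalar-power3-app} gives $0\leq\Id_E-H_i^s\leq\frac{s}{s+\sigma/2}H_i^{-\sigma/2}$, hence
\[
\|(\Id_E-H_i^s)\partial^{h_{\rm ref}}H_i^{\sigma}\|_{L^2}^2
+\|(\Id_E-H_i^s)[\theta^*_{h_{\rm ref}},H_i^{\sigma}]\|_{L^2}^2
\leq C\frac{s}{s+\sigma/2}.
\]
This is small exactly when $s/\sigma\to0$. First pass $i\to\infty$ at fixed $(s,\sigma)$, using Lemma~\ref{lem:strong-weak-products} and lower semicontinuity. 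Then let $s\to0$ at fixed $\sigma$ to get $\pi\partial^{h_{\rm ref}}H_\infty^\sigma=0$ and $\pi[\theta^*_{h_{\rm ref}},H_\infty^\sigma]=0$. Only then let $\sigma\to0$. Alternatively, choosing $s_k=\sigma_k^2$ gives your $O(\sigma_k^{1/2})$ bound, but for $(\Id_E-H_i^{s_k})\partial^{h_{\rm ref}}H_i^{\sigma_k}$ rather than for equal exponents.

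What you flag as the main obstacle in Step 1 is not one. After also extracting $H_i\to H_\infty$ almost everywhere, continuity of $x\mapsto x^\sigma$ on $[0,1]$ gives $H_\infty^{(k)}=H_\infty^{\sigma_k}$. The limit is therefore the support projection $\mathbf 1_{(0,\infty)}(H_\infty)$, which is trivially an orthogonal projection. Your $\sigma_k$-versus-$2\sigma_k$ monotonicity argument can be completed, but it is unnecessary.

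Finally, the adjoint of $\pi\partial^{h_{\rm ref}}q=0$ is $(\bar\partial_Eq)\pi=0$. You need one more line using $\pi^2=\pi$ to reach $q\bar\partial_E\pi=0$.
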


\begin{proof}
The argument is the Higgs spectral cutoff of Simpson and Jacob; compare
\cite{Simpson1988} and \cite[(5.17)--(5.24)]{Jacob2015}.  We give the limiting steps because the
holomorphic and Higgs conditions must be obtained for the same projection.

Apply Lemma~\ref{lem:weighted-energy-detailed} with $\sigma=1$.  After
passing to a subsequence,
\begin{equation}\label{eq:H-common-limit}
 H_i\rightharpoonup H_\infty\quad\text{in }W^{1,2},
 \qquad H_i\to H_\infty\quad\text{strongly in }L^2
 \text{ and almost everywhere},
\end{equation}
where $0\leq H_\infty\leq\Id_E$.  For every fixed
$0<\sigma\leq1$, functional calculus and dominated convergence give
\begin{equation}\label{eq:common-power-limit-detailed}
 H_i^\sigma\to H_\infty^\sigma
 \quad\text{strongly in }L^2.
\end{equation}

Since \(0<H_i\leq\Id_E\), one has
\(H_i^{-\sigma/2}\geq\Id_E\), and hence \eqref{eq:weighted-energy-detailed}
gives
\[
 \|\partial^{h_{\rm ref}}H_i^\sigma\|_{L^2}\leq C.
\]
Moreover, \(H_i^\sigma\) is \(h_{\rm ref}\)-Hermitian, so
\[
 \bar\partial H_i^\sigma
 =
 \bigl(\partial^{h_{\rm ref}}H_i^\sigma\bigr)^{*_{h_{\rm ref}}}.
\]
Together with \(0\leq H_i^\sigma\leq\Id_E\), this yields a uniform
\(W^{1,2}\)-bound, so after
a diagonal extraction over any prescribed countable family of exponents,
these convergences are also weak in $W^{1,2}$.  The lower bound in
Lemma~\ref{lem:weighted-energy-detailed} at $\sigma=1$ gives
$H_\infty\neq0$.

Choose $\sigma_j\downarrow0$ and define the support projection
\begin{equation}\label{eq:q-support-projection}
 q=\mathbf1_{(0,\infty)}(H_\infty),
 \qquad \pi=\Id_E-q.
\end{equation}
Then $H_\infty^{\sigma_j}\to q$ almost everywhere and strongly in $L^2$.
After passing to a subsequence of the exponents $\sigma_j$, the uniform
$W^{1,2}$ bound gives weak convergence in $W^{1,2}$; the strong $L^2$ limit
identifies the weak limit with $q$.  Hence $q\in W^{1,2}$, $q^*=q=q^2$, and $q\neq0$.

Fix $j$ and take $0<s\leq\sigma_j/2$.  The scalar inequality
\eqref{eq:scalar-power3-app} gives the operator inequality
\[
 0\leq\Id_E-H_i^s
 \leq\frac{s}{s+\sigma_j/2}H_i^{-\sigma_j/2}.
\]
Since the factors on the right are functions of $H_i$, for every
endomorphism-valued form $X$,
\[
 |(\Id_E-H_i^s)X|^2
 \leq\left(\frac{s}{s+\sigma_j/2}\right)^2
 |H_i^{-\sigma_j/2}X|^2.
\]
Using Lemma~\ref{lem:weighted-energy-detailed} and the fact that the ratio
is at most one, we obtain
\begin{align}\label{eq:cutoff-energy-detailed}
 &\|(\Id_E-H_i^s)\partial^{h_{\rm ref}}H_i^{\sigma_j}\|_{L^2}^2
 +\|(\Id_E-H_i^s)
 [\theta_{h_{\rm ref}}^*,H_i^{\sigma_j}]\|_{L^2}^2\\
 &\qquad\leq C\frac{s}{s+\sigma_j/2}.\notag
\end{align}
Choose a countable sequence $s_{j,k}\downarrow0$ for each $j$, and perform
one diagonal extraction for the countable set
$\{\sigma_j,s_{j,k}:j,k\geq1\}$.  For fixed $j,k$, Lemma~\ref{lem:strong-weak-products} allows passage to the limit $i\to\infty$ in
the derivative term of \eqref{eq:cutoff-energy-detailed}; the Higgs
commutator term passes even strongly because it is zeroth order.  Weak lower
semicontinuity gives
\begin{align}\label{eq:cutoff-limit-fixed-jk}
 &\|(\Id_E-H_\infty^{s_{j,k}})
 \partial^{h_{\rm ref}}H_\infty^{\sigma_j}\|_{L^2}^2\\
 &\quad+\|(\Id_E-H_\infty^{s_{j,k}})
 [\theta_{h_{\rm ref}}^*,H_\infty^{\sigma_j}]\|_{L^2}^2
 \leq C\frac{s_{j,k}}{s_{j,k}+\sigma_j/2}.\notag
\end{align}
Let $k\to\infty$.  Since
$\Id_E-H_\infty^{s_{j,k}}\to\pi$ almost everywhere and is uniformly
bounded, dominated convergence applied to the fixed $L^2$ factors gives
\begin{equation}\label{eq:defects-fixed-j}
 \pi\partial^{h_{\rm ref}}H_\infty^{\sigma_j}=0,
 \qquad
 \pi[\theta_{h_{\rm ref}}^*,H_\infty^{\sigma_j}]=0.
\end{equation}
Finally let $j\to\infty$.  The first identity passes by weak
$W^{1,2}$ convergence and the second by strong $L^2$ convergence, yielding
\begin{equation}\label{eq:two-defects-zero-detailed}
 \pi\partial^{h_{\rm ref}}q=0,
 \qquad \pi\theta_{h_{\rm ref}}^*q=0.
\end{equation}
Taking the adjoint of the first identity in \eqref{eq:two-defects-zero-detailed}
gives
\(
 (\bar\partial_E q)\pi=0.
\)
Hence \(\bar\partial_E q=(\bar\partial_E q)q\).  Differentiating
\(q^2=q\) and multiplying by \(q\) on both sides gives
\(q(\bar\partial_E q)q=0\).  Therefore
\(
 q\bar\partial_E\pi=-q\bar\partial_E q=0.
\)
Taking the adjoint of the second identity in
\eqref{eq:two-defects-zero-detailed} gives \(q\theta\pi=0\).

The first relation is the weak holomorphicity condition.  By
Theorem~\ref{thm:UY-regularity-detailed}, $\pi$ defines a coherent
subsheaf $\mathcal F$.  The Uhlenbeck--Yau regularity theorem makes the
corresponding projection smooth away from an analytic set of complex
codimension at least two; since $\dim_{\mathbb C}M=1$, that set is empty.
Thus $\pi$ is smooth on all of $M$, and the distributional identity
$q\theta\pi=0$ becomes the ordinary pointwise identity.  Consequently
$\mathcal F=\pi E$ is a smooth holomorphic Higgs subbundle.  Since
$q\neq0$, the quotient $\mathcal Q=E/\mathcal F$ is a nonzero Higgs
bundle (equivalently, a locally free Higgs quotient sheaf).
\end{proof}

\subsection{The quotient degree formula}

We first record the Chern--Weil formula for a smooth holomorphic
Higgs subbundle.  It will then be applied to the projection obtained
in Proposition~\ref{prop:Higgs-cutoff-detailed}.

\begin{lemma}\label{lem:Higgs-quotient-degree}
Let
\(
 \pi=\pi^{*_{h_{\rm ref}}}=\pi^2
\)
be the \(h_{\rm ref}\)-orthogonal projection onto a smooth
holomorphic Higgs subbundle \(\mathcal F\subset E\), and put
\(
 q:=\Id_E-\pi.
\)
Identifying the quotient bundle
\(\mathcal Q=E/\mathcal F\) with \(qE\), one has
\begin{equation}\label{eq:Higgs-quotient-degree}
\begin{aligned}
 2\pi\deg\mathcal Q
 &=
 \int_M
 \operatorname{tr}\left(
  q\sqrt{-1}\Lambda_\omega R_{D_{h_{\rm ref}}}
 \right)\omega\\
 &\quad+
 \int_M|\partial^{h_{\rm ref}}\pi|^2\,\omega
 +
 \int_M|[\theta_{h_{\rm ref}}^*,\pi]|^2\,\omega.
\end{aligned}
\end{equation}
For the projection produced in Proposition~\ref{prop:Higgs-cutoff-detailed},
Theorem~\ref{thm:UY-regularity-detailed} and \(\dim_{\mathbb C}M=1\) imply that \(\pi\) is smooth
on all of \(M\); hence \eqref{eq:Higgs-quotient-degree} applies
globally.
\end{lemma}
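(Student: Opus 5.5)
The plan is to compute the Chern--Weil degree of $\mathcal Q$ with the quotient metric induced by $h_{\rm ref}$ on $qE\cong\mathcal Q$, and then add the Higgs commutator, whose trace pairs to a nonnegative off-diagonal energy.

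\emph{Step 1: the holomorphic part.} The Chern connection of the quotient metric on $qE$ is $q\nabla_{h_{\rm ref}}q$. Its curvature satisfies the standard second fundamental form identity
\[
 R_{\mathcal Q}=qR_{h_{\rm ref}}q-q(\partial^{h_{\rm ref}}\pi)\wedge(\bar\partial_E\pi)q .
\]
The extra term is the square of the second fundamental form $\beta=q\,\bar\partial_E\pi$ (or its adjoint), up to sign. Holomorphicity of $\mathcal F$ gives $q\bar\partial_E\pi=\bar\partial_E\pi$ on $\mathcal F$ and $\pi\bar\partial_E\pi\,\pi=0$. Taking $\sqrt{-1}\Lambda_\omega\operatorname{tr}$ then yields
\[
 \sqrt{-1}\Lambda_\omega\operatorname{tr}R_{\mathcal Q}=\operatorname{tr}\bigl(q\sqrt{-1}\Lambda_\omega R_{h_{\rm ref}}\bigr)+|\partial^{h_{\rm ref}}\pi|^2 .
\]
I would fix the sign carefully in a unitary coordinate. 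The sign is $+$ because quotients gain degree: the familiar statement that subbundles have curvature decreased translates, under $\operatorname{tr} R_E=\operatorname{tr}R_{\mathcal F}+\operatorname{tr}R_{\mathcal Q}$, into this sign. I would also use $|\partial\pi|^2=|\bar\partial\pi|^2$ and the fact that only one off-diagonal block of $\bar\partial_E\pi$ is nonzero, so no factor of $2$ appears. Integrating and using \eqref{eq:degree-convention} gives $2\pi\deg\mathcal Q=\int\operatorname{tr}(q\sqrt{-1}\Lambda_\omega R_{h_{\rm ref}})\omega+\|\partial^{h_{\rm ref}}\pi\|^2_{L^2}$.

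\emph{Step 2: the Higgs part.} By Lemma~\ref{lem:HS-curvature-full},
\[
 \operatorname{tr}\bigl(q\sqrt{-1}\Lambda_\omega R_{D_{h_{\rm ref}}}\bigr)=\operatorname{tr}\bigl(q\sqrt{-1}\Lambda_\omega R_{h_{\rm ref}}\bigr)+\operatorname{tr}\bigl(q\sqrt{-1}\Lambda_\omega[\theta,\theta^*_{h_{\rm ref}}]\bigr).
\]
It therefore suffices to show pointwise that
\[
 \operatorname{tr}\bigl(q\sqrt{-1}\Lambda_\omega[\theta,\theta^*]\bigr)=-|[\theta^*,\pi]|^2 .
\]
Work in a unitary frame adapted to $E=\mathcal F\oplus\mathcal F^\perp$, with $\omega$ unitary at the point, and write $\Theta$ in blocks. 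The Higgs invariance $q\theta\pi=0$ kills the lower-left block, so $\Theta=\begin{pmatrix}a&b\\0&d\end{pmatrix}$. Then the $qq$ block of $\Theta\Theta^\dagger-\Theta^\dagger\Theta$ is $dd^\dagger-(b^\dagger b+d^\dagger d)$. Its trace is $-|b|^2$.

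Meanwhile $[\theta^*,\pi]$ has only the off-diagonal block coming from $b$, with norm $|b|^2$. Combining these with the convention $\sqrt{-1}\Lambda_\omega(dz\wedge d\bar z)>0$ gives the sign, so that $\int|[\theta^*_{h_{\rm ref}},\pi]|^2$ enters $2\pi\deg\mathcal Q$ with a $+$ sign. Adding Steps 1 and 2 gives \eqref{eq:Higgs-quotient-degree}.

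\emph{Step 3 and the main obstacle.} The final sentence of the statement follows from Proposition~\ref{prop:Higgs-cutoff-detailed} and Theorem~\ref{thm:UY-regularity-detailed}, since the singular set is empty on a curve. This makes $\pi$ a smooth orthogonal projection onto a Higgs subbundle, so the pointwise computation applies globally. The main obstacle is purely one of bookkeeping: the signs and normalizations relating $\sqrt{-1}\Lambda_\omega$, $|\cdot|^2$ of $(1,0)$-forms, and the orientation of $[\theta,\theta^*]$. I would verify these on a split example, namely a line subbundle of $\mathcal O\oplus\mathcal O$ with a nilpotent $\theta$, to confirm that both energy terms enter with a plus sign.
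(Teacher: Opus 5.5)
Your route is the paper's: Chern--Weil for the induced metric on $qE$, plus a block computation of the Higgs commutator using $q\theta\pi=0$. Step~2 is correct as written. Your $b$ is the paper's $\varphi$, and $\operatorname{tr}(dd^\dagger-d^\dagger d)=0$ is the paper's observation that $\operatorname{tr}[\theta_{\mathcal Q},\theta_{\mathcal Q}^*]=0$. Step~1, however, contains two false statements, and you reach the correct identity only by overriding them with a heuristic.

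\emph{The second fundamental form.} $q\bar\partial_E\pi$ is not the second fundamental form. Holomorphicity of $\mathcal F$ is exactly the condition $q\bar\partial_E\pi=0$, so your $\beta$ vanishes identically. Your sentence ``$q\bar\partial_E\pi=\bar\partial_E\pi$ on $\mathcal F$'' is vacuous, since both sides vanish there, and $\pi(\bar\partial_E\pi)\pi=0$ follows from $\pi^2=\pi$ alone. What holomorphicity together with $\pi^2=\pi$ actually gives is $\bar\partial_E\pi=\pi(\bar\partial_E\pi)q$. Taking adjoints gives $\partial^{h_{\rm ref}}\pi=q(\partial^{h_{\rm ref}}\pi)\pi$. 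The second fundamental form is therefore $B=q\partial^{h_{\rm ref}}\pi=\partial^{h_{\rm ref}}\pi$.

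\emph{The sign of the curvature identity.} Your displayed identity has the wrong sign. Write $\nabla=\nabla^{h_{\rm ref}}$ and use $q(\nabla q)q=0$ together with $\nabla q=-\nabla\pi$. On sections of $qE$ this gives
\[
 (q\nabla q)^2=qR_{h_{\rm ref}}q+q(\nabla\pi)\wedge(\nabla\pi)q=qR_{h_{\rm ref}}q+\partial^{h_{\rm ref}}\pi\wedge\bar\partial_E\pi ,
\]
which is the paper's $qR_{h_{\rm ref}}q+B\wedge B^*$. Write $\partial^{h_{\rm ref}}\pi=P\,dz$ in a unitary frame and $c_\omega=\sqrt{-1}\Lambda_\omega(dz\wedge d\bar z)>0$. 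The extra term is then $PP^\dagger\,dz\wedge d\bar z$, whose $\sqrt{-1}\Lambda_\omega$-trace is $c_\omega\operatorname{tr}(PP^\dagger)=|\partial^{h_{\rm ref}}\pi|^2\geq0$. With your minus sign, the same computation would produce $-|\partial^{h_{\rm ref}}\pi|^2$, contradicting your next display.

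Your fallback argument is valid and does give the correct sign. It combines pointwise additivity $\operatorname{tr}R_E=\operatorname{tr}R_{\mathcal F}+\operatorname{tr}R_{\mathcal Q}$ for the induced metrics with the fact that subbundle curvature decreases. So the concluding formula of Step~1 is right. Replace the displayed identity and the definition of $\beta$ by the correct ones above rather than relying on ``up to sign''. With that fix the proof is complete and coincides with the paper's.
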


\begin{proof}
Use the \(h_{\rm ref}\)-orthogonal splitting
\(
 E=\mathcal F\oplus\mathcal Q.
\)
In the block computations below, products of endomorphism-valued forms
include both composition and the exterior product.
Since \(\mathcal F\) is holomorphic,
\(
 q\bar\partial_E\pi=0.
\)
Taking adjoints and differentiating \(\pi^2=\pi\) give
\(
 \partial^{h_{\rm ref}}\pi
 =
 q\partial^{h_{\rm ref}}\pi\,\pi.
\)
Thus
\(
 B:=q\partial^{h_{\rm ref}}\pi
\)
is the ordinary second fundamental form and
\(
 |B|^2=|\partial^{h_{\rm ref}}\pi|^2.
\)

With respect to the above splitting, the Chern connection has block
form
\[
 \nabla^{h_{\rm ref}}
 =
 \begin{pmatrix}
  \nabla^{\mathcal F}&-B^*\\
  B&\nabla^{\mathcal Q}
 \end{pmatrix}.
\]
The quotient curvature formula is
\begin{equation}\label{eq:Chern-quotient-curvature}
 R_{h_{\mathcal Q}}
 =
 qR_{h_{\rm ref}}q+B\wedge B^*.
\end{equation}
After contraction with \(\sqrt{-1}\Lambda_\omega\) and taking the
trace, the second term contributes
\(|\partial^{h_{\rm ref}}\pi|^2\).

The Higgs invariance \(q\theta\pi=0\) gives
\[
 \theta=
 \begin{pmatrix}
  \theta_{\mathcal F}&\varphi\\
  0&\theta_{\mathcal Q}
 \end{pmatrix},
 \qquad
 \theta_{h_{\rm ref}}^*=
 \begin{pmatrix}
  \theta_{\mathcal F}^*&0\\
  \varphi^*&\theta_{\mathcal Q}^*
 \end{pmatrix}.
\]
A direct computation yields
\begin{equation}\label{eq:Higgs-quotient-commutator}
 q[\theta,\theta_{h_{\rm ref}}^*]q
 =
 [\theta_{\mathcal Q},\theta_{\mathcal Q}^*]
 +\varphi^*\wedge\varphi.
\end{equation}
Hence
\[
 [\theta_{\mathcal Q},\theta_{\mathcal Q}^*]
 =
 q[\theta,\theta_{h_{\rm ref}}^*]q-\varphi^*\wedge\varphi.
\]
In particular, writing $\varphi=\Phi\,dz$ in a unitary frame gives
\[
 -\sqrt{-1}\Lambda_\omega(\varphi^*\wedge\varphi)
 =g_{z\bar z}^{-1}\Phi^\dagger\Phi\geq0.
\]
Together with \eqref{eq:Chern-quotient-curvature}, this yields
\[
 R_{D_{h_{\mathcal Q}}}
 =qR_{D_{h_{\rm ref}}}q+B\wedge B^*
   -\varphi^*\wedge\varphi.
\]
Moreover, since the adjoint of \(q\theta\pi=0\) is
\(\pi\theta_{h_{\rm ref}}^*q=0\),
\[
 [\theta_{h_{\rm ref}}^*,\pi]
 =
 q\theta_{h_{\rm ref}}^*\pi
 =
 \varphi^*.
\]
Thus
\[
 |\varphi|^2
 =
 |[\theta_{h_{\rm ref}}^*,\pi]|^2.
\]
Since the trace of the induced Higgs commutator on \(\mathcal Q\)
vanishes, the Chern--Weil formula gives
\[
 2\pi\deg\mathcal Q
 =\int_M\operatorname{tr}\bigl(
 \sqrt{-1}\Lambda_\omega R_{D_{h_{\mathcal Q}}}
 \bigr)\,\omega.
\]
Contracting the combined curvature identity, taking its trace, and
integrating therefore proves \eqref{eq:Higgs-quotient-degree}.
\end{proof}

\begin{lemma}\label{lem:cutoff-degree}
For the sequence and projection obtained in Proposition~\ref{prop:Higgs-cutoff-detailed},
\begin{equation}\label{eq:cutoff-degree}
 2\pi\deg\mathcal Q
 \leq
 \liminf_{j\to\infty}\liminf_{i\to\infty}
 \int_M
 \operatorname{tr}\left(
  \sqrt{-1}\Lambda_\omega
  R_{D_{g_i h_{\rm ref}}}H_i^{\sigma_j}
 \right)\omega.
\end{equation}
\end{lemma}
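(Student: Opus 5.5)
We combine the metric-change identity \eqref{eq:H-metric-change-detailed} with the Higgs power inequality, and then pass to the limit using the convergences already established in the proof of Proposition~\ref{prop:Higgs-cutoff-detailed}. The target is the right-hand side of the quotient degree formula \eqref{eq:Higgs-quotient-degree}.

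The first step is to rewrite the integrand. Pairing \eqref{eq:H-metric-change-detailed} with $H_i^{\sigma_j}$ and integrating gives
\[
 \int_M\tr\bigl(\sqrt{-1}\Lambda_\omega R_{D_{g_ih_{\rm ref}}}H_i^{\sigma_j}\bigr)\omega
 =\int_M\tr\bigl(\sqrt{-1}\Lambda_\omega R_{D_{h_{\rm ref}}}H_i^{\sigma_j}\bigr)\omega+\mathcal E_{i,j},
\]
where $\mathcal E_{i,j}$ is the left-hand side of \eqref{eq:Higgs-power-detailed} with $H=H_i$ and $\sigma=\sigma_j$. By Lemma~\ref{lem:Higgs-power-detailed},
\[
 \mathcal E_{i,j}\ge \|H_i^{-\sigma_j/2}\partial^{h_{\rm ref}}H_i^{\sigma_j}\|_{L^2}^2+\|H_i^{-\sigma_j/2}[\theta^*_{h_{\rm ref}},H_i^{\sigma_j}]\|_{L^2}^2 .
\]
Since $H_i^{-\sigma_j/2}\ge\Id_E$, this is at least
\[
 \|\partial^{h_{\rm ref}}H_i^{\sigma_j}\|_{L^2}^2+\|[\theta^*_{h_{\rm ref}},H_i^{\sigma_j}]\|_{L^2}^2 .
\]

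Next we take $\liminf_{i\to\infty}$ with $j$ fixed. The background curvature $\sqrt{-1}\Lambda_\omega R_{D_{h_{\rm ref}}}$ is a fixed smooth endomorphism, and $H_i^{\sigma_j}\to H_\infty^{\sigma_j}$ strongly in $L^2$ by \eqref{eq:common-power-limit-detailed}. Hence the first term converges to $\int\tr(\sqrt{-1}\Lambda_\omega R_{D_{h_{\rm ref}}}H_\infty^{\sigma_j})\omega$. Along the diagonal subsequence, $\partial^{h_{\rm ref}}H_i^{\sigma_j}\rightharpoonup\partial^{h_{\rm ref}}H_\infty^{\sigma_j}$ weakly in $L^2$, so weak lower semicontinuity bounds the $\liminf$ of the derivative energy below by $\|\partial^{h_{\rm ref}}H_\infty^{\sigma_j}\|^2_{L^2}$. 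The commutator term is zeroth order and converges strongly.

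We then take $\liminf_{j\to\infty}$. Since $H_\infty^{\sigma_j}\to q$ strongly in $L^2$, the curvature term tends to $\int\tr(q\sqrt{-1}\Lambda_\omega R_{D_{h_{\rm ref}}})\omega$. The commutator converges strongly to $[\theta^*_{h_{\rm ref}},q]=-[\theta^*_{h_{\rm ref}},\pi]$. The derivative term is handled by $H_\infty^{\sigma_j}\rightharpoonup q$ weakly in $W^{1,2}$ together with lower semicontinuity, using $\partial q=-\partial\pi$. This gives
\[
 \liminf_j\liminf_i(\cdots)\ge\int\tr(q\sqrt{-1}\Lambda_\omega R_{D_{h_{\rm ref}}})\omega+\|\partial^{h_{\rm ref}}\pi\|^2_{L^2}+\|[\theta^*_{h_{\rm ref}},\pi]\|^2_{L^2}.
\]
By Lemma~\ref{lem:Higgs-quotient-degree}, applicable because $\pi$ is smooth on the curve, the right-hand side equals $2\pi\deg\mathcal Q$.

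The main obstacle is bookkeeping: every limit must be taken along the single diagonal subsequence fixed in Proposition~\ref{prop:Higgs-cutoff-detailed}, so that $q$ is the same projection defining $\mathcal Q$. One must also check that weak $W^{1,2}$ convergence of $H_\infty^{\sigma_j}$ to $q$ holds along the chosen exponents; this was arranged in that proof by extracting a subsequence of $\sigma_j$. Dropping the weights $H_i^{-\sigma_j/2}$ costs nothing, since they only enlarge the energy.
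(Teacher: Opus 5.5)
Your proposal is correct and follows the paper's proof essentially step for step. You pair the metric-change identity with $H_i^{\sigma_j}$, apply the Higgs power inequality, and discard the weights using $H_i^{-\sigma_j/2}\geq\Id_E$. You then take the limit in $i$ and then in $j$ via strong $L^2$ convergence and weak $W^{1,2}$ lower semicontinuity, and finish with the quotient degree formula of Lemma~\ref{lem:Higgs-quotient-degree}.
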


\begin{proof}
Recall that
\[
 g_i=e^{m_i}H_i,
\]
where \(m_i\) is constant on \(M\).  Multiplication of a Hermitian
metric by a positive constant changes neither its Chern connection
nor the adjoint of the Higgs field.  Consequently,
\[
 R_{D_{g_i h_{\rm ref}}}
 =
 R_{D_{H_i h_{\rm ref}}}.
\]

The metric-change formula gives
\[
\begin{aligned}
 \sqrt{-1}\Lambda_\omega
 \left(
  R_{D_{g_i h_{\rm ref}}}
  -R_{D_{h_{\rm ref}}}
 \right)
 &=
 \sqrt{-1}\Lambda_\omega
 \bar\partial\left(
  H_i^{-1}\partial^{h_{\rm ref}}H_i
 \right)\\
 &\quad+
 \sqrt{-1}\Lambda_\omega
 [\theta,
   H_i^{-1}\theta_{h_{\rm ref}}^*H_i
   -\theta_{h_{\rm ref}}^*].
\end{aligned}
\]
Pairing this identity with \(H_i^{\sigma_j}\), integrating, and
applying Lemma~\ref{lem:Higgs-power-detailed} yield
\begin{align}
 &\int_M
 \operatorname{tr}\left(
  \sqrt{-1}\Lambda_\omega
  R_{D_{h_{\rm ref}}}H_i^{\sigma_j}
 \right)\omega \notag\\
 &\quad+
 \left\|
  H_i^{-\sigma_j/2}
  \partial^{h_{\rm ref}}H_i^{\sigma_j}
 \right\|_{L^2}^2
 +
 \left\|
  H_i^{-\sigma_j/2}
  [\theta_{h_{\rm ref}}^*,H_i^{\sigma_j}]
 \right\|_{L^2}^2 \notag\\
 &\leq
 \int_M
 \operatorname{tr}\left(
  \sqrt{-1}\Lambda_\omega
  R_{D_{g_i h_{\rm ref}}}H_i^{\sigma_j}
 \right)\omega.
 \label{eq:cutoff-curvature-inequality}
\end{align}
Since \(0<H_i\leq\Id_E\),
\[
 H_i^{-\sigma_j/2}\geq\Id_E,
\]
and therefore
\[
 \|\partial^{h_{\rm ref}}H_i^{\sigma_j}\|_{L^2}^2
 \leq
 \left\|
  H_i^{-\sigma_j/2}
  \partial^{h_{\rm ref}}H_i^{\sigma_j}
 \right\|_{L^2}^2,
\]
and
\[
 \|[\theta_{h_{\rm ref}}^*,H_i^{\sigma_j}]\|_{L^2}^2
 \leq
 \left\|
  H_i^{-\sigma_j/2}
  [\theta_{h_{\rm ref}}^*,H_i^{\sigma_j}]
 \right\|_{L^2}^2.
\]

Fix \(j\).  By Proposition~\ref{prop:Higgs-cutoff-detailed},
\[
 H_i^{\sigma_j}
 \longrightarrow H_\infty^{\sigma_j}
 \quad\text{strongly in }L^2
\]
and weakly in \(W^{1,2}\).  Hence the fixed reference-curvature term
converges, while weak lower semicontinuity gives
\begin{align}
 &\int_M
 \operatorname{tr}\left(
  \sqrt{-1}\Lambda_\omega
  R_{D_{h_{\rm ref}}}H_\infty^{\sigma_j}
 \right)\omega
 +
 \|\partial^{h_{\rm ref}}H_\infty^{\sigma_j}\|_{L^2}^2+
 \|[\theta_{h_{\rm ref}}^*,
       H_\infty^{\sigma_j}]\|_{L^2}^2
 \notag\\
 &\leq
 \liminf_{i\to\infty}
 \int_M
 \operatorname{tr}\left(
  \sqrt{-1}\Lambda_\omega
  R_{D_{g_i h_{\rm ref}}}H_i^{\sigma_j}
 \right)\omega.
 \label{eq:fixed-sigma-degree-limit}
\end{align}

Now let \(j\to\infty\).  Proposition~\ref{prop:Higgs-cutoff-detailed} gives
\[
 H_\infty^{\sigma_j}\to q
 \quad\text{strongly in }L^2
 \quad\text{and weakly in }W^{1,2}.
\]
Thus the reference-curvature term converges,
\[
 \partial^{h_{\rm ref}}H_\infty^{\sigma_j}
 \rightharpoonup
 \partial^{h_{\rm ref}}q
 \quad\text{in }L^2,
\]
and, since \(\theta\) is smooth,
\[
 [\theta_{h_{\rm ref}}^*,H_\infty^{\sigma_j}]
 \longrightarrow
 [\theta_{h_{\rm ref}}^*,q]
 \quad\text{strongly in }L^2.
\]
It follows from \eqref{eq:fixed-sigma-degree-limit} that
\begin{align*}
 &\int_M
 \operatorname{tr}\left(
  q\sqrt{-1}\Lambda_\omega R_{D_{h_{\rm ref}}}
 \right)\omega
 +
 \|\partial^{h_{\rm ref}}q\|_{L^2}^2
 +
 \|[\theta_{h_{\rm ref}}^*,q]\|_{L^2}^2\\
 &\leq
 \liminf_{j\to\infty}\liminf_{i\to\infty}
 \int_M
 \operatorname{tr}\left(
  \sqrt{-1}\Lambda_\omega
  R_{D_{g_i h_{\rm ref}}}H_i^{\sigma_j}
 \right)\omega.
\end{align*}
Since \(q=\Id_E-\pi\),
\[
 |\partial^{h_{\rm ref}}q|
 =
 |\partial^{h_{\rm ref}}\pi|,
 \qquad
 |[\theta_{h_{\rm ref}}^*,q]|
 =
 |[\theta_{h_{\rm ref}}^*,\pi]|.
\]
Lemma~\ref{lem:Higgs-quotient-degree} identifies the left-hand side
with \(2\pi\deg\mathcal Q\), proving
\eqref{eq:cutoff-degree}.
\end{proof}

\subsection{Construction of a quotient of nonpositive degree}

\begin{proposition}\label{prop:nonpositive-Higgs-quotient}
Suppose that there are smooth admissible solutions
\((f_i,g_i,t_i)\) and points \(p_i\in M\) such that
\[
 f_i(p_i)\longrightarrow-\infty.
\]
Then \((E,\theta)\) admits a nonzero Higgs quotient bundle
\(\mathcal Q\) satisfying
\[
 \deg\mathcal Q\leq0.
\]
\end{proposition}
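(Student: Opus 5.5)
The plan is to apply the machinery of Proposition~\ref{prop:Higgs-cutoff-detailed} and Lemma~\ref{lem:cutoff-degree} to the given sequence. Then we show that the right-hand side of \eqref{eq:cutoff-degree} is $\leq0$ once $f_i$ blows down.

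First, by Proposition~\ref{prop:unconditional-estimates-detailed} we have $\osc_M f_i\leq C$. Hence $f_i(p_i)\to-\infty$ forces $\sup_M f_i\to-\infty$, and so $e^{f_i}\to0$ uniformly on $M$. We normalize $H_i=e^{-m_i}g_i$ as in \eqref{eq:mH-detailed} and pass to the subsequence and exponents $\sigma_j$ given by Proposition~\ref{prop:Higgs-cutoff-detailed}. This yields a nonzero Higgs quotient bundle $\mathcal Q=E/\mathcal F$; the case $\pi=0$, $\mathcal Q=E$, is allowed. Lemma~\ref{lem:cutoff-degree} reduces the claim to showing
\[
 \limsup_{i\to\infty}\int_M\tr\bigl(\sqrt{-1}\Lambda_\omega R_{D_{g_ih_{\rm ref}}}H_i^{\sigma}\bigr)\omega\leq0
\]
for each fixed $\sigma=\sigma_j$.

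To evaluate this integral, we use \eqref{eq:K-bounded-detailed} together with $\sqrt{-1}\Lambda_\omega R_{D_{g_ih_{\rm ref}}}=\sqrt{-1}\Lambda_\omega R_{D_{h_i}}-\Delta_\omega^{\mathbb C}f_i\Id_E$. This gives
\[
 \sqrt{-1}\Lambda_\omega R_{D_{g_ih_{\rm ref}}}=\frac{\beta}{r}\Id_E-e^{f_i}u_i ,\qquad u_i=\log g_i .
\]
Since $H_i^\sigma$ commutes with $u_i$ and $u_i=m_i\Id_E+\log H_i$, we get
\[
 -e^{f_i}\tr(u_iH_i^\sigma)=-e^{f_i}m_i\tr H_i^\sigma-e^{f_i}\tr(H_i^\sigma\log H_i).
\]
The second term is bounded by $e^{f_i}r\sup_{0<x\leq1}x^\sigma|\log x|\leq Cr e^{f_i}/\sigma\to0$. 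The first term is $\leq0$, because $m_i\geq0$ (as $\tr u_i=0$). What remains is $\int_M\frac{\beta}{r}\tr(H_i^\sigma)\omega$, which is not a priori nonpositive. So this naive split is insufficient, and the real work is to show that the $\beta$ contribution is cancelled.

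The main obstacle is therefore to use the determinant equation in the degenerating regime. Here I would exploit admissibility: $A_{t_i}>0$ forces $\frac{\beta}{r}+\Delta f_i+(1-t_i)\alpha>e^{f_i}\ell_{\max}$. Integrating against suitable weights, and using that $\det A=e^{\lambda f_i}a_0\to0$, we see that some eigenvalue $s-e^{f_i}\ell_a$ of $A$ must tend to $0$ almost everywhere. Since the eigenvalues are bounded above, this forces $s\approx e^{f_i}\ell_a$ for some $a$. My first attempt would be to show that the degeneration occurs in the largest-eigenvalue directions, which carry the mass of $H_i^\sigma$: $H_i^\sigma$ concentrates where $\ell_a\approx m_i$, and there $\frac{\beta}{r}-e^{f_i}\ell_a=\sqrt{-1}\Lambda_\omega R_{D_{g_ih_{\rm ref}}}$ restricted to those directions equals $-\Delta f_i-(1-t_i)\alpha+(\text{eigenvalue of }A)$. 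After pairing with $H_i^\sigma$, integrating, and moving the $\Delta f_i$ term onto the weight, this should give $\limsup\leq\int(\text{small eigenvalue})\tr H_i^\sigma$. Controlling the error from eigenvalues with $\ell_a\ll m_i$ is the delicate point. If direct control fails, I would fall back to a contradiction argument: assume $\deg\mathcal Q>0$ and use Theorem~\ref{thm:Hample-criterion-detailed} style positivity to contradict $\det A_i\to0$.
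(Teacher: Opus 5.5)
Your setup is right. The oscillation bound gives $f_i\to-\infty$ uniformly. Proposition~\ref{prop:Higgs-cutoff-detailed} gives the nonzero Higgs quotient. Lemma~\ref{lem:cutoff-degree} reduces everything to an upper bound for $J_{ij}=\int_M\tr\bigl(\sqrt{-1}\Lambda_\omega R_{D_{g_ih_{\rm ref}}}H_i^{\sigma_j}\bigr)\omega$. You also correctly see that discarding $-e^{f_i}m_i\tr H_i^{\sigma}$ throws away the cancellation with $\beta/r$.

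But the proposal stops where the proof begins. The bound on $J_{ij}$ is only described as what ``should'' come out, and the ``delicate point'' is left open. Your fallback contradiction argument would need exactly the same bound on $J_{ij}$, so it is not an independent route.

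The missing pointwise step is elementary. In the eigenframe of $u_i$, the eigenvalues $a_{a,i}$ of $A_i$ are ordered oppositely to the $\ell_{a,i}$. So the smallest one automatically sits in the top direction, and $a_{1,i}^r\le\det A_i=e^{\lambda f_i}a_0\to0$ uniformly; no weighted integration is needed. Put $d_{a,i}=\ell_{a,i}-m_i\le0$. Then $\beta/r-e^{f_i}\ell_{a,i}=a_{a,i}-\Delta_\omega^{\mathbb C}f_i-(1-t_i)\alpha$, and $a_{a,i}=a_{1,i}+e^{f_i}(\ell_{1,i}-\ell_{a,i})\le a_{1,i}+e^{f_i}|d_{a,i}|$. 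Hence
\[
 0\le\sum_a a_{a,i}e^{\sigma d_{a,i}}\le r\Bigl(a_{1,i}+\frac{e^{f_i}}{e\sigma}\Bigr)\longrightarrow0
 \quad\text{uniformly on }M,
\]
so the directions with $\ell_{a,i}\ll m_i$ are harmless. The paper does the same thing pointwise, using the bounded $K_i$, a dichotomy on whether $d_{a,i}\to-\infty$, and reverse Fatou. Dropping the term $-(1-t_i)\alpha\tr H_i^{\sigma_j}\le0$ leaves
\[
 J_{ij}\le r\Vol(M)\sup_M\Bigl(a_{1,i}+\frac{e^{f_i}}{e\sigma_j}\Bigr)-\int_M\Delta_\omega^{\mathbb C}f_i\,\tr\bigl(H_i^{\sigma_j}\bigr)\,\omega .
\]

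The last term is the second gap, and it is why your reduction to ``$\limsup_iJ_{ij}\le0$ for each fixed $\sigma_j$'' is too strong. Moving $\Delta_\omega^{\mathbb C}f_i$ onto the weight does not make it disappear. With $\Delta_\omega^{\mathbb C}f_i\stackrel{*}{\rightharpoonup}\varphi$ in $L^\infty$ and $H_i^{\sigma_j}\to H_\infty^{\sigma_j}$ strongly in $L^2$, this term converges to $-\int_M\varphi\,\tr(H_\infty^{\sigma_j})\,\omega$, which has no sign for fixed $j$.

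It is removed only by the iterated limit built into Lemma~\ref{lem:cutoff-degree}. First, $\int_M\varphi\,\omega=0$. Second, as $j\to\infty$, $\tr(H_\infty^{\sigma_j})\to\tr q=\rk\mathcal Q$. This is a constant because $q$ is smooth (Theorem~\ref{thm:UY-regularity-detailed} on a curve) and $M$ is connected. Only with both steps do you get $\limsup_j\limsup_iJ_{ij}\le0$, and hence $\deg\mathcal Q\le0$.
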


\begin{proof}
By Proposition~\ref{prop:unconditional-estimates-detailed},
\(\operatorname{osc}_M f_i\leq C\).  Hence
\begin{equation}\label{eq:uniform-minus-infinity}
 f_i\longrightarrow-\infty
 \qquad\text{uniformly on }M.
\end{equation}
After passing to a subsequence, assume that
\(t_i\to t_\infty\).  Proposition~\ref{prop:Higgs-cutoff-detailed} gives projections
\(q\) and \(\pi=\Id_E-q\), a holomorphic Higgs subbundle
\(\mathcal F=\pi E\), and a nonzero Higgs quotient bundle
\[
 \mathcal Q=E/\mathcal F.
\]

Put \(u_i=\log g_i\), \(h_i=e^{-f_i}g_i h_{\rm ref}\), and
\begin{equation}
 K_i:=
 \sqrt{-1}\Lambda_\omega R_{D_{h_i}}
 =
 \left(
  \frac{\beta}{r}+\Delta_\omega^{\mathbb C}f_i
 \right)\Id_E-e^{f_i}u_i.
\end{equation}
Proposition~\ref{prop:unconditional-estimates-detailed} gives a uniform \(C^0\)-bound for \(K_i\).
Since \(g_i h_{\rm ref}=e^{f_i}h_i\), conformal change gives
\begin{equation}\label{eq:conformal-curvature-splitting}
 \sqrt{-1}\Lambda_\omega R_{D_{g_i h_{\rm ref}}}
 =
 K_i-\Delta_\omega^{\mathbb C}f_i\,\Id_E.
\end{equation}

We first treat the scalar Laplacian term.  After passing to a
subsequence,
\[
 \Delta_\omega^{\mathbb C}f_i
 \stackrel{*}{\rightharpoonup}\varphi
 \qquad\text{in }L^\infty(M).
\]
Since the integral of a Laplacian vanishes,
\begin{equation}
 \int_M\varphi\,\omega=0.
\end{equation}
For each fixed \(j\), the strong \(L^2\)-convergence
\(H_i^{\sigma_j}\to H_\infty^{\sigma_j}\), together with the
uniform \(L^\infty\)-bound for \(\Delta_\omega^{\mathbb C}f_i\),
first allows us to replace \(H_i^{\sigma_j}\) by
\(H_\infty^{\sigma_j}\) in the integral.  The weak-* convergence
then gives
\[
 \lim_{i\to\infty}
 \int_M
 \Delta_\omega^{\mathbb C}f_i\,
 \tr(H_i^{\sigma_j})\,\omega
 =
 \int_M
 \varphi\,\tr(H_\infty^{\sigma_j})\,\omega.
\]
Letting \(j\to\infty\) and using
\(H_\infty^{\sigma_j}\to q\) strongly in \(L^2\), we obtain
\begin{equation}\label{eq:scalar-term-limit}
 \lim_{j\to\infty}\lim_{i\to\infty}
 \int_M
 \Delta_\omega^{\mathbb C}f_i\,
 \tr(H_i^{\sigma_j})\,\omega
 =
 \int_M\varphi\,\tr(q)\,\omega
 =
 \rk(\mathcal Q)\int_M\varphi\,\omega
 =0.
\end{equation}
Here \(q\) is smooth by Theorem~\ref{thm:UY-regularity-detailed}, and hence
\(\tr(q)=\rk(\mathcal Q)\) on the connected surface \(M\).

We next estimate the \(K_i\)-term.  Fix a point outside the null set
on which the relevant almost-everywhere convergences may fail.
Diagonalize \(u_i\) with eigenvalues
\[
 \ell_{1,i}\geq\cdots\geq\ell_{r,i},
\]
and put
\[
 m_i:=\sup_M\ell_{1,i},
 \qquad
 d_{a,i}:=\ell_{a,i}-m_i\leq0.
\]
Then the eigenvalues of \(H_i^{\sigma_j}\) are
\(e^{\sigma_jd_{a,i}}\).

Since \(K_i\) is an affine function of \(u_i\), it is diagonal in
the same frame.  Let \(k_{a,i}\) be its eigenvalues and set
\begin{equation}\label{eq:smallest-eigenvalue}
 a_{a,i}:=k_{a,i}+(1-t_i)\alpha.
\end{equation}
These are the eigenvalues of \(A_i\), and
\[
 a_{1,i}\leq\cdots\leq a_{r,i},
\]
because \(\ell_{1,i}\geq\cdots\geq\ell_{r,i}\).  The determinant
equation and \eqref{eq:uniform-minus-infinity} give
\[
 \prod_{a=1}^r a_{a,i}
 =
 e^{\lambda f_i}a_0
 \longrightarrow0
 \qquad\text{uniformly on }M.
\]
Since all \(a_{a,i}>0\),
\[
 a_{1,i}^r\leq\prod_{a=1}^r a_{a,i},
\]
and therefore
\begin{equation}
 a_{1,i}\longrightarrow0
 \qquad\text{uniformly on }M.
\end{equation}

Fix \(j\) and consider an arbitrary subsequence in \(i\).  Since
there are only finitely many indices \(a\), after passing to one further
subsequence we may assume, for each \(a\), that either
\(d_{a,i}\to-\infty\) or \(d_{a,i}\geq-C_a\).  In the first case,
the uniform bound for \(K_i\) gives
\[
 k_{a,i}e^{\sigma_jd_{a,i}}\longrightarrow0.
\]
In the second case, set \(C:=\max_a C_a\), where the maximum is taken
over the finitely many indices belonging to this alternative.  Since
\(\ell_{a,i}\leq\ell_{1,i}\leq m_i\), one has
\[
 0\leq\ell_{1,i}-\ell_{a,i}\leq C.
\]
Moreover,
\[
 0\leq a_{a,i}-a_{1,i}
 =
 e^{f_i}(\ell_{1,i}-\ell_{a,i})
 \longrightarrow0.
\]
Together with \eqref{eq:smallest-eigenvalue}, this yields
\[
 k_{a,i}
 =
 a_{a,i}-(1-t_i)\alpha
 \longrightarrow
 -(1-t_\infty)\alpha\leq0.
\]

Since \(0<e^{\sigma_jd_{a,i}}\leq1\), every subsequential limit of
\(k_{a,i}e^{\sigma_jd_{a,i}}\) is nonpositive.  The original subsequence
was arbitrary; hence every subsequential limit of the finite sum
$\tr(K_iH_i^{\sigma_j})$ is nonpositive.  Equivalently,
\[
 \limsup_{i\to\infty}
 \tr(K_iH_i^{\sigma_j})
 \leq0
 \qquad\text{for a.e. }x\in M
\]
for every fixed \(j\).  Since
\[
 \bigl|\tr(K_iH_i^{\sigma_j})\bigr|
 \leq r\sup_i\|K_i\|_{C^0},
\]
the reverse Fatou inequality, applied using this uniform integrable bound, gives
\begin{equation}\label{eq:Ki-integral-limit}
 \limsup_{i\to\infty}
 \int_M\tr(K_iH_i^{\sigma_j})\,\omega
 \leq0.
\end{equation}

Combining \eqref{eq:conformal-curvature-splitting},
\eqref{eq:scalar-term-limit}, and \eqref{eq:Ki-integral-limit}, we
obtain
\[
 \limsup_{j\to\infty}\limsup_{i\to\infty}
 \int_M
 \tr\left(
  \sqrt{-1}\Lambda_\omega
  R_{D_{g_i h_{\rm ref}}}H_i^{\sigma_j}
 \right)\omega
 \leq0.
\]
Consequently,
\[
\begin{aligned}
 \liminf_{j\to\infty}\liminf_{i\to\infty}J_{ij}
 &\leq
 \limsup_{j\to\infty}\limsup_{i\to\infty}J_{ij}\\
 &\leq0,
\end{aligned}
\]
where
\[
 J_{ij}:=
 \int_M
 \tr\left(
  \sqrt{-1}\Lambda_\omega
  R_{D_{g_i h_{\rm ref}}}H_i^{\sigma_j}
 \right)\omega.
\]
Lemma~\ref{lem:cutoff-degree} therefore gives
\[
 2\pi\deg\mathcal Q\leq0.
\]
Thus \(\deg\mathcal Q\leq0\).
\end{proof}

\begin{theorem}\label{thm:uniform-lower-bound}
If \((E,\theta)\) is H-ample, then every smooth admissible solution
of the Higgs--Demailly system satisfies
\begin{equation}
 f_t\geq-C,
\end{equation}
where \(C\) is independent of \(t\) and of the solution.
\end{theorem}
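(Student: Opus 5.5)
The plan is to argue by contradiction and reduce everything to Proposition~\ref{prop:nonpositive-Higgs-quotient} together with the numerical criterion of Theorem~\ref{thm:Hample-criterion-detailed}. Suppose that no uniform constant $C$ exists. Then there is a sequence of smooth admissible solutions $(f_i,g_i,t_i)$ of \eqref{eq:HD-intro-full}, with $t_i\in[0,1]$, such that $\min_M f_i\to-\infty$. Choose points $p_i\in M$ at which $f_i$ attains its minimum, so that $f_i(p_i)\to-\infty$. This is exactly the hypothesis of Proposition~\ref{prop:nonpositive-Higgs-quotient}.

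That proposition then produces a nonzero Higgs quotient bundle $\mathcal Q$ of $(E,\theta)$ with $\deg\mathcal Q\leq0$. The ingredients behind it, all established earlier, are:
\begin{enumerate}[label=(\roman*)]
 \item the oscillation bound of Proposition~\ref{prop:unconditional-estimates-detailed}, which upgrades $f_i(p_i)\to-\infty$ to uniform divergence;
 \item the cutoff construction of Proposition~\ref{prop:Higgs-cutoff-detailed}, which yields a weak Higgs projection with $q\neq0$, and which is smooth on the curve by Theorem~\ref{thm:UY-regularity-detailed};
 \item the degree inequality of Lemma~\ref{lem:cutoff-degree}.
\end{enumerate}
The quotient may be all of $E$, which happens when $\pi=0$. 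In that case the conclusion reads $\deg E\leq0$.

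On the other hand, H-ampleness in the sense of \cite[Definition~2.5]{BiswasMisraRay2026} means, by Theorem~\ref{thm:Hample-criterion-detailed} (applicable since $M$ is projective by GAGA), that $\deg E>0$ and $\deg\mathcal Q>0$ for every nonzero Higgs quotient bundle. Remark~\ref{rem:coherent-quotients-detailed} guarantees that the analytically produced quotient is a genuine Higgs quotient bundle. In either case, $\mathcal Q=E$ or $\mathcal Q$ proper, we get $\deg\mathcal Q>0$, contradicting $\deg\mathcal Q\leq0$. Hence $\inf_M f_t$ is bounded below uniformly in $t$ and in the solution.

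The only point needing care is uniformity in $t$. The sequence is allowed to have arbitrary $t_i$, and Proposition~\ref{prop:nonpositive-Higgs-quotient} already handles this by passing to $t_i\to t_\infty$; it uses only $-(1-t_\infty)\alpha\leq0$. So no step is a real obstacle here: the substantive work lies in the preceding proposition. The proof of the theorem is the short contradiction argument above, and I would just check that every constant invoked, from Proposition~\ref{prop:unconditional-estimates-detailed} and Lemma~\ref{lem:weighted-energy-detailed}, is independent of $t$.
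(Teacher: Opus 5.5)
Your proposal is correct and follows the paper's own proof. You argue by contradiction, feed a sequence with $f_i(p_i)\to-\infty$ into Proposition~\ref{prop:nonpositive-Higgs-quotient} to obtain a nonzero Higgs quotient with $\deg\mathcal Q\leq0$, and contradict Theorem~\ref{thm:Hample-criterion-detailed}; your remarks on the case $\mathcal Q=E$ and on uniformity in $t$ are already handled inside that proposition.
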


\begin{proof}
Otherwise there would be smooth admissible solutions
\((f_i,g_i,t_i)\) and points \(p_i\in M\) such that
\(f_i(p_i)\to-\infty\).  Proposition~\ref{prop:nonpositive-Higgs-quotient}
would then produce a nonzero Higgs quotient bundle
\(\mathcal Q\) with \(\deg\mathcal Q\leq0\), contradicting
Theorem~\ref{thm:Hample-criterion-detailed}.
\end{proof}

\section{Positivity and proof of the main theorem}
\label{sec7}

\begin{lemma}\label{lem:quotient-Griffiths-positive}
Let \((E,\theta,h)\) have Griffiths-positive Hitchin--Simpson
curvature.  Then every nonzero Higgs quotient bundle, equipped with
the quotient metric, also has Griffiths-positive Hitchin--Simpson
curvature.
\end{lemma}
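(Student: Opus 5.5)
The plan is to compute the Hitchin--Simpson curvature of the quotient metric by the same block decomposition used in Lemma~\ref{lem:Higgs-quotient-degree}, now with $h$ in place of $h_{\rm ref}$. Let $\mathcal F\subset E$ be the Higgs subbundle with $\mathcal Q=E/\mathcal F\neq0$. Let $\pi$ be the $h$-orthogonal projection onto $\mathcal F$ and $q=\Id_E-\pi$, and identify $\mathcal Q$ with $qE$ carrying the restricted metric $h_{\mathcal Q}$. Holomorphicity of $\mathcal F$ gives the second fundamental form $B=q\partial^h\pi$. Higgs invariance $q\theta\pi=0$ gives the upper-triangular form of $\theta$, with off-diagonal block $\varphi$ mapping $\mathcal Q\to\mathcal F\otimes K_M$. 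Exactly as in the proof of Lemma~\ref{lem:Higgs-quotient-degree}, I will obtain the pointwise identity
\[
 R_{D_{h_{\mathcal Q}}}=qR_{D_h}q+B\wedge B^*-\varphi^*\wedge\varphi .
\]
Here the Chern part is the standard quotient curvature formula, and the Higgs part is \eqref{eq:Higgs-quotient-commutator}.

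Next I evaluate on a nonzero $v\in\mathcal Q_x=qE_x$ and a nonzero $\xi\in T^{1,0}_xM$. The first term gives $\langle R_{D_h}(\xi,\bar\xi)v,v\rangle_h>0$ by hypothesis, since $qv=v$ and $q$ is $h$-self-adjoint. It then suffices to show that the two correction terms are nonnegative in the convention of \eqref{eq:Griffiths-positive-full}. In a coordinate with $\omega$ unitary at $x$, write $B=\beta_0\,dz$ and $\varphi=\Phi\,dz$. Then $B\wedge B^*=\beta_0\beta_0^\dagger\,dz\wedge d\bar z$, which contributes $|\beta_0^\dagger v|^2|\xi|^2\geq0$, while
\[
 -\varphi^*\wedge\varphi=\Phi^\dagger\Phi\,dz\wedge d\bar z
\]
(using $d\bar z\wedge dz=-dz\wedge d\bar z$), which contributes $|\Phi v|^2|\xi|^2\geq0$. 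The sign for $\varphi$ matches the already-verified computation $-\sqrt{-1}\Lambda_\omega(\varphi^*\wedge\varphi)=g_{z\bar z}^{-1}\Phi^\dagger\Phi\geq0$ in Lemma~\ref{lem:Higgs-quotient-degree}. The sign for $B$ is the classical fact that quotients increase curvature. Summing these three contributions gives strict positivity.

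Alternatively, and as a consistency check, on a curve Griffiths positivity is equivalent to $\sqrt{-1}\Lambda_\omega R_{D}>0$. So it is enough to contract the displayed identity, which yields
\[
 \sqrt{-1}\Lambda_\omega R_{D_{h_{\mathcal Q}}}=q(\sqrt{-1}\Lambda_\omega R_{D_h})q+(\text{positive semidefinite}),
\]
and a compression of a positive Hermitian endomorphism to $qE$ is positive.

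The main obstacle I expect is bookkeeping of signs and conventions: the curvature of the quotient involves $B\wedge B^*$ rather than $-B^*\wedge B$, and the off-diagonal Higgs block enters with the opposite wedge order. I would first carefully redo the $2\times2$ block computation of $D_h^2$, with $D_h$ written in the splitting $\mathcal F\oplus\mathcal Q$ including the $\varphi,\varphi^*$ entries, to confirm that the $\mathcal Q\mathcal Q$-block of $D_h^2$ equals $R_{D_{h_{\mathcal Q}}}-B\wedge B^*+\varphi^*\wedge\varphi$. Once this is confirmed, the argument reduces to the pointwise positivity check above.
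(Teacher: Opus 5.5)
Your proposal is correct and follows essentially the same route as the paper. Both identify $\mathcal Q$ with $\mathcal F^{\perp_h}$, use the block identity $R_{D_{h_{\mathcal Q}}}=qR_{D_h}q+B\wedge B^*-\varphi^*\wedge\varphi$, and observe that on $(\xi,\bar\xi)$ and $v$ the two correction terms become $|B_\xi^* v|_h^2$ and $|\varphi_\xi v|_h^2$, both nonnegative, so strict positivity comes from the first term; your unitary-frame sign checks are exactly what the paper's pointwise formula relies on.
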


\begin{proof}
Let
\[
 0\longrightarrow\mathcal F
 \longrightarrow E
 \longrightarrow\mathcal Q
 \longrightarrow0
\]
be an exact sequence of Higgs bundles, and identify \(\mathcal Q\)
smoothly with \(\mathcal F^{\perp_h}\).  Let
\[
 B\in\Omega^{1,0}\bigl(\mathrm{Hom}(\mathcal F,\mathcal Q)\bigr)
\]
be the Chern second fundamental form, and let
\[
 \varphi\in
 \Omega^{1,0}\bigl(\mathrm{Hom}(\mathcal Q,\mathcal F)\bigr)
\]
be the off-diagonal component of the Higgs field.

For \(0\neq v\in\mathcal Q_x\), let
\(\widetilde v\in\mathcal F_x^{\perp_h}\) be its orthogonal lift.
For \(0\neq\xi\in T_x^{1,0}M\), the quotient curvature formula gives
\begin{equation}\label{eq:pointwise-Higgs-quotient-curvature}
\begin{aligned}
 \left\langle
  R_{D_{h_{\mathcal Q}}}(\xi,\bar\xi)v,v
 \right\rangle_{h_{\mathcal Q}}
 &=
 \left\langle
  R_{D_h}(\xi,\bar\xi)\widetilde v,
  \widetilde v
 \right\rangle_h\\
 &\quad+
 |B_\xi^*\widetilde v|_h^2
 +
 |\varphi_\xi\widetilde v|_h^2,
\end{aligned}
\end{equation}
where \(B_\xi^*:\mathcal Q_x\to\mathcal F_x\) is the adjoint of
\(B_\xi:\mathcal F_x\to\mathcal Q_x\).
The first term is strictly positive and the remaining terms are
nonnegative.  Hence the quotient curvature is Griffiths positive.
\end{proof}

\begin{proposition}\label{prop:positive-curvature-H-ample}
If \((E,\theta)\) admits a Hermitian metric with
Griffiths-positive Hitchin--Simpson curvature, then
\((E,\theta)\) is H-ample.
\end{proposition}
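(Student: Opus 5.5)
By Theorem~\ref{thm:Hample-criterion-detailed}, it suffices to verify two inequalities: $\deg E>0$, and $\deg\mathcal Q>0$ for every nonzero Higgs quotient bundle $(\mathcal Q,\theta_{\mathcal Q})$ of $(E,\theta)$. The first inequality is the special case $\mathcal Q=E$, which is the quotient by the zero Higgs subbundle. So I would prove a single statement: a Higgs bundle over $M$ carrying a metric with Griffiths-positive Hitchin--Simpson curvature has positive degree. I would then apply it to every nonzero Higgs quotient. This is legitimate because Lemma~\ref{lem:quotient-Griffiths-positive} shows that any such quotient, equipped with the quotient metric $h_{\mathcal Q}$, again has Griffiths-positive Hitchin--Simpson curvature.

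For the single statement, let $(\mathcal Q,\theta_{\mathcal Q},h_{\mathcal Q})$ have Griffiths-positive $R_{D_{h_{\mathcal Q}}}$. On a Riemann surface this is equivalent to the pointwise inequality $\sqrt{-1}\Lambda_\omega R_{D_{h_{\mathcal Q}}}>0$ as an $h_{\mathcal Q}$-Hermitian endomorphism. In local coordinates, $R_{D}=A\,dz\wedge d\bar z$ and $\sqrt{-1}\Lambda_\omega R_D=g_{z\bar z}^{-1}A$, with $A$ the positive Hermitian endomorphism in \eqref{eq:Griffiths-positive-full}. Consequently $\tr_{\mathcal Q}\bigl(\sqrt{-1}\Lambda_\omega R_{D_{h_{\mathcal Q}}}\bigr)$ is a strictly positive continuous function on $M$. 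Lemma~\ref{lem:HS-curvature-full} gives $\tr R_{D_h}=\tr R_h$, since the Higgs commutator is traceless. The degree convention \eqref{eq:degree-convention} then yields
\[
 2\pi\deg\mathcal Q=\int_M\tr\bigl(\sqrt{-1}\Lambda_\omega R_{D_{h_{\mathcal Q}}}\bigr)\,\omega>0 .
\]
Applying this with $\mathcal Q=E$ and $h_{\mathcal Q}=h$ gives $\deg E>0$. Applying it to arbitrary nonzero Higgs quotients gives the remaining condition, and Theorem~\ref{thm:Hample-criterion-detailed} then shows that $(E,\theta)$ is H-ample.

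I expect the only delicate step to be the compatibility of conventions. First, the Hitchin--Simpson curvature of the quotient with the quotient metric must be exactly the one in Lemma~\ref{lem:quotient-Griffiths-positive}. In particular, the induced Higgs field $\theta_{\mathcal Q}$ on $\mathcal Q\cong\mathcal F^{\perp_h}$ must be the compression $q\theta q$, which is what the block decomposition in Lemma~\ref{lem:Higgs-quotient-degree} uses. Second, the degree of $\mathcal Q$ must be computable from any Hermitian metric on it, which holds because \eqref{eq:degree-convention} is metric independent. Third, the quotient bundle here is the same notion as in the criterion, and Remark~\ref{rem:coherent-quotients-detailed} settles this. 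Both remaining points are handled by results already established, so the proof reduces to the trace-and-integrate computation above.
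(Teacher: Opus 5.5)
Your proposal is correct and follows the paper's proof. You get $\deg E>0$ by integrating the trace of the positive contracted Hitchin--Simpson curvature, using that the Higgs commutator is traceless; you use Lemma~\ref{lem:quotient-Griffiths-positive} to give every nonzero Higgs quotient Griffiths-positive curvature, and hence positive degree; and you conclude with Theorem~\ref{thm:Hample-criterion-detailed}. Treating $\deg E>0$ as the case $\mathcal Q=E$ only repackages the paper's two steps into one.
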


\begin{proof}
Since the Higgs commutator has zero trace,
\[
 2\pi\deg E
 =
 \int_M
 \tr\left(
  \sqrt{-1}\Lambda_\omega R_{D_h}
 \right)\omega
 >0.
\]
By Lemma~\ref{lem:quotient-Griffiths-positive}, every nonzero Higgs
quotient bundle also has Griffiths-positive Hitchin--Simpson
curvature and hence positive degree.  Theorem~\ref{thm:Hample-criterion-detailed} therefore implies
that \((E,\theta)\) is H-ample.
\end{proof}

\begin{proof}[Proof of Theorem~\ref{thm:main-intro-full}]
\((1)\Rightarrow(2)\).
Let \(h=h_1\) be the metric associated with an admissible solution at
\(t=1\).  Then
\[
 \sqrt{-1}\Lambda_\omega R_{D_h}>0.
\]
Since \(\Lambda^{1,1}T_x^*M\) is one-dimensional, for every
\(0\neq\xi\in T_x^{1,0}M\), put
\(c_\omega(\xi)=-\sqrt{-1}\omega(\xi,\bar\xi)>0\).  Then
\[
 R_{D_h}(\xi,\bar\xi)
 =c_\omega(\xi)\sqrt{-1}\Lambda_\omega R_{D_h}.
\]
Thus \(R_{D_h}\) is Griffiths positive.

\((2)\Rightarrow(3)\).
This is Proposition~\ref{prop:positive-curvature-H-ample}.

\((3)\Rightarrow(1)\).
Theorem~\ref{thm:uniform-lower-bound} verifies the hypothesis of
Theorem~\ref{thm:degree-reduction-detailed}.  Therefore the Higgs--Demailly system has a smooth
admissible solution at \(t=1\).
\end{proof}

\begin{remark}
The maximum-eigenvalue normalization in Section~\ref{sec6} only needs
to produce a nonzero quotient.  In particular, the case
\[
 \pi=0,\qquad \mathcal Q=E
\]
is allowed: the resulting inequality \(\deg E\leq0\) already contradicts
H-ampleness.  There is no need to require the limiting subsheaf to be
nonzero, as in a stability argument.
\end{remark}

\appendix

\section{Local calculations for the logarithmic method}
\label{app}

This appendix collects the matrix differential formulas and compact-cone estimates used in Sections~\ref{sec2} and~\ref{sec3}.

\begin{lemma}[Differentials of exponential and logarithm]
\label{lem:exp-log-differentials-app}
Let $u$ be a Hermitian endomorphism, $X$ an endomorphism-valued one-form,
$G\in\Herm^+(E,h_{\rm ref})$, and $B\in\End E$.  Then
\begin{align}
 e^{-u}\mathcal D(\exp)_u(X)
 &=\int_0^1e^{-su}Xe^{su}\,ds,
 \label{eq:Dexp-integral-app}\\
 \mathcal D(\log)_G(B)
 &=\int_0^\infty(G+\rho\Id_E)^{-1}B(G+\rho\Id_E)^{-1}\,d\rho.
 \label{eq:Dlog-integral-app}
\end{align}
If $u=\operatorname{diag}(\ell_1,\ldots,\ell_r)$ at a point, then
\begin{equation}\label{eq:Dexp-components-app}
 \bigl(e^{-u}\mathcal D(\exp)_u(X)\bigr)_{ab}
 =\chi(\ell_b-\ell_a)X_{ab},
 \qquad \chi(s)=\frac{e^s-1}{s},
\end{equation}
with $\chi(0)=1$.
\end{lemma}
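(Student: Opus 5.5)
The two formulas are standard, so I will prove each by a short direct calculation and then read off the componentwise statement.

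For \eqref{eq:Dexp-integral-app} I consider $F(s):=e^{-su}\,\mathcal D(\exp)_{su}(sX)$. Put $\Gamma(s):=e^{s(u+\epsilon X)}$. Differentiating at $\epsilon=0$ gives $\partial_\epsilon\Gamma=\mathcal D(\exp)_{su}(sX)$. I then differentiate in $s$, using $\partial_s\Gamma=(u+\epsilon X)\Gamma$ and the fact that the derivatives in $s$ and $\epsilon$ commute. This yields
\[
\partial_s\bigl(e^{-su}\partial_\epsilon\Gamma|_{\epsilon=0}\bigr)=e^{-su}Xe^{su}.
\]
Integrating from $0$ to $1$, with $F(0)=0$, gives the formula. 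The only caveat is that when $X$ is a one-form, $\mathcal D(\exp)_u(X)$ is understood coefficientwise: one applies the formula to each component of $X$ in a local coframe, and everything is linear in $X$. The same computation with $X=\partial^{h_{\rm ref}}u$ is exactly \eqref{eq:B-integral-exponential}.

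For \eqref{eq:Dlog-integral-app} I start from the resolvent identity
\[
\log G=\int_0^\infty\Bigl((1+\rho)^{-1}\Id_E-(G+\rho\Id_E)^{-1}\Bigr)\,d\rho .
\]
This holds for positive Hermitian $G$ by diagonalizing, since on each eigenvalue $x>0$ it reduces to the scalar identity $\int_0^\infty\bigl(\frac1{1+\rho}-\frac1{x+\rho}\bigr)d\rho=\log x$. Next I differentiate under the integral, using $\mathcal D(\mathrm{inv})_{G+\rho}(B)=-(G+\rho)^{-1}B(G+\rho)^{-1}$. To justify the differentiation, note that on a compact neighbourhood of $G$ in $\Herm^+$ one has $\|(G'+\rho)^{-1}\|\le(c+\rho)^{-1}$ for some $c>0$. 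The $\rho$-derivative integrand is therefore $O((c+\rho)^{-2})$, which is integrable, and dominated convergence applies.

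Finally, for \eqref{eq:Dexp-components-app} I work in a frame with $u=\operatorname{diag}(\ell_a)$. Then
\[
(e^{-su}Xe^{su})_{ab}=e^{s(\ell_b-\ell_a)}X_{ab},
\]
and $\int_0^1e^{s\delta}\,ds=\chi(\delta)$, including the case $\delta=0$ where it equals $1$. No step is a real obstacle. The only points needing care are the interchange of derivative and integral in the logarithm formula, which the bound above handles, and the coefficientwise interpretation for form-valued $X$.
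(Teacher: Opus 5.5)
Your proposal is correct and follows the same route as the paper: Duhamel's formula for the exponential (which you derive rather than cite), differentiation under the integral sign in the resolvent representation of $\log G$, and entrywise evaluation of $\int_0^1 e^{s(\ell_b-\ell_a)}\,ds=\chi(\ell_b-\ell_a)$. One small tidy-up: the lemma allows arbitrary $B\in\End E$, so take your domination bound on a small ball about $G$ in $\End E$ rather than in $\Herm^+$; a Neumann series still gives $\|(G'+\rho\Id_E)^{-1}\|\le 2(\lambda_{\min}(G)+\rho)^{-1}$ there.
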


\begin{proof}
\eqref{eq:Dexp-integral-app} follows from Duhamel's formula. Differentiating
\[
 \log G
 =
 \int_0^\infty
 \left(
  \frac{1}{1+\rho}\Id_E-(G+\rho\Id_E)^{-1}
 \right)d\rho.
\]
under the integral sign gives \eqref{eq:Dlog-integral-app}. Finally, if \(u=\mathrm{diag}(\ell_1,\dots,\ell_r)\), evaluating the first
integral entrywise yields \eqref{eq:Dexp-components-app}.
\end{proof}

\begin{lemma}[Uniform estimates on a compact positive cone]
\label{lem:compact-cone-matrix-estimates}
Fix constants \(0<m\leq M\), and let
\[
 \mathcal K_{m,M}
 :=
 \left\{
  G\in\Herm^+(E,h_{\rm ref}):
  m\Id_E\leq G\leq M\Id_E
 \right\}.
\]
Then the following statements hold.

\begin{enumerate}
\item
For every \(G\in\mathcal K_{m,M}\) and every
\(G h_{\rm ref}\)-Hermitian endomorphism \(K\),
\begin{equation}\label{eq:Dlog-compact-cone}
 \operatorname{Re}
 \left\langle
  \mathcal D(\log)_G(GK),K
 \right\rangle_{G h_{\rm ref}}
 \geq
 \frac{m}{M}|K|_{G h_{\rm ref}}^2.
\end{equation}

\item
Let
\[
 \mathscr S_G(\Psi):=G^{-1/2}\Psi G^{1/2},
\]
where \(\Psi\) is a fixed \(h_{\rm ref}\)-Hermitian endomorphism, and set
\[
 \mathcal Z_G(K)
 :=
 \left.\frac{d}{ds}\right|_{s=0}
 \mathscr S_{Ge^{sK}}(\Psi).
\]
There exists a constant
\(C_{\Psi,m,M}>0\) such that
\begin{equation}\label{eq:transport-derivative-compact-cone}
 \left|
  \operatorname{Re}
  \langle\mathcal Z_G(K),K\rangle_{G h_{\rm ref}}
 \right|
 \leq
 C_{\Psi,m,M}|K|_{G h_{\rm ref}}^2
\end{equation}
for every \(G\in\mathcal K_{m,M}\) and every
\(G h_{\rm ref}\)-Hermitian endomorphism \(K\).
\end{enumerate}
\end{lemma}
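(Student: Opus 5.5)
The two parts are independent. I would treat (1) by an exact eigenvalue computation and (2) by a compactness/smoothness argument.

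For (1), fix a point and choose an $h_{\rm ref}$-unitary frame diagonalizing $G=\operatorname{diag}(g_1,\dots,g_r)$, with $m\le g_a\le M$.

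\emph{Hermitian structure of $GK$.} The first observation is that a $Gh_{\rm ref}$-Hermitian $K$ is exactly one for which $B:=GK$ is $h_{\rm ref}$-Hermitian. Indeed, $K^{*_{Gh_{\rm ref}}}=G^{-1}K^{*_{h_{\rm ref}}}G$, so $K^{*_{Gh_{\rm ref}}}=K$ is equivalent to $(GK)^{*_{h_{\rm ref}}}=GK$. In the chosen frame this reads $g_bK_{ba}=\overline{g_aK_{ab}}$.

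\emph{The logarithmic derivative.} By \eqref{eq:Dlog-integral-app}, evaluated entrywise,
\[
 \mathcal D(\log)_G(B)_{ab}=\Lambda(g_a,g_b)\,B_{ab},
 \qquad
 \Lambda(x,y):=\frac{\log x-\log y}{x-y},
\]
with $\Lambda(x,x)=1/x$.

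\emph{The pairing.} The $Gh_{\rm ref}$ pairing is $\langle X,Y\rangle_{Gh_{\rm ref}}=\tr\bigl(X\,G^{-1}Y^{*_{h_{\rm ref}}}G\bigr)$, so for $Y=K$ it is $\tr(XK)$. Using the relation $g_bK_{ba}=\overline{g_aK_{ab}}$, I expect
\[
 \operatorname{Re}\langle\mathcal D(\log)_G(GK),K\rangle_{Gh_{\rm ref}}
 =\sum_{a,b}\Lambda(g_a,g_b)\,\frac{g_a^2}{g_b}\,|K_{ab}|^2,
 \qquad
 |K|^2_{Gh_{\rm ref}}=\sum_{a,b}\frac{g_a}{g_b}\,|K_{ab}|^2 .
\]

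\emph{Comparison.} By the mean value theorem, $\Lambda(x,y)=1/\xi$ for some $\xi$ between $x$ and $y$. Hence $\Lambda(x,y)\ge 1/\max(x,y)\ge 1/M$. This gives $\Lambda(g_a,g_b)\,g_a\ge m/M$ termwise, which proves \eqref{eq:Dlog-compact-cone}. The only care needed is keeping track of which adjoint is used in each pairing.

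For (2), write $B=GK$ and differentiate the product $G^{-1/2}\Psi G^{1/2}$ along $s\mapsto Ge^{sK}$. This gives the expression for $\mathcal Z_G(K)$ already displayed in the proof of Lemma~\ref{lem:g0-forcing-removal}:
\[
 \mathcal Z_G(K)=\mathcal D(G\mapsto G^{-1/2})_G[B]\,\Psi G^{1/2}
 +G^{-1/2}\Psi\,\mathcal D(G\mapsto G^{1/2})_G[B].
\]
The maps $G\mapsto G^{\pm1/2}$ are real-analytic on a neighbourhood of the compact set $\mathcal K_{m,M}$ (for instance via Cauchy/resolvent integrals). Their differentials are therefore bounded uniformly for $G\in\mathcal K_{m,M}$, and so $|\mathcal Z_G(K)|_{h_{\rm ref}}\le C\|\Psi\|_{C^0}|B|_{h_{\rm ref}}$.

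It remains to pass between norms. On $\mathcal K_{m,M}$ the metrics $Gh_{\rm ref}$ and $h_{\rm ref}$ are uniformly equivalent, with constants depending only on $m$ and $M$. Also $|B|_{h_{\rm ref}}\le M|K|_{h_{\rm ref}}$. Combining these with Cauchy--Schwarz in the $Gh_{\rm ref}$ pairing gives \eqref{eq:transport-derivative-compact-cone}.

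I expect the only delicate point of the lemma to be the index and adjoint bookkeeping in (1), since that is what yields the clean constant $m/M$. Part (2) is routine.
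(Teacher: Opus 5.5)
Your proposal is correct and follows essentially the same route as the paper. In (1) you diagonalize $G$, use the divided-difference form of $\mathcal D(\log)_G$, and apply the mean value theorem to get $g_a\Lambda(g_a,g_b)\geq m/M$ termwise. Your direct use of the relation $g_bK_{ba}=\overline{g_aK_{ab}}$ is just the entrywise version of the paper's substitution $\widehat K=G^{1/2}KG^{-1/2}$; both produce the weights $g_a^2/g_b$ and $g_a/g_b$. In (2) you use uniform bounds on the derivatives of $G\mapsto G^{\pm1/2}$ over $\mathcal K_{m,M}$, norm equivalence, and Cauchy--Schwarz, exactly as the paper does.
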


\begin{proof}
For the first assertion, fix a point and choose an
\(h_{\rm ref}\)-unitary frame in which
\[
 G=\operatorname{diag}(\lambda_1,\ldots,\lambda_r),
 \qquad
 m\leq\lambda_a\leq M.
\]
Set
\[
 \widehat K:=G^{1/2}KG^{-1/2}.
\]
Then \(\widehat K\) is \(h_{\rm ref}\)-Hermitian and
\[
 |\widehat K|_{h_{\rm ref}}=|K|_{G h_{\rm ref}}.
\]
The divided-difference formula gives
\[
 \bigl(\mathcal D(\log)_G(B)\bigr)_{ab}
 =
 q_{ab}B_{ab},
\]
where
\[
 q_{ab}:=
 \begin{cases}
 \dfrac{\log\lambda_a-\log\lambda_b}
       {\lambda_a-\lambda_b},
 &\lambda_a\neq\lambda_b,\\[8pt]
 \dfrac1{\lambda_a},
 &\lambda_a=\lambda_b.
 \end{cases}
\]
Since
\[
 GK=G^{1/2}\widehat K G^{1/2},
\]
we obtain
\[
 \operatorname{Re}
 \left\langle
  \mathcal D(\log)_G(GK),K
 \right\rangle_{G h_{\rm ref}}
 =
 \sum_{a,b}\lambda_a q_{ab}|\widehat K_{ab}|^2.
\]
By the mean value theorem,
\[
 q_{ab}\geq\frac1M.
\]
Since \(\lambda_a\geq m\), it follows that
\[
 \lambda_aq_{ab}\geq\frac mM,
\]
which proves \eqref{eq:Dlog-compact-cone}.

For the second assertion, the maps
\[
 G\longmapsto G^{1/2},
 \qquad
 G\longmapsto G^{-1/2}
\]
and their Fr\'echet derivatives are smooth and uniformly bounded on
\(\mathcal K_{m,M}\).  Differentiating
\(\mathscr S_G(\Psi)=G^{-1/2}\Psi G^{1/2}\) therefore gives
\[
\begin{aligned}
 \mathcal Z_G(K)
 ={}&
 \mathcal D(G\mapsto G^{-1/2})_G[GK]\,
 \Psi G^{1/2}\\
 &+
 G^{-1/2}\Psi\,
 \mathcal D(G\mapsto G^{1/2})_G[GK].
\end{aligned}
\]
Uniform norm equivalence on \(\mathcal K_{m,M}\) then yields
\[
 |\mathcal Z_G(K)|_{G h_{\rm ref}}
 \leq C_{\Psi,m,M}|K|_{G h_{\rm ref}},
\]
and \eqref{eq:transport-derivative-compact-cone} follows from the
Cauchy--Schwarz inequality.
\end{proof}

\begin{lemma}[Detailed Chern and Higgs logarithmic terms]
\label{lem:local-log-calculation-app}
At a point choose a holomorphic frame normal for $h_{\rm ref}$ and then a
constant unitary change of frame diagonalizing $u$.  Write
\(c_\omega:=\sqrt{-1}\Lambda_\omega(dz\wedge d\bar z)>0\).  With
$B=e^{-u}\partial^{h_{\rm ref}}e^u$, one has
\begin{align}
 \tr(Bu)&=\frac12\partial\tr(u^2),
 \label{eq:Bu-trace-app}\\
 \left\langle\sqrt{-1}\Lambda_\omega\bar\partial B,u\right\rangle
 &=-\frac12\Delta_\omega^{\mathbb C}|u|^2
 +c_\omega\sum_{a,b}\chi(\ell_b-\ell_a)
 |(\partial^{h_{\rm ref}}u)_{ab}|^2,
 \label{eq:Chern-log-local-app}\\
 \left\langle\sqrt{-1}\Lambda_\omega
 [\theta,e^{-u}\theta_{h_{\rm ref}}^*e^u-
            \theta_{h_{\rm ref}}^*],u\right\rangle
 &=c_\omega\sum_{a,b}
 (\ell_a-\ell_b)(e^{\ell_a-\ell_b}-1)|\Theta_{ab}|^2.
 \label{eq:Higgs-log-local-app}
\end{align}
\end{lemma}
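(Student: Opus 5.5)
The plan is to verify the three identities pointwise. We work at a fixed point $x$ in the frame described in the statement: a holomorphic frame normal for $h_{\rm ref}$ at $x$, followed by a constant unitary change of frame. The constant change preserves normality, so the new frame is still holomorphic and normal at $x$, and in it $u(x)=\operatorname{diag}(\ell_1,\ldots,\ell_r)$. In such a frame $h_{\rm ref}(x)=\Id$ and $dh_{\rm ref}(x)=0$, so $\partial^{h_{\rm ref}}=\partial$ on endomorphisms at $x$, and the second-order terms of $h_{\rm ref}$ only enter through $\bar\partial$ of a connection term.

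\textbf{Identity \eqref{eq:Bu-trace-app}.} This is the trace computation already made in the proof of Lemma~\ref{lem:logarithmic-pairing}. Write $B$ by Duhamel's formula \eqref{eq:Dexp-integral-app}. Cyclicity of the trace, together with the fact that $u$ commutes with $e^{su}$, reduces $\tr(Bu)$ to $\tr((\partial^{h_{\rm ref}}u)u)$, which equals $\tfrac12\partial\tr(u^2)$.

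\textbf{Identity \eqref{eq:Chern-log-local-app}.} Since $u$ is Hermitian, we have
\[
\tr\bigl((\sqrt{-1}\Lambda_\omega\bar\partial B)u\bigr)
=\sqrt{-1}\Lambda_\omega\bar\partial\tr(Bu)+\sqrt{-1}\Lambda_\omega\tr(B\wedge\bar\partial u),
\]
with the sign determined by the degree of $B$. By \eqref{eq:Bu-trace-app}, the first term is $\tfrac12\sqrt{-1}\Lambda_\omega\bar\partial\partial|u|^2=-\tfrac12\Delta^{\mathbb C}_\omega|u|^2$, matching the convention for $\Delta^{\mathbb C}_\omega$. For the second term, $\bar\partial u=(\partial^{h_{\rm ref}}u)^*$. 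At $x$, \eqref{eq:Dexp-components-app} gives $B_{ab}=\chi(\ell_b-\ell_a)(\partial u)_{ab}$, so $\tr(B\wedge(\partial u)^*)=\sum_{a,b}\chi(\ell_b-\ell_a)|(\partial u)_{ab}|^2\,dz\wedge d\bar z$. Applying $\sqrt{-1}\Lambda_\omega$ produces the factor $c_\omega$. Taking real parts is harmless because the result is real.

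\textbf{Identity \eqref{eq:Higgs-log-local-app}.} This is purely algebraic at $x$. We have $\theta^*_{h_{\rm ref}}=\Theta^\dagger d\bar z$, and $S:=e^{-u}\theta^*e^u-\theta^*$ has entries $S_{ab}=(e^{\ell_b-\ell_a}-1)\overline{\Theta_{ba}}\,d\bar z$. Then $[\theta,S]=(\Theta S'+S'\Theta)\,dz\wedge d\bar z$ up to the sign convention for wedge products, where $S'$ denotes the coefficient matrix. Pairing with the diagonal $u$ picks out the diagonal entries $\sum_a\ell_a[\Theta,S']_{aa}$, which equals $\sum_{a,b}\ell_a\bigl(\Theta_{ab}S'_{ba}-S'_{ab}\Theta_{ba}\bigr)$. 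Substituting the formula for $S'$ and reindexing the second sum by $a\leftrightarrow b$ gives
\[
\sum_{a,b}\Bigl[\ell_a(e^{\ell_a-\ell_b}-1)-\ell_b(e^{\ell_a-\ell_b}-1)\Bigr]|\Theta_{ab}|^2 .
\]
This is the claimed $(\ell_a-\ell_b)(e^{\ell_a-\ell_b}-1)|\Theta_{ab}|^2$.

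The main obstacle is bookkeeping of signs. The sign from the wedge $dz\wedge d\bar z$ versus $d\bar z\wedge dz$, and the sign in the graded Leibniz rule, must combine consistently with $c_\omega=\sqrt{-1}\Lambda_\omega(dz\wedge d\bar z)>0$. I would fix these signs by checking the rank-one case $\theta=0$ against $-\tfrac12\Delta^{\mathbb C}_\omega|u|^2$. For the Higgs term, I would check consistency with Lemma~\ref{lem:Higgs-largest-sign-detailed}, which must give nonnegativity.
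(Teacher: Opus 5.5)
Your proposal is correct and follows the paper's proof essentially step for step. You use Duhamel's formula plus cyclicity for $\tr(Bu)$, the graded Leibniz rule with the entrywise divided-difference formula $B_{ab}=\chi(\ell_b-\ell_a)(\partial u)_{ab}$ for the Chern term, and the entrywise formula for $e^{-u}\theta^*_{h_{\rm ref}}e^u-\theta^*_{h_{\rm ref}}$ followed by an $a\leftrightarrow b$ swap for the Higgs term. One small slip: since $d\bar z\wedge dz=-dz\wedge d\bar z$, the commutator is exactly $[\theta,S]=(\Theta S'-S'\Theta)\,dz\wedge d\bar z$, not $(\Theta S'+S'\Theta)$ ``up to sign,'' although your next line correctly uses $[\Theta,S']$.
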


\begin{proof}
Equation \eqref{eq:Bu-trace-app} follows from
\eqref{eq:Dexp-integral-app}, cyclicity of the trace, and the fact that $u$
commutes with its exponential.  Applying
$\sqrt{-1}\Lambda_\omega\bar\partial$ and differentiating the Hermitian
pairing gives
\[
 \left\langle\sqrt{-1}\Lambda_\omega\bar\partial B,u\right\rangle
 =-\frac12\Delta_\omega^{\mathbb C}|u|^2
 +{\rm Re}\langle B,\partial^{h_{\rm ref}}u\rangle.
\]
Formula \eqref{eq:Dexp-components-app} gives the second term in
\eqref{eq:Chern-log-local-app}.  For the Higgs term write
$\theta=\Theta dz$.  The $(a,b)$ entry of
$e^{-u}\theta_{h_{\rm ref}}^*e^u-\theta_{h_{\rm ref}}^*$ is
$(e^{\ell_b-\ell_a}-1)\overline{\Theta_{ba}}\,d\bar z$.
Expanding the commutator, taking its trace against $u$, and interchanging
$a$ and $b$ in one of the sums gives
\eqref{eq:Higgs-log-local-app}.
\end{proof}

\begin{lemma}[Asymptotically equal positive factors]
\label{lem:asymptotic-product-app}
Let $x_{a,j}>0$, $1\leq a\leq r$, be functions on a compact space.  If
\[
 \prod_{a=1}^r x_{a,j}\to1
 \quad\text{and}\quad
 \max_{a,b}|x_{a,j}-x_{b,j}|\to0
\]
uniformly, then $x_{a,j}\to1$ uniformly for every $a$.
\end{lemma}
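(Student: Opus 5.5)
Since the factors are positive and $\max_{a,b}|x_{a,j}-x_{b,j}|\to0$ uniformly, I would compare each factor with a single representative, say the geometric mean
\[
 \mu_j:=\Bigl(\prod_{a=1}^r x_{a,j}\Bigr)^{1/r}.
\]
The hypothesis gives $\mu_j\to1$ uniformly, so $\mu_j\in[1/2,2]$ for all large $j$, uniformly in the point. Because $\mu_j$ lies between $\min_a x_{a,j}$ and $\max_a x_{a,j}$, for every $a$ we have
\[
 |x_{a,j}-\mu_j|\leq\max_{b,c}|x_{b,j}-x_{c,j}|,
\]
which tends to zero uniformly.

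The triangle inequality then gives, for every $a$,
\[
 |x_{a,j}-1|\leq|x_{a,j}-\mu_j|+|\mu_j-1|,
\]
and both terms tend to zero uniformly on the compact space. This proves that $x_{a,j}\to1$ uniformly for every $a$.

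The only point needing care is that $\mu_j\to1$ uniformly follows from $\prod_a x_{a,j}\to1$ uniformly. This holds because $y\mapsto y^{1/r}$ is uniformly continuous on $[1/2,2]$. There is no real obstacle here. In particular, positivity is used only to make the geometric mean well defined, and no bound away from zero is needed beyond what the hypotheses already supply.
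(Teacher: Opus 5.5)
Your proposal is correct and is essentially the paper's argument: the paper also places the geometric mean $P_j^{1/r}$ between $\min_a x_{a,j}$ and $\max_a x_{a,j}$, and uses that the spread $\max_a x_{a,j}-\min_a x_{a,j}$ tends to zero uniformly. One cosmetic point: you should first place the product, not $\mu_j$, in $[1/2,2]$, and then deduce $\mu_j\to1$ uniformly from continuity of $y\mapsto y^{1/r}$ (or simply from $|y^{1/r}-1|\leq|y-1|$ for $y>0$).
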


\begin{proof}
Put $m_j(x)=\min_a x_{a,j}(x)$ and
$M_j(x)=\max_a x_{a,j}(x)$.  If
$P_j(x)=\prod_a x_{a,j}(x)$, then
\[
 m_j(x)\leq P_j(x)^{1/r}\leq M_j(x),
 \qquad
 0\leq M_j(x)-m_j(x)\leq
 \max_{a,b}|x_{a,j}(x)-x_{b,j}(x)|.
\]
The middle quantity converges uniformly to one and the last quantity
converges uniformly to zero.  Hence both $m_j$ and $M_j$ converge uniformly
to one, and so does every $x_{a,j}$.
\end{proof}

\section{Matrix inequalities and standard analytic inputs}
\label{app:matrix-detailed}

This appendix records the precise finite-dimensional inequalities and the
standard compactness statement used above.  Their inclusion also fixes all
sign conventions in the Higgs terms.

\begin{lemma}[Trace inequalities for exponential and logarithm]
\label{lem:matrix-monotonicity-app}
Let \(A,B\) be Hermitian matrices. Then
\[
 \tr\bigl((e^A-e^B)(A-B)\bigr)\geq0,
\]
with equality if and only if \(A=B\).  If \(G>0\) and \(K\) is
\(G h_{\rm ref}\)-Hermitian, then
\[
 \operatorname{Re}
 \langle \mathcal D(\log)_G(GK),K\rangle_{G h_{\rm ref}}>0
\]
unless \(K=0\).
\end{lemma}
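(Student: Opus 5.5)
Both assertions reduce to diagonalizing a single Hermitian object and checking an explicit sum of nonnegative terms.

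For the first inequality, write
\[
 \tr\bigl((e^A-e^B)(A-B)\bigr)=\int_0^1\tr\bigl(\mathcal D(\exp)_{B+s(A-B)}(A-B)\,(A-B)\bigr)\,ds .
\]
For a fixed $s$, diagonalize $C_s:=B+s(A-B)$ with eigenvalues $\mu_a$, and put $X:=A-B$. The divided-difference form of \eqref{eq:Dexp-components-app} gives
\[
 \tr\bigl(\mathcal D(\exp)_{C_s}(X)X\bigr)=\sum_{a,b}\frac{e^{\mu_a}-e^{\mu_b}}{\mu_a-\mu_b}\,|X_{ab}|^2 ,
\]
where the coefficient is read as $e^{\mu_a}$ when $\mu_a=\mu_b$. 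Every coefficient is strictly positive, because $\exp$ is strictly increasing. The integrand is therefore nonnegative, and it is positive whenever $X\neq0$. Equality forces $X=0$, that is $A=B$. This argument is self-contained, so it does not depend on citing Klein's inequality.

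For the second assertion, I will reuse the computation in the proof of Lemma~\ref{lem:compact-cone-matrix-estimates}, dropping the uniform bounds $m,M$. At the given point, choose an $h_{\rm ref}$-unitary frame in which $G=\operatorname{diag}(\lambda_a)$ with all $\lambda_a>0$, and set $\widehat K=G^{1/2}KG^{-1/2}$. This $\widehat K$ is $h_{\rm ref}$-Hermitian and satisfies $|\widehat K|_{h_{\rm ref}}=|K|_{Gh_{\rm ref}}$. The same identity as in that proof then gives
\[
 \operatorname{Re}\langle\mathcal D(\log)_G(GK),K\rangle_{Gh_{\rm ref}}=\sum_{a,b}\lambda_a q_{ab}|\widehat K_{ab}|^2 .
\]
The mean value theorem gives $q_{ab}\ge 1/\max(\lambda_a,\lambda_b)>0$, so every coefficient $\lambda_a q_{ab}$ is strictly positive. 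The sum vanishes only if $\widehat K=0$, which is equivalent to $K=0$.

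The only real care needed is bookkeeping in the pairing identity: checking that the real part of the $Gh_{\rm ref}$ pairing of $\mathcal D(\log)_G(G^{1/2}\widehat KG^{1/2})$ with $G^{-1/2}\widehat KG^{1/2}$ produces exactly the weights $\lambda_a q_{ab}$. A different but still positive symmetric weight would be equally acceptable. Because any such weighting with positive coefficients proves the claim, I do not expect a genuine obstacle.
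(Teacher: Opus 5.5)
Your proposal is correct and follows the paper's proof. The first claim uses the integral representation $e^A-e^B=\int_0^1\mathcal D(\exp)_{B+s(A-B)}(A-B)\,ds$ together with strictly positive divided differences; the second uses the pointwise computation from Lemma~\ref{lem:compact-cone-matrix-estimates}, which the paper simply invokes with $m=\lambda_{\min}(G)$, $M=\lambda_{\max}(G)$, and the bookkeeping you flagged checks out exactly, since $K^{*_{Gh_{\rm ref}}}=K$ and $K=G^{-1/2}\widehat KG^{1/2}$ give $\operatorname{Re}\langle\mathcal D(\log)_G(GK),K\rangle_{Gh_{\rm ref}}=\tr\bigl(\mathcal D(\log)_G(GK)\,K\bigr)=\sum_{a,b}\lambda_aq_{ab}|\widehat K_{ab}|^2$.
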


\begin{proof}
Since
\[
 e^A-e^B
 =
 \int_0^1
 \mathcal D(\exp)_{B+s(A-B)}(A-B)\,ds,
\]
the first assertion follows from the strict positivity of the
divided differences of the exponential.  Equality is possible only
when \(A-B=0\).  The second assertion follows from Lemma~\ref{lem:compact-cone-matrix-estimates}, applied
pointwise with
\[
 m=\lambda_{\min}(G),
 \qquad
 M=\lambda_{\max}(G).
\]
\end{proof}

\begin{lemma}[Scalar power inequalities]\label{lem:scalar-powers-app}
Let \(x,y>0\) and \(0<\sigma\leq1\). Then
\begin{equation}\label{eq:scalar-power1-app}
 (x-y)(x^\sigma-y^\sigma)\geq0.
\end{equation}
Moreover,
\begin{equation}\label{eq:scalar-power2-app}
 \left(\frac yx-1\right)(y^\sigma-x^\sigma)
 \geq x^{-\sigma}|y^\sigma-x^\sigma|^2.
\end{equation}
Finally, if \(0<s\leq\sigma/2\) and \(0<x\leq1\), then
\begin{equation}\label{eq:scalar-power3-app}
 0\leq
 \frac{s+\sigma/2}{s}(1-x^s)
 \leq x^{-\sigma/2}.
\end{equation}
\end{lemma}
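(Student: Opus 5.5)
Each of the three inequalities is a one-variable calculus statement, so I will prove them in order and reduce the later ones to elementary monotonicity or convexity facts. For \eqref{eq:scalar-power1-app} I use that \(t\mapsto t^\sigma\) is increasing on \((0,\infty)\) for \(\sigma>0\). Hence \(x-y\) and \(x^\sigma-y^\sigma\) have the same sign, and their product is nonnegative.

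For \eqref{eq:scalar-power2-app} I divide by \(x^{1-\sigma}\cdot x^{\sigma}\)-type homogeneity. Put \(t=y/x>0\). Then
\[
 \Bigl(\frac yx-1\Bigr)(y^\sigma-x^\sigma)=x^\sigma(t-1)(t^\sigma-1),
 \qquad
 x^{-\sigma}|y^\sigma-x^\sigma|^2=x^\sigma(t^\sigma-1)^2 .
\]
So it suffices to show \((t-1)(t^\sigma-1)\geq(t^\sigma-1)^2\), that is, \((t^\sigma-1)(t-t^\sigma)\geq0\). If \(t\geq1\), then \(t^\sigma\geq1\) and \(t\geq t^\sigma\), since \(\sigma\leq1\). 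If \(t<1\), then \(t^\sigma<1\) and \(t\leq t^\sigma\). In both cases the two factors have the same sign, which gives the inequality.

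For \eqref{eq:scalar-power3-app}, the left inequality is immediate from \(0<x\leq1\), which gives \(x^s\leq1\). For the right inequality I substitute \(x=e^{-y}\) with \(y\geq0\) and set \(\tau=\sigma/2\geq s\). The claim becomes
\[
 (s+\tau)(1-e^{-sy})\leq s\,e^{\tau y}.
\]
Multiplying by \(e^{sy}\), this is equivalent to \(\phi(y)\geq0\), where
\[
 \phi(y):=s\,e^{(s+\tau)y}-(s+\tau)(e^{sy}-1).
\]
At \(y=0\) we have \(\phi(0)=s>0\). Differentiating,
\[
 \phi'(y)=s(s+\tau)\bigl(e^{(s+\tau)y}-e^{sy}\bigr)\geq0 \quad\text{for } y\geq0,
\]
because \(\tau\geq0\). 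Therefore \(\phi\) is nondecreasing and \(\phi(y)\geq s>0\).

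I do not expect a real obstacle in any of these steps. The only points needing care are the choice of normalization in \eqref{eq:scalar-power2-app} and the monotonicity argument in \eqref{eq:scalar-power3-app}. The hypothesis \(s\leq\sigma/2\) turns out not to be needed for the inequality itself, only \(s>0\).
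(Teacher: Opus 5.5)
Your proof is correct. The first two inequalities are handled exactly as in the paper: monotonicity of $t\mapsto t^\sigma$ for the first, and the substitution $z=y/x$ reducing the second to $(z^\sigma-1)(z-z^\sigma)\geq 0$.

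For \eqref{eq:scalar-power3-app} you take a different route. The paper writes $x=e^{-t}$ and $a=\sigma/2$, and chains two linear bounds:
$\frac{s+a}{s}(1-e^{-st})\le (s+a)t\le 2at\le e^{at}$.
This uses $1-e^{-st}\le st$, then $s\le a$ to replace $s+a$ by $2a$, and finally $2r\le e^r$ for $r\ge 0$. You instead clear denominators and show that $\phi(y)=s\,e^{(s+\tau)y}-(s+\tau)(e^{sy}-1)$ is nondecreasing with $\phi(0)=s$.

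This is equally short and proves the inequality, even strictly, for every $s>0$ and $\tau\ge 0$. In particular it does not need the hypothesis $s\le\sigma/2$, which the paper's chain genuinely requires in its middle step: for $s\gg a$ the intermediate bound $(s+a)t\le e^{at}$ fails. So your observation that the hypothesis is superfluous for the inequality itself is right.

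The extra generality is not used later. In the cutoff argument of Proposition~\ref{prop:Higgs-cutoff-detailed} the paper imposes $s\le\sigma_j/2$ anyway, and only the smallness of the ratio $s/(s+\sigma_j/2)$ as $s\downarrow 0$ matters. Either version therefore suffices there.

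One cosmetic point: in the third part you reuse the letter $y$ for the substitution variable, which clashes with the $y$ in the lemma's statement. A fresh letter would avoid confusion.
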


\begin{proof}
The first inequality follows from the monotonicity of
\(t\mapsto t^\sigma\).

For the second inequality, set \(z=y/x\). Since
\[
 y^\sigma-x^\sigma=x^\sigma(z^\sigma-1),
\]
division by \(x^\sigma>0\) reduces the assertion to
\[
 (z-1)(z^\sigma-1)\geq(z^\sigma-1)^2,
\]
or equivalently,
\[
 (z^\sigma-1)(z-z^\sigma)\geq0.
\]
If \(z\geq1\), then
\[
 z^\sigma-1\geq0,
 \qquad
 z-z^\sigma\geq0,
\]
because \(0<\sigma\leq1\). If \(0<z\leq1\), then both factors are
nonpositive. Hence the product is nonnegative in either case.

For the last inequality, put
\[
 a:=\frac{\sigma}{2},
 \qquad
 x=e^{-t},
 \qquad
 t\geq0.
\]
Since \(0<s\leq a\), the elementary inequality
\(1-e^{-st}\leq st\) gives
\[
 \frac{s+a}{s}(1-x^s)
 =
 \frac{s+a}{s}(1-e^{-st})
 \leq(s+a)t
 \leq2at.
\]
For every \(r\geq0\), one has \(2r\leq e^r\). Applying this with
\(r=at\), we obtain
\[
 \frac{s+a}{s}(1-x^s)
 \leq2at
 \leq e^{at}
 =
 x^{-a}
 =
 x^{-\sigma/2}.
\]
The lower bound is immediate from \(0<x\leq1\).
\end{proof}

\begin{lemma}[Strictly perturbed compactness]
\label{lem:strictly-perturbed-compactness}
Fix \(0<\gamma<1\).  Let
\[
 c_j\in C^{0,\gamma}(M,\mathbb R),
 \qquad
 F_j\in C^{0,\gamma}(M,\End E),
\]
and suppose that, for some constants \(c_-,c_+,C_0>0\),
\[
 0<c_-\leq c_j\leq c_+,
 \qquad
 \|c_j\|_{C^{0,\gamma}}
 +\|F_j\|_{C^{0,\gamma}}
 \leq C_0.
\]
Let \(g_j\) be positive \(C^2\) \(h_{\rm ref}\)-Hermitian endomorphisms satisfying
\(\det g_j=1\) and
\begin{equation}\label{eq:strictly-perturbed-equation}
\begin{aligned}
 &\sqrt{-1}\Lambda_\omega
 \left(
  \bar\partial(g_j^{-1}\partial^{h_{\rm ref}}g_j)
  +[\theta,
    g_j^{-1}\theta_{h_{\rm ref}}^*g_j
    -\theta_{h_{\rm ref}}^*]
 \right)\\
 &\hspace{45mm}
 +c_j\log g_j=F_j.
\end{aligned}
\end{equation}
If
\[
 \|\log g_j\|_{C^0}\leq L,
\]
then
\[
 \|g_j\|_{C^{2,\gamma}}\leq C,
\]
where \(C\) depends only on the fixed background data,
\(c_-,c_+,C_0,L\), and \(\gamma\).  More generally, uniform
\(C^{k,\gamma}\)-bounds for \(c_j\) and \(F_j\) imply uniform
\(C^{k+2,\gamma}\)-bounds for \(g_j\).
\end{lemma}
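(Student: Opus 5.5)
The plan is a standard $W^{1,p}$--Morrey--Schauder bootstrap for the first-order elliptic system satisfied by $B_j:=g_j^{-1}\partial^{h_{\rm ref}}g_j$.

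First I record what the hypotheses give at order zero. The bound $\|\log g_j\|_{C^0}\leq L$ places $g_j$ in the fixed cone $e^{-L}\Id_E\leq g_j\leq e^{L}\Id_E$. Hence the Higgs commutator
\[
[\theta,g_j^{-1}\theta_{h_{\rm ref}}^*g_j-\theta_{h_{\rm ref}}^*]
\]
is bounded in $L^\infty$, and so are $c_j\log g_j$ and $F_j$.

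Next comes an energy bound. I pair \eqref{eq:strictly-perturbed-equation} with $u_j=\log g_j$ and use Lemma~\ref{lem:logarithmic-pairing}. Integrating over $M$ kills the Laplacian term, and \eqref{eq:Qnabla-coercive} with constant $c_L$ then gives $\|\partial^{h_{\rm ref}}u_j\|_{L^2}\leq C$. Since $g_j=e^{u_j}$ lies in the cone, the differential formula \eqref{eq:B-integral-exponential} yields $|B_j|\leq C|\partial^{h_{\rm ref}}u_j|$. So $\|B_j\|_{L^2}+\|g_j\|_{W^{1,2}}\leq C$.

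The main obstacle is upgrading this to $W^{1,p}$ for some $p>2$. In dimension two, $W^{1,2}$ does not embed in $C^0$, and the equation for $g_j$ itself is quasilinear, since $\bar\partial(g^{-1}\partial g)$ contains the quadratic term $-g^{-1}\bar\partial g\, g^{-1}\partial g$. I would work with $B_j$ directly rather than with $g_j$. The equation reads
\[
\sqrt{-1}\Lambda_\omega\bar\partial B_j=\text{($L^\infty$-bounded term)}.
\]
It is supplemented by the structural identity coming from the flatness of the difference of Chern connections: $B_j$ is the $(1,0)$-connection difference $\partial^{h_j}-\partial^{h_{\rm ref}}$ with $h_j=g_jh_{\rm ref}$, so its adjoint relation gives $\partial^{h_j}$-information. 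This part alone is not an elliptic system for $B_j$.

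The cleaner route I would try first is the one used for Hermitian--Einstein metrics on curves. Fix a local trivialization in which $\bar\partial_E=\bar\partial$. Set $\Phi_j:=g_j\,\sqrt{-1}\Lambda_\omega R_{h_j}$, whose components, up to bounded terms, are $\Delta g_j-\partial g_j\,g_j^{-1}\bar\partial g_j$. This is the equation
\[
\Delta g_j=\partial g_j\,g_j^{-1}\bar\partial g_j+O(1),
\]
which has a quadratic gradient nonlinearity and a bounded solution. The standard estimate for such systems on a two-dimensional domain, as in Uhlenbeck--Yau and Donaldson, gives $C^{1,\gamma}$ once $g_j$ is bounded and the $L^2$ energy of $\partial g_j$ does not concentrate. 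Non-concentration follows here because the pointwise identity \eqref{eq:logarithmic-identity-general} says
\[
-\tfrac12\Delta^{\mathbb C}_\omega|u_j|^2+c_L|\partial u_j|^2\leq C.
\]
Testing this against cutoffs gives the Morrey-type decay $\int_{B_\rho}|\partial u_j|^2\leq C\rho^{2\mu}$ via a Caccioppoli and hole-filling argument, using that $|u_j|^2$ is bounded. Morrey's Dirichlet growth theorem then gives $u_j\in C^{0,\mu}$ uniformly.

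Once $g_j$ is uniformly Hölder, the operator $\bar\partial(g_j^{-1}\partial\,\cdot)$ has Hölder coefficients. $L^p$ theory for the linear system
\[
\Delta g_j=g_j\,(\text{RHS})+\partial g_j\,g_j^{-1}\bar\partial g_j
\]
then applies. I would treat the quadratic term by a second hole-filling step with Campanato estimates, or alternatively by writing $\bar\partial B_j=O(1)$ with $B_j\in L^{2,2\mu}$ and using that $\bar\partial$ is elliptic on a curve. This gives $B_j\in L^p$ for all $p$, hence $g_j\in W^{1,p}\subset C^{0,\gamma}$ uniformly, and then $B_j\in C^{0,\gamma}$.

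Finally, with $B_j\in C^{0,\gamma}$ the right-hand side of $\sqrt{-1}\Lambda_\omega\bar\partial B_j=\dots$ is uniformly $C^{0,\gamma}$. Schauder estimates for $\bar\partial$ give $B_j\in C^{1,\gamma}$, and $\partial^{h_{\rm ref}}g_j=g_jB_j$ then yields $\|g_j\|_{C^{2,\gamma}}\leq C$. Differentiating the equation and iterating gives $C^{k+2,\gamma}$ bounds from $C^{k,\gamma}$ bounds on $c_j,F_j$. The uniform cone also controls $g_j^{-1}$ at every stage.
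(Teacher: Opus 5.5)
Your energy bound and your final Schauder and bootstrap step agree with the paper. The middle step, however, does not go through as written, and the reason you give for taking that route is false.

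The claimed Morrey decay does not follow from $-\tfrac12\Delta^{\mathbb C}_\omega|u_j|^2+c_L|\partial^{h_{\rm ref}}u_j|^2\leq C$ and $|u_j|\leq L$. The logarithmic identity is tied to the fixed reference $h_{\rm ref}$, so you cannot re-center it by subtracting a constant from $u_j$. Testing with a cutoff and applying Poincar\'e to $|u_j|^2-k$ on the annulus therefore only gives
\[
\int_{B_\rho}|\partial^{h_{\rm ref}}u_j|^2\,\omega
\leq C\Bigl(\int_{B_{2\rho}\setminus B_\rho}|\partial^{h_{\rm ref}}u_j|^2\,\omega\Bigr)^{1/2}+C\rho^2 .
\]
This is not a hole-filling inequality, because the annular energy enters only through its square root. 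It is compatible with local energy decaying like $1/\log(1/\rho)$. So it yields neither $C\rho^{2\mu}$ nor, via Morrey's Dirichlet growth theorem, H\"older continuity.

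To rescue this route you would have to re-center at a constant metric. For example, use Donaldson's distance from $g_jh_{\rm ref}$ to the flat local metric given by its annular average. That quantity is comparable to the squared difference, and its Laplacian is at least $c|\nabla g_j|^2$ minus a term linear in the difference. Poincar\'e then converts the cutoff term into the annular energy itself.

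None of this is needed. Contrary to your remark, $\sqrt{-1}\Lambda_\omega\bar\partial B_j=O(1)$ is by itself an elliptic system on a curve. The contraction $\sqrt{-1}\Lambda_\omega$ is a pointwise isomorphism on $(1,1)$-forms. The operator $\bar\partial:\Omega^{1,0}(\End E)\to\Omega^{1,1}(\End E)$ is a fixed, linear, first-order elliptic operator: with $B_j=b_j\,dz$ in a local holomorphic frame, it is just $\partial_{\bar z}b_j=O(1)$ componentwise. The nonlinearity sits entirely in $\partial^{h_{\rm ref}}g_j=g_jB_j$, which is harmless on the fixed cone.

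This is the paper's argument:
\begin{enumerate}
\item The estimate $\|B_j\|_{W^{1,p}}\leq C_p(\|\bar\partial B_j\|_{L^p}+\|B_j\|_{L^p})$ with $p=2$, together with your $L^2$ bound, gives $B_j\in W^{1,2}\subset L^p$ for every finite $p$.
\item The same estimate with $p>2$ and Morrey's inequality give $B_j\in C^{0,\eta}$.
\item The relation $\partial^{h_{\rm ref}}g_j=g_jB_j$ then gives $g_j\in C^{1,\eta}$.
\item Schauder estimates for $\bar\partial$ finish the proof as in your last paragraph.
\end{enumerate}

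The ``alternative'' you mention at the end is exactly this argument. It needs only $B_j\in L^2$, not $B_j\in L^{2,2\mu}$. Use it from the start and drop the quadratic-gradient analysis of $g_j$ and the Morrey decay altogether.
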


\begin{proof}
Set
\[
 u_j:=\log g_j,
 \qquad
 B_j:=g_j^{-1}\partial^{h_{\rm ref}}g_j.
\]
The \(C^0\)-bound for \(u_j\) gives the fixed cone estimate
\[
 e^{-L}\Id_E\leq g_j\leq e^L\Id_E.
\tag{B.7}
\]
Taking the real trace pairing of
\eqref{eq:strictly-perturbed-equation} with \(u_j\), applying
Lemma~\ref{lem:logarithmic-pairing}, and integrating over \(M\), we obtain
\[
 \int_M
 \left(
  \mathscr Q_\nabla(u_j)
  +\mathscr Q_\theta(u_j)
  +c_j|u_j|^2
 \right)\omega
 =
 \int_M\langle F_j,u_j\rangle\,\omega.
\]
The right-hand side is uniformly bounded.  By Lemma~\ref{lem:logarithmic-pairing} and
\(\|u_j\|_{C^0}\leq L\),
\[
 \mathscr Q_\nabla(u_j)
 \geq c_L|\partial^{h_{\rm ref}}u_j|^2,
\]
and hence
\[
 \|\partial^{h_{\rm ref}}u_j\|_{L^2}\leq C.
\tag{B.8}
\]
The differential formula for the exponential and the fixed cone
bound imply
\[
 |B_j|\leq C_L|\partial^{h_{\rm ref}}u_j|,
\]
so
\[
 \|B_j\|_{L^2}\leq C.
\tag{B.9}
\]

Equation \eqref{eq:strictly-perturbed-equation} can be rewritten as
\[
\begin{aligned}
 \sqrt{-1}\Lambda_\omega\bar\partial B_j
 &=
 F_j-c_ju_j\\
 &\quad-
 \sqrt{-1}\Lambda_\omega
 [\theta,
   g_j^{-1}\theta_{h_{\rm ref}}^*g_j
   -\theta_{h_{\rm ref}}^*].
\end{aligned}
\tag{B.10}
\]
The right-hand side is uniformly bounded in \(L^\infty\).  Since
\(\dim_{\mathbb C}M=1\), contraction by \(\Lambda_\omega\) is a
pointwise isomorphism on \((1,1)\)-forms, and
\[
 \bar\partial:
 \Omega^{1,0}(\End E)\longrightarrow\Omega^{1,1}(\End E)
\]
is elliptic.  Therefore
\[
 \|B_j\|_{W^{1,p}}
 \leq
 C_p\bigl(
  \|\bar\partial B_j\|_{L^p}
  +\|B_j\|_{L^p}
 \bigr).
\tag{B.11}
\]
Taking first \(p=2\), and then using
\(W^{1,2}\hookrightarrow L^p\) for every finite \(p\), we obtain,
for any sufficiently large \(p>2\),
\[
 \|B_j\|_{W^{1,p}}\leq C_p.
\]
Choose \(p\) so that
\[
 \gamma<1-\frac2p.
\]
Morrey's inequality gives
\[
 \|B_j\|_{C^{0,\eta}}\leq C
\]
for some \(\gamma<\eta<1-2/p\).

Since
\[
 \partial^{h_{\rm ref}}g_j=g_jB_j
\]
and \(g_j\) is \(h_{\rm ref}\)-Hermitian,
\[
 \bar\partial g_j
 =
 \bigl(\partial^{h_{\rm ref}}g_j\bigr)^{*_{h_{\rm ref}}}.
\]
The fixed cone bound and boundedness of \(B_j\) first give a uniform
Lipschitz bound for \(g_j\).  Since \(B_j\) is bounded in \(C^{0,\eta}\),
the product \(g_jB_j\) has the same H\"older bound.  Hence
\[
 \|g_j\|_{C^{1,\eta}}
 +\|g_j^{-1}\|_{C^{1,\eta}}
 +\|u_j\|_{C^{1,\eta}}
 \leq C.
\tag{B.12}
\]
The right-hand side of (B.10) is consequently uniformly bounded in
\(C^{0,\gamma}\).  Schauder estimates for \(\bar\partial\) give
\[
 \|B_j\|_{C^{1,\gamma}}\leq C.
\]
Using \(\partial^{h_{\rm ref}}g_j=g_jB_j\) once more yields
\[
 \|g_j\|_{C^{2,\gamma}}\leq C.
\]

If \(c_j\) and \(F_j\) are uniformly bounded in \(C^{k,\gamma}\),
repeated differentiation of (B.10), followed by the relation
\(\partial^{h_{\rm ref}}g_j=g_jB_j\), gives
\[
 \|g_j\|_{C^{k+2,\gamma}}\leq C_k.
\]
\end{proof}

\section*{Acknowledgements}
The author is grateful to Professor Kefeng Liu for his encouragement and
guidance, Professor Xiaokui Yang for helpful discussions, and Professor
Man-Chun Lee for his warm hospitality during the author's visit to The
Chinese University of Hong Kong, where part of this work was carried out.
The author also thanks Dr. Armando Capasso for pointing out the discrepancy
in the earlier recursive definition of H-ampleness and drawing his attention
to the work of Biswas, Misra, and Ray.

\noindent\textbf{Funding.} This work was supported by the National Natural Science Foundation of China (Grant No.\ 12601080),
the Scientific Research Foundation of
Chongqing University of Technology (Grant No.\ 2026ZDZ012)
and the Youth Project of the Science and Technology Research Program of
Chongqing Education Commission of China
(Grant No.\ KJQN202601108).

\noindent\textbf{AI declaration.}
During the preparation of this manuscript, ChatGPT (OpenAI) was used for
language polishing, structural organization, notation consistency, and
auxiliary checking. The author reviewed the manuscript and takes full
responsibility for all mathematical content.

\noindent\textbf{Conflict of interest.}
The author declares that he has no conflict of interest.


\begin{thebibliography}{99}

\bibitem{Berndtsson2009}
Berndtsson B. \emph{Curvature of vector bundles associated to holomorphic fibrations}.
Ann Math (2), 2009, \textbf{169}: 531--560.

\bibitem{BiswasMisraRay2026}
Biswas I, Misra S, Ray N. \emph{Positivity of Higgs vector bundles}.
arXiv:2605.22402 [math.AG], 2026.

\bibitem{BlochGieseker1971}
Bloch S, Gieseker D. \emph{The positivity of the Chern classes of an ample vector bundle}.
Invent Math, 1971, \textbf{12}: 112--117.

\bibitem{BruzzoCapassoOtero2025}
Bruzzo U, Capasso A, Gra\~na Otero B. \emph{Positivity for Higgs vector bundles: criteria and applications}.
Rev Mat Complut, 2026, \textbf{39}: 597--609.

\bibitem{BruzzoGrana2007Metrics}
Bruzzo U, Gra\~na Otero B. \emph{Metrics on semistable and numerically effective Higgs bundles}.
J Reine Angew Math, 2007, \textbf{612}: 59--79.

\bibitem{BruzzoGrana2007NF}
Bruzzo U, Gra\~na Otero B. \emph{Numerically flat Higgs vector bundles}.
Commun Contemp Math, 2007, \textbf{9}: 437--446.

\bibitem{BruzzoGrana2011}
Bruzzo U, Gra\~na Otero B. \emph{Semistable and numerically effective principal (Higgs) bundles}.
Adv Math, 2011, \textbf{226}: 3655--3676.

\bibitem{BruzzoHernandez2006}
Bruzzo U, Hern\'andez Ruip\'erez D. \emph{Semistability vs. nefness for (Higgs) vector bundles}.
Differ Geom Appl, 2006, \textbf{24}: 403--416.

\bibitem{CampanaFlenner1990}
Campana F, Flenner H. \emph{A characterization of ample vector bundles on a curve}.
Math Ann, 1990, \textbf{287}: 571--575.

\bibitem{CaoSunZhang2026}
Cao H-D, Sun X, Zhang Y. \emph{The Hermitian--Yang--Mills iteration on stable bundles}.
arXiv:2606.20307 [math.DG], 2026.

\bibitem{Corlette1988}
Corlette K. \emph{Flat {$G$}-bundles with canonical metrics}.
J Differential Geom, 1988, \textbf{28}: 361--382.

\bibitem{Demailly2012}
Demailly J-P. \emph{Analytic Methods in Algebraic Geometry}.
Surveys of Modern Mathematics, vol.~1. Somerville, MA: International Press; Beijing: Higher Education Press, 2012.

\bibitem{Demailly2021}
Demailly J-P. \emph{Hermitian--Yang--Mills approach to the conjecture of Griffiths on the positivity of ample vector bundles}.
Sb Math, 2021, \textbf{212}: 305--318.

\bibitem{DemaillyPeternellSchneider1994}
Demailly J-P, Peternell T, Schneider M. \emph{Compact complex manifolds with numerically effective tangent bundles}.
J Algebraic Geom, 1994, \textbf{3}: 295--345.

\bibitem{DemaillySkoda1980}
Demailly J-P, Skoda H. \emph{Relations entre les notions de positivit\'e de P.~A.~Griffiths et de S.~Nakano pour les fibr\'es vectoriels}.
In: Lelong P, Skoda H, eds. \emph{S\'eminaire Pierre Lelong--Henri Skoda (Analyse), Ann\'ees 1978/79}. Lecture Notes in Mathematics, vol.~822. Berlin: Springer, 1980, 304--309.

\bibitem{Donaldson1983}
Donaldson S K. \emph{A new proof of a theorem of Narasimhan and Seshadri}.
J Differential Geom, 1983, \textbf{18}: 269--277.

\bibitem{Donaldson1985}
Donaldson S K. \emph{Anti self-dual Yang--Mills connections over complex algebraic surfaces and stable vector bundles}.
Proc London Math Soc (3), 1985, \textbf{50}: 1--26.

\bibitem{FanWangYangYau2026}
Fan J, Wang M, Yang X, et al. \emph{Existence of Hermitian metrics with prescribed Hermitian--Yang--Mills tensors II}.
arXiv:2604.02679 [math.DG], 2026.

\bibitem{FultonLazarsfeld1983}
Fulton W, Lazarsfeld R. \emph{Positive polynomials for ample vector bundles}.
Ann Math (2), 1983, \textbf{118}: 35--60.

\bibitem{Gieseker1971}
Gieseker D. \emph{P-ample bundles and their Chern classes}.
Nagoya Math J, 1971, \textbf{43}: 91--116.

\bibitem{Griffiths1969}
Griffiths P A. \emph{Hermitian differential geometry, Chern classes, and positive vector bundles}.
In: Spencer D C, Iyanaga S, eds. \emph{Global Analysis: Papers in Honor of K.~Kodaira}. Tokyo: University of Tokyo Press; Princeton, NJ: Princeton University Press, 1969, 185--251.

\bibitem{GriffithsHarris1978}
Griffiths P, Harris J. \emph{Principles of Algebraic Geometry}.
New York: Wiley-Interscience, 1978.

\bibitem{Hartshorne1966}
Hartshorne R. \emph{Ample vector bundles}.
Publ Math Inst Hautes \'Etudes Sci, 1966, \textbf{29}: 63--94.

\bibitem{Hartshorne1971}
Hartshorne R. \emph{Ample vector bundles on curves}.
Nagoya Math J, 1971, \textbf{43}: 73--89.

\bibitem{Hitchin1987}
Hitchin N J. \emph{The self-duality equations on a Riemann surface}.
Proc London Math Soc (3), 1987, \textbf{55}: 59--126.

\bibitem{Jacob2015}
Jacob A. \emph{Stable Higgs bundles and Hermitian--Einstein metrics on non-K\"ahler manifolds}.
In: Feehan P M N, Song J, Weinkove B, et al., eds. \emph{Analysis, Complex Geometry, and Mathematical Physics: In Honor of Duong H.~Phong}. Contemporary Mathematics, vol.~644. Providence, RI: American Mathematical Society, 2015, 117--140.

\bibitem{Kobayashi1987}
Kobayashi S. \emph{Differential Geometry of Complex Vector Bundles}.
Tokyo: Iwanami Shoten; Princeton, NJ: Princeton University Press, 1987.

\bibitem{Lazarsfeld2004}
Lazarsfeld R. \emph{Positivity in Algebraic Geometry II: Positivity for Vector Bundles, and Multiplier Ideals}.
Berlin: Springer, 2004.

\bibitem{LubkeTeleman1995}
\mbox{L\"ubke M}, Teleman A. \emph{The Kobayashi--Hitchin Correspondence}.
River Edge, NJ: World Scientific, 1995.

\bibitem{Mandal2023}
Mandal A. \emph{The Demailly systems with the vortex ansatz}.
Bull Sci Math, 2023, \textbf{187}: 103307.

\bibitem{MourouganeTakayama2007}
Mourougane C, Takayama S. \emph{Hodge metrics and positivity of direct images}.
J Reine Angew Math, 2007, \textbf{606}: 167--178.

\bibitem{MourouganeTakayama2008}
Mourougane C, Takayama S. \emph{Hodge metrics and the curvature of higher direct images}.
Ann Sci \'Ec Norm Sup\'er (4), 2008, \textbf{41}: 905--924.

\bibitem{Murakami2026}
Murakami R. \emph{An analytic proof of Griffiths' conjecture on compact Riemann surfaces}.
Math Ann, 2026, \textbf{394}: 36.

\bibitem{Nakano1955}
Nakano S. \emph{On complex analytic vector bundles}.
J Math Soc Japan, 1955, \textbf{7}: 1--12.

\bibitem{NarasimhanSeshadri1965}
Narasimhan M S, Seshadri C S. \emph{Stable and unitary vector bundles on a compact Riemann surface}.
Ann Math (2), 1965, \textbf{82}: 540--567.

\bibitem{Nitsure1991}
Nitsure N. \emph{Moduli space of semistable pairs on a curve}.
Proc London Math Soc (3), 1991, \textbf{62}: 275--300.

\bibitem{Pingali2021}
Pingali V P. \emph{A note on Demailly's approach towards a conjecture of Griffiths}.
C R Math Acad Sci Paris, 2021, \textbf{359}: 501--503.

\bibitem{Pingali2023}
Pingali V P. \emph{The Demailly system for a direct sum of ample line bundles on Riemann surfaces}.
Calc Var Partial Differential Equations, 2023, \textbf{62}: 172.

\bibitem{Raufi2015}
Raufi H. \emph{Singular Hermitian metrics on holomorphic vector bundles}.
Ark Mat, 2015, \textbf{53}: 359--382.

\bibitem{Simpson1988}
Simpson C T. \emph{Constructing variations of Hodge structure using Yang--Mills theory and applications to uniformization}.
J Amer Math Soc, 1988, \textbf{1}: 867--918.

\bibitem{Simpson1990}
Simpson C T. \emph{Harmonic bundles on noncompact curves}.
J Amer Math Soc, 1990, \textbf{3}: 713--770.

\bibitem{Simpson1992}
Simpson C T. \emph{Higgs bundles and local systems}.
Publ Math Inst Hautes \'Etudes Sci, 1992, \textbf{75}: 5--95.

\bibitem{Simpson1994I}
Simpson C T. \emph{Moduli of representations of the fundamental group of a smooth projective variety I}.
Publ Math Inst Hautes \'Etudes Sci, 1994, \textbf{79}: 47--129.

\bibitem{Simpson1994II}
Simpson C T. \emph{Moduli of representations of the fundamental group of a smooth projective variety II}.
Publ Math Inst Hautes \'Etudes Sci, 1994, \textbf{80}: 5--79.

\bibitem{UhlenbeckYau1986}
Uhlenbeck K, Yau S-T. \emph{On the existence of Hermitian--Yang--Mills connections in stable vector bundles}.
Comm Pure Appl Math, 1986, \textbf{39}: S257--S293.

\bibitem{UhlenbeckYau1989}
Uhlenbeck K, Yau S-T. \emph{A note on our previous paper: On the existence of Hermitian--Yang--Mills connections in stable vector bundles}.
Comm Pure Appl Math, 1989, \textbf{42}: 703--707.

\bibitem{Umemura1973}
Umemura H. \emph{Some results in the theory of vector bundles}.
Nagoya Math J, 1973, \textbf{52}: 97--128.

\bibitem{WangYangYau2026}
Wang M, Yang X, Yau S-T. \emph{Existence of Hermitian metrics with prescribed Hermitian--Yang--Mills tensors I}.
arXiv:2603.10611 [math.DG], 2026.

\bibitem{WangYangYauTwisted2026}
Wang M, Yang X, Yau S-T. \emph{Existence of twisted Hermitian--Einstein metrics on unstable vector bundles}.
arXiv:2606.15102v1 [math.DG], 2026.

\bibitem{XiongYangYau2026}
Xiong Z, Yang X, Yau S-T. \emph{The prescribed Hermitian--Yang--Mills flow II}.
arXiv:2606.21073 [math.DG], 2026.

\end{thebibliography}
\end{document}